\documentclass[a4paper, 12pt]{article}
\textheight 240mm \textwidth 168mm \topmargin -10.3mm
\evensidemargin -5.4mm \oddsidemargin -3.4mm

\usepackage{amsfonts}
\usepackage{amsmath}
\usepackage{amsthm}
\usepackage{amssymb}
\usepackage{color}
\usepackage{setspace} 
 \usepackage{hyperref}

\newtheorem{theo}{Theorem}[section]
\newtheorem{prop}[theo]{Proposition}
\newtheorem{lemma}[theo]{Lemma}

\newtheorem{coro}[theo]{Corollary}
\theoremstyle{definition}\newtheorem{de}{Definition}[section]
\theoremstyle{remark}\newtheorem{rem}{Remark}[section]

\def\D{\mathbb{D}}
\def\R{\mathbb{R}}
\def\N{\mathbb{N}}
\def\div{\mathrm{div}\/}
\def\id{\mathrm{Id}\/}
\def\tr{\mathrm{Tr}\/}
\def\simple{\mathrm{Simple}\/}
\def\pO{\partial\Omega}
\def\oOm{{\tilde{\Omega}}}
\def\eps{\varepsilon}
\def\lb{\lambda}
\newcommand{\n}{{ n}}
\newcommand{\br}{{\bar{r}_0}}
\newcommand{\bet}{{\bar{\eta}_0}}
\newcommand{\x}{x}
\newcommand{\y}{y}
\newcommand{\z}{z}
\newcommand{\uu}{{{u}}}
\newcommand{\E}{{{E}}}

\title{{Generic properties of the spectrum of the Stokes system with Dirichlet boundary condition in $\R^3$}}
\author{
Y. \textsc{Chitour} \footnote{L2S, Universit\'e Paris-Sud XI, CNRS,
Sup\'elec, 3 Rue Joliot-Curie, 91192 Gif-sur-Yvette, France, email:
yacine.chitour@lss.supelec.fr}\and
D. \textsc{Kateb} \footnote{Centre de
Recherche de Royallieu, LMAC, 60020 Compi\`egne, France, email:
dkateb@dma.utc.fr} \and
R. \textsc{Long} \footnote{Department of Chemistry, Princeton University, NJ 08544, USA,
email: rlong@princeton.edu}}

\begin{document}

\maketitle
{\bf Abstract.}  Let $(SD_\Omega)$ be the Stokes operator defined in a bounded domain $\Omega$ of $\R^3$ with Dirichlet boundary conditions. We prove that, generically with respect to the domain $\Omega$ with $C^5$ boundary,
the spectrum of $(SD_\Omega)$ satisfies a non resonant property introduced by C. Foias and J. C. Saut in \cite{Foias:1987dq} to linearize the Navier-Stokes system in a bounded domain $\Omega$ of $\R^3$ with Dirichlet boundary conditions. For that purpose, we first prove that, generically with respect to the domain $\Omega$ with $C^5$ boundary, all the eigenvalues of $(SD_\Omega)$ are simple. That answers positively a question raised by J. H. Ortega and E. Zuazua in \cite[Section 6]{Zuazua1}. The proofs of these results follow a standard strategy based on a contradiction argument requiring shape differentiation. One needs to shape differentiate at least twice the initial problem in the direction of carefully chosen domain variations. The main step of the contradiction argument amounts to study the evaluation of Dirichlet-to-Neumann operators associated to these domain variations.

\tableofcontents

\section{Introduction and main results}
In this paper, we consider the eigenvalue problem for the Stokes system with Dirichlet boundary conditions defined in 
a bounded open subset $\Omega$ of $\R^3$ with $C^\ell$ boundary, $\ell\geq 4$,
\begin{equation*}\label{eq:stokes}
(SD_\Omega)\qquad\left\{\begin{array}{cccc}
\displaystyle -\Delta \phi +\nabla p&=&\lambda \phi&\textrm{ in }\Omega,\\
\displaystyle\div \phi&=&0&\textrm{ in }\Omega,\\
\displaystyle\phi&=&0&\textrm{ on }\pO,\\
\displaystyle\int_{\Omega}p(x)dx&=&0.
\end{array}\right.
\end{equation*}
Here we use $\phi\in\R^3$ and $p\in\R$ to denote respectively the velocity field and the pressure. 
It is well-known that $(SD_\Omega)$ admits an increasing sequence of positive eigenvalues $(\lambda_n)_{n\geq 1}$ tending to infinity as $n$ goes to infinity. 

The purpose of this paper is to prove \emph{genericity} results on the spectrum of $(SD_\Omega)$ with respect to domains of $\R^3$. We start by clarifying the notion of genericity considered below. Recall that the set of bounded domains of $\R^3$ with $C^\ell$ boundary denoted by $\D^3_\ell$ can be endowed with the following topology: the base of open neighborhoods is (essentially) given by the sets $V(\Omega,\varepsilon)$
defined, for any domain $\Omega\in\D^3_\ell$ and
$\varepsilon>0$ small enough, as the images of $\Omega$ by $\id+u$, with
$u\in W^{\ell+1,\infty}(\Omega,\R^3)$ and
$\|u\|_{W^{\ell+1,\infty}}<\varepsilon$ (cf. \cite{DH} and \cite{Simon}). Then $\varepsilon$ is chosen so that 
$\id+u:\Omega\rightarrow (\textrm{Id}+u)(\Omega)$ is a diffeomorphism. As shown by A. M. Michelleti in \cite{Micheletti:1972nx} (see also \cite[Appendix 2]{DH}), a neighborhood $V(\Omega,\varepsilon)$ of $\Omega\in\D^3_\ell$ as defined previously is metrizable using a Courant-type distance, denoted by $d_{\ell+1}$, and each $(V(\Omega,\varepsilon),d_{\ell+1})$ is complete and separable. For any domain $\Omega\in \D^3_\ell$, we use $\D^3_\ell(\Omega)$ to denote the Banach manifold obtained as the set of images $(\textrm{Id}+u)(\Omega)$  by $u\in W^{\ell+1,\infty}(\Omega,\R^3)$, which are diffeomorphic to $\Omega$. A property $(P)$ will thus be referred to as ``being generic with respect to $\Omega\in\D^3_\ell$'' if, for every  $\Omega\in\D^3_\ell$, the set of of domains of $\D^3_\ell(\Omega)$ where $(P)$ holds true contains a countable intersection of open and dense subsets of $\D^3_\ell(\Omega)$.\medskip

The main contribution of this paper consists in proving a conjecture formulated by C. Foias and J. C. Saut in \cite{Foias:1987dq} on the generic \emph{non-resonant} character of the spectrum of the Stokes operator with Dirichlet boundary conditions. Note that the non-resonance condition plays a crucial role in linearizing the Navier-Stokes systems to obtain a normal form, and then computing a useful asymptotic expansion for its solutions (cf. \cite{Foias:1987dq}, \cite{Foias-Ziane-06} and \cite{Foias-Ziane-09}). This situation is analogous to the classical Poincar\'e's normal form theorem for ordinary differential equations (cf. \cite[Chapter 5]{Arnold:1988fk}) although the proof is more involved. See also \cite{Foias:2011uq} for a recent development in the spirit of \cite{Foias:1987dq}. As noticed in \cite{Foias:1987dq}, non-resonance does not occur for periodic boundary conditions. However, the authors conjectured that non-resonance should be generic for Dirichlet boundary conditions. In this paper, we confirm that conjecture in Theorem \ref{main-theo-2}. Let us first recall the following definition (cf. \cite[Definition 1]{Foias:1987dq}). 

\begin{de}\label{def:reso}
We call {\it resonance} in the spectrum of $(SD_\Omega)$ a relation of the type
\begin{equation}\label{eq:reso-1}
\lambda_{k+1}=\sum_{j=1}^km_j\lambda_j, \hbox{ where }m_j\in\N, \qquad 1\leq j\leq k.
\end{equation}
If no resonance occurs in the spectrum of $(SD_\Omega)$, then $(SD_\Omega)$ will be called \emph{non-resonant}.
\end{de}


\begin{theo}\label{main-theo-2}
Generically with respect to $\Omega\in\D^3_5$, the spectrum of the operator $(SD_\Omega)$ is non-resonant.
\end{theo}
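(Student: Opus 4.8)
The plan is to reduce the non-resonance statement to the simplicity of eigenvalues (which the paper establishes generically) plus a countable family of "no single resonance relation" genericity statements, combined via a Baire category argument.

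First, I would fix $\Omega \in \D^3_5$ and work inside the Banach manifold $\D^3_5(\Omega)$, which by Micheletti's theorem carries a complete separable metric, so the Baire category theorem applies. Let $\mathcal{S}$ denote the (generic, by the simplicity result the paper proves) set of domains in $\D^3_5(\Omega)$ for which all eigenvalues of $(SD_\Omega)$ are simple; $\mathcal S$ contains a countable intersection of open dense sets. On $\mathcal S$ the eigenvalues $\lambda_n$ depend real-analytically (or at least $C^1$) on the domain variation $u$, by standard perturbation theory for simple eigenvalues of the Stokes operator together with shape differentiability.

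Next, for each fixed $k \geq 1$ and each fixed multi-index $m = (m_1,\dots,m_k) \in \N^k$, define
\[
\mathcal{R}_{k,m} = \Bigl\{ \Omega' \in \D^3_5(\Omega) : \lambda_{k+1}(\Omega') = \sum_{j=1}^k m_j \lambda_j(\Omega') \Bigr\},
\]
and let $\mathcal{N}_{k,m} = \D^3_5(\Omega) \setminus \mathcal{R}_{k,m}$. The set of non-resonant domains is exactly $\bigcap_{k \geq 1} \bigcap_{m \in \N^k} \mathcal{N}_{k,m}$, a countable intersection since $\bigcup_k \N^k$ is countable. Restricting attention to $\mathcal{S}$, it suffices to show that $\mathcal{N}_{k,m} \cap \mathcal S$ contains a countable intersection of open dense subsets of $\D^3_5(\Omega)$; intersecting over the countable index set and with $\mathcal S$ then yields genericity of non-resonance. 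Openness of $\mathcal{N}_{k,m}\cap \mathcal{S}$ (relative to $\mathcal S$) is immediate from continuity of the $\lambda_j$. For density, I would argue by contradiction: if $\mathcal{R}_{k,m}$ contained an open subset $\mathcal U$ of $\mathcal S$, then on $\mathcal U$ the analytic function $\Omega' \mapsto \lambda_{k+1}(\Omega') - \sum_j m_j \lambda_j(\Omega')$ vanishes identically, hence all its shape derivatives vanish. One then exploits the Hadamard-type formula for the first shape derivative of a simple eigenvalue $\lambda_n$ in direction $u$, namely an integral over $\pO$ of $(\partial_\nu \phi_n)^2 (u\cdot\nu)$ (up to constants and pressure contributions), and — where the first derivative is insufficient — the second shape derivative, reducing the identity to a spectral relation involving Dirichlet-to-Neumann operators associated to the chosen domain variations. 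Choosing $u$ supported near a suitable boundary point and using unique continuation for the Stokes system to guarantee $\partial_\nu \phi_n \not\equiv 0$, one derives a contradiction with the vanishing of the derivative(s); this is precisely the contradiction-argument machinery the introduction advertises as the core of the paper.

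The main obstacle is the density step, specifically ruling out the degenerate case where the first shape derivatives of $\lambda_{k+1}$ and of $\sum_j m_j\lambda_j$ happen to coincide for \emph{all} admissible variations $u$. This forces a pointwise identity on $\pO$ among squared normal derivatives of the relevant eigenfunctions, $(\partial_\nu \phi_{k+1})^2 = \sum_j m_j (\partial_\nu \phi_j)^2$, and one must then differentiate a second time and analyze the resulting expression — which is where the Dirichlet-to-Neumann operators enter — to extract enough independent constraints to contradict the analyticity of the eigenfunctions and the structure of the Stokes system. Handling the pressure terms correctly in the Hadamard formulas and ensuring the second-order shape calculus is valid with only $C^5$ regularity are the delicate technical points; everything else (Baire category bookkeeping, openness, reduction to a single relation) is routine once the simplicity result is in hand.
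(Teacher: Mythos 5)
Your outline follows the same route as the paper: countability of resonance relations, Baire category, reduction to simple eigenvalues via the generic simplicity theorem, first shape derivative of the resonance relation yielding the pointwise identity $\Vert \partial\phi_{k+1}/\partial n\Vert^2=\sum_j m_j\Vert\partial\phi_j/\partial n\Vert^2$ on $\pO$, and then further shape differentiation with variations localized near a boundary point where $\partial\phi_{k+1}/\partial n\neq 0$ (guaranteed by Osses's unique continuation). Up to that point the proposal is correct and matches the paper.

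However, there is a genuine gap at the heart of the density step: you stop at the second shape derivative and assert that the Dirichlet-to-Neumann analysis then "extracts enough independent constraints to contradict the analyticity of the eigenfunctions." This is not how, nor why, the argument closes. The second differentiation, combined with the asymptotic expansion of the localized Dirichlet-to-Neumann operator and the arbitrariness of the angular parameter $\bar\eta_0$, yields only the matrix identity
$\frac{\partial\phi_k}{\partial n}\big(\frac{\partial\phi_k}{\partial n}\big)^T=\sum_j m_j\,\frac{\partial\phi_j}{\partial n}\big(\frac{\partial\phi_j}{\partial n}\big)^T$ on $\pO$, which is \emph{not} contradictory: it is satisfied whenever the normal derivatives are locally proportional, $\partial\phi_j/\partial n=\mu_j\,\partial\phi_k/\partial n$ with $\sum_j m_j\mu_j^2=1$. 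A \emph{third} shape differentiation of this identity is required (this is the reason for the extra degree of boundary regularity, $\ell\geq 5$), and the contradiction is then purely quantitative: in the $1/\eps$-expansion of the resulting quantity $d_k=\sum_j m_j\mu_j(\mu_j\,\partial\phi_k'/\partial\nu-\partial\phi_j'/\partial\nu)$, the coefficient of the leading term contains the factor $\rho=\sum_j m_j\mu_j^2(\lambda_k-\lambda_j)$, which is strictly positive precisely because in a resonance relation all $\lambda_j$ are strictly smaller than $\lambda_k$; forcing the corresponding non-vanishing entire function of the radial parameter $\br$ to vanish then gives $\rho=0$, the contradiction. Your proposal never identifies where the arithmetic structure of the resonance relation (i.e.\ $\lambda_k>\lambda_j$) is used, and without that input the argument cannot close — the first two derivatives alone are consistent with resonance. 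Note also that it is the radial parameter between $x$ and $x_\eps$, not the angular one, that produces the final contradiction here, in contrast with the simplicity proof.
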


In order to prove Theorem \ref{main-theo-2}, we need first to establish another genericity result on the spectrum of the Stokes operator known as \emph{generic simplicity}.
 
%
%
%
\begin{theo}\label{main-theo}
Generically with respect to $\Omega\in\D^3_4$, all the eigenvalues of $(SD_\Omega)$ are simple. 
\end{theo}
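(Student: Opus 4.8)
The plan is to follow the classical Albert/Uhlenbeck-type strategy adapted to the Stokes operator. Fix a domain $\Omega_0 \in \D^3_4$. For each integer $N \geq 1$, let $\mathcal{O}_N \subset \D^3_4(\Omega_0)$ be the set of domains $\Omega$ for which the first $N$ eigenvalues $\lambda_1(\Omega) \leq \cdots \leq \lambda_N(\Omega)$ are simple. Since eigenvalues depend continuously on the domain (in the $d_5$ metric, via pullback to a fixed domain and analytic perturbation theory for the resulting family of operators with varying coefficients), each $\mathcal{O}_N$ is open. The crux is density: I must show every $\mathcal{O}_N$ is dense, and then $\bigcap_N \mathcal{O}_N$ is the desired residual set by Baire, since $\D^3_4(\Omega_0)$ is a complete metric space. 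So it suffices to show: given $\Omega$ with a multiple eigenvalue $\lambda$ among the first $N$, there is an arbitrarily small domain perturbation that reduces the multiplicity; iterating splits the cluster completely, and one reaches $\mathcal{O}_N$.

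The density step is where the real work lies, and it proceeds by contradiction via shape differentiation. Suppose $\lambda$ is an eigenvalue of multiplicity $m \geq 2$ that cannot be split by any perturbation in a neighborhood. Pick a smooth one-parameter family $\Omega_t = (\id + t u)(\Omega)$ with $u \in W^{5,\infty}$, $u$ supported near $\pO$, normal component $u\cdot \n = \mu$ on $\pO$. Standard shape-derivative computations (Hadamard formula) for the Stokes eigenvalue problem give that the eigenvalues $\lambda_i(t)$ of the perturbed cluster are, to first order, the eigenvalues of the $m\times m$ quadratic-form matrix
\begin{equation*}
M(\mu)_{ij} = -\int_{\pO} \partial_\n \phi_i \cdot \partial_\n \phi_j \, \mu \, d\sigma,
\end{equation*}
where $\phi_1,\dots,\phi_m$ is an $L^2$-orthonormal basis of the eigenspace (here $\phi = 0$ and $\div\phi = 0$ on $\pO$ force the boundary trace of $D\phi$ to be captured by $\partial_\n\phi$, which is tangent-free, i.e. $\partial_\n\phi = (\partial_\n\phi\cdot\n)\n$ on $\pO$). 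Non-splittability forces $M(\mu)$ to be a multiple of the identity for \emph{every} admissible $\mu$; since $\mu$ ranges over a dense subset of $C(\pO)$, this yields the pointwise identities on $\pO$:
\begin{equation*}
\partial_\n\phi_i \cdot \partial_\n\phi_j = c\,\delta_{ij} \quad \text{for all } i,j, \text{ some constant } c \geq 0.
\end{equation*}
If $c = 0$ then $\partial_\n\phi_i \equiv 0$ on $\pO$, so $\phi_i$ has zero Cauchy data for a Stokes (hence elliptic, after eliminating $p$) system and must vanish identically — contradiction. So $c > 0$, the vectors $\partial_\n\phi_i/\sqrt c$ are pointwise orthonormal in $\R^3$ along $\pO$, which already forces $m \leq 3$; and moreover each $|\partial_\n\phi_i|^2 \equiv c$ is constant on $\pO$.

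To push to a contradiction one differentiates a second time, which is the main obstacle and the technical heart of the argument. The idea is: the first-order relations above are now rigid constraints that survive under the deformation, so differentiating them along $t$ produces, via the shape derivatives $\phi_i'$ of the eigenfunctions (which solve a Stokes system with nonhomogeneous boundary data built from $\mu$ and $\partial_\n\phi_i$), new identities on $\pO$ coupling the boundary traces $\partial_\n\phi_i'$ — effectively the evaluation of a Dirichlet-to-Neumann operator $\mathcal{D}_\lambda$ associated with the domain variation. Choosing $\mu$ cleverly (e.g. localized, or adapted to the geometry and to a distinguished point of $\pO$ where one can normalize the local frame), one derives an overdetermined system on the $\phi_i$ near the boundary — for instance that the $\phi_i$ and enough of their derivatives all vanish at a boundary point, or an incompatibility between $\div\phi_i = 0$ and the orthonormality of the $\partial_\n\phi_i$ — forcing some $\phi_i$ to vanish on an open set and hence, by unique continuation for the Stokes system, everywhere. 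That contradiction establishes that splitting is always possible, giving density of $\mathcal{O}_N$ and the theorem. The delicate points will be: (i) handling the pressure $p$ correctly in all shape-derivative formulas (it contributes boundary terms that must be shown to not spoil the identities, using $\int_\Omega p = 0$ and the divergence-free constraint); (ii) making the second shape differentiation rigorous at the available regularity $C^4$, which is exactly why the hypothesis is $\D^3_4$ rather than $\D^3_3$; and (iii) the unique continuation / Cauchy-data vanishing input, which for the Stokes system needs the eigenfunction to be recovered from an associated elliptic problem for $(\phi,p)$.
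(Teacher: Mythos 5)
Your Baire-category framework and the first shape derivative reproduce the paper's setup (which at that stage is Ortega--Zuazua's argument), but two points there are already wrong. From $\phi_i=0$ and $\div\,\phi_i=0$ on $\pO$ one gets $\frac{\partial \phi_i}{\partial n}\cdot n=0$, i.e.\ $\frac{\partial \phi_i}{\partial n}$ is \emph{tangent} to $\pO$; you assert the opposite (``tangent-free, i.e.\ $\partial_n\phi=(\partial_n\phi\cdot n)n$''). With the correct fact, at a point where the normal derivatives do not vanish the $m$ pairwise-orthogonal vectors $\frac{\partial \phi_i}{\partial n}(x_0)$ together with $n(x_0)$ are $m+1$ mutually orthogonal nonzero vectors of $\R^3$, so $m\le 2$ (not $m\le 3$), and the residual case is exactly $m=2$ with the two normal derivatives spanning $T_x\pO$. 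Also, non-splittability only forces the \emph{pointwise} identities $\Vert\frac{\partial\phi_i}{\partial n}\Vert^2=\Vert\frac{\partial\phi_j}{\partial n}\Vert^2$ and $\frac{\partial\phi_i}{\partial n}\cdot\frac{\partial\phi_j}{\partial n}=0$ on $\pO$; it does not make $\Vert\frac{\partial\phi_i}{\partial n}\Vert^2$ a constant $c$ on $\pO$ as you claim.

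The genuine gap is the second differentiation, which you correctly call ``the main obstacle and the technical heart'' and then do not carry out. ``Choosing $\mu$ cleverly \ldots one derives an overdetermined system \ldots forcing some $\phi_i$ to vanish on an open set and hence, by unique continuation, everywhere'' is a placeholder, and the endgame you propose is not the one that works: unique continuation is used only at the first-derivative stage. In the paper, shape-differentiating the Ortega--Zuazua relations a second time gives an identity of the form $M'(u)=-(u\cdot n)\,\frac{\partial M(0)}{\partial n}$ whose two sides have different regularity in $u\cdot n$; this gap is exploited \emph{quantitatively} by taking $u\cdot n$ to be a Gaussian $\alpha_{\varepsilon,\eta_0}\circ h_x^{-1}$ of width $\varepsilon$ centred at a point offset from $x$ by $\eta_0$ with $\vert\eta_0\vert\le\varepsilon$. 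One must then represent $\frac{\partial \phi_j'}{\partial\nu}$ by layer potentials and compute the leading $\varepsilon^{-3}$ term of the resulting hypersingular integral (Propositions \ref{expansion} and \ref{term-w1}, together with the material of Section \ref{sec:pf-expansion} and the appendices, including the nonvanishing entire functions $M_k^{A_1}$). Since the right-hand side is only $O(\varepsilon^{-2})$, that coefficient must vanish, which reads $\cos(\theta_1-\theta_0)\cos(\theta_2-\theta_0)=0$ for every offset angle $\theta_0$, hence $\sin 2\theta_0\equiv 0$ --- an algebraic contradiction exploiting the angular degree of freedom of the localized variation. Nothing in your proposal supplies this computation or a workable substitute; without the explicit asymptotics of the Dirichlet-to-Neumann operator there is no contradiction, and the $C^4$ regularity is needed precisely to make $\frac{\partial M(0)}{\partial n}$ continuous so the two sides can be compared pointwise.
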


\begin{rem}\label{rem:zuazua}
In \cite{Zuazua1},  several properties for the Stokes system with Dirichlet boundary conditions (in particular the simplicity of the spectrum) were proved to be generic for domains in $\R^2$. Moreover, in the same paper, the three dimensional case was considered in Section $6$, pointing out why techniques developed in \cite{Zuazua1} could only handle the two dimensional case. In this regard, Theorem \ref{main-theo} answers positively the open question of Section $6$ in \cite{Zuazua1}.
\end{rem}

Theorem \ref{main-theo} is of course a particular case of Theorem \ref{main-theo-2}, but will allow us to work only with simple eigenvalues in the main step of Theorem \ref{main-theo-2}'s proof. This reduction is essential in our arguments. We now describe the strategy of the proofs. As it is standard since \cite{Alb}, the reasoning goes by contradiction and is based on shape differentiation. \smallskip

We start with a description of the proof of Theorem \ref{main-theo}. Fix a domain $\Omega_0\in\D^3_\ell$. For every integer $k$, we define $A_k$ as the (open) subset of $\D^3_\ell(\Omega_0)$ whose 
elements $\Omega$ verify that the $k$ first eigenvalues of $(SD_{\Omega})$
are simple. Clearly, by Baire's lemma, proving Theorem \ref{main-theo} amounts to show that 
$A_{k+1}$ is dense in $A_k$ for every $k\geq 0$. We argue by contradiction and assume that there 
exists an integer $k$, a domain $\Omega$ with $C^\ell$ boundary in $A_k$ and $\varepsilon>0$, 
such that, for every $u\in W^{\ell+1,\infty}(\Omega,\R^3)$ with
$\|u\|_{W^{\ell+1,\infty}}<\varepsilon$, the domain $(\id+u)\Omega$, or simply $\Omega+u$,
belongs to  $A_k$ but not to  $A_{k+1}$. Let $m\geq 2$ be the multiplicity of $\lambda$, the value of the $(k+1)$-th eigenvalue of $(SD_{\Omega})$ and $\phi_i$, $i=1,\cdots,m$, orthonormal eigenfunctions associated to $\lambda$. Finally, let $n$ be the outward unit normal vector field on $\partial\Omega$. By computing the shape derivative of the $(n+1)$-th eigenvalue of $(SD_{\Omega})$, J. H. Ortega and E. Zuazua obtained in \cite{Zuazua1} that, at every $x\in\partial\Omega$, one has, for $i,j=1,\cdots,m$, and $i\neq j$,
\begin{equation}\label{intro1}
\frac{\partial \phi_i}{\partial n}\cdot n=0,\qquad 
\frac{\partial \phi_i}{\partial n}\cdot\frac{\partial \phi_j}{\partial n}=0,\qquad \Vert \frac{\partial \phi_i}{\partial n}\Vert^2=\Vert \frac{\partial \phi_j}{\partial n}\Vert^2.
\end{equation}
If $m>2$, then there necessarily exists $1\leq i\leq m$ so that $\displaystyle\frac{\partial \phi_i}{\partial n}\equiv 0$ on $\partial\Omega$ and one reaches a contradiction using 
a unique continuation result due to Osses (cf. \cite{Osses}). {However, in order to obtain generic simplicity ($m=1$), it was not clear how to pursue the reasoning by contradiction, i.e., showing that relations in (\ref{intro1}) do not hold true generically with respect to the domains of $\R^3$ if $m=2$. Note that, for questions involving scalar PDEs, if one wants to prove generic simplicity of the spectrum of a self-adjoint operator with Dirichlet boundary conditions, then it is standard to follow the lines of the above mentioned contradiction argument and to reach Eq.  (\ref{intro1}). The second equation there is now a product of real numbers and a contradiction follows readily by unique continuation, cf. \cite{Alb} and \cite{DH}.  Therefore, the difficulty for showing the generic simplicity of the spectrum of $(SD_{\Omega})$ stems, at this stage of the argument, from the vectorial character of $\phi_i$, i.e., the fact that we are dealing with a system of PDEs.}

In this paper, we push further the
contradiction argument by computing the shape derivative of the $(k+1)$-th 
eigenvalue of $(S)_{\Omega+u}$
at every $u\in W^{\ell+1,\infty}(\Omega,\R^3)$ with
$\|u\|_{W^{\ell+1,\infty}}<\varepsilon$ small enough. The relations obtained in Eq. \eqref{intro1} for $\Omega$ are now valid for every domain $\Omega+u$ with $u$ small enough. At this stage, we are not able to derive a contradiction. So
we again take the shape derivative of the above relations  on
$\partial\Omega$ and end up with expressions of the type 
\begin{equation}\label{key}
M'(u)(x)=-(u\cdot n)(x)\frac{\partial M(0)}{\partial n}(x), \ \
x\in\partial\Omega,
\end{equation}
for $\|u\|_{W^{\ell+1,\infty}}<\varepsilon$ and where 
$$M(\cdot):=\frac{\partial \phi_i}{\partial n}\cdot n,\quad
\frac{\partial \phi_i}{\partial n}\cdot\frac{\partial \phi_j}{\partial n},\quad\hbox{or}\quad\Vert \frac{\partial \phi_i}{\partial n}\Vert^2-\Vert \frac{\partial \phi_j}{\partial n}\Vert^2.
$$

Taking into account the expression of $M$, its shape derivative $M'(u)$ can also be expressed in terms of Neumann data of the shape derivatives of the eigenfunctions whose values on $\partial\Omega$ have the regularity of $u\cdot n$. By standard elliptic theory, if $u\cdot n$ belongs to the Sobolev space $H^s(\partial\Omega)$, $M'(u)$ \emph{a priori} belongs to $H^{s-1}(\partial\Omega)$. Then, the key observation is that a \emph{gap of regularity} exists between the two sides of Eq. (\ref{key}) since the right-hand side trivially belongs to $H^s(\partial\Omega)$, for $s\leq 1$ and $\ell\geq 4$, the latter assumption needed to assert that $\frac{\partial M(0)}{\partial n}$ is continuous on $\partial\Omega$. The whole point now comes down to use that gap of regularity in order to reach a contradiction. 
In this paper, we reformulate the issue at hand as follows: how to extract pointwise information (i.e., for $x\in\partial\Omega$) reflecting the aforementioned gap of regularity and thus allow us to pursue the reasoning by contradiction. This rather elementary line of attack, first considered in \cite{Coron-Chitour-Mauro} and also applied in \cite{BCKL}, consists in choosing appropriate variations $u$ ``localized'' at an arbitrary point $x\in\partial\Omega$. 
We note that problems treated in \cite{Coron-Chitour-Mauro} and \cite{BCKL} concerned planar domains and, therefore, equations of the type \eqref{key} were valid on closed $C^3$ curves of $\R^2$. In that case, the localization procedure is easier to handle. Indeed, the strategy adopted in \cite{Coron-Chitour-Mauro} and \cite{BCKL} consisted in extending $M'(u)$ for variations $u$ defined on $\partial\Omega$ as continuous functions except at some point $x\in\partial\Omega$. More particularly, $u=u_x$ can be taken as a Heaviside like function admitting a single jump of discontinuity at $x$. In order to exploit the  gap of regularity,  the singular part of $M'(u_x)(\cdot)$ at
$x$ (in the distributional sense) had to be computed, to eventually obtain the following expression,
$$
M'(u_{x})(\sigma)=M_0~\textrm{p.v.} \left( \frac{1}{\sigma} \right) +
R(\sigma),
$$
where $\sigma$ denotes the arclength (with $\sigma=0$ corresponding to $x$) and $R(\cdot)$ belongs to $H^{1/2-\eps}(\pO)$ for every $\eps>0$. Plugging back the above expression into Eq. (\ref{key}), one deduces that $M_0(\cdot)\equiv 0$ on $\partial\Omega$. In \cite{Coron-Chitour-Mauro}, the previous relation provided additional information and allowed to conclude the contradiction
argument. However, in \cite{BCKL}, it turns out that
$M_0(\cdot)$ is proportional to $M(0)(\cdot)$ and hence is trivially
equal to zero. To determine the first non trivial term
in the ``singular'' expansion of
$M'(u_x)+(u_x\cdot n)\frac{\partial M(0)}{\partial n}$ at $x$, in the distributional sense, a detailed study of Dirichlet-to-Neumann operators
associated to several Helmholtz equations was required. 

In the present paper, the ``localization'' procedure, i.e., the choice of appropriate variations $u$ for any arbitrary point $x\in\partial\Omega$, must be performed for functions
defined on a surface $\partial\Omega$ and not anymore on a curve, as in \cite{Coron-Chitour-Mauro} and \cite{BCKL}. For that purpose, after fixing an arbitrary point $x\in\partial\Omega$, we will choose sequences of smooth functions $u_{\varepsilon,x_\varepsilon}$ approximating the Dirac distribution at $x_\varepsilon$ as $\varepsilon$ tends to zero, the point $x_\varepsilon\in\partial\Omega$ being any point at distance $\varepsilon$ of $x$. The gap of regularity between the two sides of Eq.~(\ref{key}) will be now  quantified in terms of powers of 
$\displaystyle\frac{1}{\varepsilon}$ and  not anymore measured in the distributional sense.
We are therefore led to an asymptotic analysis as $\displaystyle\frac{1}{\varepsilon}$ tends to infinity and, more precisely, the right-hand side of Eq.~(\ref{key}) is a $\displaystyle O(\frac1{\varepsilon^2})$ meanwhile we will establish that the left-hand side of of Eq. (\ref{key}) 
is equal to $\displaystyle\frac{w_\varepsilon}{\varepsilon^3}+O(\frac1{\varepsilon^2})$, where $w_\varepsilon$ is bounded independently of $\varepsilon$. 
Letting $\varepsilon$ tend to zero, one deduces that $\lim_{\varepsilon\rightarrow 0}w_\varepsilon=0$ and finally one concludes the contradiction argument and Theorem \ref{main-theo} is established. Note that the exact characterization of $w_\varepsilon$ requires, as in \cite{BCKL}, 
a detailed study of certain Dirichlet-to-Neumann operators, but here, associated to the Stokes system. That study heavily uses many technical results borrowed from \cite[Chap. 7]{DH}, not only for handling certain weakly singular operators but also for the material which is necessary to evaluate integrals defined on the surface $\partial\Omega$. It is noteworthy that, to perform the evaluation of the surface integrals, we choose charts based at $x_\varepsilon\in\partial\Omega$ near the fixed point $x\in\partial\Omega$, but not exactly at $x$. This trick turns out to be crucial for handling the singularities in computations involving boundary layer potentials. Of an equal importance, it also provides two degrees of freedom, namely the distance and the angle (in local coordinates) between $x_\eps$ and $x$, and functions of these two variables being equal to zero give additional information to yield a contradiction.

Let us now briefly mention how the argument for Theorem \ref{main-theo-2} goes. Since the resonance relations of the type (\ref{eq:reso-1}) are clearly of countable number, we can start a contradiction argument similar to the abovementioned one. 
Therefore, there exists a resonance relation of the type (\ref{eq:reso-1}) and denoted here by $(RR)$, a domain $\Omega$ with $C^{\ell+1}$ boundary, $\ell\geq 3$ and $\varepsilon>0$, such that, for every $u\in W^{\ell+2,\infty}(\Omega,\R^3)$ with
$\|u\|_{W^{\ell+2,\infty}}<\varepsilon$, the domain $\Omega+u$ verifies $(RR)$. Moreover, since Theorem \ref{main-theo} holds true, one can assume that the eigenvalues involved in $(RR)$ are all simple for  $\Omega+u$ with $\|u\|_{W^{\ell+2,\infty}}<\varepsilon$. We then take the shape derivative of $(RR)$ but we are unable to derive any contradiction. Assuming thus that this shape derivative is equal to zero for $\Omega+u$ with $\|u\|_{W^{\ell+2,\infty}}$ small enough, we again differentiate the shape derivative of $(RR)$ at $u=0$. We then consider the  variations $u_{\varepsilon,x_\varepsilon}$ introduced previously and embark into the characterization of the main term of the second shape derivative of $(RR)$. After lengthy computations (where an extra shape derivative is performed and this justifies the extra degree of regularity of the boundary as compared with the argument for generic simplicity), we get a contradiction and conclude. It is interesting to notice the following difference between the proofs of Theorem \ref{main-theo} and Theorem \ref{main-theo-2} respectively. 
Indeed, for the first result, one uses, in the contradiction argument, the parameter defined by the angular part between $x$ and $x_\varepsilon$ whereas for the second result, it is the radial part between $x$ and $x_\varepsilon$ which plays a crucial role. Both parameters actually result from the vectorial character of our variations and that enables one to adequately address the fact that $(SD_{\Omega})$ is a system of PDEs. Therefore, one should  emphasize the flexibility of the approach proposed in this paper, which can be applied to genericity questions for other systems of PDEs. 

Before passing to the plan of the paper, we would like to make two remarks. The first one regards the reference \cite{DH}, which provides the best update for genericity questions related to PDEs, where genericity is meant with respect to the domain $\Omega$. Moreover, many new genericity results are proven there and in several situations, the author (essentially) arrives to the same critical issue as the one explained previously, i.e., equations of the type \eqref{key}. D. Henry's approach is based on transversality theory (see for instance Theorem $5.4$ in \cite[p. 63]{DH} instead of shape derivation. In his arguments, the main issue to contradict, ``$A_{k+1}$ is not dense in $A_k$'' translates into the fact that a certain operator acting on the space defined by $u\cdot n$ (i.e., the domain variation restricted to $\partial\Omega$) is actually of finite rank.  In the spirit of pseudo-differential calculus, he pursues the argument by evaluating that operator for functions $u\cdot n$ with rapidly oscillations of the type $\gamma(x)\cos(\omega\theta(x))$ where $\omega$ tends to infinity (see  \cite[Chap. 8]{DH}. The asymptotic analysis is therefore performed in terms
of the phase $\omega$. Even though the contradiction arguments follow two different view points, our approach and that of Dan Henry both consider parameterized appropriate domain variations and an asymptotic analysis with respect to the parameters. 
One must however notice that the technical details to be handled in D. Henry's approach are much more elaborated and complicated compared to ours. 
%


We close this introduction with a brief scope on the present work and, more generally, on genericity problems regarding differential operators (not necessarily self-adjoint) admitting only a discrete spectrum. The fact that a property is generic with respect to the domain translates geometrically as a transversality property but the technics presented in this paper
only rely on real and functional analysis tools. We do believe though that they are flexible enough to tackle similar problems for other systems of PDEs. For simplicity, we assume that the differential operators are completed with Dirichlet boundary conditions. Assume that the following two facts hold true. Firstly, one has a result of unique continuation for the 
associated overdetermined eigenvalue problem, i.e., the zero function is the unique solution of the eigenvalue problem where both Dirichlet and Neumann boundary conditions are satisfied over the whole boundary. Secondly, there exists a ``good'' representation formula for the Dirichlet-to-Neumann operator associated to the original operator, i.e., 
one is able to compute with ``good'' precision the kernels of the required layer potentials.
Since we only perform computations localized in an arbitrarily small neighborhood of any point of the boundary, one should (in principle) only need Taylor expansions of the appropriate kernels rather than their explicit global expression. Once these two sets of information are available (unique continuation result and ``good'' representation formula), our technics can come into play for genericity questions. 

The paper is organized as follows. In Section 2, we present the necessary material on the Stokes system, shape differentiation and the result displayed in Eq. \eqref{intro1} and first established in \cite{Zuazua1}. The third section is devoted to the proof of Theorem \ref{main-theo} assuming that the expansion of a Dirichlet-to-Neuman operator in terms of  inverse powers of $\varepsilon$ is available. 
Then, in Section 4, the argument to achieve such an expansion is provided
using technical results on representation formulas for Dirichlet-to-Neuman operators gathered in Section $6$. The proof of Theorem \ref{main-theo-2} is given in Section 5. Background materials on layer potentials and integral representation formulas for the Stokes system as well as the proofs of computational lemmas are gathered in Appendices A and B.

\bigskip

{\bf Acknowledgements} The authors would like to thank E. Zuazua and J. C. Saut for having suggested the problems.

\section{Definitions and preliminary results}
We start by defining precisely in Section \ref{sec:domain-topo} the topology for the set of domains in $\R^d$ with $C^\ell$ boundary, where $d,\ell\geq 2$. The material is standard and borrowed from \cite{DH} and \cite{Simon}. We then recall in Section \ref{sec:spectrum} the definition of the Stokes operator and its spectrum. The presentation adopted in this section is inspired by \cite[Chapter II]{Foias:2001cr}, \cite[Chapter 5]{simon-cours} and \cite{Zuazua1}. Results on the regularity of the eigenvalues and eigenfunctions of the Stokes operator with respect to domain variations are derived in Section \ref{sec:regularity-spectrum} and essentially based on \cite[Chapter 7]{Ka} and \cite{Conca}. Section \ref{shape0} is devoted to the shape differentiation for the Stokes system following the strategy of \cite{Simon}. We finally recall in Section \ref{sec:zuazua} J. H. Ortega and E. Zuazua's result obtained in  \cite{Zuazua1} and provide an alternative proof. This result will be the starting point of our proof for Theorems \ref{main-theo-2} and \ref{main-theo}.  

\subsection{Topology on the domains}\label{sec:domain-topo}
In this section, we provide the basic definitions needed in the paper. We work in this section in $\R^d$, $d\geq 2$, even though we will only be interested by the case $d=3$.
A domain $\Omega$ of $\R^d$, $d\geq 2$, is an open bounded subset of $\R^d$. We provide now the standard topology for domains with a regular boundary. 
For $\ell\geq 2$, the set of domains $\Omega$ of $\R^d$ with
$C^\ell$ boundary will be denoted by $\D^d_\ell$. Following \cite{Simon}, we can define a topology on
$\D^d_\ell$. Consider the Banach space
$W^{\ell+1,\infty}(\Omega,\mathbb{R}^d)$ equipped with its standard
norm defined by 
$$
\Vert u\Vert_{l+1,\infty}:=\textrm{supess}\{\Vert D^{\alpha}
u(x)\Vert; \ 0\leq \alpha\leq l+1,\ x\in\Omega\}.$$

For $\Omega\in \D^d_\ell$, $u\in
W^{\ell+1,\infty}(\Omega,\mathbb{R}^d)$, let
$\Omega+u:=(\textrm{Id}+u)(\Omega)$ be the subset of points
$y\in\mathbb{R}^d$ such that $y=x+u(x)$ for some $x\in\Omega$ and
$\partial\Omega+u:=(\textrm{Id}+u)(\partial\Omega)$ its boundary.
For $\varepsilon>0$, let $V(\Omega,\epsilon)$ be the set of all
$\Omega+u$ with $u\in W^{\ell+1,\infty}(\Omega,\mathbb{R}^d)$ and
$\Vert u\Vert_{W^{\ell+1,\infty}}\leq\varepsilon$, small enough so that 
$\id+u:\Omega\rightarrow (\textrm{Id}+u)(\Omega)$ is a diffeomorphism. The topology of
$\D^d_\ell$ is defined by taking the sets $V(\Omega,\varepsilon)$ with
$\varepsilon$ small enough as a base of open neighborhoods of $\Omega$.

A. M. Michelleti in \cite{Micheletti:1972nx} (and also reported in \cite[Appendix 2]{DH}) considered a Courant-type metric, denoted $d_{\ell+1}$ in this paper, so that $V(\Omega,\varepsilon)$ is metrizable and each $(V(\Omega,\varepsilon),d_{\ell+1})$ is complete and separable. For any domain $\Omega\in \D^d_\ell$, we use $\D^d_\ell(\Omega)$ to denote the  the set of images $(\textrm{Id}+u)(\Omega)$  by $u\in W^{\ell+1,\infty}(\Omega,\R^d)$, which are diffeomorphic to $\Omega$. Then $\D^d_\ell(\Omega)$ is a Banach manifold modeled on $u\in W^{\ell+1,\infty}(\partial\Omega,\R^d)$ as proved in \cite[Theorem A.10]{DH}. In the sequel, we will sometimes identify, without further notice, the neighborhoods $V(\Omega,\varepsilon)$ with the corresponding open balls of $W^{\ell+1,\infty}(\Omega,\R^d)$ centered at $0$.

\begin{de}
We say that a property $(P)$ is generic in $\D^d_\ell$ if, for every $\Omega\in\D^d_\ell$, the set of
domains of $\D^d_\ell(\Omega)$ on which Property $(P)$ holds true is residual i.e., contains a countable intersection of open and dense subsets of $\D^d_\ell(\Omega)$.
\end{de}


\subsection{Spectrum of the Stokes operator with Dirichlet boundary conditions}\label{sec:spectrum}
The presentation here is inspired by \cite[Chapter II]{Foias:2001cr}, \cite[Chapter 5]{simon-cours} and \cite{Zuazua1}. 
Let $\Omega$ be a domain of $\R^d$, $d\geq 1$ with $C^1$ boundary. We use $\mathcal{D}(\Omega)$ and $\mathcal{D}'(\Omega)$ to denote respectively the space of $C^\infty$ functions with compact support in $\Omega$ and the space of distributions on $\Omega$. The duality bracket will be denoted by $\langle\cdot,\cdot\rangle_{\mathcal{D}'\times \mathcal{D}}$. 

Consider the following fundamental functional spaces for the Stokes system:
\begin{eqnarray*}
V(\Omega)&:=&\{v\in (H_0^1(\Omega))^d~\vert~\div~ v=0\},\\
H(\Omega)&:=&\{v\in (L^2(\Omega))^d~\vert~ \div~v=0~\textrm{ in }\Omega, ~v\cdot n=0~\textrm{ on }\pO\}.
\end{eqnarray*}The space $V(\Omega)$ is equipped with the scalar product of $(H^1_0(\Omega))^d$ defined by
\begin{equation}\label{hadamard}
\langle u,v\rangle_V:=\int_{\Omega}\nabla u\cdot\nabla v:=\sum_{i,j=1}^d\int_{\Omega}\frac{\partial u^i}{\partial x_j}\frac{\partial v^i}{\partial x_j}dx,
\end{equation}for $u:=(u^1,\cdots,u^d)$ and $v:=(v^1,\cdots,v^d)$ in $V(\Omega)$. The space $H(\Omega)$ is equipped with the scalar product of $(L^2(\Omega))^d$ which will be denoted by $\langle\cdot,\cdot\rangle_H$. Note that $V(\Omega)$ and $H(\Omega)$ are separable Hilbert spaces as they are closed sub-spaces of respectively $(H^1_0(\Omega))^d$ and $(L^2(\Omega))^d$. We use $L_0^2(\Omega)$ to denote the subspace of 
$L^2(\Omega)$ made of the functions $f$ with zero mean, i.e. $\displaystyle\int_{\Omega}f(x)dx=0$. 
 \begin{rem}
If we define $\mathcal{V}(\Omega):=\{v\in (\mathcal{D}(\Omega))^d~\vert~ \div~v=0\}$, one can show that $V(\Omega)$ is the closure of $\mathcal{V}$ in $(H^1(\Omega))^d$ (cf. \cite[Theorem 1.6, page 18]{temam1}), and $H(\Omega)$ is the closure of $\mathcal{V}(\Omega)$ in $(L^2(\Omega))^d$ (cf. \cite[Theorem 1.4, page 15]{temam1} and \cite[Theorem 2.8, page 30]{girault-raviart}).
\end{rem}


Let $f\in H$. Since the linear form on $V(\Omega)$ defined by $\displaystyle \ell(v):=\int_{\Omega} f\cdot v$, for $v\in V(\Omega)$, is continuous, by Lax-Milgram's Theorem, there exists a unique $w\in V(\Omega)$ such that, for every $v\in V(\Omega)$, 
$\langle w,v\rangle_V=\ell(v)$ and $\Vert w\Vert_V\leq C(\Omega)\Vert f\Vert_H$, where the constant $C(\Omega)$ only depends on $\Omega$. Therefore, the linear operator $L$ from $H(\Omega)$ to $H(\Omega)$ defined by $Lf=w$ is continuous. As $L$ is also self-adjoint and compact (cf. \cite[Theorem IX.16, page 169]{brezis}), then, by classical spectral theory (cf. \cite[Theorem VI.11, page 97]{brezis}), the operator $L$ admits a non-increasing sequence of positive eigenvalues $(\mu_i)_{i\in\N}$ tending to $0$, and the corresponding eigenfunctions $(\phi_i)_{i\in \N}$ can be taken so that they constitute an orthonormal basis of $H$. In particular, one has
\begin{equation}\label{eq:stokes3}
\int_\Omega\nabla \phi_i\cdot\nabla v=\lambda_i\int_\Omega\phi_i\cdot v,\qquad \forall ~v\in V,
\end{equation}
where $\displaystyle \lambda_i:=\frac{1}{\mu_i}$. Note that $(\lambda_i)_{i\in\N}$ is a non-decreasing sequence tending to infinity.  We use $m(\lambda)$
to denote the multiplicity of the eigenvalue $\lambda$.

For $v\in\mathcal{V}$, Eq. (\ref{eq:stokes3}) is equivalent to 
\begin{equation}\label{eq:stokes4}
\langle \Delta\phi_i+\lambda_i\phi_i,v\rangle_{\mathcal{D}'\times \mathcal{D}}=0.
\end{equation}

\begin{theo}[de Rham-Lions]\label{th:deRham}
Let $q\in(\mathcal{D}'(\Omega))^d$ such that
\begin{equation}\label{eq:deRham}
\langle q,v\rangle_{\mathcal{D}'\times \mathcal{D}}=0,\qquad \forall~ v\in\mathcal{V}.
\end{equation}Then, there exists $p\in \mathcal{D}'(\Omega)$ such that $q=\nabla p$.
As a consequence of Theorem \ref{th:deRham}, one deduces from Eq. (\ref{eq:stokes3}) that there exists $p_i\in \mathcal{D}'(\Omega)$ such that
\begin{equation}\label{eq:stokes5}
\Delta\phi_i+\lambda_i\phi_i=\nabla p_i.
\end{equation}
\end{theo}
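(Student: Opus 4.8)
The statement is the classical de Rham--Lions theorem, and the plan is to follow the constructive approach of \cite[Chapter I]{temam1}, which reduces it to the solvability of the divergence equation with compactly supported data. The key remark is that hypothesis \eqref{eq:deRham} is considerably stronger than the distributional curl-free condition $\partial_i q_j=\partial_j q_i$ (which it obviously implies, by testing against the divergence-free fields $v=\partial_j\psi\, e_i-\partial_i\psi\, e_j$): it forces $q$ to annihilate \emph{all} divergence-free test fields, not only those supported in a small ball. This extra global information is exactly what removes the cohomological obstruction one would otherwise meet when patching together local primitives, and in the proof it is absorbed into a global solvability statement for the operator $\div$ on the connected open set $\Omega$.

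First I would establish, on a ball (or cube) $B\Subset\R^d$, that for every $f\in\mathcal{D}(B)$ with $\int_B f=0$ there is $v\in(\mathcal{D}(B))^d$ with $\div\,v=f$, the map $f\mapsto v$ being linear and continuous for the topology of $\mathcal{D}(B)$; this is the elementary Bogovskii-type construction, carried out one coordinate at a time, the zero-mean condition being precisely what makes the last integration produce a compactly supported primitive. Then---and this is the step I expect to be the main obstacle---I would globalize: assuming first that $\Omega$ is connected (the general case follows by treating each connected component separately), I claim that
\[
\div\colon (\mathcal{D}(\Omega))^d\longrightarrow X:=\Bigl\{f\in\mathcal{D}(\Omega):\ \textstyle\int_\Omega f=0\Bigr\}
\]
is onto, with a right inverse that is continuous for the $\mathcal{D}$-topologies. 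Given $f\in X$, one covers its compact support by finitely many balls $B_1,\dots,B_N\Subset\Omega$ that can be linked by chains of consecutively overlapping balls, writes $f=\sum_j\theta_j f$ with a subordinate partition of unity, and then ``flushes'' the excess masses $\int\theta_j f$ along these chains using bump functions supported in the overlaps $B_j\cap B_{j+1}$; connectedness guarantees the chains exist, while $\int_\Omega f=0$ guarantees that the mass flushed out at the end of each chain vanishes, so that each corrected piece has zero mean and falls under the local result. The real work is the bookkeeping needed to keep the whole construction linear and $\mathcal{D}$-continuous.

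With global solvability in hand the conclusion is quick. By \eqref{eq:deRham} the linear form $v\mapsto\langle q,v\rangle_{\mathcal{D}'\times\mathcal{D}}$ vanishes on the kernel of $\div$ in $(\mathcal{D}(\Omega))^d$, hence factors through $\div$: there is a linear form $\ell_0$ on $X$ with $\langle q,v\rangle_{\mathcal{D}'\times\mathcal{D}}=-\ell_0(\div\,v)$, and composing with the continuous right inverse above shows that $\ell_0$ is a continuous linear form on $X$. Fixing $\chi\in\mathcal{D}(\Omega)$ with $\int_\Omega\chi=1$, I define $p\in\mathcal{D}'(\Omega)$ by $\langle p,f\rangle:=\ell_0\bigl(f-(\int_\Omega f)\chi\bigr)$, noting that $f-(\int_\Omega f)\chi\in X$. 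Then for every $v\in(\mathcal{D}(\Omega))^d$, since $\int_\Omega\div\,v=0$,
\[
\langle\nabla p,v\rangle_{\mathcal{D}'\times\mathcal{D}}=-\langle p,\div\,v\rangle=-\ell_0(\div\,v)=\langle q,v\rangle_{\mathcal{D}'\times\mathcal{D}},
\]
so $q=\nabla p$, as claimed.

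Finally, for the consequence stated in the theorem it suffices to apply the above with $q:=\Delta\phi_i+\lambda_i\phi_i\in(\mathcal{D}'(\Omega))^d$: Eq. \eqref{eq:stokes4} is exactly the assertion that this $q$ satisfies \eqref{eq:deRham}, whence one obtains $p_i\in\mathcal{D}'(\Omega)$ with $\Delta\phi_i+\lambda_i\phi_i=\nabla p_i$, which is \eqref{eq:stokes5}.
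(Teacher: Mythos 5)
Your argument is correct in outline, but note that the paper itself does not prove this theorem at all: it is stated as a known result, with the remark following it pointing to de Rham \cite{deRham}, Lions/Temam \cite[Proposition 1.1, page 14]{temam1}, and the constructive proof in \cite{simon-derham}. What you have written is essentially a self-contained version of that constructive route: local solvability of $\div v=f$ on a ball for zero-mean data (the compactly supported Poincar\'e lemma in top degree), globalization over a connected $\Omega$ by chaining overlapping balls and transporting the excess masses $\int\theta_jf$ through the overlaps, and then factoring the functional $v\mapsto\langle q,v\rangle$ through $\div$ to produce $p$ via a fixed $\chi$ with $\int\chi=1$. The logic is sound: well-definedness of $\ell_0$ is exactly the hypothesis \eqref{eq:deRham} applied to $v_1-v_2\in\mathcal{V}$, the sign bookkeeping in $\langle\nabla p,v\rangle=-\langle p,\div v\rangle$ comes out right, and the application to $q=\Delta\phi_i+\lambda_i\phi_i$ via \eqref{eq:stokes4} is exactly how the paper uses the theorem. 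The one place where you are explicitly deferring work is the continuity of the right inverse of $\div$ for the $\mathcal{D}$-topologies (for each compact $K\subset\Omega$ one must fix a finite cover and chains once and for all, so that $\mathcal{D}_K\cap X$ maps boundedly into some $(\mathcal{D}_{K'})^d$); this is standard but is genuinely the technical heart of the constructive proof, and if you wanted to avoid it you could instead invoke the closed-range/duality formulation as in \cite{temam1}. Your preliminary remark that \eqref{eq:deRham} is strictly stronger than the local curl-free condition, and that this is what kills the cohomological obstruction on non-simply-connected domains, is accurate and worth keeping.
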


\begin{rem}
Note that $p$ in Theorem \ref{th:deRham} is unique up to an additive constant. 
\end{rem}
\begin{rem}
Theorem \ref{th:deRham} is a consequence of a more general result due to de Rham (cf. \cite[Theorem 17', page 95 ]{deRham}). The version adopted in Theorem \ref{th:deRham} is due to Lions, also stated in \cite[Proposition 1.1, page 14]{temam1}. A constructive proof can be found in \cite{simon-derham}.
\end{rem}

\begin{rem}
There exists an equivalent presentation of the eigenvalue problem for the Stokes system based on the Stokes operator $T_S$, which is defined as the operator defined on $V \cap W^2(\Omega)$ by $T_S u \in H$ being the unique element satisfying
$$
\Delta u +T_S u=\nabla p,
$$
for some harmonic pressure field $p$, cf. \cite[Chapter II]{Foias:2001cr}. Then, one has $T_S=-\mathcal{P}\Delta$ where $\mathcal{P}$ is the Leray projector. One then proceeds by standard functional analysis arguments.
\end{rem}\smallskip

The following regularity result holds for $\phi_i$ and $p_i$ (cf. \cite[Section 2.6, page 38]{temam1}). 
\begin{theo}[Regularity]\label{th:regulariy}
If the domain $\Omega$ is of class $C^\ell$, for an integer $\ell\geq 2$, then, for $i\in \N$, $\phi_i\in H^\ell(\overline{\Omega})$ and $p_i\in H^{\ell-1}(\overline{\Omega})$. If $\Omega$ is of class $C^\infty$, then, for $i\in\N$, $\phi_i\in C^\infty(\overline{\Omega})$ and $p_i\in C^{\infty}(\overline{\Omega})$.
\end{theo}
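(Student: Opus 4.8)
The plan is to derive the claimed Sobolev and $C^\infty$ regularity from a bootstrap argument on the \emph{stationary} Stokes system, feeding into it the classical elliptic estimates for that system (of Cattabriga--Agmon--Douglis--Nirenberg type; cf. \cite[Section 2.6, page 38]{temam1}). The starting point is Eq.~\eqref{eq:stokes5}: once $p_i$ is normalized by $\int_\Omega p_i=0$, the pair $(\phi_i,p_i)$ solves
$$-\Delta\phi_i+\nabla p_i=\lambda_i\phi_i\ \text{ in }\Omega,\qquad \div\phi_i=0\ \text{ in }\Omega,\qquad \phi_i=0\ \text{ on }\pO;$$
by the uniqueness part of the variational theory it is \emph{the} weak solution of this problem with right-hand side $f:=\lambda_i\phi_i\in(L^2(\Omega))^d$.

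The regularity input I would invoke is the incremental statement: if $\Omega$ is of class $C^{m+2}$ for some integer $m\geq 0$ and $f\in(H^m(\Omega))^d$, then the weak solution $(u,p)\in(H^1_0(\Omega))^d\times L^2_0(\Omega)$ of the stationary Stokes problem with datum $f$ satisfies $u\in(H^{m+2}(\Omega))^d$ and $p\in H^{m+1}(\Omega)$, with the attendant continuity estimate. Granting this, I would prove by induction on $k$ that $\phi_i\in(H^k(\Omega))^d$ for every $1\leq k\leq\ell$. The case $k=1$ holds since $\phi_i\in V(\Omega)\subset(H^1_0(\Omega))^d$. For the inductive step, if $\phi_i\in(H^k(\Omega))^d$ with $1\leq k\leq\ell-1$, then $f=\lambda_i\phi_i\in(H^{k-1}(\Omega))^d$, and since $k+1\leq\ell$ the domain $\Omega$ is of class $C^{k+1}=C^{(k-1)+2}$, so the displayed statement with $m=k-1$ gives $\phi_i\in(H^{k+1}(\Omega))^d$; moreover, at the terminal step $k=\ell-1$ it also gives $p_i\in H^{\ell-1}(\Omega)$. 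This yields $\phi_i\in H^\ell(\overline\Omega)$ and $p_i\in H^{\ell-1}(\overline\Omega)$.

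For the $C^\infty$ case the same induction runs with no upper cutoff on $k$: $\Omega\in C^\infty$ gives $\phi_i\in(H^k(\Omega))^d$ and $p_i\in H^k(\Omega)$ for every $k\in\N$, hence $\phi_i\in C^\infty(\overline\Omega)$ and $p_i\in C^\infty(\overline\Omega)$ by the Sobolev embedding theorem.

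I do not anticipate a genuine obstacle here: the only non-elementary ingredient is the boundary regularity theorem for the stationary Stokes operator, which is classical, and the two points requiring mild care are (i) identifying the de Rham pressure $p_i$ --- a priori only a distribution, defined up to an additive constant --- with the genuine pressure field delivered by the Stokes regularity theory, which is handled by fixing $\int_\Omega p_i=0$; and (ii) the bookkeeping of boundary smoothness, i.e. checking that at the $k$-th step the hypothesis $\Omega\in C^{(k-1)+2}$ is met, which is precisely where the limited regularity of $\partial\Omega$ caps the gain at $H^\ell$.
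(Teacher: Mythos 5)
Your bootstrap is correct and is exactly the standard argument behind the result the paper simply cites from \cite[Section 2.6]{temam1} (Cattabriga--ADN regularity for the stationary Stokes problem applied to the datum $f=\lambda_i\phi_i$, with the pressure normalized in $L^2_0(\Omega)$). The induction bookkeeping on the boundary smoothness and the identification of the de Rham pressure are handled correctly, so there is nothing to add.
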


We now summarize some computational results related to the Stokes system. We start by providing several notions of ``normal'' derivatives used in this context. If $\phi=(\phi^i)_{1\leq i\leq d}$, the Jacobian matrix of $\phi$ defined as $\displaystyle (\frac{\partial\phi^i}{\partial x_j})_{1\leq i,j\leq d}$ will be denoted by $\nabla\phi$. We use $\n$ to denote the outward unit normal 
to $\partial \Omega$ and the superscript $^T$ used below denotes the transpose of a matrix. The corresponding normal derivative is given by \begin{equation}
\frac{\partial \phi}{\partial n}:= \nabla \phi\cdot n,
\end{equation}
and we also have 
\begin{equation}\label{der-nu}
\frac{\partial \phi}{\partial N}:= (\nabla \phi +\nabla \phi^T)\cdot \n.
\end{equation}
Finally, the conormal derivative $\frac{\partial \phi}{\partial \nu}$ on $\partial \Omega$ is defined as follows
\begin{equation}\label{conor0}
\frac{\partial \phi}{\partial \nu}:=\frac{\partial \phi}{\partial N}-p\n.
\end{equation}
Moreover, we will use $n_x$ or $n(x)$, with $x\in\partial\Omega$, to denote the value of the outward normal vector at point $x$.

\begin{de}\label{hada}
For $a$  and  $b$ are $C^1$ functions defined on an open neighborhood of $\Omega$, we use $\nabla a : \nabla b$ to denote the following function
$$
\nabla a : \nabla b=\frac{1}{2}(\nabla a+\nabla^Ta)\cdot(\nabla b+\nabla^Tb),
$$
where $\cdot$ is defined in Eq. \eqref{hadamard} as the Hadamard product of two matrices.
\end{de}

We recall the following Green's formulas (cf. \cite[page 53]{lady}).
\begin{lemma}\label{green-coro}
Assume that $d=3$.  The  formulas
\begin{equation}
\int_{\partial\Omega} a\cdot\frac{\partial b}{\partial\nu}=\int_{\Omega}\nabla a:\nabla b+\int_{\Omega}a\cdot(\Delta b-\nabla q),
\end{equation} and  
\begin{equation}
\int_{\pO}a\cdot\frac{\partial b}{\partial \nu}-\int_{\pO}b\cdot\frac{\partial a}{\partial \nu}=\int_\Omega a\cdot ((\Delta+\eta) b-\nabla q)-\int_\Omega b\cdot ((\Delta+\eta) a-\nabla p),
\end{equation}
hold for every $\eta\in\R$ and for every pairs $(a,p)$  and  $(b,q)$ of $C^1$ functions defined on an open neighborhood of $\Omega$, taking values in $\R^3\times\R$ and satisfying $\div~a=\div~b=0$.
\end{lemma}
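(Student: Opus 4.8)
The plan is to obtain the first identity by a single integration by parts against the Stokes stress tensor, and then to deduce the second one by antisymmetrization. Introduce the strain tensor $e(v):=\tfrac12(\nabla v+\nabla v^T)$ and the stress tensor $\sigma(v,s):=2e(v)-s\,\id$. By the definitions \eqref{der-nu} and \eqref{conor0} one has, on $\pO$, $\sigma(b,q)\cdot n=\tfrac{\partial b}{\partial N}-qn=\tfrac{\partial b}{\partial\nu}$; and since $\div b=0$ a direct computation gives $\div(2e(b))=\Delta b+\nabla(\div b)=\Delta b$, hence $\div\,\sigma(b,q)=\Delta b-\nabla q$.

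First I would integrate the pointwise identity $\div\big(\sigma(b,q)a\big)=a\cdot\div\,\sigma(b,q)+\sigma(b,q):\nabla a$ over $\Omega$ and apply the divergence theorem to the left-hand side; using the symmetry of $\sigma(b,q)$ this yields
\[
\int_{\pO} a\cdot\frac{\partial b}{\partial\nu}=\int_\Omega a\cdot(\Delta b-\nabla q)+\int_\Omega \sigma(b,q):\nabla a .
\]
In the last integral, $\sigma(b,q):\nabla a=2\,e(b):\nabla a-q\,\div a$; the term $q\,\div a$ vanishes because $\div a=0$, and since $e(b)$ is symmetric one has $2\,e(b):\nabla a=2\,e(b):e(a)=\nabla a:\nabla b$ in the notation of Definition \ref{hada}. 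This is exactly the first formula. If $a,b,q$ are only $C^1$, one first argues for smooth data and then passes to the limit, the interior Laplacian of $b$ being handled via elliptic regularity; in all the applications below the relevant functions are in fact more regular by Theorem \ref{th:regulariy}.

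For the second formula, I would write the first identity twice, once as stated and once with the pairs $(a,p)$ and $(b,q)$ interchanged, use that $\nabla a:\nabla b=\nabla b:\nabla a$, and subtract: the volume terms $\int_\Omega\nabla a:\nabla b$ cancel and one obtains
\[
\int_{\pO}a\cdot\frac{\partial b}{\partial\nu}-\int_{\pO}b\cdot\frac{\partial a}{\partial\nu}=\int_\Omega a\cdot(\Delta b-\nabla q)-\int_\Omega b\cdot(\Delta a-\nabla p).
\]
Finally, for any $\eta\in\R$ the extra contributions $\int_\Omega \eta\,a\cdot b$ and $\int_\Omega \eta\,b\cdot a$ are equal and hence cancel, so $\Delta$ may be replaced by $\Delta+\eta$ on both sides, which is the stated identity.

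The computation is essentially routine and I do not expect any genuine obstacle; the only points requiring care are keeping the numerical prefactors consistent with the paper's conventions for $\tfrac{\partial b}{\partial N}$, $\tfrac{\partial b}{\partial\nu}$ and for the product $\nabla a:\nabla b$ of Definition \ref{hada}, and the (standard) justification of the divergence theorem at the regularity at which the lemma is stated.
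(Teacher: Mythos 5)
Your proof is correct: the integration by parts against the symmetric stress tensor $\sigma(b,q)=\nabla b+\nabla b^T-q\,\id$, the identification $\sigma(b,q)n=\tfrac{\partial b}{\partial\nu}$, the cancellation of $q\,\div a$, the identity $2e(b):\nabla a=\nabla a:\nabla b$ in the sense of Definition \ref{hada}, and the antisymmetrization with the harmless $\eta\int_\Omega a\cdot b$ terms all check out. The paper offers no proof of this lemma (it simply cites Ladyzhenskaya, p.~53), and your argument is precisely the standard derivation that reference contains, including the sensible remark about the regularity needed to interpret $\Delta b$.
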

\begin{rem} One has noticed that the dot ``$\cdot$" has been used for scalar product a well as for the Hadamard product in Eq. (\ref{hadamard}). We will make that abuse of notation throughout the paper.
\end{rem}
We also need the following obvious result. 
\begin{lemma}\label{F12}
 Let $d\geq 2$ be an integer, $a\in (C^1(\overline{\Omega}))^d\cap (H_0^1(\Omega))^d$ and $\Omega\subset\R^d$ be an open domain of class $C^1$. Then, 
 \begin{equation}\label{F1} 
 \nabla a=\frac{\partial a}{\partial n}n^T,\qquad\textrm{on }\pO.
 \end{equation}
\end{lemma}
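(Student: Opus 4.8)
The statement is essentially a tangential differentiation identity, so the plan is to work pointwise on $\partial\Omega$ and show that, for each scalar component $a^i$ of $a$, the gradient $\nabla a^i$ has vanishing tangential part along $\partial\Omega$. First I would record the elementary fact that $a\equiv 0$ on $\partial\Omega$ as a continuous function: since $a\in (C^1(\overline\Omega))^d$, its boundary trace coincides with the pointwise restriction $a|_{\partial\Omega}$, while $a\in (H_0^1(\Omega))^d$ forces that trace to be zero. (This identification of the $H_0^1$-trace with the pointwise boundary value on $C^1(\overline\Omega)$ is standard — approximate $a$ in $C^1(\overline\Omega)$, or use that the trace operator extends the restriction map on $C(\overline\Omega)$.)

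Next, fix an arbitrary $x_0\in\partial\Omega$ and an arbitrary tangent vector $\tau\in T_{x_0}\partial\Omega$. Since $\partial\Omega$ is of class $C^1$, there is a $C^1$ curve $\gamma:(-\delta,\delta)\to\partial\Omega$ with $\gamma(0)=x_0$ and $\gamma'(0)=\tau$. Differentiating the identity $a^i(\gamma(t))\equiv 0$ at $t=0$ (the chain rule being valid because $a^i\in C^1(\overline\Omega)$) yields $\nabla a^i(x_0)\cdot\tau=0$. As $\tau$ ranges over the $(d-1)$-dimensional tangent space $T_{x_0}\partial\Omega$, we conclude that $\nabla a^i(x_0)$ is orthogonal to $T_{x_0}\partial\Omega$, hence colinear with the outward unit normal $n(x_0)$.

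It then remains to reassemble these component-wise facts into the matrix identity. Decomposing along the orthonormal splitting $\R^d=T_{x_0}\partial\Omega\oplus\R\,n(x_0)$ gives $\nabla a^i(x_0)=\bigl(\nabla a^i(x_0)\cdot n(x_0)\bigr)\,n(x_0)=\dfrac{\partial a^i}{\partial n}(x_0)\,n(x_0)$ for each $i=1,\dots,d$. Reading off the $i$-th row of the Jacobian matrix $\nabla a$ as exactly $\nabla a^i$, this says that the $(i,j)$ entry of $\nabla a(x_0)$ equals $\dfrac{\partial a^i}{\partial n}(x_0)\,n_j(x_0)$, i.e. $\nabla a(x_0)=\dfrac{\partial a}{\partial n}(x_0)\,n(x_0)^T$. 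Since $x_0\in\partial\Omega$ was arbitrary, this is precisely \eqref{F1}.

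\textbf{Expected main obstacle.} There is essentially no obstacle — the result is, as the paper says, obvious. The only point meriting a line of justification is the passage from $a\in (C^1(\overline\Omega))^d\cap (H_0^1(\Omega))^d$ to $a=0$ pointwise on $\partial\Omega$; everything after that is a one-line tangential differentiation together with the orthogonal decomposition of $\R^d$ at each boundary point.
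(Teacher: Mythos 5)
Your proof is correct. The paper actually states Lemma \ref{F12} with no proof at all (it is introduced as an ``obvious result''), and your argument — trace zero from $a\in (C^1(\overline\Omega))^d\cap(H_0^1(\Omega))^d$, vanishing of all tangential derivatives of each component $a^i$ along $\partial\Omega$, hence $\nabla a^i$ colinear with $n$, then reassembly into the rank-one matrix identity $\nabla a=\frac{\partial a}{\partial n}n^T$ — is exactly the standard justification the authors are implicitly invoking.
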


\subsection{Regularity of the eigenvalues and eigenfunctions with respect to the shape perturbation parameter}\label{sec:regularity-spectrum}
In this section, $\ell-1\geq d\geq 2$. Let $\Omega$ be a domain in $\D^d_\ell$. We consider perturbations $u$ in the space
$W^{\ell+1,\infty}(\mathbb{R}^d,\mathbb{R}^d)$ with its standard norm $\Vert\cdot\Vert_{\ell+1,\infty}
$. To study perturbations of eigenvalues, we adopt the strategy described in \cite[Chapter 7, Section 6.5, pages 423-425]{Ka} and also follow the developments of \cite[Section 4, pages 1541-1548]{Conca}.

Recall that the eigenvalue problem associated to the Stokes system on $\Omega$ with Dirichlet boundary condition is given by 
\begin{equation*}\label{eq:stokes}
(SD_\Omega)\qquad\left\{\begin{array}{cccc}
\displaystyle -\Delta \phi+\nabla p&=&\lambda \phi&\textrm{ in }\Omega,\\
\displaystyle\div \phi&=&0&\textrm{ in }\Omega,\\
\displaystyle \phi &=&0&\textrm{ on }\pO,\\
\displaystyle\int_{\Omega}p(x)dx&=&0.
\end{array}\right.
\end{equation*}


Consider any smooth map $t\rightarrow T_t$ defined for $t$ small enough so that $T_0=\id$ and $T_t$ is a diffeomorphism from  $\Omega$ onto its image 
$\Omega_t:=T_t(\Omega)$.  Let  $(\phi_t,p_t,\lambda_t)$  be the solution of 
\begin{equation*}\label{eq:stokest}
(SD_{\Omega_t})\qquad\left\{\begin{array}{cccc}
\displaystyle -\Delta \phi_t +\nabla p_t&=&\lambda_t \phi_t&\textrm{ in }\Omega_t,\\
\displaystyle\div ~\phi_t&=&0&\textrm{ in }\Omega_t,\\
\displaystyle \phi_t&=&0&\textrm{ on }\pO_t,\\
\displaystyle\int_{\Omega_t}p_t(y^t)dy^t&=&0.
\end{array}\right.
\end{equation*}
By Theorem \ref{th:regulariy},  $\phi_t\in (H^\ell(\overline{\Omega_t}))^d\cap (H_0^1(\Omega_t))^d$ and $p_t\in H^{\ell-1}(\overline{\Omega_t})\cap L_0^2(\Omega_t)$.

We next turn to the variational formulation of the above eigenvalue problem. 

For every $(w,q) \in (H_0^1(\Omega_t))^d \times L_0^2(\Omega_t)$, it comes
$$
\int_{\Omega_t} \nabla \phi_t : \nabla w ~dy^t-\int_{\Omega_t}p_t~\div(w)~dy^t+\int_{\Omega_t} \tr(\nabla \phi_t)q ~dy^t=\int_{\Omega_t}\lambda_t ~\phi_t w ~dy^t. 
$$

We set $\phi^t:=\phi_t\circ T_t \in (H^\ell(\overline{\Omega}))^d\cap (H_0^1(\Omega))^d$ and $p^t:=p_t\circ T_t\in H^{\ell-1}(\overline{\Omega})\cap L_0^2(\Omega)$. Define  the change of variables $y^t:=T_t(y)$ and set $z(y):=w(y^t)$ and $r(y):=q(y^t)$. Then, one  shows that 
$(\phi^t,p^t)$ satisfies the following identity 
\begin{equation}\label{eq:weak}
\int_\Omega A(t) \nabla \phi^t:\nabla z -\int_\Omega p^t \tr(B(t) \nabla z) \gamma(t)
+\int_\Omega \tr(B(t) \nabla \phi^t)r \gamma(t)=\int_{\Omega }\lambda_t ~\phi^t z \gamma(t),
\end{equation}
where $\gamma(t)=\det(DT_t) $, $A(t)=\gamma(t) (DT_t^{-1})^* (DT_t^{-1})$ and $B(t)=(DT_t^{-1})^{*}$. For fixed $t$ (small enough), all these functions defined on $\overline{\Omega}$ are of class $C^\ell$.

It follows that $( \phi^t,p^t)$ satisfies 
\begin{equation}\label{domaine_fixe0}
\left\{
\begin{array}{rlll}
-\textrm{div}(A(t)\nabla  \phi^t)+\textrm{div}(p^t \gamma(t) B(t)^*)&=&\lambda_t ~ \phi^t \gamma(t)& \text{in ~} \Omega,\\
\textrm{Tr}(B(t)\nabla  \phi^t)&=&0& \text{in ~} \Omega,\\
 \phi^t&=&0 & \text{on ~}\partial  \Omega,\\
\displaystyle\int_{\Omega}p^t(y)\gamma(t) dy&=&0.
\end{array}
\right.
\end{equation}
Let $L^2_{T_t}$ be the Hilbert space equipped with the scalar product
\begin{equation}
\langle \phi,\psi\rangle_{T_t}=\int_{\Omega} \phi(x)\psi(x) \gamma(t)~dx, 
\end{equation}
and define $\displaystyle L^2_{0,T_t}:=\left\{ v \in L^2:\int_{\Omega} v(x) \gamma(t)dx=0\right\}$.
We consider $\mathcal{C}(t)$ and $\mathcal{D}(t)$ the two operators on  $(H_0^1(\Omega))^d$ given by 
\begin{equation}\label{op-C}
\mathcal{C}(t) v =-\displaystyle\frac{1}{\gamma(t)} \div(A(t) \nabla v),
\end{equation}
and 
\begin{equation}\label{op-L}
\mathcal{D}(t)v=-\textrm{Tr}\left((DT_t^{-1})^*\nabla v\right).
\end{equation}
For fixed $t$ (small enough), the operators $\mathcal{C}(t)$ and $\mathcal{D}(t)$ have respectively coefficients of class $C^{\ell-1}$ and of class $C^{\ell}$.
The following result holds true.
\begin{theo}(cf. \cite[Lemma 4.2]{Conca})
\noindent
\begin{enumerate}
\item The operator $\mathcal{C}(t)$ is self-adjoint with respect to $\langle\cdot,\cdot\rangle_{T_t}$ and $\mathcal{C}(t)^{-1}$ is coercive, i.e., there exists $C>0$  such that, for every
$g\in H^{-1}(\Omega)$, one has 
$\langle g,\mathcal{C}(t)^{-1} g \rangle \ge C \parallel g \parallel_{H^{-1}}$.
\item The range of $\mathcal{D}(t)$ is closed and the adjoint $\mathcal{D}(t)^*$ of $\mathcal{D}(t)$ with respect to $\langle\cdot,\cdot\rangle_{T_t}$ is given by
\begin{equation}
\mathcal{D}^* q(t)= \displaystyle\frac{1}{\gamma(t)}\div(q \gamma(t)).
\end{equation}
Moreover, the null space of $\mathcal{D}(t)$ is made of constant functions on $\Omega$ and its range is equal to $L^2_{0,T_t}(\Omega)$.
\end{enumerate}
\end{theo}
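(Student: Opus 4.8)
The plan is to treat the two items separately, in each case transferring known facts about the constant-coefficient operators $-\Delta$ and $\div$ to the variable-coefficient operators $\mathcal{C}(t)$ and $\mathcal{D}(t)$. Throughout I use that $T_0=\id$, so that $A(0)=B(0)=\id$ and $\gamma(0)\equiv 1$, and that $t\mapsto(A(t),B(t),\gamma(t))$ is continuous with values in $C^0(\overline\Omega)$; hence, for $|t|$ small, $A(t)$ is a symmetric matrix field with $A(t)\geq\frac12\,\id$ and $\gamma(t)\geq\frac12$ uniformly on $\overline\Omega$. This uniform (in $t$) ellipticity is what makes every constant below independent of $t$ for $|t|$ small, a point worth keeping explicit since the uniformity is used later when differentiating in $t$.

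\emph{Item 1.} First I would prove self-adjointness: for $u,v\in(H_0^1(\Omega))^d$, Green's formula (Lemma~\ref{green-coro}, or plain integration by parts, the boundary term vanishing by the Dirichlet condition) gives
\begin{equation*}
\langle\mathcal{C}(t)u,v\rangle_{T_t}=\int_\Omega -\div\!\big(A(t)\nabla u\big)\cdot v=\int_\Omega A(t)\nabla u:\nabla v,
\end{equation*}
which is symmetric in $u$ and $v$ because $A(t)=A(t)^{*}$. Next I would consider the bilinear form $a_t(u,v):=\int_\Omega A(t)\nabla u:\nabla v$ on $(H_0^1(\Omega))^d$; it is continuous and, by the ellipticity bound on $A(t)$ together with Poincar\'e's inequality, coercive, with constants independent of $t$. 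Lax--Milgram then shows that $\mathcal{C}(t)\colon(H_0^1(\Omega))^d\to(H^{-1}(\Omega))^d$ is an isomorphism. Finally, given $g\in(H^{-1}(\Omega))^d$, set $v:=\mathcal{C}(t)^{-1}g$, so that $a_t(v,w)=\langle g,w\rangle$ for every $w$; choosing $w=v$ yields $\langle g,\mathcal{C}(t)^{-1}g\rangle=a_t(v,v)\geq c\,\|v\|_{H_0^1}^2$, while $\|g\|_{H^{-1}}=\sup_{w\neq 0}a_t(v,w)/\|w\|_{H_0^1}\leq C\,\|v\|_{H_0^1}$; combining the two gives the coercivity estimate $\langle g,\mathcal{C}(t)^{-1}g\rangle\geq C'\|g\|_{H^{-1}}^2$.

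\emph{Item 2.} The computational core is a Piola-type identity: writing $C(t):=\gamma(t)\,(DT_t)^{-1}$, so that $C(t)$ is (up to a transpose) the cofactor matrix of $DT_t$, the classical fact that the rows of a cofactor matrix are divergence free gives, for any vector field $w$, $\div\!\big(C(t)^{*}w\big)=\gamma(t)\,\tr\!\big(B(t)\nabla w\big)$. Plugging this into $\langle\mathcal{D}(t)w,q\rangle_{T_t}=-\int_\Omega\tr(B(t)\nabla w)\,q\,\gamma(t)$ and integrating by parts (the boundary term vanishing for $w\in(H_0^1(\Omega))^d$) moves all derivatives onto $q$ and produces the stated formula for $\mathcal{D}(t)^{*}$. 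For the structural statements I would transport to $\Omega_t$ via $w\mapsto w\circ T_t^{-1}$: the chain rule shows that $\tr(B(t)\nabla w)=0$ on $\Omega$ is equivalent to $\div_{y^t}(w\circ T_t^{-1})=0$ on $\Omega_t$, so under this correspondence $\mathcal{D}(t)^{*}$ becomes (a scalar multiple of) the gradient on $\Omega_t$, whose null space is the constants, and $\mathrm{Ran}\,\mathcal{D}(t)$ becomes $\div\big((H_0^1(\Omega_t))^d\big)$. That $\div\colon(H_0^1(\Omega_t))^d\to L^2_0(\Omega_t)$ is onto and admits a bounded right inverse for a $C^1$ (in fact Lipschitz) domain is classical --- the Ne\v{c}as/Bogovskii inequality, cf.\ \cite{girault-raviart,temam1}; equivalently, closedness of the range follows from the de Rham--Lions Theorem~\ref{th:deRham} together with the closed range theorem. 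Transporting back, the weight $\gamma(t)$ only replaces ``mean zero'' by ``$\gamma(t)$-weighted mean zero'', giving $\mathrm{Ran}\,\mathcal{D}(t)=L^2_{0,T_t}(\Omega)$.

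The parts involving $\mathcal{C}(t)$ are routine. The obstacles I anticipate are in item 2: getting the transpose/index bookkeeping in the Piola identity exactly right so that $\mathcal{D}(t)^{*}$ comes out precisely as stated, and --- more substantively --- checking that the right inverse of $\div$ (hence the inf--sup constant for $\mathcal{D}(t)$) can be taken uniform for $|t|$ small. The latter follows by composing the fixed Bogovskii operator on $\Omega$ with the change of variables $T_t$, whose $C^\ell$-norms are controlled, but it deserves to be spelled out since this uniformity is exactly what is invoked in the later perturbation arguments.
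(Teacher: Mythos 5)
The paper does not prove this theorem: it is quoted verbatim from \cite[Lemma 4.2]{Conca}, so there is no internal proof to compare yours against. Judged on its own, your argument is correct and uses exactly the standard ingredients one would expect: symmetry of $A(t)$ plus Lax--Milgram and Poincar\'e for item~1, and for item~2 the Piola identity $\gamma(t)\tr(B(t)\nabla w)=\div\big(\gamma(t)B(t)w\big)$ (i.e.\ the divergence-freeness of the cofactor matrix of $DT_t$) followed by integration by parts, together with the Ne\v{c}as--Bogovskii surjectivity of $\div\colon (H_0^1(\Omega_t))^d\to L^2_0(\Omega_t)$ transported back through $T_t$. One genuinely useful by-product of your computation is that it silently corrects three slips in the statement as printed: the coercivity bound should be homogeneous, $\langle g,\mathcal{C}(t)^{-1}g\rangle\geq C\|g\|_{H^{-1}}^2$; the adjoint formula should carry the factor $B(t)^*$, i.e.\ $\mathcal{D}(t)^*q=\frac{1}{\gamma(t)}\div\big(q\,\gamma(t)B(t)^*\big)=B(t)^*\nabla q$, consistent with the term $\div(p^t\gamma(t)B(t)^*)$ appearing in \eqref{domaine_fixe0}; and ``null space of $\mathcal{D}(t)$'' should read ``null space of $\mathcal{D}(t)^*$'' (the null space of $\mathcal{D}(t)$ itself is the space of transported divergence-free fields, not the constants). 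Your proof establishes these corrected statements, which are the ones actually used in the sequel. The only point I would ask you to write out fully is the transpose bookkeeping in the Piola step, which you yourself flag; as sketched it is consistent, and the uniformity in $t$ of the inf--sup constant, while not needed for the theorem at fixed small $t$, is indeed worth recording for the later analyticity argument.
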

Using the operators $\mathcal{C}(t)$ and $\mathcal{D}(t)$, we rewrite System \eqref{domaine_fixe0} as
\begin{equation}\label{domaine_fixe}
\left\{
\begin{array}{rlll}
\mathcal{C} (t)\phi^t+\mathcal{D}(t)^* p^t&=&\lambda_t \phi^t,&\textrm{~in ~} \Omega,\\
\mathcal{D} (t)\phi^t&=&0&\textrm{~in ~} \Omega,\\
\phi^t&=&0&\textrm{~on ~} \partial\Omega.\\
\end{array}
\right.
\end{equation}
Since the operator $\mathcal{C}(t) : (H_0^1(\Omega))^d \rightarrow (H^{-1}(\Omega))^d$ is an isomorphism, we can write 
\begin{equation}\label{fofo0}
\phi^t+\mathcal{C}(t) ^{-1}\mathcal{D}^*(t)p^t=\lambda_t \mathcal{C} (t)^{-1}\phi^t,
\end{equation}
and since $\mathcal{D}(t)\phi^t=0$, one has
$$
\mathcal{D}(t)\mathcal{C}(t) ^{-1}\mathcal{D}(t)^*p^t=\lambda_t \mathcal{D}(t)\mathcal{C} (t)^{-1}\phi^t.
$$
Thanks to the coercivity of $\mathcal{C} (t)^{-1}$, one concludes that $ \mathcal{D}(t)\mathcal{C} (t)^{-1}\mathcal{D}(t)^*$ is continuous and one-to-one  in the space orthogonal to the null space of $\mathcal{D}(t)^*$. It follows that 
$$
p^t=\lambda_t\left(\mathcal{D}(t)\mathcal{C} (t)^{-1}\mathcal{D}(t)^*\right)^{-1}\mathcal{D}(t)\mathcal{C} (t)^{-1}\phi^t.$$
Finally, reporting this expression of $p^t$ into \eqref{fofo0}, we derive that
\begin{equation}
\mathcal{C} (t) \phi^t+\lambda_t \mathcal{D}(t)^* \left(\mathcal{D}(t)\mathcal{C} (t)^{-1}\mathcal{D}(t)^*\right)^{-1}\mathcal{D}(t)\mathcal{C} (t)^{-1}\phi^t=\lambda_t \phi^t,
\end{equation}
or equivalently
\begin{equation}\label{good0}
\mathcal{C}(t)  \phi^t=\lambda_t \mathcal{A}(t) \phi^t,
\end{equation}
where we have set $$\mathcal{A}(t):= \Big[\textrm{Id}- \mathcal{D}(t)^* \left(\mathcal{D}(t)\mathcal{C}(t) ^{-1}\mathcal{D}(t)^*\right)^{-1}\mathcal{D}(t)\mathcal{C}(t) ^{-1}\Big].$$
Assume now that $t\mapsto T_t$ is analytic in a neighborhood of $t=0$. We are therefore dealing with the analytic perturbation problem described in \cite[Equations (6.42) page 424 and (6.47) page 426]{Ka}. Indeed,   (\ref{domaine_fixe}) shows that the $t$-dependent operators $\mathcal{A}(\cdot)$ and $\mathcal{C}(\cdot)$ are defined on a fixed (i.e., $t$-independent) Hilbert space. We also have that 
\begin{itemize}
\item
the operator $\mathcal{A}(t)$ is a closed operator with coefficients of class $C^{\ell-1}$.
\item
The operators  $t \mapsto  \mathcal{C}(t)$ and $ t \mapsto \mathcal{C}(t)^{-1}$ are analytic in a neighborhood of $t=0$, 
This shows that the mapping $t\mapsto\mathcal{A}(t)$  is analytic in a neighborhood of $t=0$. Furthermore, $\mathcal{A}(t)$ is bounded when $t$ is sufficiently small. 
\end{itemize}

We next prove the following theorem.

\begin{theo}\label{th:regularity}
Let $\Omega \subset \mathbb{R}^{d}$ be an open bounded domain of
class $C^{\ell}$. Assume that $\lambda$ is an eigenvalue of multiplicity $m(\lambda)=h$ of the Stokes system with Dirichlet boundary condition on the domain $\Omega$.
Then, there exist $h$ real-valued continuous
functions, $u \mapsto \lambda_{i}(u)$
defined in a neighborhood $V$ of $0$ in $W^{\ell+1,\infty}(\Omega,\mathbb{R}^{d})$ such that the
following properties hold,
\begin{itemize}
\item $\lambda_{i}(0)=\lambda$, for $i=1,...,h$;
\item for every open interval $I \subset \mathbb{R}$, such that the
  intersection of $I$ with the set of eigenvalues of $(SD_\Omega)$ contains only $\lambda$, there exists a
  neighborhood $V_{I} \subset V$ such that, for every  $u \in U_{I}$,
  there exist exactly $h$ eigenvalues counting with multiplicity,
  $\lambda_{i}(u)$, $1 \leq i \leq h$, of
  $(SD_{(\textrm{\emph{Id}}+u)\Omega})$ contained in $I$;
\item for every $u \in W^{\ell+1,\infty}(\Omega,\R^d)$ and $1
  \leq i \leq h$, consider the map
$$\Psi_i:\quad\begin{array}{ccccccc}
  J & \rightarrow & \R & \times &(H^{\ell}(\Omega)\cap H_0^1(\Omega))^d&\times &(H^{\ell-1}\cap L_0^2(\Omega))
\\
t & \mapsto & (~Ê\lambda_{i}^t(u), & & \phi_{i}^t(u),&&p_i^t(u)~Ê)
\end{array}$$
with $J\subset \R$ an open interval containing $0$, for $1
  \leq i \leq h$, $\phi_i^t(u):=\phi_{t,i}(u)\circ (\id+tu)$ and 
$p_i^t(u):=p_{t,i}(u)\circ(\id+tu)$, where $\phi_{t,i}(u)$ and $p_{t,i}(u)$ are respectively  eigenfunction and  eigenpressure of $(SD_{\Omega+tu})$. Then, for $1\leq i \leq h$, $\Psi_i$ is analytic in a neighborhood of $t=0$. Moreover,  the family $(\phi_{t,1}(u),\dots,\phi_{t,h}(u))$ is orthonormal in $H^1_0(\Omega+tu)$. 
\end{itemize}
\end{theo}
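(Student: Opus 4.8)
The plan is to reduce Theorem \ref{th:regularity} to the standard Kato theory of analytic perturbation of discrete spectra, applied to the family of operators constructed just above in Eq. \eqref{good0}. Recall that we have rewritten the eigenvalue problem $(SD_{\Omega+tu})$, after pulling back to the fixed domain $\Omega$ via the change of variables $y^t = (\id+tu)(y)$, as the generalized eigenvalue problem $\mathcal{C}(t)\phi^t = \lambda_t \mathcal{A}(t)\phi^t$ on the fixed Hilbert space $(H_0^1(\Omega))^d$, with $\mathcal{C}(t)$ and $\mathcal{A}(t)$ depending analytically on $t$ in a neighborhood of $0$ (this was established in the bulleted list preceding the statement). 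Since $\mathcal{C}(t)$ is an isomorphism from $(H_0^1(\Omega))^d$ onto $(H^{-1}(\Omega))^d$ and $\mathcal{A}(t)$ is bounded, the problem is equivalent to the eigenvalue problem for the bounded operator $\mathcal{T}(t) := \mathcal{C}(t)^{-1}\mathcal{A}(t)$ (or, symmetrizing, for $\mathcal{C}(t)^{-1/2}\mathcal{A}(t)\mathcal{C}(t)^{-1/2}$ if one prefers a self-adjoint formulation with respect to $\langle\cdot,\cdot\rangle_{T_t}$), with $\mu_t = 1/\lambda_t$. The map $t\mapsto\mathcal{T}(t)$ is an analytic family of bounded operators in the sense of Kato \cite[Chapter 7]{Ka}, and $\mu = 1/\lambda$ is an isolated eigenvalue of $\mathcal{T}(0)$ of finite multiplicity $h$.

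First I would invoke the analytic perturbation theory of \cite[Chapter 7, \S6.5]{Ka}: for $t$ in a (complex) neighborhood of $0$, the total spectral projection $P(t)$ associated to the group of $h$ eigenvalues of $\mathcal{T}(t)$ near $\mu$ is analytic in $t$, its range has constant dimension $h$, and the $h$ eigenvalues (with multiplicity) can be organized into analytic functions; moreover one may choose a basis of $\mathrm{Range}\,P(t)$ depending analytically on $t$. This gives, for real $t$, the $h$ analytic branches $t\mapsto\lambda_i^t(u) = 1/\mu_i^t(u)$ and analytic eigenfunction branches. To obtain the functions $u\mapsto\lambda_i(u)$ defined on a neighborhood $V$ of $0$ in $W^{\ell+1,\infty}(\Omega,\R^d)$ with the stated continuity and the counting property on intervals $I$, I would note that the whole construction of $\mathcal{C}(t), \mathcal{A}(t)$ goes through verbatim with $t$ replaced by a parameter $u$, and that $u\mapsto(\mathcal{C}(u),\mathcal{A}(u))$ is continuous (indeed real-analytic in the appropriate sense) from $W^{\ell+1,\infty}$ into the space of bounded operators, because $A(u), B(u), \gamma(u)$ depend continuously on the coefficients of $\id+u$. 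Upper semicontinuity of the spectrum and continuity of the spectral projection associated to a contour enclosing only $\lambda$ then yield the neighborhood $V_I$ and the fact that exactly $h$ eigenvalues, counted with multiplicity, lie in $I$; relabeling them as $\lambda_i(u)$ and using that they are continuous (each is, say, the $i$-th largest among the $h$) finishes the first two items. The last item follows by specializing to the one-parameter family $t\mapsto(\id+tu)$ for fixed $u$: the pulled-back eigenfunctions $\phi_i^t(u)=\phi_{t,i}(u)\circ(\id+tu)$ and eigenpressures $p_i^t(u)=p_{t,i}(u)\circ(\id+tu)$ are recovered from the analytic eigenvector branch of $\mathcal{T}(t)$ and from the explicit formula $p^t=\lambda_t(\mathcal{D}(t)\mathcal{C}(t)^{-1}\mathcal{D}(t)^*)^{-1}\mathcal{D}(t)\mathcal{C}(t)^{-1}\phi^t$ derived above, each ingredient of which is analytic in $t$; the regularity $\phi_i^t(u)\in (H^\ell(\overline\Omega)\cap H_0^1(\Omega))^d$ and $p_i^t(u)\in H^{\ell-1}(\overline\Omega)\cap L_0^2(\Omega)$ is furnished by Theorem \ref{th:regulariy} transported back to $\Omega$, and analyticity as a map into these Sobolev spaces follows from elliptic regularity estimates applied to the $t$-dependent system \eqref{domaine_fixe0} with analytic coefficients.

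The main obstacle I anticipate is not the abstract Kato machinery but the bookkeeping needed to upgrade ``analyticity as a map into $(H_0^1(\Omega))^d$ and $L_0^2(\Omega)$'' to ``analyticity into $(H^\ell(\overline\Omega)\cap H_0^1(\Omega))^d$ and $H^{\ell-1}(\overline\Omega)\cap L_0^2(\Omega)$'', and, relatedly, to checking that the normalization can be chosen so that $(\phi_{t,1}(u),\dots,\phi_{t,h}(u))$ is orthonormal in $H^1_0(\Omega+tu)$. For the orthonormality: starting from any analytic basis of $\mathrm{Range}\,P(t)$ one applies the Gram–Schmidt process with respect to the scalar product $\langle\cdot,\cdot\rangle_{V}$ transported to $\Omega$ — which amounts to the bilinear form $(\phi,\psi)\mapsto\int_\Omega A(t)\nabla\phi:\nabla\psi$ up to the change of variables — and one checks that Gram–Schmidt preserves analyticity because the Gram matrix is analytic and invertible near $t=0$; pushing forward by $(\id+tu)^{-1}$ then gives an $H^1_0(\Omega+tu)$-orthonormal family. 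For the Sobolev regularity and its analytic dependence, the point is to differentiate the system \eqref{domaine_fixe0} formally in $t$, obtain for the formal derivatives a Stokes-type system with the same principal part and right-hand sides built from the (already controlled) lower-order data, and invoke the a priori estimate $\|\phi\|_{H^\ell}+\|p\|_{H^{\ell-1}}\le C(\|f\|_{H^{\ell-2}}+\dots)$ for the Stokes operator on $\Omega$ together with a Cauchy-estimate / majorant-series argument to sum the Taylor series in the stronger norm; this is routine but the one place where the hypothesis $\ell-1\ge d$ and the $C^\ell$ regularity of $\partial\Omega$ are genuinely used, and it is the step I would write out with the most care.
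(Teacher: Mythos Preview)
Your proposal is correct and follows essentially the same route as the paper: reduce to the fixed-domain problem \eqref{good0}, invoke Kato's analytic perturbation theory \cite[Chapter 7, \S6.2, \S6.5]{Ka} for the operator family $(\mathcal{C}(t),\mathcal{A}(t))$, and then upgrade the analyticity from $(H_0^1)^d\times L_0^2$ to $(H^\ell\cap H_0^1)^d\times(H^{\ell-1}\cap L_0^2)$ by expanding the eigenfunction branch as a power series, deriving an inductive equation for the Taylor coefficients, and bounding them in the stronger norm via elliptic regularity so that the series converges in $H^\ell$. Your account is in fact slightly more explicit than the paper's on two points---the Gram--Schmidt argument for the orthonormality of $(\phi_{t,1}(u),\dots,\phi_{t,h}(u))$ and the continuity of $u\mapsto\lambda_i(u)$ via continuity of the spectral projection---which the paper leaves implicit.
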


\begin{rem}\label{danger} This result is actually the Stokes system's version of \cite[Theorem 3]{Zuazua2}. It is important to insist on the fact that at $t=0$
the orthonormal family
$$(\phi_{0,1}(u),\dots,\phi_{0,h}(u)),$$ of eigenfunctions associated to $\lb$ does in general depend on $u$ and continuity of the eigenfunctions with respect to the shape parameter $u$ does not hold true. Therefore, only \emph{directional} continuity and derivability with respect to $u$ can be achieved and this is the object of the next paragraph. 
\end{rem}

\begin{proof}[Proof of Theorem \ref{th:regularity}]

From \cite[Chapter 7, Sections 6.2 and 6.5]{Ka}, we deduce that $(\lambda_t, \phi^t,p^t)$ defined in \eqref{domaine_fixe} is analytic in a neighborhood $J_0$ of $t=0$.
Moreover, if $\lambda=\lambda(0)$ is an eigenvalue of multiplicity $h$, by applying a standard Lyapunov-Schmidt argument (cf. for instance \cite[Chapter 7]{Ka}, \cite{PH} or \cite{DH}), one gets Theorem \eqref{th:regularity} when $T_t=\id+tu$, with $u\in W^{\ell+1,\infty}(\Omega,\mathbb{R}^{d})$, except that the maps $\Psi_i$'s take values in 
$ \R \times (H_0^1(\Omega))^d\times L_0^2(\Omega)$. 

Since the boundary $\partial \Omega$ is of class $C^\ell$, we now want to get that the maps $t\mapsto (\phi_{i}^t(u),p_i^t(u))$'s take values in   $(H^{\ell}(\Omega)\cap H_0^1(\Omega))^d\times (H^{\ell-1}\cap L_0^2(\Omega))$ and analytic (in a neighborhood of $t=0$) in this space. It is sufficient to show the result for the map $t\mapsto \phi_{i}^t(u)$. To see that, consider the power series $\phi_{i}^t(u)=\sum_{k\geq 0}A_i^{(k)}(u)t^k$ which is convergent in $J_0$ as element of $H_0^1(\Omega))^d$. Develop all terms in Eq. \eqref{good0} as power series. A trivial induction shows that $A_i^{(k)}(u)\in H_0^1(\Omega))^d$ verifies the equation $\mathcal{C}(0)A_i^{(k)}(u) =\lambda \mathcal{A}(0)A_i^{(k)}(u)+R_k$ where $R_0=0$ and $R_k$ involves the terms $A_i^{(j)}(u)$, $0\leq j\leq k-1$. 
By a standard argument (cf. \cite[Theorem 9.19, page 243]{GilTru} and \cite[Theorem 5, page 323]{Evans}), one shows by induction that firstly $A_i^{(k)}(u)\in (H^{\ell}(\Omega))^d$ (since the operators $\mathcal{A}(t)$ and $\mathcal{C}(t)$ of class $C^{\ell-1}$) and secondly their $(H^{\ell}(\Omega))^d$-norms verify the appropriate upper bounds insuring that the map $t\mapsto \sum_{k\geq 0}A_i^{(k)}(u)t^k$ takes values in $(H^{\ell}(\Omega))^d$ and is a convergent power series in some neighborhood $J$ of $t=0$ contained in $J_0$. 

\end{proof}

\subsection{Shape differentiation}\label{shape0}
The subsequent developments follow a standard strategy (cf. \cite[Theorem 2.13]{Simon} for instance) but seem to be new for the Stokes system with Dirichlet boundary conditions. 
Fix $u\in W^{\ell+1,\infty}(\Omega,\mathbb{R}^{d})$ and set $T_t=\id+tu$ for $t$ small enough. In this section, we define and calculate the differential systems verified by the derivatives at $t=0$ of the eigenfunctions $(\phi_{i,t}(u),p_{i,t}(u))$ defined in Theorem \ref{th:regularity}. For that purpose, we must first consider the  derivatives of the maps $\phi_i^t(u)$ and $p_i^t(u)$. Since we perform such a computation along a fixed analytic branch $(\lambda_{i}^t(u),\phi_{t,i}(u),p_{t,i}(u))$, the index $i$ is omitted for the rest of the paragraph.


According to Theorem \ref{th:regularity}, $(\phi^t(u),p^t(u))$ is analytic in a neighborhood of $t=0$ and we set
\begin{equation}\label{def:totale}
\dot \phi(u):=\frac{d\phi^t(u)}{dt}\Big\vert_{t=0},\qquad
\dot p(u):=\frac{dp^t(u)}{dt}\Big\vert_{t=0}.
\end{equation}

We next proceed in a similar way as in \cite[Theorem 2.13]{Simon}. For every open set $\omega$ whose closure is included in $\Omega$, we consider $(\phi_t(u))\vert_{\omega}$ and $(p_t(u))\vert_{\omega}$, the restrictions
of $\phi_t(u)$ and $p_t(u)$ respectively to $\omega$. As compositions of two analytic maps in a neighborhood of $t=0$,  $(\phi_t(u))\vert_{\omega}$ and $(p_t(u))\vert_{\omega}$ are also analytic in a neighborhood of $t=0$ and their derivatives at $t=0$ are equal to 
$(\dot\phi(u)-\nabla\phi\cdot u)\vert_{\omega}$ and $(\dot p(u)-\nabla p\cdot u)\vert_{\omega}$  respectively. It is then easy to see that these formulas are actually valid over the whole $\Omega$ and thus, if we use $\phi'(u)$ and $p'(u)$ to denote the derivatives at $t=0$ of $\phi_t$ and $p_t$ respectively, one finally gets that
\begin{equation}\label{eq:shape}
\phi'(u)=\dot\phi(u)-\nabla\phi\cdot u, \quad
p'(u)=\dot p(u)-\nabla p\cdot u, \hbox{ in }\Omega.
\end{equation}
We refer to $\phi'(u)$ and $p'(u)$ as the shape derivatives  in the direction $u$ of the eigenfunction and eigenpressure $(\phi,p)$ associated to $\lambda$. 

According to Theorem \ref{th:regularity}, $\dot\phi(u)$ and $\dot p(u)$ belong to 
$(H^{\ell}(\Omega)\cap H_0^1(\Omega))^d$ and $H^{\ell-1}\cap L_0^2(\Omega)$ respectively. Therefore, they admit traces on $\partial\Omega$ in $H^{\ell-1/2}(\partial\Omega)$ and $H^{\ell-3/2}(\partial\Omega)$ respectively. Since $\ell-3/2>d/2$, these traces are continous functions on $\partial\Omega$, which are of course equal to zero. From Eqs. \eqref{eq:shape} and \eqref{domaine_fixe0}, we deduce at once, by using 
Eq. \eqref{F1} that $p'(u)+\div(u p)\in L^2_0(\Omega)$ and 
$$
\phi'(u)+(u\cdot n)\frac{\partial \phi}{\partial n}=0 \hbox{ on }\partial\Omega.
$$
It remains to determine the relations satisfied by the derivatives $\phi'(u)$ and 
$p'(u)$ inside the domain $\Omega$. For that end (see \cite[Proposition 4.6]{Conca} for more details), we take the derivative with respect to time evaluated at $t=0$ of Eq. \eqref{eq:weak}. For arbitrary test functions $(z,r)\in (\mathcal{D}(\Omega))^d\times \mathcal{D}(\Omega)$, we obtain
$$
\int_\Omega \Big(A'(0) \nabla \phi+ \nabla \dot \phi(u)\Big):\nabla z -
 \int_\Omega\Big( \dot{p}(u) \div(z)+p\tr(B'(0) \nabla z)+p\div(z)\gamma'(0)\Big)
 $$
 \begin{equation}\label{eq:timeder}
 +\int_\Omega \Big(\tr(B'(0)\nabla\phi)+\div(\dot\phi(u))+\div(\phi) \gamma'(0)\Big)r
=\int_{\Omega } \Big(\lambda'(u) \phi+\lambda \dot{\phi}(u)+\lambda\phi  \gamma'(0)\Big)z.
\end{equation}
To simplify the previous equation, we use the following relations between time derivatives and shape derivatives,
$$
\gamma'(0)=\div(u),\quad A'(0)=\div(u)\id-(\nabla u+ \nabla^Tu)\hbox{ and }B'(0)=-\nabla^Tu.
$$
We first use the boundary conditions for $\phi$ and notice that the term multiplied by $\gamma'(0)$ in the integrand of Eq. \eqref{eq:timeder} is the PDE satisfied by $\phi$. Eq. \eqref{eq:timeder} then reduces to
$$
\int_\Omega \Big(\nabla \phi'(u)+\nabla(\nabla\phi\cdot u)-(\nabla u+ \nabla^Tu)\nabla \phi\Big):\nabla z-\int_\Omega \Big((p'(u)+\nabla p\cdot u)\div(z)-p\tr(\nabla^Tu\nabla z)\Big)
$$
$$
=\int_{\Omega } \Big(\lambda'(u) \phi+\lambda\phi'(u)+\lambda \nabla\phi\cdot u\Big)z,
$$
and
$$
\int_{\Omega } \Big(-\tr(\nabla^Tu\nabla \phi)+\div(\phi'(u))+\div(\nabla\phi\cdot u)\Big)r=0.
$$
After some integrations by parts and using the boundary conditions, one deduces the two identities
$$
\int_\Omega \nabla \phi'(u):\nabla z+\int_\Omega \nabla p'(u)\cdot z
=\int_{\Omega } \Big(\lambda'(u) \phi+\lambda\phi'(u)\Big)z\quad
\hbox{ and }
\int_{\Omega } \div(\phi'(u))r=0.
$$
These identities hold for every 
$(z,r)\in (\mathcal{D}(\Omega))^d\times \mathcal{D}(\Omega)$, and they yield to the equations which are valid in $\Omega$
$$
-(\Delta+\lambda)\phi'(u)+\nabla p'(u)=-\lambda'(u)\phi, \qquad
\div(\phi'(u))=0.
$$
In summary, the shape derivatives $\phi'(u)$ and $p'(u)$ satisfy the following inhomogeneous Stokes system of PDEs
\begin{equation}\label{eq:shape-stokes-tv}
\left\{\begin{array}{cccc}
\displaystyle -(\Delta+\lambda)  \phi'(u) +\nabla p'(u)&=&-\lambda'(u) 
\phi&\textrm{ in }\Omega,\\
\displaystyle \div~  \phi'(u)&=&0&\textrm{ in }\Omega,\\
\displaystyle  \phi'(u)+(u\cdot n)\frac{\partial \phi}{\partial n}
&=&0&\textrm{ on }\partial\Omega,\\
\displaystyle p'(u)+\div(u p)&\in& L_0^2(\Omega).
\end{array}\right.
\end{equation}

\subsection{Ortega-Zuazua's result}\label{sec:zuazua}
Our argument for establishing Theorem \ref{main-theo} requires the shape differentiation of the eigenvalue problem $(SD_{\Omega})$. The first step of the contradiction argument (i.e., assuming that the simplicity of the spectrum is not generic) was already conducted by J. H. Ortega and E. Zuazua in \cite{Zuazua1}. We next recall precisely the main result they obtained and, for that purpose, we introduce the following definition. 
\begin{de}
Let $\ell,d$ be two integers such that $\ell-1\geq d\geq 2$. A domain $\Omega\in\D^d_\ell$ verifies Property $(P_{OZ})_d$ if, for every $\lambda$ eigenvalue of the Stokes operator with Dirichlet boundary conditions $(SD_\Omega)$, one has $m(\lambda)\leq d-1$ and if $m(\lambda)=d-1$, for $1\leq i, j\leq d-1$ and $i\neq j$, the following three conditions must hold on $\pO$,
\begin{eqnarray}
\frac{\partial \phi_i}{\partial n}\cdot n&=&0,\label{C1}\\
\frac{\partial \phi_i}{\partial n}\cdot\frac{\partial \phi_j}{\partial n}&=&0,\label{C2}\\
\left\Vert \frac{\partial \phi_i}{\partial n}\right\Vert&=&\left\Vert \frac{\partial \phi_j}{\partial n}\right\Vert,\label{C3}
\end{eqnarray}
where the $\phi_i$'s, $1\leq i\leq d-1$ are orthonormal eigenfunctions associated with $\lambda$.
\end{de}
Then, the main result in \cite{Zuazua1} is the following.
\begin{theo}\label{th:zuazua}
Let $\ell,d$ be two integers such that $\ell-1\geq d$ and $d$ is equal to $2$ or $3$. Then Property $(P_{OZ})_d$ defined above holds true, generically with respect to $\Omega\in\D^d_\ell$.
\end{theo}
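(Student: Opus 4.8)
The plan is to run the standard Baire category / contradiction argument of \cite{Alb}, \cite{DH}, adapted to the Stokes system, and to close it using the shape differentiation formulas of Section \ref{shape0} together with the unique continuation result of Osses \cite{Osses}. Fix $\Omega_0\in\D^d_\ell$ and, for each $k\geq 0$, let $A_k\subset\D^d_\ell(\Omega_0)$ be the (open) set of domains $\Omega$ for which the first $k$ eigenvalues of $(SD_\Omega)$ have multiplicity at most $d-1$, and such that whenever one of them has multiplicity exactly $d-1$ the three relations \eqref{C1}--\eqref{C3} hold on $\partial\Omega$ for the associated orthonormal eigenfunctions. Since $\D^d_\ell(\Omega_0)$ is a complete metric space (Micheletti's Courant metric, Section \ref{sec:domain-topo}), by Baire's lemma it suffices to show $A_{k+1}$ is open and dense in $A_k$ for every $k$. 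Openness follows from the analyticity/continuity statements of Theorem \ref{th:regularity} (the relevant eigenvalues and their eigenprojections depend continuously on $u$, and the boundary data $\partial\phi_i/\partial n$ depend continuously as well since $\ell-1\geq d$ guarantees $C^1$ boundary regularity of the eigenfunctions up to $\partial\Omega$).

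For density, argue by contradiction: suppose there are $k$, a domain $\Omega\in A_k$, and $\varepsilon>0$ such that no $\Omega+u$ with $\|u\|_{W^{\ell+1,\infty}}<\varepsilon$ lies in $A_{k+1}$. Let $\lambda=\lambda_{k+1}(\Omega)$ with multiplicity $m\geq 2$, and $\phi_1,\dots,\phi_m$ orthonormal eigenfunctions. The first step is the computation of the shape derivative of the $(k+1)$-th eigenvalue: choosing the analytic branch of Theorem \ref{th:regularity}, one differentiates the Rayleigh-type identity and uses the inhomogeneous Stokes system \eqref{eq:shape-stokes-tv} for $(\phi_i'(u),p_i'(u))$ together with Green's formula (Lemma \ref{green-coro}). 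Since at $t=0$ the orthonormal family must be chosen adapted to the direction $u$ (Remark \ref{danger}), the derivative of the eigenvalue is obtained as an eigenvalue of the $m\times m$ symmetric matrix
\[
\mathcal{M}(u)_{ij}=-\int_{\partial\Omega}(u\cdot n)\,\frac{\partial\phi_i}{\partial n}\cdot\frac{\partial\phi_j}{\partial n}\,d\sigma,\qquad 1\leq i,j\leq m.
\]
The contradiction hypothesis forces $\lambda_{k+1}$ to stay a (non-simple) eigenvalue along \emph{all} directions $u$, i.e.\ $\mathcal{M}(u)$ must be a multiple of the identity for every admissible $u$. Varying $u$ and localizing $u\cdot n$ near an arbitrary point $x\in\partial\Omega$ yields, pointwise on $\partial\Omega$, that $\frac{\partial\phi_i}{\partial n}\cdot\frac{\partial\phi_j}{\partial n}$ is a multiple of $\delta_{ij}$, which is precisely \eqref{C2} and \eqref{C3}. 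To obtain \eqref{C1} and the bound $m\leq d-1$: the vectors $\frac{\partial\phi_i}{\partial n}(x)$, $i=1,\dots,m$, are pairwise orthogonal in $\R^d$; moreover each $\phi_i$ vanishes on $\partial\Omega$, so by Lemma \ref{F12} its tangential gradient vanishes, hence on the boundary $\phi_i$ is "normal-like"; differentiating the incompressibility constraint $\div\phi_i=0$ and using $\nabla\phi_i=\frac{\partial\phi_i}{\partial n}\,n^T$ on $\partial\Omega$ gives $\frac{\partial\phi_i}{\partial n}\cdot n = \operatorname{tr}\nabla\phi_i = \div\phi_i = 0$, which is \eqref{C1}. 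Thus each $\frac{\partial\phi_i}{\partial n}(x)$ lies in the $(d-1)$-dimensional tangent space $T_x\partial\Omega$, and being pairwise orthogonal there can be at most $d-1$ nonzero ones; if $m\geq d$, at least one $\frac{\partial\phi_i}{\partial n}$ vanishes identically on $\partial\Omega$, and since $\phi_i$ also vanishes on $\partial\Omega$, Osses' unique continuation result \cite{Osses} forces $\phi_i\equiv 0$, a contradiction. Hence $m\leq d-1$, and if $m=d-1$ the three relations hold; this contradicts $\Omega+u\notin A_{k+1}$, completing the density step.

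The main obstacle is the correct derivation of the matrix formula for the first shape derivative in the non-simple case: one must carry the Lyapunov--Schmidt reduction of Theorem \ref{th:regularity} carefully enough to see that the branch eigenvalues are the eigenvalues of $\mathcal{M}(u)$, handle the pressure terms (using $p_i'(u)+\div(u\,p_i)\in L^2_0(\Omega)$ and the fact that $\phi_i$ is divergence free so that $\int_{\partial\Omega}(u\cdot n)p_i\,\phi_i\cdot n\,d\sigma$-type contributions drop out via \eqref{C1}), and justify the localization of $u\cdot n$ to an arbitrary neighborhood of $x\in\partial\Omega$ — which is legitimate because $W^{\ell+1,\infty}(\Omega,\R^d)$ surjects onto a dense subspace of the boundary data and the integrands are continuous. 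The remaining ingredients (openness via Theorem \ref{th:regularity}, the pointwise consequences \eqref{C1}--\eqref{C3}, and the unique continuation step) are then routine. Note that for $d=2$ this reproduces the planar result of \cite{Zuazua1}, and for $d=3$ it is the content needed as the starting point for Theorems \ref{main-theo} and \ref{main-theo-2}.
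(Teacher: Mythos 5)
Your proposal follows essentially the same route as the paper: Baire category on the sets $A_k$, shape differentiation of the degenerate eigenvalue giving the symmetric matrix $\mathcal{M}(u)$ (the paper's Lemma \ref{deriv1}), localization of $u\cdot n$ to pass from the integral identities to the pointwise relations \eqref{C1}--\eqref{C3}, the orthogonality count in $T_x(\partial\Omega)\oplus\R n_x$, and Osses' unique continuation. One step is stated too quickly, however: the assertion that ``the contradiction hypothesis forces $\lambda_{k+1}$ to stay a (non-simple) eigenvalue along all directions $u$, i.e.\ $\mathcal{M}(u)$ must be a multiple of the identity.'' At the fixed domain $\Omega$ this does not follow: the multiplicity is only upper semicontinuous, so a perturbation may split $\lambda_{k+1}$ (equivalently, $\mathcal{M}(u)$ may fail to be scalar) while every nearby domain still violates $(P_{OZ})_d$ --- for instance a multiplicity-$4$ eigenvalue in $d=3$ splitting into two clusters of multiplicity $2$ for which the relations fail. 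The paper repairs exactly this point with the minimal-multiplicity device (Definition \ref{ext-eig} and Remarks \ref{rem:1}--\ref{rem:2}): one first passes to a nearby domain $\Omega_*$ realizing the liminf of multiplicities, around which $m(\lambda_{l+1}(\cdot))$ is genuinely locally constant; only then do all branch derivatives coincide and $M(\phi(u))=-\mu\,\mathrm{Id}_m$, transported back to the fixed orthonormal frame $v$ via the rotation $S(u)$. With that reduction inserted, the rest of your argument (including the derivation of \eqref{C1} from Lemma \ref{F12} and incompressibility, and the use of the equal-norms relation \eqref{C3} to pass from pointwise to identical vanishing of some $\partial v_i/\partial n$) matches the paper's proof.
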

As an immediate corollary, it is proved in \cite{Zuazua1} that Property $(\simple)$ holds true generically for domains in $\R^2$. Since we adopt a  viewpoint different from \cite{Zuazua1}, we provide below a complete argument. We need to provide the following definition, similar to that of ``minimal multiplicity'' in \cite[page 56]{DH}.
\begin{de}\label{ext-eig}
Let $\Omega\in\D^d_\ell$ and $\lambda$ an eigenvalue of $(SD_\Omega)$. We use $m_\Omega(\lambda)$ to denote the liminf over the multiplicities
$m(\lambda_n)$, where  $\lambda_n$ is an eigenvalue of $(SD_{\Omega_n})$ such that $\Omega_n\rightarrow \Omega$ and $\lambda_n\rightarrow \lambda$ as $n$ tends to infinity.
\end{de}
Several remarks are now in order with regard to Definition \ref{ext-eig}.  
\begin{rem}\label{rem:1}
There exists a sequence of domains $(\Omega_n)$ in $\D^d_\ell$ and a sequence $(\lambda_n)$, where $\lambda_n$ is an eigenvalue of $(S_{\Omega_n})$, such that $\Omega_n\rightarrow \Omega$, $\lambda_n\rightarrow \lambda$ as $n$ tends to infinity and $m(\lambda_n)=m_\Omega(\lambda)$ (and it is also equal to $m_{\Omega_n}(\lambda_n)$).
\end{rem}
\begin{rem}\label{rem:2}
Moreover, Property $(P_{OZ})_d$ for a domain $\Omega\in\D^d_\ell$ is clearly equivalent to the fact that, for every $\lambda$ eigenvalue of $(SD_\Omega)$, $m_\Omega(\lambda)\leq d-1$, with the equality case described by Eqs. \eqref{C1},\eqref{C2},\eqref{C3}. 
\end{rem}
\begin{proof}[Proof of Theorem \ref{th:zuazua}]
Fix a domain $\Omega_0\in\D^d_\ell$. We define, for $l\in\N$, the sets $$A_0:=\D^d_\ell(\Omega_0),$$
and, for $l\geq 1$, consider 
$$
A_l:=\{\Omega_0+u\in A_0,\ u\in W^{\ell+1,\infty}(\Omega_0,\R^d),~m_{\Omega_0}(\lambda)\leq d-1
\textrm{ for the first $l$ first eigenvalues of }(SD_{\Omega_0+u})\}.$$ 
Set $A:=\bigcap_{l\in\N}A_l$. Note that $$A=\{\Omega_0+u\in A_0,\ u\in W^{\ell+1,\infty}(\Omega_0,\R^d), m_{\Omega_0+u}(\lambda) \leq d-1~\textrm{ if $\lambda$ is an eigenvalue of }(SD_{\Omega_0+u})\}.$$

The proof is based on the application of Baire's lemma to the sequence $\{A_l\}_{l\in\N}$. As $A_l$ is open in $A_0$ for every $l\in\N$, we only need to prove that, for $l\in\N$, $A_{l+1}$ is dense in $A_l$. 

We proceed by contradiction. Assume that $A_{l+1}$ is not dense in $A_l$. Then, there exists $u\in A_{l}\setminus A_{l+1}$ and a neighborhood $U$ of $u$ such that $U\subset A_l\setminus A_{l+1}$. 
Set $\tilde{\Omega}:= \Omega_0+u$ and let $\lambda$ be the $(l+1)$-th eigenvalue of $(SD_\oOm)$. For $s\geq 1$, let $\lambda_s(\cdot)$ be the function which associates to $\Omega\in\D^d_\ell$ the $s$-th eigenvalue of $(SD_\Omega)$. Note that $\lambda_s(\cdot)$ is continuous and 
$\lambda=\lambda_{l+1}(\oOm)$. According to the contradiction assumption, one has $m:=m_\oOm(\lambda)\geq d$ and then
$\lambda_l(\oOm)<\lambda$. As a consequence, 
if $(\Omega_n)$ is the sequence  in $\D^d_\ell$ considered in Remark \ref{rem:1} and associated to $\oOm$, then it has the following additional property: for $n$ large enough, there exists $\eps_n>0$ such that, for every $\Omega'$ with $d(\Omega',\Omega_n)<\eps_n$, one has that
$$
m(\lb_{l+1}(\Omega'))=m\geq d.
$$
In particular, $m(\lb_{l+1}(\cdot))$ is locally constant, equal to $m\geq d$ in an open neighborhood of $\Omega_n$, for $n$ large enough. We will contradict that latter fact, i.e. the existence of a domain $\Omega_*$ where $m(\lb_{l+1}(\cdot))$ is constant and equal to $m\geq d$ in an open neighborhood $U_*$ of $\Omega_*$. For simplicity, $\lb$ is used to denote $\lb_{l+1}(\Omega_*)$ in the remaining part of the argument. Once for all, fix an orthonormal family $v=(v_1,\dots,v_m)$ of eigenfunctions of $(SD_{\Omega_*})$ associated to $\lb$ and define the $m\times m$ matrix
$$
M(v)=\Big(\int_{\partial\Omega_*} (u \cdot  n)\frac{\partial v_i}{\partial n}\cdot\frac{\partial v_j}{\partial n}\Big)_{1\leq i,j\leq m}.
$$
Note that $M(v)$ is real symmetric. 
We next perform shape differentiation with respect to the parameter $u\in U_*$. Using the notations of Theorem \ref{th:regularity}, we consider, for every $u\in U_* $,  the $m$ analytic branches $t\mapsto(\lambda_i^t(u), \phi_{t,i}(u),p_{t,i}(u))$, for $i=1,\dots,m$, given by Theorem \ref{th:regularity}. We use $\phi(u):=(\phi_1(u),\dots,\phi_m(u))$ and $(p_1(u),\dots, p_m(u))$ respectively to denote 
$$(\phi_{0,1}(u),\dots,\phi_{0,m}(u)), \qquad (p_{0,1}(u),\dots, q_{0,m}(u)),
$$
the eigenfunctions and eigenpressures associated to $\lb$ (i.e., which correspond to the values of the
$\phi_{t,i}(u)$'s and $p_{t,i}(u)$'s at $t=0$). 
Since $v$ and $\phi(u)$ are orthonormal families of eigenfunctions associated to the same eigenvalue $\lb$, then, for every $1\leq i\leq m$, there exists $S(u)\in SO(m)$ such that
$\phi(u)=vS(u)$ (with the convention that the $\phi_i(u)$'s and the $v_i$'s are viewed as column vectors of $\R^m$).
One clearly obtains that 
\begin{equation}\label{rel:1}
M(\phi(u))=S(u)M(v)S(u)^T.
\end{equation} 

We now need the following standard result whose proof is given in Section \ref{pf-deriv1} of Appendix. 
 \begin{lemma}\label{deriv1}
Using the notations defined above, then
\begin{equation}\label{id_m}
\emph{\textrm{diag}}({\lambda}_i'(u))_{1\leq i\leq m}=-M(\phi(u))
\end{equation}holds for every $u\in W^{\ell+1,\infty}(\Omega,\R^d)$.
\end{lemma}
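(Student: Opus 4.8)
The plan is to compute the derivative at $t=0$ of the eigenvalue $\lambda_i^t(u)$ along a fixed analytic branch, using the weak formulation \eqref{eq:weak} on the fixed domain $\Omega$ and the identities relating time derivatives to shape derivatives. First I would write down Eq. \eqref{eq:weak} for the branch $(\lambda_i^t(u),\phi_i^t(u),p_i^t(u))$, differentiate it with respect to $t$ at $t=0$, and then \emph{test against $z=\phi_i(u)$ itself} (which is admissible since $\phi_i(u)\in(H_0^1(\Omega))^d$ and the branch is $H^\ell$-analytic). The terms involving $\dot\phi_i(u)$ and $\dot p_i(u)$ will cancel or reorganize: integrating by parts and using that $\phi_i(u)$ solves $(SD_\Omega)$ together with $\div\phi_i(u)=0$ and the boundary condition $\phi_i(u)=0$ on $\partial\Omega$, the bilinear terms reduce, exactly as in the derivation of \eqref{eq:shape-stokes-tv}, leaving only the boundary contribution. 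Since $\phi_i(u)$ is orthonormal in $H(\Omega)$, the coefficient of $\lambda_i'(u)$ on the right-hand side is $\int_\Omega|\phi_i(u)|^2=1$.

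The key computation is the boundary term. Testing the shape-derivative system \eqref{eq:shape-stokes-tv} (written for $\phi_i(u)$, $p_i(u)$, $\lambda_i'(u)$) against $\phi_j(u)$ and using Green's formula from Lemma \ref{green-coro}, together with the Dirichlet condition $\phi_j(u)=0$ on $\partial\Omega$ and Lemma \ref{F12} (so that $\nabla\phi_i(u)=\frac{\partial\phi_i(u)}{\partial n}n^T$ on $\partial\Omega$), one finds that the pairing picks up precisely $-\int_{\partial\Omega}(u\cdot n)\frac{\partial\phi_i(u)}{\partial n}\cdot\frac{\partial\phi_j(u)}{\partial n}$. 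For $i=j$ this yields $\lambda_i'(u)=-\int_{\partial\Omega}(u\cdot n)\left|\frac{\partial\phi_i(u)}{\partial n}\right|^2=-M(\phi(u))_{ii}$, which is the diagonal of \eqref{id_m}. To obtain the full matrix identity one observes that along the fixed branch the family $(\phi_{t,1}(u),\dots,\phi_{t,h}(u))$ stays orthonormal (last clause of Theorem \ref{th:regularity}); differentiating the orthonormality relations $\langle\phi_{t,i}(u),\phi_{t,j}(u)\rangle_{H(\Omega+tu)}=\delta_{ij}$ shows that the off-diagonal Lyapunov--Schmidt normalization forces the cross terms $M(\phi(u))_{ij}$, $i\neq j$, to vanish — equivalently, the analytic branches are chosen (via the Lyapunov--Schmidt reduction in the proof of Theorem \ref{th:regularity}) precisely so that the reduced matrix $M(\phi(u))$ is diagonal, with entries the $\lambda_i'(u)$. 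Hence $\mathrm{diag}(\lambda_i'(u))_{1\le i\le m}=-M(\phi(u))$.

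The main obstacle I anticipate is the bookkeeping in the differentiation of \eqref{eq:weak}: one must carefully use the relations $\gamma'(0)=\div u$, $A'(0)=\div(u)\,\id-(\nabla u+\nabla^Tu)$, $B'(0)=-\nabla^Tu$ and repeatedly integrate by parts, discarding interior terms that recombine into the PDE satisfied by $\phi_i(u)$, so that only the boundary integral survives; this is exactly the computation carried out in Section \ref{shape0} to derive \eqref{eq:shape-stokes-tv}, so it can be invoked rather than redone. The one genuinely branch-dependent point is the vanishing of the off-diagonal terms, which is not automatic for an arbitrary orthonormal family $v$ (indeed $M(v)$ need not be diagonal) but holds for the specific family $\phi(u)$ produced by the analytic perturbation theory; this is where the rotation $S(u)\in SO(m)$ and relation \eqref{rel:1} enter, the point being that $S(u)$ is exactly the orthogonal matrix diagonalizing $M(v)$. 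I would state this carefully, referring to \cite[Chapter 7]{Ka} for the Lyapunov--Schmidt construction, and conclude.
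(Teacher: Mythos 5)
Your core computation is the right one and is exactly the paper's: test the system \eqref{eq:shape-stokes-tv} satisfied by $(\phi_i'(u),p_i'(u),\lambda_i'(u))$ against $\phi_k(u)$, apply the second Green's formula of Lemma \ref{green-coro} (the interior terms collapse because $\phi_k(u)$ solves the eigenvalue problem and vanishes on $\pO$), substitute the boundary condition $\phi_i'(u)=-(u\cdot n)\frac{\partial\phi_i(u)}{\partial n}$, and use Lemma \ref{F12} to replace the conormal derivative $\frac{\partial\phi_k(u)}{\partial\nu}$ by $\frac{\partial\phi_k(u)}{\partial n}$; both the pressure term and the $\nabla^T\phi_k(u)\,n$ term drop out because $\frac{\partial\phi_i(u)}{\partial n}\cdot n=\div~\phi_i(u)=0$ on $\pO$.

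Where you go astray is the off-diagonal entries. You treat them as needing a separate argument and justify their vanishing by ``differentiating the orthonormality relations,'' which does not work: differentiating $\langle\phi_{t,i}(u),\phi_{t,j}(u)\rangle=\delta_{ij}$ constrains the symmetric part of the matrix of inner products $\langle\dot\phi_i(u),\phi_j(u)\rangle$, not the boundary integrals $M(\phi(u))_{ij}$. The point you are missing is that your own Green's-formula computation already yields the full matrix identity: when the equation for $\phi_i'(u)$ is paired with $\phi_k(u)$, the interior term coming from the right-hand side is $\lambda_i'(u)\int_\Omega\phi_i(u)\cdot\phi_k(u)=\lambda_i'(u)\delta_{ik}$, so that
\begin{equation*}
\lambda_i'(u)\,\delta_{ik}\;=\;-\int_{\pO}(u\cdot n)\,\frac{\partial\phi_i(u)}{\partial n}\cdot\frac{\partial\phi_k(u)}{\partial n}\;=\;-M(\phi(u))_{ik}
\end{equation*}
for \emph{all} $1\le i,k\le m$; taking $i\neq k$ gives $M(\phi(u))_{ik}=0$ with no appeal to the Lyapunov--Schmidt normalization. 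Your fallback remark that the analytic branches diagonalize the reduced first-order perturbation matrix is the correct Kato--Rellich fact, but identifying that reduced matrix entrywise with $-M(\phi(u))$ is precisely the computation above carried out for every pair $(i,k)$, so the detour buys nothing and, as written, presents the conclusion of \eqref{id_m} as its own justification. With that one repair the proof is complete and coincides with the paper's.
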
\smallskip
We next proceed with the proof of Theorem \ref{th:zuazua}, \smallskip

The fact that $m(\lb_{l+1}(\cdot))$ is constant and equal to $m$ in a neighborhood of $u=0$ is equivalent to the fact that $\lb_{i}^t(u)\equiv\lb_{j}^t(u)$, $1\leq i,j\leq m$, for $t$ small enough, implying that $\lb_i'(u)$ takes only one single value 
$\mu$ as $i$ runs from $1$ to $m$. In other words, $M(\phi(u))=-\mu \id_m$ and then one gets
$$M(v)=-\mu \id_m,
$$
thanks to Eq. (\ref{rel:1}). That yields the equations
\begin{eqnarray}
\int_{\partial\Omega_*} (u\cdot n)\big(\left\Vert \frac{\partial v_i}{\partial n}\right\Vert^2-
\left\Vert \frac{\partial v_j}{\partial n}\right\Vert^2\big)&=&0,\hbox{ for }1\leq i,j\leq m,\label{CC1}\\
\int_{\partial\Omega_*} (u\cdot n)\frac{\partial v_i}{\partial n}\cdot\frac{\partial v_j}{\partial n}&=&0,
\hbox{ for }1\leq i,j\leq d-1, ~i\neq j\label{CC2}.
\end{eqnarray}
The integrals in the above equations define linear maps in $(u.n)$ and are equal to zero in an open neighborhood of $u=0$. It thus implies that, for distinct $1\leq i,j\leq m$,
\begin{eqnarray}
\left\Vert \frac{\partial v_i}{\partial n}\right\Vert-
\left\Vert \frac{\partial v_j}{\partial n}\right\Vert&\equiv 0&\hbox{ on }
\partial\Omega_*,\label{CD1}\\
\frac{\partial v_i}{\partial n}\cdot\frac{\partial v_j}{\partial n}&\equiv 0&\hbox{ on }
\partial\Omega_*\label{CD2}.
\end{eqnarray}
Moreover, using Lemma (\ref{F12}),
one has, for $1\leq i\leq m$,
\begin{equation}\label{CD3}
\frac{\partial v_i}{\partial n}\cdot n\equiv 0, \hbox{ on }\partial\Omega_*.
\end{equation}
Assume now that there exists $x_0\in \partial\Omega_*$ and an index $i\in \{1,\cdots,m\}$ such that $\displaystyle\frac{\partial v_i}{\partial n}(x_0)$ is not zero. According to Eqs. \eqref{CD1}, \eqref{CD2} and \eqref{CD3} , the $(m+1)$ vectors given by $\displaystyle\frac{\partial v_j}{\partial n}(x_0)$, $1\leq j\leq m$ and $n(x_0)$ are all non zero and two by two perpendicular. This is a contradiction because these vectors
 belong to a $d$-dimensional vector space. Therefore, $\displaystyle\frac{\partial v_i}{\partial n}$ must be identically  equal to zero,  for $1\leq i\leq m$.
 
Thanks to a unique continuation type of argument due to Osses (cf. \cite{Osses}) and which is valid only for $d$ equal to $2$ or $3$, one concludes that the $v_i$'s must also be identically  equal to zero, which is in contradiction with the facts that the $v_i$'s have $L^2$-norm equal to one.

\end{proof}
 
\begin{rem} 
This argument is an adaptation of the original proof by J. H. Albert in \cite{Alb} to the Stokes system with Dirichlet boundary conditions, and the perturbation parameters being the domains of $\R^d$. See also \cite[Example 4.4]{DH} for a more general situation.
\end{rem}


\section{Proof of Theorem \ref{main-theo}}
For the rest of the paper, domains $\Omega$ are bounded subsets of $\R^3$ with $C^\ell$ boundary, i.e., $d=3$ and $\ell\geq 4$.
We follow the classical strategy initiated by J. H. Albert in \cite{Alb} for the Laplace operator with Dirichlet boundary conditions. This strategy was in particular applied successfully in \cite{Zuazua1} for the generic simplicity of the Stokes operator in two space dimensions, and in \cite{Coron-Chitour-Mauro} for other Laplacian-like operators.
Fix a domain $\Omega_0\in\D^3_\ell$. We define, for $l\in\N$, the sets 
$$
A_0:=\D^3_\ell(\Omega_0),
$$
and, for $l\geq 1$,
$$
A_l:=\{\Omega_0+u,\ u\in W^{\ell+1,\infty}(\Omega_0,\R^3),\ \Omega_0+u\in A_0
~\textrm{and the } l \textrm{ first eigenvalues of }(SD_{\Omega_0+u})\textrm{ are simple}\}.
$$ 
Set $A:=\bigcap_{l\in\N}A_l$. Note that 
$$
A=\{u\in W^{\ell+1,\infty}(\Omega_0,\R^3),\ \Omega_0+u\in A_0~\textrm{ and the eigenvalues of }(SD_{\Omega_0+u})\textrm{ are simple}\}.
$$

Again, the proof of the generic simplicity of $(SD_\Omega)$ is based on the application of Baire's lemma to the sequence $\{A_l\}_{l\in\N}$. As $A_l$ is open in $\D^3_\ell(\Omega_0)$
for every $l\in\N$, we only need to prove that, for $l\in\N$, $A_{l+1}$ is dense in $A_l$. 
We proceed by contradiction. Assume that $A_{l+1}$ is not dense in $A_l$. Then, there exists $u\in A_{l}\setminus A_{l+1}$ and a neighborhood $U$ of $u$ such that $U\subset A_l\setminus A_{l+1}$. By Theorem \ref{th:zuazua}, we can assume, without loss of generality, that there exists $\Omega:=\Omega_0+u_0$ for some $u_0\in U$ verifying the following: there exists an open neighborhood $V\subset U$ of $0$ such that, for every $u\in V$, then $\Omega+u$ verifies:
\begin{itemize}
\item[(i)] the first $l$ eigenvalues $\lambda_1(u),\dots, \lambda_l(u)$ of $(SD_{\Omega+u})$ are simple;
\item[(ii)] the multiplicity of the $(l+1)$-th eigenvalue $\lambda_{l+1}(u)$ of  $(SD_{\Omega+u})$ is equal to $2$ and, on $\pO+u$, one has
\begin{eqnarray}
\frac{\partial \phi_i}{\partial n_u}\cdot n_u&=&0,\qquad i=1,2,\label{C1-u}\\
\frac{\partial \phi_1}{\partial n_u}\cdot\frac{\partial \phi_2}{\partial n_u}&=&0\label{C2-u}\\
\left\Vert \frac{\partial \phi_1}{\partial n_u}\right\Vert&=&\left\Vert \frac{\partial \phi_2}{\partial n_u}\right\Vert,\label{C3-u}
\end{eqnarray}where $\n_u$ is used to denote the outer unit normal at $\partial\Omega+u$ and $(\phi_1,\phi_2)$ is any pair of orthonormal eigenfunctions associated with 
$\lambda_{l+1}(u)$.
\end{itemize} 
\begin{rem}
These conditions simply state that, for an eigenvalue $\lambda$ of $(SD_{\Omega})$ (say the 
$(l+1)$-th), its multiplicity is larger than or equal to $2$ and, for every variation $v$ in $W^{\ell+1,\infty}(\Omega+u,\R^3)$, there are two equal directionnal derivatives (in the direction of $v$) of $\lambda_{l+1}$ at $u$. This fact actually does not depend on the dimension $d\geq 2$ of the domain $\Omega$. In dimension two, the above conditions immediately yield that $$\frac{\partial \phi_1}{\partial n_u}\equiv\frac{\partial \phi_2}{\partial n_u}\equiv 0,$$ for any pair of orthonormal eigenfunctions associated with $\lambda_{l+1}(u)$,
and one derives at once a contradiction by the unique continuation result of \cite{Osses}, see also \cite{Zuazua1}. However, in dimension $d=3$, conditions (\ref{C1-u}), (\ref{C2-u}), and (\ref{C3-u}) do not immediately yield a contradiction since three non-zero two-by-two orthogonal vectors may exist in dimension $d=3$. 
\end{rem}

\subsection{Shape derivation of Equations  (\ref{C1-u}) (\ref{C2-u}) and (\ref{C3-u})}



We begin with the following preliminary result.
\begin{lemma}\label{shap_der}
The  shape derivative $\phi_i'$  of $\phi_i$ in the direction $V$ satisfies 
\begin{equation}
 \frac{\partial \phi_i'}{\partial n}= \frac{\partial \phi_i'}{\partial\nu}+\langle   \frac{\partial \phi_i}{\partial n},\n'\rangle\n +V_n \frac{\partial}{\partial n}\Big( (\nabla \phi_i)^T \n \Big)+p'_i \n,
 \end{equation}
 where we use $V_n$ to denote the normal component $V\cdot n$ of the direction $V$ evaluated on $\partial\Omega$.
\end{lemma}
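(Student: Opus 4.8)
The plan is to obtain the identity by carefully relating the three notions of normal derivative introduced in Section~2, namely $\frac{\partial}{\partial n}$, $\frac{\partial}{\partial N}$ and the conormal $\frac{\partial}{\partial\nu}$, and by exploiting the boundary condition satisfied by the shape derivative $\phi_i'$, which from Eq.~\eqref{eq:shape-stokes-tv} reads $\phi_i'=-V_n\,\frac{\partial\phi_i}{\partial n}$ on $\partial\Omega$ (here $V_n:=V\cdot n$). First I would recall that $\frac{\partial\phi_i'}{\partial\nu}=\frac{\partial\phi_i'}{\partial N}-p_i'n=(\nabla\phi_i'+\nabla^T\phi_i')\cdot n-p_i'n$, so that
\begin{equation*}
\frac{\partial\phi_i'}{\partial n}=\frac{\partial\phi_i'}{\partial\nu}+p_i'n-(\nabla^T\phi_i')\cdot n .
\end{equation*}
Thus the whole content of the lemma is the identification of the tangential-gradient-type term $(\nabla^T\phi_i')\cdot n$ on $\partial\Omega$ with $-\langle\frac{\partial\phi_i}{\partial n},n'\rangle n - V_n\,\frac{\partial}{\partial n}\big((\nabla\phi_i)^T n\big)$.

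To compute $(\nabla^T\phi_i')\cdot n$, i.e. the vector whose $k$-th component is $\sum_j \partial_k(\phi_i')^j\, n_j$, I would use that $\phi_i'$ is known \emph{on} $\partial\Omega$, where it equals $-V_n\,\frac{\partial\phi_i}{\partial n}=-V_n(\nabla\phi_i)^T n$ (using the convention $\frac{\partial\phi_i}{\partial n}=\nabla\phi_i\cdot n$; note $(\nabla\phi_i)^T n$ is the same vector since $\phi_i$ is vector-valued). Decompose the full gradient of a function on $\overline\Omega$ near the boundary into its tangential part plus its normal part: for any sufficiently smooth $g$ defined near $\partial\Omega$, $\nabla g = \nabla_{\partial\Omega} g + \frac{\partial g}{\partial n} n$, hence $\nabla^T\phi_i'\cdot n$ only sees, through the tangential derivatives, the boundary values of $\phi_i'$, while the purely normal contribution is a genuine normal derivative of $\phi_i'$. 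The key structural fact is Lemma~\ref{F12}, $\nabla a=\frac{\partial a}{\partial n}n^T$ on $\partial\Omega$ for $a\in H^1_0$; I would differentiate the relation $\phi_i=0$ on $\partial\Omega$ (equivalently use that $\phi_i'+V_n\frac{\partial\phi_i}{\partial n}$ vanishes on $\partial\Omega$) tangentially, which kills the tangential derivatives of $\phi_i'$ up to the tangential derivatives of $V_n\,\frac{\partial\phi_i}{\partial n}$, and these in turn get absorbed because on $\partial\Omega$ the vector $\frac{\partial\phi_i}{\partial n}$ is itself tangent (by Eq.~\eqref{C1} / \eqref{C1-u}, $\frac{\partial\phi_i}{\partial n}\cdot n=0$, although in fact the computation should be done without this restriction so that the lemma is valid in general). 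The remaining pieces are: the term $V_n\frac{\partial}{\partial n}\big((\nabla\phi_i)^T n\big)$, which is the normal derivative of the boundary data $-V_n(\nabla\phi_i)^T n$ reorganized via the product rule and the fact that $\partial_n V_n$-type contributions are tangential hence disappear when dotted appropriately; and the term $\langle\frac{\partial\phi_i}{\partial n},n'\rangle n$, which arises precisely from the shape derivative $n'$ of the normal: when one differentiates an expression of the form $(\text{something})\cdot n$ living on a moving boundary, the derivative of $n$ produces exactly this scalar $\langle\frac{\partial\phi_i}{\partial n},n'\rangle$ multiplying $n$.

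Concretely, the steps in order are: (1) write $\frac{\partial\phi_i'}{\partial n}-\frac{\partial\phi_i'}{\partial\nu}=p_i'n-(\nabla^T\phi_i')\cdot n$; (2) on $\partial\Omega$ substitute $\phi_i'=-V_n\,(\nabla\phi_i)^T n$ and compute $(\nabla^T\phi_i')\cdot n$ by splitting $\nabla\phi_i'$ into tangential and normal parts, noting that the tangential part is determined by the boundary values just computed; (3) apply the product rule to $\nabla\big(V_n\,(\nabla\phi_i)^T n\big)$, keeping track that the gradient of $V_n$ and of $n$ contribute only tangentially except for the genuinely normal piece $V_n\,\frac{\partial}{\partial n}\big((\nabla\phi_i)^T n\big)$; (4) collect the term involving the variation of the normal, which is $\langle\frac{\partial\phi_i}{\partial n},n'\rangle n$, and verify the signs. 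I expect the main obstacle to be bookkeeping: correctly separating tangential from normal derivatives of a vector field defined only on (or in a neighborhood of) the moving boundary, and making sure the $n'$-term is produced with the right coefficient. A clean way to control this is to work in a tubular neighborhood with the signed distance function, so that $n$ is extended as $\nabla d$ and $n'$, $\partial_n(\cdot)$ have unambiguous meanings; then the identity falls out of the chain rule together with Lemma~\ref{F12} and the conormal definition~\eqref{conor0}.
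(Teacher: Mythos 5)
Your opening reduction is exactly right and coincides with the last step of the paper's proof: from $\frac{\partial \phi_i'}{\partial \nu}=\frac{\partial \phi_i'}{\partial n}+(\nabla^T\phi_i')\,n-p_i'\,n$ the lemma is equivalent to the single identity $(\nabla \phi_i')^T n=-\langle \frac{\partial \phi_i}{\partial n},\n'\rangle \n-V_n\,\frac{\partial}{\partial n}\big((\nabla\phi_i)^T\n\big)$ on $\partial\Omega$. The gap lies in how you propose to establish that identity. The paper obtains it in one stroke by first observing that $(\nabla\phi_i)^T\n\equiv 0$ on $\partial\Omega$ --- this follows from Lemma \ref{F12} ($\nabla\phi_i=\frac{\partial\phi_i}{\partial n}\n^T$) together with $\div\,\phi_i=0$, since then $(\nabla\phi_i)^T\n=\n\,\langle\frac{\partial\phi_i}{\partial n},\n\rangle$ and $\langle\frac{\partial\phi_i}{\partial n},\n\rangle=\tr(\nabla\phi_i)=0$ --- and then shape-differentiating this boundary identity; the product rule produces exactly the three terms $(\nabla\phi_i')^T\n$, $(\nabla\phi_i)^T\n'=\langle\frac{\partial\phi_i}{\partial n},\n'\rangle\n$ and $V_n\,\partial_n((\nabla\phi_i)^T\n)$. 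Your plan never identifies this identity. Instead you propose to reconstruct $(\nabla^T\phi_i')\,n$ on the fixed boundary from the Dirichlet data $\phi_i'=-V_n\frac{\partial\phi_i}{\partial n}$ by a tangential/normal splitting, and that route has two holes. First, tangential differentiation of boundary data on the \emph{fixed} domain can never produce the shape derivative $\n'$ of the normal: $\n'$ appears only when one shape-differentiates an identity in which the moving normal occurs as a factor, and the only identity you shape-differentiate is $\phi_i=0$, which contains no $\n$. You sense this (``the $n'$-term is produced when one differentiates something $\cdot\,n$ on a moving boundary'') but never say \emph{which} expression that is; identifying it as $(\nabla\phi_i)^T\n=0$ is essentially the whole proof. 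Second, the $k$-th component of $(\nabla^T\phi_i')\,n$ is $\sum_j\partial_k(\phi_i')^j n_j$ and contains the normal--normal block $n_k\langle\partial_n\phi_i',\n\rangle$, which is not determined by the boundary values of $\phi_i'$; recovering it requires the constraint $\div\,\phi_i'=0$ (Lemma \ref{F12} does not apply to $\phi_i'$, which does not vanish on $\partial\Omega$). Neither ingredient appears in your outline. A minor further point: $\frac{\partial\phi_i}{\partial n}\cdot \n=0$ is not a restriction coming from the contradiction hypotheses \eqref{C1-u}; it holds for every eigenfunction by Lemma \ref{F12} and incompressibility, so no generality would be lost by using it.
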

\begin{proof}[Proof of Lemma \ref{shap_der}]
From the fact that $\phi_i$ vanishes on $\partial\Omega$ and satisfies $\div(\phi_i)=0$, one knows  that
\begin{equation}\label{shape1}
(\nabla \phi_i)^T \n=0.
\end{equation}
Taking the shape derivative of the two sides of Eq. (\ref{shape1}), one gets 
$$
(\nabla \phi'_i)^T~ \n + (\nabla \phi_i)^T ~\n' +V_n \frac{\partial }{\partial n} \Big(  (\nabla \phi_i )^T\n  \Big)=0.
$$
Since $(\nabla \phi_i)^T=\Big(\displaystyle\frac{\partial\phi_i}{\partial n}~ \n^T\Big)^T=\n  ~ \Big(\frac{\partial\phi_i}{\partial n}    \Big)^T$, it comes that 
$$
(\nabla \phi'_i)^T~ \n +\langle  \frac{\partial\phi_i}{\partial n} , \n' \rangle \n+V_n \frac{\partial}{\partial n} \Big(  (\nabla \phi_i )^T\n  \Big)=0,
$$
hence  
$$
(\nabla \phi'_i)^T~ \n =-\langle  \frac{\partial\phi_i}{\partial n} , \n' \rangle \n-V_n \frac{\partial}{\partial n} \Big(  (\nabla \phi_i )^T\n  \Big).
$$
The proof is finished once we report this expression in the definition of the co-normal derivative   of $\phi_i$.

\end{proof}
\begin{prop}\label{shape-derivative}
If  $\phi_i$ satisfies  (\ref{C1-u}) and (\ref{C2-u}), then we have, for $j=1,2$, 
\begin{eqnarray}
&&\Big\langle \frac{\partial \phi'_i}{\partial \nu}, \frac{\partial \phi_j}{\partial n}\Big\rangle + 
\Big\langle \frac{\partial \phi'_j}{\partial \nu}, \frac{\partial \phi_i}{\partial n}\Big\rangle\nonumber\\
&=&-V_n \Big( \frac{\partial}{\partial n}\Big(\Big\langle    \frac{\partial \phi_j}{\partial n}   , \frac{\partial \phi_i}{\partial n}\Big\rangle + 
\Big\langle \frac{\partial}{\partial n}(\nabla \phi_i ^T \n ),\frac{\partial \phi_j}{\partial\n}\Big\rangle+\Big\langle \frac{\partial}{\partial n}(\nabla \phi_j^T \n ),\frac{\partial \phi_i}{\partial\n}\Big\rangle\Big).\label{C111}
\end{eqnarray}
\end{prop}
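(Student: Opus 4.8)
The plan is to shape differentiate the boundary relation \eqref{C2-u}, which by the contradiction hypothesis of Section~3 holds on $\partial(\Omega+V)$ for \emph{every} admissible direction $V$, and then to trade the Neumann traces of the shape derivatives $\phi_i'$ for the conormal traces $\frac{\partial \phi_i'}{\partial \nu}$ appearing in \eqref{C111} by means of Lemma \ref{shap_der}. Here a prime denotes the shape derivative in the direction $V$ (so also $p_i'$, $n'$, $\lambda_i'$), and whenever $\frac{\partial}{\partial n}$ is applied to a boundary quantity it is understood through a fixed ambient extension of the unit normal, e.g. $n=\nabla d$ with $d$ the signed distance to $\partial\Omega$.

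First I would extract from Lemma \ref{shap_der} the ``conormal versus normal'' exchange. Rewriting that lemma as
\[
\frac{\partial \phi_i'}{\partial \nu}=\frac{\partial \phi_i'}{\partial n}-\Big\langle \frac{\partial \phi_i}{\partial n},n'\Big\rangle n-V_n\,\frac{\partial}{\partial n}\big((\nabla\phi_i)^T n\big)-p_i'\,n,
\]
taking the scalar product with $\frac{\partial\phi_j}{\partial n}$, and using \eqref{C1-u} --- which says precisely that $\frac{\partial\phi_j}{\partial n}$ is tangent to $\partial\Omega$, hence orthogonal to $n$ --- the two terms carrying the factor $n$ drop out, leaving
\[
\Big\langle \frac{\partial \phi_i'}{\partial \nu},\frac{\partial \phi_j}{\partial n}\Big\rangle
=\Big\langle \frac{\partial \phi_i'}{\partial n},\frac{\partial \phi_j}{\partial n}\Big\rangle
-V_n\,\Big\langle \frac{\partial}{\partial n}\big((\nabla\phi_i)^T n\big),\frac{\partial \phi_j}{\partial n}\Big\rangle .
\]
Adding this to the identity obtained by interchanging $i$ and $j$ reduces the left-hand side of \eqref{C111} to $\langle\frac{\partial\phi_i'}{\partial n},\frac{\partial\phi_j}{\partial n}\rangle+\langle\frac{\partial\phi_j'}{\partial n},\frac{\partial\phi_i}{\partial n}\rangle$ minus the two $V_n$--transport terms already present on the right-hand side of \eqref{C111}.

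Second I would compute the combination $\langle\frac{\partial\phi_i'}{\partial n},\frac{\partial\phi_j}{\partial n}\rangle+\langle\frac{\partial\phi_j'}{\partial n},\frac{\partial\phi_i}{\partial n}\rangle$ by differentiating \eqref{C2-u}. The map $V\mapsto \frac{\partial\phi_i}{\partial n}\cdot\frac{\partial\phi_j}{\partial n}$ vanishes identically on each $\partial(\Omega+V)$ when $i\neq j$ (and, when $i=j$, it is $\|\frac{\partial\phi_1}{\partial n}\|^2-\|\frac{\partial\phi_2}{\partial n}\|^2$ that vanishes identically, by \eqref{C3-u}), so the standard rule for shape differentiating a relation satisfied on \emph{all} the perturbed boundaries --- which, exactly as in the derivation of \eqref{key}, follows from $\dot{M}=0$ together with the vanishing of the tangential gradient of $M$ on $\partial\Omega$, cf. \cite[Theorem 2.13]{Simon} and \cite[Chap.~5]{DH} --- gives on $\partial\Omega$ the instance of \eqref{key}
\[
\Big(\frac{\partial \phi_i}{\partial n}\cdot\frac{\partial \phi_j}{\partial n}\Big)'=-V_n\,\frac{\partial}{\partial n}\Big(\frac{\partial \phi_i}{\partial n}\cdot\frac{\partial \phi_j}{\partial n}\Big).
\]
Expanding the left-hand side by the Leibniz rule for the shape derivative and using the known relation between the shape derivative of the boundary field $\frac{\partial\phi_i}{\partial n}$ and the Neumann trace $\frac{\partial\phi_i'}{\partial n}$ --- the correction distinguishing $\big(\frac{\partial\phi_i}{\partial n}\big)'$ from $\frac{\partial\phi_i'}{\partial n}$ is a second-normal-derivative/transport term whose component against $\frac{\partial\phi_j}{\partial n}$ is annihilated by \eqref{C1-u} and \eqref{C2-u} when $i\neq j$ --- one identifies $\big(\frac{\partial\phi_i}{\partial n}\cdot\frac{\partial\phi_j}{\partial n}\big)'$ with $\langle\frac{\partial\phi_i'}{\partial n},\frac{\partial\phi_j}{\partial n}\rangle+\langle\frac{\partial\phi_j'}{\partial n},\frac{\partial\phi_i}{\partial n}\rangle$, whence this combination equals $-V_n\frac{\partial}{\partial n}\langle\frac{\partial\phi_i}{\partial n},\frac{\partial\phi_j}{\partial n}\rangle$. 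Substituting into the reduction of the first step gives \eqref{C111}.

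The main obstacle is the careful treatment of the shape derivative of the boundary vector field $\frac{\partial\phi_i}{\partial n}$: one must distinguish material from shape derivatives of a quantity living on the moving surface, fix the extension of the normal field once and for all, and check that the correction relating $\big(\frac{\partial\phi_i}{\partial n}\big)'$ and $\frac{\partial\phi_i'}{\partial n}$ is compatible with the explicit $V_n\frac{\partial}{\partial n}\big((\nabla\phi_i)^Tn\big)$ terms produced by Lemma \ref{shap_der}, so that nothing beyond the stated right-hand side of \eqref{C111} survives. A secondary point is the diagonal/off-diagonal bookkeeping: for $i\neq j$ the relation \eqref{C2-u} is differentiated directly, whereas on the diagonal the identity is obtained only in the combined form ``$(i,j)=(1,1)$ minus $(i,j)=(2,2)$'' from \eqref{C3-u}. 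This is exactly the step where the Ortega--Zuazua relations \eqref{C1-u}, \eqref{C2-u}, \eqref{C3-u} enter; the rest is routine manipulation within the shape-derivative Stokes system \eqref{eq:shape-stokes-tv}.
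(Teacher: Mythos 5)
Your proposal is correct and follows essentially the same route as the paper: the paper first writes the shape derivative of the Ortega--Zuazua orthogonality relation in the form $\langle\frac{\partial\phi_i'}{\partial n},\frac{\partial\phi_j}{\partial n}\rangle+\langle\frac{\partial\phi_j'}{\partial n},\frac{\partial\phi_i}{\partial n}\rangle=-V_n\frac{\partial}{\partial n}\langle\frac{\partial\phi_i}{\partial n},\frac{\partial\phi_j}{\partial n}\rangle$ and then uses Lemma \ref{shap_der} together with \eqref{C1-u} to exchange $\frac{\partial\phi'}{\partial n}$ for $\frac{\partial\phi'}{\partial\nu}$, exactly your two steps in the opposite order. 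Your treatment is if anything slightly more explicit than the paper's about why $\bigl(\frac{\partial\phi_i}{\partial n}\cdot\frac{\partial\phi_j}{\partial n}\bigr)'$ may be identified with the symmetrized pairing of Neumann traces and about the diagonal versus off-diagonal bookkeeping.
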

\begin{proof}[Proof of Proposition \ref{shape-derivative}]
The shape derivative of  Eq. (\ref{C1-u}) gives 
\begin{equation}\label{ps_shape}
\Big\langle \frac{\partial \phi'_i}{\partial n}, \frac{\partial \phi_j}{\partial n}\Big\rangle+\Big\langle \frac{\partial \phi_i}{\partial n}, \frac{\partial \phi'_j}{\partial n}\Big\rangle
=-\displaystyle V_n \frac{\partial}{\partial n}\Big(\Big\langle    \frac{\partial \phi_j}{\partial n}   , \frac{\partial \phi_i}{\partial n}\Big\rangle\Big).
\end{equation}
Since $\displaystyle\frac{\partial  \phi_i}{\partial n} \cdot\n=0$, it comes from Lemma (\ref{shap_der})   that 
$$
\Big\langle \frac{\partial \phi'_i}{\partial n}, \frac{\partial \phi_j}{\partial n}\Big\rangle=\Big\langle \frac{\partial \phi'_i}{\partial \nu}, \frac{\partial \phi_j}{\partial n}\Big\rangle + V_n \Big\langle \frac{\partial}{\partial n}(\nabla \phi ^T \n ),\frac{\partial \phi_j}{\partial\n}\Big\rangle,
$$
hence we deduce that  
\begin{eqnarray*}
&&\Big\langle \frac{\partial \phi'_i}{\partial n}, \frac{\partial \phi_j}{\partial n}\Big\rangle+
\Big\langle \frac{\partial \phi'_j}{\partial n}, \frac{\partial \phi_i}{\partial n}\Big\rangle\\
& =&\Big\langle \frac{\partial \phi'_i}{\partial \nu}, \frac{\partial \phi_j}{\partial n}\Big\rangle + 
\Big\langle \frac{\partial \phi'_j}{\partial \nu}, \frac{\partial \phi_i}{\partial n}\Big\rangle
+
V_n\Big( \Big\langle \frac{\partial}{\partial n}(\nabla \phi_i ^T \n ),\frac{\partial \phi_j}{\partial\n}\Big\rangle+\Big\langle \frac{\partial}{\partial n}(\nabla \phi_j^T \n ),\frac{\partial \phi_i}{\partial\n}\Big\rangle\Big).
\end{eqnarray*}
From Eq. (\ref{ps_shape}), we get after identification that 
$$
\Big\langle \frac{\partial \phi'_i}{\partial \nu}, \frac{\partial \phi_j}{\partial n}\Big\rangle + 
\Big\langle \frac{\partial \phi'_j}{\partial \nu}, \frac{\partial \phi_i}{\partial n}\Big\rangle=-V_n \Big( \frac{\partial}{\partial n}\Big(\Big\langle    \frac{\partial \phi_j}{\partial n}   , \frac{\partial \phi_i}{\partial n}\Big\rangle\Big) + 
\Big\langle \frac{\partial}{\partial n}(\nabla \phi_i ^T \n ),\frac{\partial \phi_j}{\partial\n}\Big\rangle+\Big\langle \frac{\partial}{\partial n}(\nabla \phi_j^T \n ),\frac{\partial \phi_i}{\partial\n}\Big\rangle\Big),
$$
and this ends the proof of Proposition \ref{shape-derivative}.

\end{proof}


\subsection{Special choice of $V_n$}\label{choice-Vn}

Let  $\x \in \partial \Omega$ such that the vectors $\displaystyle\frac{\partial \phi_i}{\partial n}(\x)$ and $\displaystyle\frac{\partial \phi_j}{\partial n}(\x)$ span the tangent space  $T_\x(\partial \Omega)$. Let $\mathcal{U}_\x $ be a neighborhood of $\x$ in $\partial \Omega$ such that, for all $\y$ belonging to $\mathcal{U}_\x$, the vectors $ \frac{\partial \phi_i}{\partial n}(\y)$ and $\frac{\partial \phi_j}{\partial n}(\y)$ span $T_\y(\partial \Omega)$.   For $\y \in \partial \Omega$ near $\x$, we write the parametrized form of $\partial \Omega$ near $\x$ as a graph over the tangent plane at $\x$ :  if $\eta=P_\x(\y-\x)$ is the orthogonal projection of $y-x$ onto the tangent plane 
$T_\x(\partial\Omega)$ with $\eta$ sufficiently small, there exists an open neighborhood 
$T_\x \mathcal{U}_\x$ of $0$ in $T_\x(\partial \Omega)$ such that the 
 map $h_\x$ given by
\begin{equation}\label{HX}
\begin{array}{lrl}
h_\x :  &T_\x\mathcal{U}_\x&\mapsto \mathcal{U}_\x\\
      &\eta &\mapsto y=\x+\eta-\nu_\x(\eta)\n_\x,
\end{array}
\end{equation}
is well-defined and is a diffeomorphism onto its image. For $\y$ near $\x$, we have 
$$
\nu_\x(\eta)=\displaystyle\frac{1}{2}\eta^T K_x\eta+O(\vert\eta\vert^3),~~\text{~as~}\eta\rightarrow 0,
$$
where $K_\x$ is the symmetric matrix representing the curvature operator at $\x$. We fix once for all $\delta>0$ small enough so that
$\vert\eta\vert\leq 2\delta$ implies that $\y=\x+\eta-\nu_\x(\eta)\n_\x$ belongs to $\mathcal{U}_\x $.

One has the following standard relations where all the vectors are embedded in $\R^3$ and $\langle\cdot\rangle$ denotes here the standard scalar product in $\R^3$.
\begin{lemma}\label{diff_geometry}
Let  $n'_\x(\eta)=\frac{\partial}{\partial \eta}n_x(\eta)\in T_\x(\partial\Omega)$.  We have 
\begin{equation}\label{geo-diff}
\begin{array}{lllll}
&i)&\n_y&=&\displaystyle \frac{\nu'_\x(\eta)+\n_\x}{\sqrt{1+\vert\nu'_\x(\eta)\vert^2}},\\
&ii)& \langle \n_\x,{y}-x\rangle&=&-\displaystyle\frac{1}{2}\eta^T K_\x\eta+O(\vert\eta\vert^3) ~~\text{~as~}\eta\rightarrow 0,\\
&iii)& \langle \n_\x,\n_\y\rangle&=&\displaystyle\frac{1}{\sqrt{1+\vert\nu'_\x(\eta)\vert^2}}\\
& & &=&1-\displaystyle \frac{1}{2} \vert K_\x\eta\vert^2+O(\vert\eta\vert^3)~~\text{~as~}\eta\rightarrow 0.
\end{array}
\end{equation}
\end{lemma}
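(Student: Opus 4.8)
The plan is to establish the three relations in Lemma~\ref{diff_geometry} by working directly from the graph parametrization \eqref{HX} of $\partial\Omega$ near $\x$, i.e.\ from $\y=h_\x(\eta)=\x+\eta-\nu_\x(\eta)\n_\x$ with $\eta\in T_\x(\partial\Omega)$ and $\nu_\x(\eta)=\tfrac12\eta^TK_\x\eta+O(|\eta|^3)$.

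First I would compute $i)$. Differentiating $h_\x$ with respect to $\eta$ shows that the tangent plane $T_\y(\partial\Omega)$ is spanned by the columns of $\mathrm{Id}_{T_\x}-\n_\x\,(\nu'_\x(\eta))^T$ (viewing $\nu'_\x(\eta)\in T_\x(\partial\Omega)$ as the gradient of the height function). A vector orthogonal to all of these, and pointing outward, is $\nu'_\x(\eta)+\n_\x$: indeed $\langle \nu'_\x(\eta)+\n_\x,\,\xi-\n_\x\langle\nu'_\x(\eta),\xi\rangle\rangle=\langle\nu'_\x(\eta),\xi\rangle-\langle\nu'_\x(\eta),\xi\rangle=0$ for every $\xi\in T_\x(\partial\Omega)$, using $\langle\n_\x,\xi\rangle=0$ and $|\n_\x|=1$. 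Normalizing gives $\n_\y=(\nu'_\x(\eta)+\n_\x)/\sqrt{1+|\nu'_\x(\eta)|^2}$, since $\nu'_\x(\eta)\perp\n_\x$ forces $|\nu'_\x(\eta)+\n_\x|^2=1+|\nu'_\x(\eta)|^2$. For $ii)$, simply take the inner product of $\y-\x=\eta-\nu_\x(\eta)\n_\x$ with $\n_\x$ and use $\eta\perp\n_\x$ to get $\langle\n_\x,\y-\x\rangle=-\nu_\x(\eta)=-\tfrac12\eta^TK_\x\eta+O(|\eta|^3)$. For $iii)$, dot $i)$ with $\n_\x$, again using $\nu'_\x(\eta)\perp\n_\x$, to obtain $\langle\n_\x,\n_\y\rangle=1/\sqrt{1+|\nu'_\x(\eta)|^2}$; then expand, noting from $\nu_\x(\eta)=\tfrac12\eta^TK_\x\eta+O(|\eta|^3)$ that $\nu'_\x(\eta)=K_\x\eta+O(|\eta|^2)$, so $|\nu'_\x(\eta)|^2=|K_\x\eta|^2+O(|\eta|^3)$ and $1/\sqrt{1+|\nu'_\x(\eta)|^2}=1-\tfrac12|K_\x\eta|^2+O(|\eta|^3)$.

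The only genuinely delicate point is bookkeeping the error terms: one must check that differentiating an $O(|\eta|^3)$ remainder in $\nu_\x$ yields an $O(|\eta|^2)$ remainder in $\nu'_\x$ (legitimate since $\partial\Omega$ is $C^\ell$ with $\ell\geq4$, so $\nu_\x$ is at least $C^4$ and the Taylor expansion with remainder may be differentiated termwise), and that squaring and composing with $s\mapsto(1+s)^{-1/2}$ preserves the $O(|\eta|^3)$ order claimed in the last line. Beyond that, everything is elementary linear algebra in $\R^3$ together with the defining property that $K_\x$ is the Hessian at $0$ of the height function $\nu_\x$, i.e.\ the matrix of the shape operator (second fundamental form) at $\x$ in the basis of $T_\x(\partial\Omega)$. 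I expect no conceptual obstacle; the main care is uniformity of the $O(\cdot)$ over $\x$ in a fixed chart, which follows from the compactness of $\partial\Omega$ and the fixed choice of $\delta$ made just before the statement.
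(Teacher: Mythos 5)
Your proposal is correct: the computation of the tangent frame $\xi\mapsto\xi-\langle\nu'_\x(\eta),\xi\rangle\n_\x$, the orthogonality check for $\nu'_\x(\eta)+\n_\x$, the normalization using $\nu'_\x(\eta)\perp\n_\x$, and the Taylor expansions for $ii)$ and $iii)$ are all sound, including the bookkeeping of the $O(|\eta|^3)$ remainders. The paper itself gives no argument here — it only cites standard surface theory (Berger, and Henry p.~146) — and what you have written is exactly the standard derivation those references contain, so there is nothing further to reconcile.
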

\begin{proof}[Proof of Lemma \ref{diff_geometry}]

These equations are easily obtained by standard facts from the theory of surfaces (cf. \cite[Chapter 10]{berger}) and are explicitly given in \cite[page 146]{DH}.
\end{proof}\smallskip

\begin{rem}\label{jacob}
We note that the inverse of the Jacobian of the change of variables $h_\x^{-1}:\y\rightarrow \eta=h_\x^{-1}(\y)$ from a neighborhood of $\x$ on $\partial\Omega$ to a neighborhood of $0$ in $\R^2$ is equal to $\langle \n_\x,\n_\y\rangle$.
\end{rem}\medskip

We are now ready to define $V_n$.  Let $\varepsilon <<\delta$ be a positive real number. 
For $\eta_0\in B_\varepsilon(0)\subset T_x(\partial \Omega)$, consider the points  $\x_0 \in \mathcal{U}_\x$ which can be written as 
$$
\x_0=\x+\eta_0-\nu_\x(\eta_0)\n_\x,
$$
and set 
$$
\eta_0=r_0 (\cos{\theta_0},\sin{\theta_0})^T,
$$
where $\theta_0 \in S^1$ and $0<r_0\le \varepsilon$.



Our choice for $V_n$ will be 
\begin{equation}\label{eq:Vn}
V_n(\y):=(\alpha_{\varepsilon,\eta_0}\beta_\delta)\circ h_\x^{-1}(\y), 
\end{equation}
where, for $\eta\in\R^2$ (identified with $T_\x(\partial \Omega)$), 
$$
\alpha_{\varepsilon,\eta_0}(\eta):=\displaystyle\frac{1}{\varepsilon^2} \exp[-\frac{\mid \eta-\eta_0\mid^2}{\varepsilon^2}],
$$and $\beta_\delta(\cdot)$ is a smooth cut-off function equal to $1$ on $B(0,3\delta/2)$ and $0$ on $\R^2\setminus B(0,2\delta)$. 

\begin{lemma}\label{grad-V}
If $\y=h_\x(\eta)$ with $\eta\in B(0,\delta)$, then we have
\begin{equation}\label{eq:vn1}
\nabla V_n(\y)=W(h_x(y))-\langle W(h_x(y)), \nu'_\x(h_x(y))\rangle n_\x,
\end{equation}
where $\nabla V_n(\y)$ is used to denote the tangential gradient of $V_n$ along $\partial \Omega$ and $W(h_x(y))\in T_\x(\partial \Omega)$ is given by
\begin{equation}\label{eq:vn2}
W(h_x(y))=\nabla\alpha_{\varepsilon,\eta_0}(h_x(y))-\frac{\langle \nabla\alpha_{\varepsilon,\eta_0}(h_x(y)), \nu'_\x(h_x(y))\rangle }{1+\vert\nu'_\x(h_x(y))\vert^2}\nu'_\x(h_x(y)).
\end{equation}
\end{lemma}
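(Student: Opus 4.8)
The plan is to compute the tangential gradient of $V_n = (\alpha_{\varepsilon,\eta_0}\beta_\delta)\circ h_\x^{-1}$ by pulling everything back to the tangent plane $T_\x(\partial\Omega)$ via the chart $h_\x$, and then pushing forward. Since we restrict to $\eta\in B(0,\delta)\subset B(0,3\delta/2)$, on this region $\beta_\delta\circ h_\x^{-1}\equiv 1$, so $V_n(h_\x(\eta)) = \alpha_{\varepsilon,\eta_0}(\eta)$ and only $\alpha_{\varepsilon,\eta_0}$ matters. The tangential gradient $\nabla V_n(\y)$ at $\y=h_\x(\eta)$ is the unique vector in $T_\y(\partial\Omega)$ such that, for any tangent vector $\xi\in T_\y(\partial\Omega)$, $\langle\nabla V_n(\y),\xi\rangle = d(V_n)_\y(\xi)$.

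First I would write $h_\x(\eta) = \x + \eta - \nu_\x(\eta)\n_\x$, so the differential of $h_\x$ at $\eta$ sends $\tau\in T_\x(\partial\Omega)$ to $\tau - \langle\nu'_\x(\eta),\tau\rangle\n_\x =: D h_\x(\eta)[\tau]$; this is a basis-describing parametrization of $T_\y(\partial\Omega)$. The chain rule gives $d(V_n)_\y(D h_\x(\eta)[\tau]) = \langle\nabla\alpha_{\varepsilon,\eta_0}(\eta),\tau\rangle$ for all $\tau\in T_\x(\partial\Omega)$ (here $\nabla\alpha$ is the ordinary gradient in $T_\x(\partial\Omega)\cong\R^2$). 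Next I would make the ansatz that the tangential gradient has the form $\nabla V_n(\y) = W - \langle W,\nu'_\x(\eta)\rangle\n_\x$ for some $W\in T_\x(\partial\Omega)$ to be determined — note this is exactly $D h_\x(\eta)[W]$, hence automatically lies in $T_\y(\partial\Omega)$. Imposing the defining relation $\langle\nabla V_n(\y), D h_\x(\eta)[\tau]\rangle = \langle\nabla\alpha_{\varepsilon,\eta_0}(\eta),\tau\rangle$ for every $\tau$ yields, after expanding the inner product in $\R^3$ and using $\langle\n_\x,\tau\rangle=0$,
\begin{equation*}
\langle W,\tau\rangle + \langle W,\nu'_\x(\eta)\rangle\langle\nu'_\x(\eta),\tau\rangle = \langle\nabla\alpha_{\varepsilon,\eta_0}(\eta),\tau\rangle,\qquad\forall\,\tau\in T_\x(\partial\Omega),
\end{equation*}
i.e. $(I + \nu'_\x(\eta)\otimes\nu'_\x(\eta))W = \nabla\alpha_{\varepsilon,\eta_0}(\eta)$ as an identity in $T_\x(\partial\Omega)$.

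The last step is to invert the rank-one perturbation of the identity: by the Sherman–Morrison formula, $(I + v\otimes v)^{-1} = I - \frac{v\otimes v}{1+|v|^2}$, so with $v=\nu'_\x(\eta)$ we get exactly
\begin{equation*}
W = \nabla\alpha_{\varepsilon,\eta_0}(\eta) - \frac{\langle\nabla\alpha_{\varepsilon,\eta_0}(\eta),\nu'_\x(\eta)\rangle}{1+|\nu'_\x(\eta)|^2}\,\nu'_\x(\eta),
\end{equation*}
which is \eqref{eq:vn2} (after substituting $\eta = h_\x^{-1}(\y)$, written as $h_x(y)$ in the statement's notation), and then \eqref{eq:vn1} follows from $\nabla V_n(\y) = W - \langle W,\nu'_\x(\eta)\rangle\n_\x$. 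I do not expect a serious obstacle here — the only points requiring slight care are: (i) checking that the proposed vector genuinely lies in $T_\y(\partial\Omega)$ (handled by the $D h_\x$ observation), (ii) correctly expanding the $\R^3$ inner product $\langle\,\cdot\,,\,\cdot\,\rangle$ between vectors of the form $a - \langle b,a'\rangle\n_\x$ using orthogonality of $\n_\x$ to the tangent plane, and (iii) invoking Lemma \ref{diff_geometry}(i) to recognize that $\n_\y$ is parallel to $\nu'_\x(\eta)+\n_\x$, which is implicitly what makes the decomposition consistent. The computation is essentially linear algebra once the pullback is set up, so the ``hard part'' is merely bookkeeping the tangent/normal splitting cleanly.
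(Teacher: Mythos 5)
Your proof is correct and follows essentially the same route as the paper: both reduce the problem to the linear equation $(I+\nu'_\x(\eta)\otimes\nu'_\x(\eta))W=\nabla\alpha_{\varepsilon,\eta_0}(\eta)$ on $T_\x(\partial\Omega)$ and invert the rank-one perturbation, the only cosmetic difference being that you posit $\nabla V_n(\y)=Dh_\x(\eta)[W]$ as an ansatz while the paper decomposes $\nabla V_n(\y)$ along $T_\x(\partial\Omega)\oplus\R\n_\x$ and uses tangency to $\partial\Omega$ at $\y$ to pin down the normal component.
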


\begin{proof}[Proof of Lemma \ref{grad-V}] Recall that $\eta=h_x(y)$. By definition of $V_n$, one has, for every tangent vector $w\in T_y(\partial\Omega)$ with $\xi=dh_x^{-1}(y)w\in T_x(\partial\Omega)$
\begin{equation}\label{eq:vn3}
\begin{array}{lrl}
\langle\nabla V_n(y), w\rangle&=&dV_n(y)w
=d(\alpha_{\varepsilon,\eta_0}\circ h_\x^{-1})(\y)w~=~d\alpha_{\varepsilon,\eta_0}(\eta)dh_x^{-1}(y)w\\
&=&d\alpha_{\varepsilon,\eta_0}(\eta)\xi~=~\langle\nabla\alpha_{\varepsilon,\eta_0}(\eta),\xi\rangle,
\end{array}
\end{equation}
where $d\alpha_{\varepsilon,\eta_0}$ denotes the differential of the scalar function 
$\alpha_{\varepsilon,\eta_0}$. 
From Eq. \eqref{HX}, one gets that $w=\xi-\langle \nu'_x(\eta),\xi\rangle n_x$. By a simple computation, one deduces that
$$
\xi=w+\langle \nu'_x(\eta),w\rangle n_x,
$$
where the latter inner product is taken in $\mathbb{R}^3$. According to the orthogonal sum $\mathbb{R}^3=T_x(\partial\Omega)\stackrel{\perp}{\oplus} \R n_x$, one has $\nabla V_n(y)=W(\eta)+\langle V_n(y),n_x\rangle n_x$ for some vector $W(\eta)\in T_\x(\partial \Omega)$. Since $\nabla V_n(y)\in T_y(\partial\Omega)$, on gets after taking the inner product of the decomposition of $V_n(y)$ with $n_y$ that
$$
\langle V_n(y),n_x\rangle=-\langle W(\eta),\nu'_x(\eta)\rangle.
$$
Plugging the two previous displayed equations into Eq. \eqref{eq:vn3}, one deduces that
$$
W(\eta)+\langle W(\eta),\nu'_x(\eta)\rangle \nu'_x(\eta)=\nabla\alpha_{\varepsilon,\eta_0}(\eta),
$$
which in turns yields Eq. \eqref{eq:vn2} and hence Eq. \eqref{eq:vn1}.
\end{proof}

\paragraph{Convention} For the ease of notation, the gradient of a scalar function will be considered in the following as a \emph{line} vector instead of a column vector. This convention will allow us to use the notation $\nabla$ for both scalar and vector-valued functions in a consistent way.

\subsection{End of the proof of Theorem \ref{main-theo}}
The main technical result of the paper is summarized in the following proposition. The proof is provided in Section \ref{sec:pf-expansion}.
\begin{prop}\label{expansion}
Let $x\in \partial\Omega$ such that the vectors $\displaystyle\frac{\partial \phi_i}{\partial n}(\x)$ and $\displaystyle\frac{\partial \phi_j}{\partial n}(\x)$ span the tangent space  $T_\x(\partial \Omega)$. Use $P_x$ to denote the orthogonal projection onto $T_x(\partial\Omega)$. Then, for $\varepsilon$ small enough, one has, for every $\eta_0\in B_\varepsilon\subset T_x(\partial\Omega)$ and the corresponding variation $V_n$ defined in Eq. \eqref{eq:Vn}, that, for $j=1,2$, 
\begin{eqnarray}
P_x(\frac{\partial\phi_j'}{\partial \nu}(\x))&=&2\frac{e^{-\br^2}}{\varepsilon^3}\Big(M_2^{A_1}(\br)+M_5^{A_1}(\bar{r}_0)-\bar{r}_0^2M_3^{A_1}(\bar{r}_0)\Big)\frac{\partial \phi_j}{\partial n}(x)\nonumber\\
&&+2\frac{e^{-\br^2}}{\varepsilon^3}M_4^{A_1}(\br)\langle\bar{\eta}_0,\frac{\partial \phi_j}{\partial n}(x)\rangle \bar{\eta}_0+O(\frac{1}{\varepsilon^2}),
\end{eqnarray}
where $\bar{r}_0=\frac{\Vert \eta_0\Vert}\varepsilon$, $\bar{\eta}_0=\frac{\eta_0}{\Vert \eta_0\Vert}=:(\cos(\theta_0),\sin(\theta_0))^T$ and 
$M_k^{A_1}(\cdot)$, $2\leq k\leq 5$, are nonzero entire function defined in Eqs. (\ref{eq:m2}), (\ref{eq:m3}), (\ref{def-M4}) and (\ref{eq:m5}) respectively.
\end{prop}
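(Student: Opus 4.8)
The plan is to compute the principal singular part of the Neumann data of the shape derivative $\phi_j'$ of an eigenfunction, at a boundary point $y$ near the fixed base point $x$, by exploiting the Stokes system \eqref{eq:shape-stokes-tv} it solves together with the boundary condition $\phi_j'(V) = -V_n\,\partial\phi_j/\partial n$ on $\partial\Omega$. Since $V_n$ is chosen as an approximate Dirac mass $\alpha_{\varepsilon,\eta_0}$ (pulled back by the graph chart $h_x$), the right-hand side $V_n\,\partial\phi_j/\partial n$ is a highly concentrated boundary datum of size $\varepsilon^{-2}$, and the game is to track how the Dirichlet-to-Neumann operator of the Stokes system amplifies this concentration to produce a term of size $\varepsilon^{-3}$ in $\partial\phi_j'/\partial\nu$. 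First I would represent $\phi_j'$ via the boundary layer potentials for the (shifted) Stokes operator $-(\Delta+\lambda)$: the inhomogeneous term $-\lambda_j'(u)\phi_j$ contributes only a smooth (hence $O(\varepsilon^{-2})$ after testing against $\alpha_{\varepsilon,\eta_0}$, since $\int\alpha_{\varepsilon,\eta_0}$ is $O(1)$ but its interaction with smooth kernels loses the extra power) remainder, so the leading behavior comes entirely from the single- and double-layer parts carrying the Dirichlet datum $-V_n\,\partial\phi_j/\partial n$. The conormal derivative $\partial\phi_j'/\partial\nu$ is then given by the corresponding hypersingular and adjoint double-layer operators applied to that datum.

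The core computation is local: I would freeze coefficients at $x$, use the chart $h_x$ and Lemma~\ref{diff_geometry} and Remark~\ref{jacob} to write the surface integrals over $\mathcal{U}_x$ as integrals over $T_x(\partial\Omega)\simeq\R^2$ in the variable $\eta$, with the Jacobian $\langle n_x,n_y\rangle = 1 - \tfrac12|K_x\eta|^2 + O(|\eta|^3)$. The kernels of the Stokes layer potentials (the Oseen tensor and its conormal derivative, whose explicit forms and Taylor expansions near the diagonal are exactly the ``good representation formula'' material gathered in Section~6 and Appendix~A) behave like $|y-x_0|^{-1}$ and $|y-x_0|^{-2}$ respectively; after the substitution $\eta = \varepsilon\zeta$, $\eta_0 = \varepsilon\bar r_0\bar\eta_0$ (so that $|\eta - \eta_0| = \varepsilon|\zeta - \bar r_0\bar\eta_0|$) and using $\alpha_{\varepsilon,\eta_0}(\varepsilon\zeta) = \varepsilon^{-2}e^{-|\zeta-\bar r_0\bar\eta_0|^2}$, each surface integral becomes $\varepsilon^{-2}\cdot\varepsilon^{2}$ (from $d\eta$) times $\varepsilon^{-k}$ (from the kernel homogeneity, $k=1$ or $2$) times $\varepsilon^{-1}$ when we finally evaluate the conormal derivative, producing the $\varepsilon^{-3}$ scale. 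The coefficient functions $M_k^{A_1}(\bar r_0)$ are precisely the resulting Gaussian-weighted integrals over $\R^2$: e.g. $M_2^{A_1}(\bar r_0) = \int_{\R^2} e^{-|\zeta-\bar r_0\bar\eta_0|^2}\,\frac{(\text{angular factor})}{|\zeta|}\,d\zeta$ and similarly for $M_3,M_4,M_5$ with higher homogeneity or extra $\zeta_i\zeta_j$ factors, the $\bar r_0^2 M_3$ term coming from the curvature correction $-\tfrac12|K_x\eta|^2$ in the Jacobian and from the second-order Taylor term in the kernel. The decomposition into a part along $\partial\phi_j/\partial n(x)$ and a part along $\bar\eta_0$ reflects the tensorial structure of the Oseen kernel $\tfrac{1}{|z|}(I + \tfrac{z\otimes z}{|z|^2})$: contracting $\tfrac{\partial\phi_j}{\partial n}$ against $I$ gives the isotropic $M_2,M_3,M_5$ terms, while contracting against $z\otimes z$ produces the anisotropic $M_4$ term with the projection onto $\bar\eta_0$.

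The main obstacle is twofold. First, one must show rigorously that all the contributions I have lumped into $O(\varepsilon^{-2})$ — the inhomogeneous term $-\lambda_j'\phi_j$, the error between the true curved surface integral and its frozen-coefficient model, the higher-order Taylor remainders of the Stokes kernels, the cut-off $\beta_\delta$ away from $x$, and the difference between $\partial/\partial\nu$ and $\partial/\partial n$ handled via Lemma~\ref{shap_der} — are genuinely one power of $\varepsilon$ smaller; this requires the careful weakly-singular-operator estimates of \cite[Chap.~7]{DH} and uniform bounds in $\eta_0\in B_\varepsilon$. Second, and more delicate, one must verify that the entire functions $M_k^{A_1}$ are \emph{nonzero} — this is what makes the leading term genuinely present and is the crux for the later contradiction in the proof of Theorem~\ref{main-theo}; I would establish this by examining the $\bar r_0 = 0$ value (a pure Gaussian integral against $|\zeta|^{-1}$ times a trigonometric polynomial, explicitly evaluable and nonzero) or, failing that, by an analyticity argument showing a zero-free open set. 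The bookkeeping of the several Stokes layer operators and their kernel expansions is heavy but, as the paper signals, essentially routine once the setup and the scaling $\eta = \varepsilon\zeta$ are in place; it is deferred to Section~\ref{sec:pf-expansion} and the appendices.
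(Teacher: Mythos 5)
Your overall strategy is the same as the paper's: represent $\partial\phi_j'/\partial\nu$ by the boundary-integral formula \eqref{conormal_derivative0} applied to the system \eqref{eq:shape-stokes-tv}, isolate the hypersingular (conormal derivative of the double layer) operator acting on the concentrated Dirichlet datum $-V_n\,\partial\phi_j/\partial n$ as the sole source of the $\varepsilon^{-3}$ term, rescale $\eta=\varepsilon\zeta$ in the chart $h_x$, evaluate the resulting Gaussian integrals in polar coordinates to get the entire functions $M_k^{A_1}$, and push everything else (the $-\lambda_j'\phi_j$ volume term, the iterates of $(K_\Omega^\lambda)^*$, the $\Delta^\lambda$ correction, the resolvent remainder) into $O(\varepsilon^{-2})$ via the weakly singular operator calculus of \cite[Chap.~7]{DH}. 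This is precisely the decomposition $W_1^j+\dots+W_4^j$ of Section~\ref{sec:pf-expansion}, with the hypersingular part further split by Lemma~\ref{singularity_couche} into $A_1,\dots,A_5$, only $A_1$ contributing at order $\varepsilon^{-3}$.

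One point in your sketch is wrong and would derail the computation if followed literally: you attribute the term $-\bar r_0^2 M_3^{A_1}(\bar r_0)$ to ``the curvature correction $-\tfrac12|K_x\eta|^2$ in the Jacobian and the second-order Taylor term in the kernel.'' In fact all curvature and Taylor corrections (the $O(|\eta|^2)$ terms in $\nu_x$, $\langle n_x,n_y\rangle$ and in the kernel) gain a full power of $\varepsilon$ and are absorbed into the $O(\varepsilon^{-2})$ remainder — this is exactly what Lemma~\ref{miracle1} applied to the $J^{A_{1,1}}$-type integrals shows. The $-\bar r_0^2 M_3^{A_1}$ contribution instead comes from the fact that the leading part of the datum's gradient is $\nabla\alpha_{\varepsilon,\eta_0}(\eta)\propto -(\eta-\eta_0)\,\alpha_{\varepsilon,\eta_0}(\eta)/\varepsilon^2$, i.e.\ from the \emph{offset} $\eta_0$ of the Gaussian relative to the base point $x$ of the chart (see Lemma~\ref{grad-V} and the split $\eta-\eta_0=\eta-\eta_0$ inside $I^{A_{1,1}}$ and $I^{A_{1,2}}$, the latter producing $\langle\eta,\eta-\eta_0\rangle=|\eta|^2-\langle\eta,\eta_0\rangle$ and hence Lemma~\ref{A12}). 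This offset is not a technical nuisance but the whole point of centering the bump at $x_\varepsilon\neq x$; misplacing its contribution would give the wrong $\bar r_0$-dependence of the leading coefficient, which is exactly the quantity exploited later. Apart from this, the deferred verifications you list (lower-order bounds for $W_2^j,W_3^j,W_4^j$, non-vanishing of the $M_k^{A_1}$, which indeed can be checked at $\bar r_0=0$ from their explicit power series) are carried out in the paper as you anticipate.
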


We can now conclude the proof of Theorem \ref{main-theo}. By conditions \eqref{C1} and \eqref{C2}, and Proposition \ref{expansion}, we have
\begin{eqnarray*}
&&\langle\frac{\partial\phi_1'}{\partial{\nu}}(\x),\frac{\partial\phi_2}{\partial\n}(\x)\rangle+\langle\frac{\partial\phi_2'}{\partial{\nu}}(\x),\frac{\partial\phi_1}{\partial\n}(\x)\rangle\\
&=&-\frac{e^{-\br^2}}{\pi\eps^3}M_4^{A_1}(\br)\langle\bar{\eta}_0,\frac{\partial\phi_1}{\partial\n}(\x)\rangle\langle\bar{\eta}_0,\frac{\partial\phi_2}{\partial\n}(\x)\rangle+O(\frac{1}{\eps^2})\\
&=&-\frac{e^{-\br^2}}{\pi\eps^3}M_4^{A_1}(\br)r_\phi^2\cos(\theta_1-\theta_0)\cos(\theta_2-\theta_0)+O(\frac{1}{\eps^2}),
\end{eqnarray*}with $\displaystyle \frac{\partial\phi_j}{\partial\n}(\x)=r_\phi(\cos\theta_j,\sin\theta_j)^T$, for $j=1,2$.\smallskip

However, Proposition \ref{shape-derivative} implies that 
\begin{equation*}
\langle\frac{\partial\phi_1'}{\partial \nu}(\x),\frac{\partial\phi_2}{\partial\n}(\x)\rangle+\langle\frac{\partial\phi_2'}{\partial \nu}(\x),\frac{\partial\phi_1}{\partial\n}(\x)\rangle=O(\frac{1}{\varepsilon^2}).
\end{equation*}
Therefore, if we now fix $\br\leq 1$ such that $M_4^{A_1}(\br)\neq 0$ and recall that $r_\phi>0$,  we have, for every $\theta_0\in S^1$, 
\begin{equation}\label{cond1}
\cos(\theta_1-\theta_0)\cos(\theta_2-\theta_0)=O(\eps).
\end{equation}
By letting $\eps$ tend to zero, we deduce that 
\begin{equation}\label{cond3}
\cos(\theta_1-\theta_0)\cos(\theta_2-\theta_0)=0,
\end{equation}
since $\theta_0$ does not depend on $\eps$. 
Again, by conditions \eqref{C1} and \eqref{C2}, one has $\vert \theta_{1}-\theta_{2}\vert=\pi/2$. Then, by 
replacing the arbitrary angle $\theta_0$ by $\theta_0-\theta_1$ in Eq. (\ref{cond1}), one derives that 
\begin{equation}\label{cond3}
\sin2\theta_0=0,
\end{equation} holding for an arbitrary angle $\theta_0\in S^1$. This yields the final contradiction and Theorem \ref{main-theo} is established.


\section{Proof of Proposition \ref{expansion}}\label{sec:pf-expansion}
This section is devoted to the proof of Proposition  \ref{expansion}. The argument starts by applying \eqref{conormal_derivative0} to  $\phi^{\lambda}=\phi'_j$, $j=1,2$, solution of \eqref{eq:stokes-tv1}-\eqref{eq:stokes-tv4}. The four terms of the right-hand side of \eqref{conormal_derivative0} correspond to four terms $W_i^j$, $1\leq i\leq 4$ respectively. 
Since $\phi'_j=-\displaystyle V_n\frac{\partial \phi_j}{\partial n}$ on $\partial \Omega$, it comes that 
\begin{equation}\label{solution}
\displaystyle\frac{\partial \phi'_j}{\partial  \nu}(\x) = W_1^j(\x)+W_2^j(\x)+W_3^j(\x)+W_4^j(\x),
\end{equation}
where we have in coordinates, for $1\leq s\leq 3$, and $\phi_j=(\phi_j^m)_{1\leq m\leq 3}$,
\begin{equation}\label{w1-0}
\Big[W_1^j(\x)\Big]_s=- 2~\textrm{p.v.}  \displaystyle\int_{\partial \Omega}\frac{\partial^2\Gamma^0_{s m}(\x-\y)}
{{\partial N(\x)\partial  N(\y)}}~V_n(\y)\frac{\partial \phi_j^m}{\partial n}(\y)~d\sigma(\y), \\
\end{equation}

\begin{eqnarray}
\Big[W_2^j(\x)\Big]_s&=&-\Big(\displaystyle\Big( \sum_{k=1}^N\Big[ (-2) ({K}_\Omega^\lambda)^*\Big]^k \Big) \Big[-2 \textrm{ p.v.}  \displaystyle\int_{\partial \Omega}\frac{\partial^2\Gamma^0_{s m}(x-y)}
{{\partial N(x)\partial  N(\y)}} ~ V_n(\y)\frac{\partial \phi_j^m}{\partial n}(\y)~d\sigma(\y) \Big)(\x)\Big]\nonumber\\
&=&\Big(\displaystyle\Big[ \sum_{k=1}^N\Big[ (-2) ({K}_\Omega^\lambda)^*\Big]^k  \Big]W_1^j\Big)(x),\label{w2-0}
\end{eqnarray}

\begin{equation}\label{w3-0}
\Big[W_3^j(\x)\Big]_s=-\displaystyle\Big( \sum_{k=0}^N\Big[ (-2) ({K}_\Omega^\lambda)^*\Big]^k \Big) \displaystyle\int_{\partial \Omega}\frac{\partial^2 \Delta_{s m}^{\lambda}(x-\y)}
{{\partial  N(x)\partial  N(\y)}} ~V_n(\y)\frac{\partial \phi_j^m}{\partial n}(\y)~d\sigma(\y),
\end{equation}
and
\begin{eqnarray}
\Big[W_4^j(\x)\Big]_s&=&-\Big[ R-\displaystyle\Big( \sum_{k=1}^N\Big[ (-2) ({K}_\Omega^\lambda)^*\Big]^k \Big)   \Big] \Big[\textrm{ p.v.}  \displaystyle\int_{\partial \Omega}\frac{\partial^2\Gamma^0_{s m}(x-\y)}
{{\partial  N(x)\partial  N(\y)}}  ~V_n(\y)\frac{\partial \phi_j^m}{\partial n}(\y)~d\sigma(\y)\Big]\nonumber \\
&-&\Big[ R-\displaystyle\Big( \sum_{k=1}^N\Big[ (-2) ({K}_\Omega^\lambda)^*\Big]^k \Big)   \Big] \displaystyle\int_{\partial \Omega}\frac{\partial^2 \Delta_{s m}^{\lambda}(x-\y)}
{{\partial  N(x)\partial  N(\y)}} V_n(\y)\frac{\partial \phi_j^m}{\partial n}(\y)~d\sigma(\y).\label{w4-0}
\end{eqnarray}
We take $V_n(\y)=\displaystyle\frac{1}{\varepsilon^2} \displaystyle \exp[-\frac{\mid h_\x^{-1}(\y)-h_\x^{-1}(\x_0)\mid^2}{\varepsilon^2}]$ and  tackle the asymptotic expansion of each term appearing in the right hand side of the equation quoted above. Our strategy is simple:  we show that the main term of the expansion is contained in $W_1^j$, where appears the effect of the hyper-singular operator.  Next, we prove that all other terms $W_i^j(x), ~i=2,3, 4$ are actually remainder terms. These are the contents of Proposition \ref{term-w1}
and Proposition \ref{term-w2-4} respectively given in the next subsections.

\subsection{Expansion of $W_1^j$}
The goal of this subsection is to provide the main term in the expansion of $W_1^j(x)$ defined in Eq. (\ref{w1-0}).
More precisely, we prove the following.
\begin{prop}\label{term-w1}With the notations of Proposition \ref{expansion}, we have, for $\eps>0$ small enough
and $j=1,2$, 
\begin{eqnarray}
P_x(W_1^j(x))&=&2\frac{e^{-\br^2}}{\varepsilon^3}\Big(M_2^{A_1}(\br)+M_5^{A_1}(\bar{r}_0)-\bar{r}_0^2M_3^{A_1}(\bar{r}_0)\Big)\frac{\partial \phi_j}{\partial n}(x)\nonumber\\
&+&2\frac{e^{-\br^2}}{\varepsilon^3}M_4^{A_1}(\br)\langle\bar{\eta}_0,\frac{\partial \phi_j}{\partial n}(x)\rangle \bar{\eta}_0+O(\frac{1}{\varepsilon^2})\label{est-W1}.
\end{eqnarray}
\end{prop}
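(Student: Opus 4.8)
\textbf{Proof strategy for Proposition \ref{term-w1}.}
The plan is to extract the leading singular behaviour of the hyper-singular double-layer operator applied to the concentrating bump $V_n$. First I would pass to the local chart $h_\x$ of Section \ref{choice-Vn}, writing every point $\y$ near $\x$ as $\y=h_\x(\eta)$, and rewrite the principal value integral in \eqref{w1-0} as an integral over $\eta\in\R^2$ against $V_n\circ h_\x(\eta)=\alpha_{\varepsilon,\eta_0}(\eta)\beta_\delta(\eta)$, picking up the Jacobian $\langle n_\x,n_\y\rangle$ of Remark \ref{jacob}. Since $\beta_\delta\equiv1$ on $B(0,3\delta/2)$ and $\alpha_{\varepsilon,\eta_0}$ is exponentially small outside $B(0,2\varepsilon)$, the cut-off contributes only an $O(\varepsilon^\infty)$ error, so the integral reduces to one over a fixed ball $B(0,\delta)$. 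Then I would substitute $\eta=\eta_0+\varepsilon\zeta$, so that $\alpha_{\varepsilon,\eta_0}(\eta)\,d\eta=\frac1{\varepsilon^2}e^{-|\zeta|^2}\cdot\varepsilon^2\,d\zeta=e^{-|\zeta|^2}d\zeta$; the hyper-singular kernel $\partial^2\Gamma^0_{sm}/\partial N(\x)\partial N(\y)$, which is homogeneous of degree $-3$ in $\x-\y$, scales like $\varepsilon^{-3}$ after this change of variables, producing the overall $1/\varepsilon^3$ prefactor.

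The core computation is then a Taylor expansion in $\varepsilon$ of everything except the Gaussian weight: the kernel $\Gamma^0$ and its normal derivatives (using the explicit fundamental solution of the Stokes system recalled in the appendices), the surface parametrization $\nu_\x(\eta)=\tfrac12\eta^TK_\x\eta+O(|\eta|^3)$ and the normal field $n_\y$ from Lemma \ref{diff_geometry}, the Jacobian factor, and the Neumann datum $\frac{\partial\phi_j^m}{\partial n}(\y)=\frac{\partial\phi_j^m}{\partial n}(\x)+O(|\eta|)$. Keeping $\eta_0=\varepsilon\br\,\bar\eta_0$ with $\br=\|\eta_0\|/\varepsilon$ fixed, the leading term is a universal matrix-valued Gaussian integral over $\zeta\in\R^2$, evaluated at $\zeta$ shifted by $\br\bar\eta_0$; performing these integrals (they are moments of a Gaussian, hence evaluate to the entire functions of $\br$ denoted $M_k^{A_1}$) and then projecting by $P_\x$ onto $T_\x(\partial\Omega)$ yields exactly the two tensorial structures in \eqref{est-W1}: a multiple of $\frac{\partial\phi_j}{\partial n}(\x)$ and a multiple of $\langle\bar\eta_0,\frac{\partial\phi_j}{\partial n}(\x)\rangle\bar\eta_0$, the latter coming from the rank-one part $\zeta\zeta^T$ of the kernel's angular dependence. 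Every correction term carries an extra power of $\varepsilon$ (from the $O(|\eta|)$ in the Neumann data, the curvature terms, or the sub-leading part of the kernel), contributing only $O(1/\varepsilon^2)$.

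The principal obstacle, as in \cite{BCKL}, is the rigorous treatment of the hyper-singular (order $-3$, non-integrable) kernel: the principal-value/finite-part regularization does not commute naively with the change of variables and the Taylor expansions, so one must split the $\eta$-integral into an inner region $|\eta-\eta_0|\le C\varepsilon$ and an outer region, control the finite-part cancellations uniformly, and verify that the regularizing tangential-derivative structure of $\partial^2\Gamma^0/\partial N\partial N$ (which makes the operator bounded on the right Sobolev scale) produces genuine convergence of the rescaled integrals rather than spurious divergences. This is where the technical machinery of \cite[Chap.~7]{DH} on weakly singular operators and surface-integral evaluation is invoked, and where choosing the chart based at $\x_0$ (at distance $\varepsilon$ from $\x$) rather than at $\x$ itself is essential to keep the kernel's singularity off the center of the Gaussian. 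Once the leading Gaussian moments are identified and shown to be nonzero entire functions of $\br$, and the remainder is bounded by $O(1/\varepsilon^2)$, Proposition \ref{term-w1} follows; combined with Proposition \ref{term-w2-4} (showing $W_2^j,W_3^j,W_4^j$ are all $O(1/\varepsilon^2)$) it gives Proposition \ref{expansion}.
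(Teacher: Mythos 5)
Your computational skeleton (local chart, rescaling against the Gaussian, Taylor expansion, Gaussian moments yielding entire functions of $\br$) matches the paper's, and you correctly identify the hyper-singular kernel as the central difficulty. But you leave that difficulty unresolved, and the idea that resolves it in the paper is missing from your proposal: before any expansion, the paper rewrites the hypersingular operator $E$ using the G\"unter-derivative identities of Hsiao--Wendland (Lemma \ref{formula-Hsiao}, Corollary \ref{coro:hsiao}, Lemma \ref{singularity_couche}). This integration by parts on the boundary moves one derivative onto the density $V_n\,\partial\phi_j/\partial n$, so that $4\pi E(V_n\,\partial\phi_j/\partial n)=\sum_{i=1}^5A_i$ where every kernel is only of order $\vert x-y\vert^{-2}$ (a genuine principal value on a surface) or weakly singular of class $C^3_*(1)$, acting linearly on $\nabla(V_n\,\partial\phi_j/\partial n)$. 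The $1/\varepsilon^3$ then comes from $\nabla\alpha_{\varepsilon,\eta_0}\sim\varepsilon^{-4}(\eta-\eta_0)e^{-|\eta-\eta_0|^2/\varepsilon^2}$ paired with an order $-2$ kernel, and the explicit coefficients $M_k^{A_1}(\br)$ drop out of polar coordinates centered at the kernel singularity $\eta=0$ (not at the Gaussian center $\eta_0$), with the only residual principal value being the convergent one in $M_3^{A_1}$, handled by Lemma \ref{miracle2}; the remainders are controlled by Lemma \ref{miracle1}.

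Without this reformulation, the step you describe as "Taylor expand everything except the Gaussian weight and keep the leading Gaussian moment" is not justified: $\partial^2\Gamma^0/\partial N(x)\partial N(y)$ is of order $-3$ on a two-dimensional surface, hence a finite-part (Hadamard) rather than a principal-value distribution, and finite-part regularization does not commute with your rescaling $\eta=\eta_0+\varepsilon\zeta$ or with replacing the density by its value at $x$ plus an $O(|\eta|)$ error. In particular your claim that each correction "carries an extra power of $\varepsilon$" needs proof precisely at this point, since an $O(|\eta|)$ factor only lowers the singularity to order $-2$, still non-absolutely-integrable. Your suggested fix (inner/outer splitting plus the general weak-singularity machinery of \cite[Chap.~7]{DH}) would at best give boundedness of the operator, not the explicit leading coefficient in front of $1/\varepsilon^3$, which is what the contradiction argument requires. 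A further small correction: in the paper the chart is based at $x$ itself and the kernel singularity sits at the chart origin; it is the Gaussian that is off-centered at $\eta_0$, and after your rescaling the singularity lands at $\zeta=-\br\bar\eta_0$ with $\br\le1$, i.e.\ well inside the effective support of the Gaussian, so the off-centering does not remove the regularization issue.
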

\subsubsection{Computational lemmas}
We begin by studying the term $W_1^j(x)$ defined in Eq. (\ref{w1-0}). We start with the following lemma whose proof is deferred in Appendix. For $u=(u^m)_{1\leq m\leq 3}:\partial\Omega\mapsto\R^3$, we will use ${E}(u)(x)$ to denote the value at $x\in \partial\Omega$ of the hypersingular operator
\begin{equation}\label{E}
\Big[{E}(u)(x)\Big]_s= ~\textrm{p.v.}  \displaystyle\int_{\partial \Omega}\frac{\partial^2\Gamma^0_{s m}(x-y)}
{{\partial  N(x)\partial  N(y)}}~u^m(y)~d\sigma_y,\qquad 1\leq s\leq 3.
\end{equation}


\begin{lemma}\label{singularity_couche}
Let $\alpha : \partial \Omega\mapsto \R$ and $\psi : \partial \Omega \mapsto \R^3$ be $C^1$ functions. One has
\begin{equation}\label{decompE}
4\pi {E}(\alpha \psi)(x)=\sum_{i=1}^{5}A_i(\alpha,\psi)(x),
\end{equation}where
\begin{align}
A_1(\alpha,\psi)(x)&=\emph{\textrm{p.v.}} \int_{\partial\Omega} \frac{\langle n_x,n_y\rangle}{\vert x-y\vert^3}\Big( \langle\psi(y),x-y\rangle\nabla^T\alpha(y)+(\nabla\alpha(y)(x-y))\psi(y)\Big)d\sigma_y,\label{de:A1}\smallskip\\
A_2(\alpha,\psi)(x)&=\emph{\textrm{p.v.}} \int_{\partial\Omega} \frac{\alpha(y)\langle n_x,n_y\rangle}{\vert x-y\vert^3}\Big(\nabla\psi(y)+\nabla^T\psi(y)\Big)(x-y)d\sigma_y,\label{de:A2}\smallskip\\
A_3(\alpha,\psi)(x)&=\emph{\textrm{p.v.}} \int_{\partial\Omega}\frac{\langle n_x,\psi(y)\rangle\nabla\alpha(y)(x-y)-\langle\psi(y),x-y\rangle\nabla\alpha(y) n_x}{\vert x-y\vert^3}n_yd\sigma_y,\label{de:A3}\smallskip\\
A_4(\alpha,\psi)(x)&=\emph{\textrm{p.v.}} \int_{\partial\Omega}\frac{\alpha(y)\langle n_x,(\nabla\psi(y)-\nabla^T\psi(y))(x-y)\rangle}{\vert x-y\vert^3}n_yd\sigma_y,\label{de:A4}\smallskip\\
A_5(\alpha,\psi)(x)&=\int_{\partial\Omega}l(x,y)[\nabla(\alpha\psi)(y)]d\sigma_y,\label{de:A5}\smallskip
\end{align}
where $l(\cdot,\cdot)$ is a weakly singular operator of class $C^3_*(1)$ (see Appendix \ref{WSO} for a definition).
\end{lemma}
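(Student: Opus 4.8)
The plan is to reduce the identity \eqref{decompE} to two ingredients: (a) an explicit computation of the double‑conormal kernel $\partial^2\Gamma^0_{sm}(x-y)/\partial N(x)\partial N(y)$ of the Stokeslet, and (b) a surface integration by parts that transfers the genuinely hypersingular part of that kernel onto the tangential derivatives of $\alpha$ and $\psi$. First I would insert the explicit fundamental solution $\Gamma^0_{sm}(z)=\frac{1}{8\pi}\bigl(\frac{\delta_{sm}}{|z|}+\frac{z_sz_m}{|z|^3}\bigr)$ and apply the two conormal operators: $\partial/\partial N(y)$ contracts $(\nabla_y+\nabla_y^T)$ with $n_y$ and, afterwards, $\partial/\partial N(x)$ contracts $(\nabla_x+\nabla_x^T)$ with $n_x$. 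Carrying this out and collecting terms, the resulting scalar kernel (with free indices $s,m$) is a finite sum of expressions built from $z=x-y$, $n_x$, $n_y$, $\delta_{sm}$ and the scalars $|z|$, $\langle n_x,n_y\rangle$, $\langle n_x,z\rangle$, $\langle n_y,z\rangle$, each term being homogeneous of degree $-3$ in $z$. The key initial remark is that every term carrying a denominator $|z|^{-5}$ or $|z|^{-7}$ necessarily comes multiplied by at least one of the inner products $\langle n_x,z\rangle$, $\langle n_y,z\rangle$; by Lemma \ref{diff_geometry}$(ii)$ these are $O(|z|^2)$ on $\pO$, so all such terms are at worst $|z|^{-3}$ and, as soon as they carry two such factors or a factor $(\langle n_x,n_y\rangle-1)$ (which is again $O(|z|^2)$ by Lemma \ref{diff_geometry}$(iii)$), they are already weakly singular and are placed into $A_5(\alpha,\psi)$.

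Next I would treat the genuinely hypersingular remainder, which after the previous reduction consists of pieces of the type $\frac{\langle n_x,n_y\rangle z_s z_m}{|z|^5}$ and $\frac{\langle n_x,n_y\rangle \delta_{sm}}{|z|^3}$ (together with their $n_x$‑ and $n_y$‑carrying analogues). Using the elementary identity $\frac{z_sz_m}{|z|^5}=\frac13\bigl(\partial_{y_m}\frac{z_s}{|z|^3}+\frac{\delta_{sm}}{|z|^3}\bigr)$ and its variants, each such piece is, up to weakly singular errors, a tangential divergence in $y$ of a kernel that decays only like $|z|^{-2}$. Since $\pO$ is a closed surface, I would perform the corresponding surface integration by parts on $\pO\setminus B_\delta(x)$ and then let $\delta\to0$: the circle integral over $\partial B_\delta(x)\cap\pO$ is of size $\delta^{-2}\cdot\delta=O(\delta^{-1})\times\text{(antisymmetric angular factor)}\times\delta$, i.e.\ $O(1)$ with vanishing angular average, so it disappears in the limit; this is precisely the mechanism that turns the finite‑part integral defining $E$ into the principal‑value integrals appearing in $A_1$--$A_4$. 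The surface divergence theorem also produces the Jacobian factor $\langle n_x,n_y\rangle$ (cf.\ Remark \ref{jacob}) and mean‑curvature corrections proportional to $\nu'_x$, $K_x$, the latter being weakly singular and hence shovelled into $A_5$. Finally, expanding $\nabla(\alpha\psi)=\psi\,\nabla^T\alpha+\alpha\,\nabla\psi$ by Leibniz and decomposing $\nabla\psi=\tfrac12(\nabla\psi+\nabla^T\psi)+\tfrac12(\nabla\psi-\nabla^T\psi)$ sorts the resulting principal‑value integrals exactly as claimed: the $\nabla\alpha$ terms weighted by $\langle n_x,n_y\rangle/|z|^3$ form $A_1$; the $\alpha$‑terms with the symmetric strain $\nabla\psi+\nabla^T\psi$ form $A_2$; the $\alpha$‑terms with the skew part $\nabla\psi-\nabla^T\psi$ form $A_4$; and the contributions in which the conormal at $x$ falls on an $n_x$‑component of the kernel form $A_3$.

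It then remains to check that every leftover kernel --- those carrying an extra $\langle n_x,z\rangle$, $\langle n_y,z\rangle$ or $(\langle n_x,n_y\rangle-1)$, the curvature corrections, and the already‑regular contributions of the Stokeslet --- is weakly singular of class $C^3_*(1)$; here one uses that $\pO$ is $C^\ell$ with $\ell\ge4$, so $\n\in C^3(\pO)$ and the geometric quantities of Lemma \ref{diff_geometry} are $C^3$, which yields exactly the stated regularity of $l(x,y)$ and hence \eqref{de:A5}. Assembling the four principal‑value parts and the weakly singular part gives \eqref{decompE}. The main obstacle is the rigorous control of the $\delta\to0$ limit in the surface integration by parts: one has to show that the boundary integrals over $\partial B_\delta(x)\cap\pO$ either vanish or reproduce \emph{exactly} the principal‑value prescriptions of $A_1$--$A_4$, with no spurious ``free'' (jump) term surviving, and that the many symmetrizations close up into precisely these five forms. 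A secondary, purely bookkeeping difficulty is keeping track throughout of which combinations remain genuinely $|z|^{-2}$‑singular (and must therefore stay under the $\mathrm{p.v.}$ sign) and which have gained an extra power of $|z|$ from a geometric inner product and thus belong in $A_5$.
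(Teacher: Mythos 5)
Your route is viable but genuinely different from the paper's. The paper does not compute $\partial^2\Gamma^0_{sm}/\partial N(x)\partial N(y)$ and regularize it by hand: it quotes from Hsiao--Wendland the G\"unter-derivative representation of the Stokes hypersingular operator (Lemma \ref{formula-Hsiao}), converts its three blocks into the explicit kernels \eqref{EE1}--\eqref{EE4} by elementary differentiation (Corollary \ref{coro:hsiao}), and then obtains $A_1,\dots,A_5$ purely algebraically, by applying the Leibniz rule to $\nabla(\alpha\psi)$ and to $\mathcal{M}(\partial_y,n_y)(\alpha\psi)$ and splitting $\nabla(\alpha\psi)$ into its symmetric and skew parts. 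What you propose is essentially a re-derivation of that quoted formula: it buys self-containedness, but it forces you to carry out exactly the step the paper deliberately outsources, namely proving that the surface integration by parts closes up into \emph{precisely} the kernels \eqref{EE1}--\eqref{EE4} with the right constants and the right placement of $n_x$ versus $n_y$, and that no residual term from the $\delta\to0$ excision survives. If you do go this way, note that the clean tool is the tangential G\"unter operator $m_{jk}(\partial,n)$, whose integral over the closed surface $\partial\Omega$ vanishes identically; this eliminates both the circle integrals over $\partial B_\delta(x)\cap\partial\Omega$ and the curvature boundary terms you are proposing to estimate, which is precisely why the paper works from Lemma \ref{formula-Hsiao}.

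Two slips to repair if you pursue your version. First, your ``key initial remark'' is false as stated: the term $\langle n_x,n_y\rangle\, z_sz_m/|z|^5$ has a $|z|^{-5}$ denominator and carries no factor $\langle n_x,z\rangle$ or $\langle n_y,z\rangle$, and it is exactly the genuinely hypersingular piece; you do treat it correctly afterwards, but the correct dichotomy is by the order of the singularity \emph{after} invoking Lemma \ref{diff_geometry}$(ii)$--$(iii)$, not by the raw power of $|z|$ in the denominator. Second, $A_3$ does not come from ``the conormal at $x$ falling on an $n_x$-component'': it is the contribution of the skew part of $\psi\,\nabla\alpha$ to the kernel \eqref{EE2}, since $\langle n_x,(\psi\nabla\alpha-(\psi\nabla\alpha)^T)(x-y)\rangle=\langle n_x,\psi\rangle\,\nabla\alpha(x-y)-\langle\psi,x-y\rangle\,\nabla\alpha\, n_x$, which is exactly the numerator of \eqref{de:A3}.
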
\medskip

Lemma \ref{singularity_couche} will be applied with $\alpha=V_n$ and $\displaystyle\psi=\frac{\partial \phi_j}{\partial n}$, $j=1,2$. We will consider the change of variables introduced in Section \ref{choice-Vn} and, using these notations, we set 
$$\eta:=r\left( \begin{array}{l}\cos{\theta}\\ \sin{\theta}\end{array}\right),\ \eta_0:=r_0\left( \begin{array}{l}\cos{\theta_0}\\ \sin{\theta_0}\end{array}\right), \ 
\psi(\x):=r_\psi  \left( \begin{array}{l}\cos{\theta_{\psi}}\\ \sin{\theta_{\psi}}\end{array}\right),
\ \displaystyle\bar{\eta}_0:=\frac{\eta_0}{\varepsilon}, \ \
\displaystyle\br:=\frac{r_0}{\varepsilon}.
$$ 
Recall that, with the conventions of  Subsection \ref{choice-Vn}, one has $\br\leq 1$.
In the sequel, we will provide an asymptotic expansion for each of the $A_i$, $1\leq i\leq 5$,
using powers in the variable $\displaystyle\frac1\varepsilon$. We will have two types of terms, one
of the type $\displaystyle\frac{e^{-\br^2}}{\varepsilon^{m_i}}X_i$ (or $\displaystyle\frac{1}{\varepsilon^{m_i}}X_i$) 
and the other one of the type $\displaystyle\frac{e^{-\frac{\delta^2}{4\varepsilon^2}}}{\varepsilon^{m_i}}Y_i$, where $m_i$ is an integer and $X_i$, $Y_i$ are vectors with bounded norms.  For each $A_i$, $1\leq i\leq 5$, we will identify the term of the first type (i.e., $\displaystyle\frac{e^{-\br^2}}{\varepsilon^{m_i}}X_i$ or $\displaystyle\frac{1}{\varepsilon^{m_i}}X_i$) with the largest value of $m_i$, then gather them and consider all the others terms as a rest. For that purpose, we will use repeatedly the following two lemmas whose proofs are deferred in Sections \ref{pf-miracle1} and \ref{pf-miracle2} in Appendix B.
\begin{lemma}\label{miracle1}
With the notations above and for any non negative integer $m$, one has 
\begin{equation}\label{est1}
\int_{B(0,\delta)}\frac{\alpha_{\varepsilon,\eta_0}(\eta)}{\mid \eta\mid^{1-m}}d\eta\leq \frac{C(m)}{\varepsilon^{1-m}},
\end{equation}
with $C(m)$ a positive constant only depending on $m$.
\end{lemma}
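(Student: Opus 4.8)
The plan is to remove the $\varepsilon$-dependence by the natural rescaling $\eta=\varepsilon\zeta$ and thereby reduce everything to a single, fixed Gaussian integral. First I would recall that $\alpha_{\varepsilon,\eta_0}(\eta)=\varepsilon^{-2}\exp[-|\eta-\eta_0|^2/\varepsilon^2]$ and that, by construction, $\eta_0\in B_\varepsilon(0)$, so that $\bar\eta_0:=\eta_0/\varepsilon$ satisfies $|\bar\eta_0|\le 1$. Performing $\eta=\varepsilon\zeta$, with $d\eta=\varepsilon^2\,d\zeta$ and $|\eta|^{m-1}=\varepsilon^{m-1}|\zeta|^{m-1}$, one obtains
\begin{equation*}
\int_{B(0,\delta)}\frac{\alpha_{\varepsilon,\eta_0}(\eta)}{|\eta|^{1-m}}\,d\eta
=\varepsilon^{m-1}\int_{B(0,\delta/\varepsilon)}e^{-|\zeta-\bar\eta_0|^2}\,|\zeta|^{m-1}\,d\zeta .
\end{equation*}
Since the integrand on the right is nonnegative, enlarging the domain from $B(0,\delta/\varepsilon)$ to all of $\R^2$ only increases it; hence it suffices to show that $\int_{\R^2}e^{-|\zeta-\bar\eta_0|^2}|\zeta|^{m-1}\,d\zeta$ is bounded by a constant depending only on $m$, uniformly over $|\bar\eta_0|\le 1$. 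Taking $C(m)$ to be that bound then yields \eqref{est1}.

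For $m\ge 1$ the remaining estimate is straightforward: by the triangle inequality $|\zeta|^{m-1}\le(|\zeta-\bar\eta_0|+|\bar\eta_0|)^{m-1}\le(|\zeta-\bar\eta_0|+1)^{m-1}$, so the translation $w=\zeta-\bar\eta_0$ bounds the integral by $\int_{\R^2}e^{-|w|^2}(|w|+1)^{m-1}\,dw$, a finite quantity depending only on $m$ (in polar coordinates the integrand behaves like $e^{-\rho^2}\rho^{m}$ at infinity). For $m=0$ one has to deal with the locally integrable singularity of $|\zeta|^{-1}$ at the origin: I would split $\R^2=B(0,1)\cup(\R^2\setminus B(0,1))$, use $e^{-|\zeta-\bar\eta_0|^2}\le 1$ on $B(0,1)$ — where $\int_{B(0,1)}|\zeta|^{-1}\,d\zeta=2\pi$ by a computation in polar coordinates — and $|\zeta|^{-1}\le 1$ on the complement — where $\int_{\R^2}e^{-|\zeta-\bar\eta_0|^2}\,d\zeta=\pi$ — thus obtaining the uniform bound $C(0)=3\pi$.

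The only point requiring any care is the uniformity of the constant in \emph{both} $\varepsilon$ and $\eta_0$, which is precisely the reason for carrying out the rescaling: it converts the moving centre $\eta_0$ into a \emph{bounded} parameter $\bar\eta_0$ ranging over the closed unit disc, after which the Gaussian weight is harmless and the $|\zeta|^{m-1}$ factor is either bounded (when $m\ge 1$) or absolutely integrable near the origin (when $m=0$). I do not expect any further obstacle; the statement is elementary once the change of variables has been set up.
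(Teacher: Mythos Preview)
Your proof is correct and follows essentially the same approach as the paper: both perform the rescaling $\eta=\varepsilon\zeta$ (the paper does it in polar coordinates, you in Cartesian) to reduce to a fixed Gaussian integral, then use $|\bar\eta_0|\le 1$ to obtain a bound depending only on $m$. The only cosmetic difference is that the paper completes the square in the radial variable to finish, whereas you use the triangle inequality for $m\ge 1$ and a near/far split for $m=0$.
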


\begin{lemma}\label{miracle2}
With the notations above,
\begin{equation}\label{est2}
~\emph{\textrm{p.v.}}\int_{\R^2}\frac{\alpha_{\varepsilon,\eta_0}(\eta)\eta}{\mid \eta\mid^3}d\eta=\frac{e^{-\br^2}}{\varepsilon^2}M_3^{A_1}(\br)\bar\eta_0,
\end{equation}
where $M_3^{A_1}(\cdot)$ is a nonzero entire function defined in (\ref{def-M3}) or (\ref{cv-M3}) below.
\end{lemma}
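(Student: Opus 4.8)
The plan is to reduce \eqref{est2} to a single universal integral by a rescaling, to pin down its direction by symmetry, and then to evaluate it explicitly in polar coordinates.

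First I would substitute $\eta=\eps\zeta$. Since $\alpha_{\eps,\eta_0}(\eps\zeta)=\eps^{-2}e^{-|\zeta-\bet|^{2}}$ with $\bet=\eta_0/\eps$ of norm $\br$, and $|\eta|=\eps|\zeta|$, $d\eta=\eps^{2}d\zeta$, the left-hand side of \eqref{est2} equals $\eps^{-2}\,\textrm{p.v.}\!\int_{\R^{2}}|\zeta|^{-3}\zeta\,e^{-|\zeta-\bet|^{2}}\,d\zeta$. Completing the square $|\zeta-\bet|^{2}=|\zeta|^{2}-2\langle\zeta,\bet\rangle+\br^{2}$ factors out $e^{-\br^{2}}$, so it remains to compute
\begin{equation*}
J(\bet):=\textrm{p.v.}\int_{\R^{2}}\frac{\zeta}{|\zeta|^{3}}\,e^{-|\zeta|^{2}+2\langle\zeta,\bet\rangle}\,d\zeta,\qquad\text{and then}\qquad \textrm{p.v.}\int_{\R^{2}}\frac{\alpha_{\eps,\eta_0}(\eta)\,\eta}{|\eta|^{3}}\,d\eta=\frac{e^{-\br^{2}}}{\eps^{2}}\,J(\bet).
\end{equation*}
Because the kernel $\zeta\mapsto\zeta/|\zeta|^{3}$ has zero mean on every centered circle, the principal value is a convergent integral; moreover the kernel is equivariant and the scalar weight $e^{-|\zeta|^{2}+2\langle\zeta,\bet\rangle}$ invariant under the reflection fixing the line $\R\bet$, so $J(\bet)$ is a scalar multiple of $\bet$. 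After a rotation bringing $\bet$ to $\br(1,0)^{T}$, one is left with the scalar $c(\br)$ defined by $J(\bet)=\br^{-1}c(\br)\bet$ and $c(\br)=\textrm{p.v.}\!\int_{\R^{2}}\zeta_{1}|\zeta|^{-3}e^{-|\zeta|^{2}+2\br\zeta_{1}}\,d\zeta$.

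Next I would pass to polar coordinates $\zeta=\rho(\cos\phi,\sin\phi)$, giving $c(\br)=\textrm{p.v.}\!\int_{0}^{\infty}\!\!\int_{0}^{2\pi}\rho^{-1}\cos\phi\;e^{-\rho^{2}+2\br\rho\cos\phi}\,d\phi\,d\rho$, and perform the angular integration first via $\int_{0}^{2\pi}\cos\phi\,e^{z\cos\phi}\,d\phi=2\pi I_{1}(z)$ (equivalently, by expanding $e^{2\br\rho\cos\phi}$ in a power series and keeping only the odd powers of $\cos\phi$). Since $I_{1}(z)=\sum_{j\ge0}(z/2)^{2j+1}/(j!(j+1)!)\sim z/2$ near $0$, the radial integrand $\rho^{-1}I_{1}(2\br\rho)e^{-\rho^{2}}$ is bounded at the origin, so the principal value is an ordinary integral; then $\int_{0}^{\infty}\rho^{2j}e^{-\rho^{2}}\,d\rho=\tfrac12\Gamma(j+\tfrac12)$ yields
\begin{equation*}
c(\br)=2\pi\int_{0}^{\infty}\frac{I_{1}(2\br\rho)}{\rho}\,e^{-\rho^{2}}\,d\rho=\pi\sum_{j\ge0}\frac{\Gamma(j+\tfrac12)}{j!\,(j+1)!}\,\br^{\,2j+1}.
\end{equation*}
Dividing by $\br$ gives $J(\bet)=M_{3}^{A_{1}}(\br)\bet$ with $M_{3}^{A_{1}}(t):=\pi\sum_{j\ge0}\Gamma(j+\tfrac12)/(j!\,(j+1)!)\,t^{2j}$, which is precisely the entire function defined in \eqref{def-M3} (its Bessel-integral form being \eqref{cv-M3}); this establishes \eqref{est2}. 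The function $M_{3}^{A_{1}}$ is entire since $\Gamma(j+\tfrac12)/(j!(j+1)!)\to0$ super-exponentially, and nonzero since $M_{3}^{A_{1}}(0)=\pi\Gamma(\tfrac12)=\pi^{3/2}$.

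The only delicate point — and the main thing to get right — is the handling of the principal value at $\eta=0$: one has to use the odd parity of $\zeta/|\zeta|^{3}$ (equivalently, the vanishing of $\int_{0}^{2\pi}\cos\phi\,d\phi$) to see both that $J(\bet)$ is well defined and that, after integrating in $\phi$ first, no singular contribution survives; the subsequent interchange of summation and radial integration is then routine, since the series of integrals obtained after discarding the identically vanishing zeroth angular mode is absolutely convergent.
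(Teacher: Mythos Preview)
Your argument is correct and follows essentially the same route as the paper: pass to polar coordinates (after your preliminary rescaling $\eta=\eps\zeta$), exploit the angular symmetry to see the result is parallel to $\bar\eta_0$, expand $e^{2\br\rho\cos\phi}$ as a power series, and integrate term by term using Wallis/Gamma identities to obtain the series \eqref{cv-M3}. Your use of the modified Bessel function $I_1$ and your explicit remark that the odd angular mode kills the $\rho^{-1}$ singularity (so the principal value is an ordinary integral) are nice touches that the paper leaves implicit; note only that you have the labels swapped---\eqref{def-M3} is the integral definition and \eqref{cv-M3} the power series.
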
\smallskip

We will provide detailed computations for $A_1(\alpha,\psi)(x)$ in the expansion of $W_1^j(x)$ and will only sketch the main steps for the other terms.
In these computations, we will systematically refer to the following procedures. 
\begin{description}
\item[(P1)] The first one consists of decomposing a $C^1$ vector-valued function $F(y)$ in two parts as $F(y)=F(x)+ G(x)(y-x)$, where $G$ is a continuous matrix-valued function. 
\item[(P2)] The second procedure consists of cutting an integral $\displaystyle\int_{\partial\Omega}\cdots d\sigma_y$ as 
$$
\int_{\partial\Omega}\cdots d\sigma_y=\int_{B(0,2\delta)}\cdots d\eta=
\int_{B(0,\delta)}\cdots d\eta+\int_{B(0,2\delta)\setminus B(0,\delta)}\cdots d\eta,
$$
and majorizing the second one by $\displaystyle C_i\frac{e^{-\frac{\delta^2}{4\varepsilon^2}}}{\varepsilon^{m_i}}$ for appropriate
constant $C_i$ and integer $m_i$. Finally, note that $\displaystyle\langle \psi(x),\n_x\rangle=0$.
\item[(P3)] In certain  integrals of the type $\int_{\partial\Omega}\cdots d\sigma_y=
\int_{B(0,2\delta)}\cdots d\eta$, the term  
$\nabla V_n(y)$ will be expressed after changing variables as
\begin{equation}\label{eq:VN}
\nabla \alpha_{\varepsilon,\eta_0}(\eta)+G(\eta)\nabla \alpha_{\varepsilon,\eta_0}(\eta)+H(\eta)n_x,
\end{equation}
where $G(\eta)$ denotes the non-positive symmetric matrix 
$-\frac{\nu'_\x(\eta)\nu'_\x(\eta)^T}{1+\vert\nu'_\x(\eta)\vert^2}$ and $H(\eta)$ is a real-valued function. Note first that, for $\eta$ small enough, one has that $\vert G(\eta)\vert \leq C\vert \eta\vert^2$ for some universal positive constant $C$. Thus the contribution arising from  $G(\eta)\nabla \alpha_{\varepsilon,\eta_0}(\eta)$ in  the expression of 
$\displaystyle\frac{\partial \phi'_j}{\partial  \nu}(\x)$ will be shown below to be trivially a $\displaystyle O(\frac1{\varepsilon})$. Moreover, in the integrals abovementionned, only their contributions tangent to $T_x(\Omega)$ are relevant, thanks to Eqs. \eqref{C1} and \eqref{C111}. In conclusion, it is enough to only estimate the contribution of $\nabla \alpha_{\varepsilon,\eta_0}(\eta)$ in Eq. \eqref{eq:VN}.
\end{description}

\subsubsection{Asymptotic expansion of $A_1$}
We give in this paragraph the asymptotic expansion of $A_1(\alpha,\psi)(x)$ with respect to $\varepsilon$. Recall that
$$A_1(\alpha,\psi)(x)=\textrm{p.v.} \int_{\partial\Omega} \frac{\langle n_x,n_y\rangle}{\vert x-y\vert^3}\Big( \langle\psi(y),x-y\rangle\nabla^T\alpha(y)+(\nabla\alpha(y)(x-y))\psi(y)\Big)d\sigma_y.$$
\begin{prop}\label{est-A1}
For $\varepsilon>0$ small enough, one has
\begin{eqnarray}
P_x(A_1(\alpha,\psi)(x))&=&2\frac{e^{-\br^2}}{\varepsilon^3}\Big(M_2^{A_1}(\br)+M_5^{A_1}(\bar{r}_0)-\bar{r}_0^2M_3^{A_1}(\bar{r}_0)\Big)\psi(\x)\nonumber\\
&+&2\frac{e^{-\br^2}}{\varepsilon^3}M_4^{A_1}(\br)\langle\bar{\eta}_0,\psi(\x)\rangle \bar{\eta}_0+O(\frac{1}{\varepsilon^2}).\label{EST-A1}
\end{eqnarray}
\end{prop}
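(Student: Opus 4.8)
The plan is to expand $A_1(\alpha,\psi)(\x)$ by inserting, under the integral sign, the Taylor expansions of the geometric quantities near $\x$ in the local chart $h_\x$ of Section~\ref{choice-Vn}, keeping only the contributions of order $1/\varepsilon^3$ and discarding everything else as $O(1/\varepsilon^2)$, all admissible errors being controlled by Lemma~\ref{miracle1}. This is the procedure already codified in (P1)--(P3) above.

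First I would localize. By (P2), after the change of variables $\y=h_\x(\eta)$ one replaces $\int_{\partial\Omega}$ by $\int_{B(0,2\delta)}$ and discards $\int_{B(0,2\delta)\setminus B(0,\delta)}$ (an $O(e^{-\delta^2/(4\varepsilon^2)}\varepsilon^{-m})$ term, since $|\eta-\eta_0|\ge\delta/2$ there). On $B(0,\delta)$ I would use Lemma~\ref{diff_geometry} and Remark~\ref{jacob}: $\x-\y=-\eta+\nu_\x(\eta)\n_\x$ with $\nu_\x(\eta)=O(|\eta|^2)$, $|\x-\y|^3=|\eta|^3(1+O(|\eta|^2))$, $\langle\n_\x,\n_\y\rangle=1+O(|\eta|^2)$, $d\sigma_\y=(1+O(|\eta|^2))\,d\eta$; together with $\psi(\y)=\psi(\x)+O(|\eta|)$ and $\langle\psi(\x),\n_\x\rangle=0$ from (P1); and the expression~\eqref{eq:VN} for $\nabla\alpha(\y)=\nabla V_n(\y)$ from (P3), noting that the $G(\eta)\nabla\alpha_{\varepsilon,\eta_0}$ part is $O(|\eta|^2|\nabla\alpha_{\varepsilon,\eta_0}|)$, hence $O(1/\varepsilon)$, and the $H(\eta)\n_\x$ part is killed by $P_\x$ (only the component tangent to $T_\x(\partial\Omega)$ matters, by~\eqref{C1} and~\eqref{C111}). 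Each Taylor remainder buys one power of $|\eta|$, hence one power of $\varepsilon$ after integration against $|\eta|^{-3}\,d\eta$ (Lemma~\ref{miracle1}). The net effect is that, inside $A_1$, one may replace $\langle\psi(\y),\x-\y\rangle$ by $-\langle\psi(\x),\eta\rangle$, the scalar $\nabla\alpha(\y)(\x-\y)$ by $-\nabla\alpha_{\varepsilon,\eta_0}(\eta)\cdot\eta$, the vector $\nabla^T\alpha(\y)$ by $\nabla^T\alpha_{\varepsilon,\eta_0}(\eta)$, and $\langle\n_\x,\n_\y\rangle|\x-\y|^{-3}\,d\sigma_\y$ by $|\eta|^{-3}\,d\eta$, up to an $O(1/\varepsilon^2)$ error.

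Next I would extract the principal part. With $\nabla\alpha_{\varepsilon,\eta_0}(\eta)=-\tfrac{2}{\varepsilon^4}(\eta-\eta_0)e^{-|\eta-\eta_0|^2/\varepsilon^2}$, the rescaling $\eta=\varepsilon\zeta$ turns $B(0,\delta)$ into $B(0,\delta/\varepsilon)$ (replaceable by $\R^2$ up to $O(e^{-c/\varepsilon^2})$), makes the integrand homogeneous of degree $-3$ in $\varepsilon$, and, after completing the square $|\eta-\eta_0|^2/\varepsilon^2=|\zeta|^2-2\br\langle\bet,\zeta\rangle+\br^2$ (where $\bet=(\cos\theta_0,\sin\theta_0)^T$ is the unit direction of $\eta_0$ and $\br=r_0/\varepsilon\le1$), gives
$$
P_\x(A_1(\alpha,\psi)(\x))=\frac{2e^{-\br^2}}{\varepsilon^3}\,\textrm{p.v.}\!\int_{\R^2}\frac{e^{-|\zeta|^2+2\br\langle\bet,\zeta\rangle}}{|\zeta|^3}\Big[\langle\psi(\x),\zeta\rangle(\zeta-\br\bet)+\big(|\zeta|^2-\br\langle\bet,\zeta\rangle\big)\psi(\x)\Big]\,d\zeta+O\!\left(\frac1{\varepsilon^2}\right).
$$
The fixed integral is linear in $\psi(\x)\in T_\x(\partial\Omega)\simeq\R^2$, hence equals $T(\br,\bet)\psi(\x)$ for a $2\times2$ matrix; rotational equivariance together with the reflection symmetry across $\R\bet$ of the (even, up to the p.v.) weight forces $T(\br,\bet)=a(\br)\,\id+b(\br)\,\bet\bet^T$, so
$$
P_\x(A_1(\alpha,\psi)(\x))=\frac{2e^{-\br^2}}{\varepsilon^3}\Big(a(\br)\psi(\x)+b(\br)\langle\bet,\psi(\x)\rangle\bet\Big)+O\!\left(\frac1{\varepsilon^2}\right).
$$
It remains to identify $a$ and $b$. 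Expanding $e^{2\br\langle\bet,\zeta\rangle}$ in powers of $\br$ shows all moments involved are absolutely convergent Gaussian integrals (worst singularity $|\zeta|^{-1}$, integrable in $\R^2$), except for the odd kernel $\zeta/|\zeta|^3$ arising from the $-\br\bet$ and $-\br\langle\bet,\zeta\rangle$ terms, for which Lemma~\ref{miracle2} provides the principal value, i.e. $\textrm{p.v.}\int_{\R^2}e^{-|\zeta|^2+2\br\langle\bet,\zeta\rangle}\zeta|\zeta|^{-3}d\zeta=\br M_3^{A_1}(\br)\bet$. Using this, computing the diagonal entries of $T$ in a frame whose first axis is $\bet$ and decomposing them via $\zeta_1^2+2\zeta_2^2=|\zeta|^2+\zeta_2^2$, and naming the two remaining scalar Gaussian moments $M_2^{A_1}$ (the $1/|\zeta|$-moment) and $M_5^{A_1}$ (the $\zeta_2^2/|\zeta|^3$-moment), one gets $a(\br)=M_2^{A_1}(\br)+M_5^{A_1}(\br)-\br^2M_3^{A_1}(\br)$ and $b(\br)=M_4^{A_1}(\br)$, all entire, which is exactly~\eqref{EST-A1}.

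The step I expect to be the main obstacle is the bookkeeping at order $1/\varepsilon^3$: the kernel $|\eta|^{-3}$ is non-integrable while the surviving numerator is only $O(|\eta|)$, so several candidate $1/\varepsilon^3$ contributions are merely conditionally convergent and must be regrouped before the principal value can legitimately be taken — one has to verify that the parts odd in $\eta-\eta_0$ either cancel or recombine into the convergent/p.v. integrals handled by Lemmas~\ref{miracle1} and~\ref{miracle2}, and that the curvature term $\nu_\x$, the chart Jacobian, and the $G(\eta)$ correction in~\eqref{eq:VN} genuinely contribute only $O(1/\varepsilon^2)$. This is also where the choice of base point $\x_\varepsilon$ at distance $r_0\le\varepsilon$ from $\x$ (so $0<\br\le1$ rather than $\br=0$) is essential: it places the Gaussian bump at a bounded, nonzero distance from the singularity in the rescaled variable, which is what keeps the limiting integrals $M_k^{A_1}(\br)$ well-defined, nontrivial, and genuinely dependent on $\br$.
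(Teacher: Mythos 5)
Your proposal is correct and follows essentially the same route as the paper's proof: the same localization in the chart $h_\x$, the same disposal of the curvature, Jacobian and $G(\eta)$ corrections and of the annulus $B(0,2\delta)\setminus B(0,\delta)$ via Lemma~\ref{miracle1} and procedures (P1)--(P3), and the same evaluation of the principal term through the Gaussian moments $M_1^{A_1},\dots,M_5^{A_1}$ with Lemma~\ref{miracle2} supplying the principal value; the only (cosmetic) difference is that the paper splits $A_1=A_{1,1}+A_{1,2}$ (Lemmas~\ref{A11} and~\ref{A12}) and expands each piece in polar coordinates, whereas you keep $A_1$ whole after rescaling and use reflection/rotation equivariance to reduce the $2\times 2$ matrix to $a(\br)\,\mathrm{Id}+b(\br)\,uu^T$ with $u=(\cos\theta_0,\sin\theta_0)^T$. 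Two harmless notational slips: your parenthetical labels for $M_2^{A_1}$ and $M_5^{A_1}$ are interchanged relative to \eqref{eq:m2} and \eqref{eq:m5} (it is $M_5^{A_1}$ that is the $1/|\zeta|$-moment and $M_2^{A_1}$ the $\zeta_2^2/|\zeta|^3$-moment, though only their sum enters $a(\br)$), and the off-diagonal coefficient is really $T_{11}-T_{22}=M_1^{A_1}(\br)-M_2^{A_1}(\br)-\br^2M_3^{A_1}(\br)=\br^2M_4^{A_1}(\br)$ by \eqref{de:M4}, which agrees with \eqref{EST-A1} under the convention $\bar{\eta}_0=\eta_0/\varepsilon$ (of norm $\br$) used in the paper's own computation rather than the unit normalization.
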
\medskip
For the sake of clarity, we set $\displaystyle A_1(\alpha,\psi)(x):=A_{1,1}(\alpha,\psi)(x)+A_{1,2}(\alpha,\psi)(x)$ with
\begin{eqnarray}
A_{1,1}(\alpha,\psi)(x)&:=&\textrm{p.v.} \int_{\partial\Omega} \frac{\langle n_x,n_y\rangle}{\vert x-y\vert^3}\langle\psi(y),x-y\rangle\nabla^T\alpha(y)d\sigma_y,\\
A_{1,2}(\alpha,\psi)(x)&:=&\textrm{p.v.} \int_{\partial\Omega} \frac{\langle n_x,n_y\rangle}{\vert x-y\vert^3}(\nabla\alpha(y)(x-y))\psi(y)d\sigma_y.
\end{eqnarray}We will establish separately estimates of these two terms in Lemma \ref{A11} and Lemma \ref{A12}.

\begin{lemma}\label{A11}
For $\varepsilon>0$ small enough, one has
\begin{equation}\label{est-A11}
P_x(A_{1,1}(\alpha,\psi)(x))=2\frac{e^{-\br^2}}{\varepsilon^3}\Big(M_4^{A_1}(\br)\langle\bar{\eta}_0,\psi(\x)\rangle \bar{\eta}_0+ÊM_2^{A_1}(\br)\psi(\x)\Big)+O(\frac{1}{\varepsilon^2}),
\end{equation}where $M_2^{A_1}(\cdot)$ and $M_4^{A_1}(\cdot)$ are non-zero entire functions defined by (\ref{eq:m2}) and (\ref{de:M4}) respectively.
\end{lemma}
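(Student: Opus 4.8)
The plan is to compute the asymptotic expansion of
$$
A_{1,1}(\alpha,\psi)(x)=\textrm{p.v.}\int_{\partial\Omega}\frac{\langle n_x,n_y\rangle}{\vert x-y\vert^3}\langle\psi(y),x-y\rangle\nabla^T\alpha(y)\,d\sigma_y
$$
by pulling everything back through the chart $h_x$ of Subsection \ref{choice-Vn}, with $\alpha=V_n$ and $\psi=\partial\phi_j/\partial n$. First I would apply procedure \textbf{(P2)}: split the integral over $B(0,2\delta)$ into the part over $B(0,\delta)$ and the complement, the latter being exponentially small, namely $O(e^{-\delta^2/(4\varepsilon^2)}\varepsilon^{-m})$ for any $m$, hence absorbed into the $O(1/\varepsilon^2)$ remainder. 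On $B(0,\delta)$, I would then use Lemma \ref{diff_geometry} and Remark \ref{jacob} to write the Jacobian as $\langle n_x,n_y\rangle$, substitute $\langle n_x, x-y\rangle = -\tfrac12\eta^T K_x\eta+O(|\eta|^3)$, and expand $|x-y|^{-3}=|\eta|^{-3}(1+O(|\eta|^2))$. Using procedure \textbf{(P1)} on $\psi(y)$ and on $\nabla^T\alpha(y)=\nabla^T\alpha_{\varepsilon,\eta_0}(\eta)+(\text{lower order})$ via \textbf{(P3)}, the integrand decomposes into a leading piece plus terms that, by Lemma \ref{miracle1}, contribute only $O(1/\varepsilon)$ or $O(1/\varepsilon^2)$.

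The heart of the computation is the leading piece, of the form
$$
\textrm{p.v.}\int_{B(0,\delta)}\frac{\langle P_x\psi(x),\eta\rangle}{|\eta|^3}\,\nabla^T\alpha_{\varepsilon,\eta_0}(\eta)\,d\eta,
$$
together with the companion term coming from $\psi(y)-\psi(x)=\nabla\psi(x)(y-x)+\cdots$ paired against the curvature term $-\tfrac12\eta^TK_x\eta$ in $\langle\psi(y),x-y\rangle$, which also survives at order $\varepsilon^{-3}$. Since $\nabla\alpha_{\varepsilon,\eta_0}(\eta)=-\tfrac{2}{\varepsilon^2}(\eta-\eta_0)\alpha_{\varepsilon,\eta_0}(\eta)$, I would change variables $\eta=\varepsilon\zeta$, $\eta_0=\varepsilon\bar\eta_0$ (so $\br=|\eta_0|/\varepsilon\le 1$), extend the domain from $B(0,\delta/\varepsilon)$ to all of $\R^2$ at the cost of an exponentially small error, and reduce everything to explicit Gaussian-weighted integrals on $\R^2$ of the form $\int_{\R^2}\zeta_a\zeta_b|\zeta|^{-1}e^{-|\zeta-\bar\eta_0|^2}d\zeta$ and similar. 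By rotational symmetry these integrals depend only on $|\bar\eta_0|=\br$ and produce, after projecting with $P_x$, a term proportional to $\psi(x)$ and a term proportional to $\langle\bar\eta_0,\psi(x)\rangle\bar\eta_0$; this is exactly where the two entire functions $M_2^{A_1}(\br)$ and $M_4^{A_1}(\br)$ are defined, as the scalar coefficients of these two tensor structures, in Eqs. (\ref{eq:m2}) and (\ref{de:M4}). One checks they are nonzero entire functions (entire in $\br$ because the Gaussian integrals are; nonzero by evaluating at $\br=0$, which reduces to an elementary radial integral).

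The main obstacle will be the careful bookkeeping of the hypersingular principal-value integral: one must verify that the $\textrm{p.v.}$ is well-defined and that the various "lower order" pieces generated by \textbf{(P1)} and \textbf{(P3)} — in particular the curvature corrections to the Jacobian and to $|x-y|^{-3}$, and the matrix $G(\eta)\nabla\alpha_{\varepsilon,\eta_0}(\eta)$ with $|G(\eta)|\le C|\eta|^2$ — genuinely fall into $O(1/\varepsilon^2)$ after using Lemma \ref{miracle1}, rather than contaminating the $\varepsilon^{-3}$ order. This requires tracking, for each such term, the exact balance between the number of factors of $|\eta|$ in the denominator, the number of factors of $\varepsilon$ produced by differentiating the Gaussian, and the gain of two powers of $|\eta|$ from curvature; the weakly singular kernel $l(x,y)$ of class $C^3_*(1)$ plays no role here since $A_5$ was separated off in Lemma \ref{singularity_couche}. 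Once these estimates are in place, summing the $\psi(x)$- and $\bar\eta_0$-contributions gives Eq. (\ref{est-A11}).
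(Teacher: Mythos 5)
Your outline follows essentially the same route as the paper: pull back through the chart $h_x$, cancel the Jacobian against $\langle n_x,n_y\rangle$, split off the exponentially small annulus and the curvature corrections via (P1)--(P3), reduce the surviving piece to Gaussian integrals over all of $\R^2$, and read off the coefficients of the two tensor structures $\psi(x)$ and $\langle\bet,\psi(x)\rangle\bet$ in polar coordinates as $M_2^{A_1}$ and $M_4^{A_1}$. One concrete correction, though: the ``companion term'' you claim survives at order $\varepsilon^{-3}$ --- the one obtained by pairing $\psi(y)-\psi(x)=\nabla\psi(x)(y-x)+\cdots$ with the kernel --- does \emph{not} survive at that order. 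Its integrand is $O(\vert\eta\vert^2)/\vert\eta\vert^3$ times $\frac{2}{\varepsilon^2}\alpha_{\varepsilon,\eta_0}(\eta)(\eta-\eta_0)$, and Lemma \ref{miracle1} (with $m=0,1$ and $r_0/\varepsilon=O(1)$) bounds it by $O(1/\varepsilon^2)$; this is exactly the term $J^{A_{1,1}}$ in the paper, which goes into the remainder. The only $\varepsilon^{-3}$ contribution comes from freezing $\psi$ at $x$ and keeping the flat kernel $\langle\psi(x),\eta\rangle\vert\eta\vert^{-3}(\eta-\eta_0)$; had you insisted on an extra leading term you would have distorted the coefficients in \eqref{est-A11}. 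A smaller point: nonvanishing of $M_2^{A_1}$ and $M_4^{A_1}$ at $\br=0$ is indeed elementary, but to know that $M_4^{A_1}$ defined by \eqref{de:M4} is \emph{entire} one must check that $M_1^{A_1}-z^2M_3^{A_1}-M_2^{A_1}$ vanishes to second order at $z=0$, which is the power-series computation of Lemma \ref{de:M-A1} rather than a symmetry observation.
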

\begin{proof}[Proof of Lemma \ref{A11}]
Using the change of variables introduced in Subsection \ref{choice-Vn} and taking into account Lemma \ref{grad-V} and Remark \ref{jacob}, we have
\begin{equation*}
P_x(A_{1,1}(\alpha,\psi)(x))=\frac{2}{\varepsilon^2}~\textrm{p.v.}\int_{B(0,2\delta)}
\frac{\alpha_{\varepsilon,\eta_0}(\eta)\langle \eta-\nu_\x(\eta)\n_\x
,\psi(y)\rangle}{\Big(\mid \eta\mid^2+\mid \nu_x(\eta)\mid^2\Big)^{\frac{3}{2}}}(\textrm{Id}_2+G(\eta))(\eta-\eta_0)d\eta.
\end{equation*}
Then, by taking into account Procedures $(P1)$ and $(P3)$, 
$$
P_x(A_{1,1}(\alpha,\psi)(x))= I^{A_{1,1}}(\alpha,\psi)(x)+J^{A_{1,1}}(\alpha,\psi)(x)+R^{A_{1,1}}(\alpha,\psi),
$$
with 
\begin{eqnarray}
I^{A_{1,1}}(\alpha,\psi)&:=&\frac{2}{\varepsilon^2}~\textrm{p.v.}\int_{B(0,\delta)}\frac{\alpha_{\varepsilon,\eta_0}(\eta)\langle \eta,\psi(x)\rangle}{\vert\eta\vert^3}(\eta-\eta_0)d\eta,\label{I1-1}\\
J^{A_{1,1}}(\alpha,\psi)(x)&:=&\frac{2}{\varepsilon^2}\int_{B(0,\delta)}\frac{\alpha_{\varepsilon,\eta_0}(\eta)O(\vert\eta\vert^2)}{\vert\eta\vert^3}(\eta-\eta_0)d\eta,\label{J1-1}\\
R^{A_{1,1}}(\alpha,\psi)(x)&:=&\int_{B(0,2\delta)\setminus B(0,\delta)}\cdots\label{R1-1},
\end{eqnarray}\smallskip
where, in $R^{A_{1,1}}(\alpha,\psi)(x)$, one has the same integrand (in local coordinates) as in $A_1(\alpha,\psi)(x)$.
Clearly, there  exists a positve constant $C_\delta$ only depending on $\delta$ such that,
for $\varepsilon$ small enough with respect to $\delta$, one has
\begin{equation}\label{est-Ra1}
\Vert R^{A_{1,1}}(\alpha,\psi)(x)\Vert\leq C_\delta\frac{e^{-\frac{\delta^2}{\varepsilon^2}}}{\varepsilon^4}.
\end{equation}
Moreover, one can apply Lemma~\ref{miracle1} to $J^{A_{1,1}}(\alpha,\psi)(x)$, one gets that 
$$
\Vert J^{A_{1,1}}(\alpha,\psi)(x)\Vert\leq \frac2{\varepsilon^2}(C(1)+\frac{C(0)r_0}{\varepsilon}),
$$
and since $\displaystyle\frac{r_0}{\varepsilon}=O(1)$, one finally deduces that there exists a positive constant $C_*$ such that
\begin{equation}\label{est-Ja1}
\Vert J^{A_{1,1}}(\alpha,\psi)(x)\Vert\leq \frac{C_*}{\varepsilon^2}.
\end{equation}
Note that, for $\varepsilon$ small enough the upper bound of \eqref{est-Ja1} is larger than that of \eqref{est-Ra1}.

It remains to estimate $I^{A_{1,1}}(\alpha,\psi)(x)$. First of all, notice that the norm of 
$$
\frac{2}{\varepsilon^2}\int_{\R^2\setminus B(0,\delta)}\frac{\alpha_{\varepsilon,\eta_0}(\eta)\langle \eta,\psi(x)\rangle}{\vert\eta\vert^3}(\eta-\eta_0)d\eta,
$$
is clearly less than or equal to $\displaystyle\frac{C_\delta e^{-\frac{\delta^2}{4\varepsilon^2}}}{\varepsilon^4}$ for some positive constant $C_\delta$ only dependent on $\delta$ and $\varepsilon$ small enough with respect to $\delta$. 

We can therefore estimate, instead of $I^{A_{1,1}}(\alpha,\psi)(x)$, the quantity $\tilde{I}^{A_{1,1}}(\alpha,\psi)(x)$ defined by
\begin{equation}\label{tI1-1}
\tilde{I}^{A_{1,1}}(\alpha,\psi)(x):=\frac{2}{\varepsilon^2}~\textrm{p.v.}\int_{\R^2}\frac{\alpha_{\varepsilon,\eta_0}(\eta)\langle \eta,\psi(x)\rangle}{\vert\eta\vert^3}(\eta-\eta_0)d\eta.
\end{equation}
By using polar coordinates, one gets
\begin{eqnarray*}
&&\tilde{I}^{A_{1,1}}(\alpha,\psi)(x)\\
&=&2\frac{e^{-\br^2}}{\varepsilon^4}r_{\psi}\int_{0}^{\infty}\exp(-\frac{r^2}{\varepsilon^2})dr\int_{0}^{2\pi}\cos(\theta-\theta_\psi)\exp(2\frac{r}{\varepsilon}\br\cos(\theta-\theta_0))\begin{pmatrix}\cos\theta\\ \sin\theta\end{pmatrix}d\theta\smallskip\nonumber\\
&&-2\frac{e^{-\br^2}}{\varepsilon^3}r_{\psi}\begin{pmatrix}\cos\theta_0\\\sin\theta_0\end{pmatrix}\br~\textrm{p.v.}\int_{0}^\infty \frac{\exp{(-\frac{r^2}{\varepsilon^2})}}{r}dr\int_{0}^{2\pi}\cos(\theta-\theta_\psi)\exp(2\frac{r}{\varepsilon}\br\cos(\theta-\theta_0))d\theta\smallskip\nonumber\\
&=&2\frac{e^{-\br^2}}{\varepsilon^3}r_{\psi}\begin{pmatrix}
M_1^{A_1}(\br)\cos(\theta_0-\theta_{\psi})\cos\theta_0+M_2^{A_1}(\br)\sin(\theta_0-\theta_{\psi})\sin\theta_0\smallskip\\
M_1^{A_1}(\br)\cos(\theta_0-\theta_{\psi})\sin\theta_0-M_2^{A_1}(\br)\sin(\theta_0-\theta_{\psi})\cos\theta_0
\end{pmatrix}\smallskip\nonumber\\
&&-2\frac{e^{-\br^2}}{\varepsilon^3}r_{\psi}\cos(\theta_0-\theta_\psi)\begin{pmatrix}\cos\theta_0\\\sin\theta_0\end{pmatrix}\br^2 M_3^{A_1^1}(\br)\smallskip\nonumber\\
&=&2\frac{e^{-\br^2}}{\varepsilon^3}r_{\psi}\begin{pmatrix}
[M_1^{A_1}(\br)-\br M_3^{A_1^1}(\br)]\cos(\theta_0-\theta_{\psi})\cos\theta_0+M_2^{A_1}(\br)\sin(\theta_0-\theta_{\psi})\sin\theta_0\smallskip\\
[M_1^{A_1}(\br)-\br M_3^{A_1^1}(\br)]\cos(\theta_0-\theta_{\psi})\sin\theta_0-M_2^{A_1}(\br)\sin(\theta_0-\theta_{\psi})\cos\theta_0
\end{pmatrix},\nonumber
\end{eqnarray*}where
\begin{eqnarray}
M_1^{A_1}(\br)&:=&\int_{0}^{\infty}\exp(-r^2)dr\int_0^{2\pi}\cos^2\theta\exp(2r\br\cos\theta)d\theta,\label{eq:m1}\\
M_2^{A_1}(\br)&:=&\int_{0}^{\infty}\exp(-r^2)dr\int_0^{2\pi}\sin^2\theta\exp(2r\br\cos\theta)d\theta,\label{eq:m2}\\
M_3^{A_1}(\br)&:=&\frac1{\br}~\textrm{p.v.}\int_{0}^{\infty}\frac{\exp(-r^2)}{r}dr\int_{0}^{2\pi}\cos\theta\exp(2r\br\cos\theta)d\theta..\label{eq:m3}
\end{eqnarray}

The needed information about the functions $\displaystyle M_i^{A_1}(\cdot)$, $i=1,2,3$, is gathered in the following lemma, whose proof is given in Section \ref{pf-de:M-A1} in appendix.
\begin{lemma}\label{de:M-A1}
For $i=1,2$, $\displaystyle M_i^{A_1}(\cdot)$ are entire functions. Moreover, the  function $M_4^{A_1}(\cdot)$ defined by the relation
\begin{equation}\label{de:M4}
M_4^{A_1}(z):=\frac{1}{z^2}(M_1^{A_1}(z)-z^2M_3^{A_1}(z)-M_2^{A_1}(z))
\end{equation}
is a nonzero entire function.
\end{lemma}
Using Lemma \ref{de:M-A1}, we further simplify $\tilde{I}_1^{A_{1,1}}$ as follows.
\begin{eqnarray}
&&\tilde{I}_1^{A_{1,1}}(\alpha,\psi)\nonumber\smallskip\\
&=&2\frac{e^{-\br^2}}{\varepsilon^3}r_{\psi}\begin{pmatrix}
[M_1^{A_1}(\br)-\br^2 M_3^{A_1}(\br)]\cos(\theta_0-\theta_{\psi})\cos\theta_0+M_2^{A_1}(\br)\sin(\theta_0-\theta_{\psi})\sin\theta_0\nonumber\smallskip\\
[M_1^{A_1}(\br)-\br^2 M_3^{A_1}(\br)]\cos(\theta_0-\theta_{\psi})\sin\theta_0-M_2^{A_1}(\br)\sin(\theta_0-\theta_{\psi})\cos\theta_0
\end{pmatrix}\nonumber\smallskip\\
&=&2\frac{e^{-\br^2}}{\varepsilon^3}r_{\psi}\br^2M_4^{A_1}(\br)\cos(\theta_0-\theta_\psi)\begin{pmatrix}\cos\theta_0\\\sin\theta_0\end{pmatrix}+~Ê2\frac{e^{-\br^2}}{\varepsilon^3}r_{\psi}M_2^{A_1}(\br^2)\begin{pmatrix}\cos\theta_\psi\\\sin\theta_\psi\end{pmatrix}\smallskip\nonumber\\
&=&2\frac{e^{-\br^2}}{\varepsilon^3}M_4^{A_1}(\br)\langle\bar{\eta}_0,\psi(\x)\rangle \bar{\eta}_0
+~Ê2\frac{e^{-\br^2}}{\varepsilon^3}M_2^{A_1}(\br)\psi(\x).
\end{eqnarray}

This ends the proof of Proposition \ref{est-A1}.

\end{proof}

\begin{lemma}\label{A12}
With the above notations, for $\varepsilon>0$ small enough, one has
\begin{equation}\label{est-A12}
P_x(A_{1,2}(\alpha,\psi)(x))=\frac{2e^{-\bar{r}_0^2}}{\varepsilon^3}(M_5^{A_1}(\bar{r}_0)-\bar{r}_0^2M_3^{A_1}(\bar{r}_0))\psi(\x)+O(\frac{1}{\eps^2}),
\end{equation}
where $M_5^{A_1}(\cdot)$ is the non zero entire function defined as $M_1^{A_1}(\cdot)+M_2^{A_1}(\cdot)$.
\end{lemma}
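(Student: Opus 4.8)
The plan is to follow closely the proof of Lemma~\ref{A11}; the only structural difference is that here $\nabla\alpha(y)$ is contracted with $x-y$ instead of with $\psi(y)$, so it produces a \emph{scalar} multiple of $\psi(x)$ rather than the rank-one term carrying the $\bar\eta_0$-direction that appears in~\eqref{est-A11}. First I would apply the chart $h_x$ of Subsection~\ref{choice-Vn}, write $y=h_x(\eta)$ with $\eta\in B(0,2\delta)$, use Remark~\ref{jacob} to absorb one factor $\langle n_x,n_y\rangle$ into the change of surface measure, and invoke Lemma~\ref{grad-V} together with Procedure~(P3) to replace $\nabla\alpha(y)=\nabla V_n(y)$ by $\nabla\alpha_{\varepsilon,\eta_0}(\eta)$ up to a $G(\eta)\nabla\alpha_{\varepsilon,\eta_0}(\eta)$ term with $\vert G(\eta)\vert\le C\vert\eta\vert^2$ and a component along $n_x$ which is irrelevant because of~\eqref{C1-u} and Proposition~\ref{shape-derivative}.

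Then, by Procedures~(P1) and~(P2) — writing $\psi(y)=\psi(x)+O(\vert\eta\vert)$ and $x-y=-\eta+O(\vert\eta\vert^2)n_x$, replacing $\langle n_x,n_y\rangle/\vert x-y\vert^3$ by $1/\vert\eta\vert^3$, and cutting the domain of integration into $B(0,\delta)$ and $B(0,2\delta)\setminus B(0,\delta)$ — everything reduces, modulo an $O(1/\varepsilon^2)$ remainder, to the model integral
\[
\tilde I^{A_{1,2}}(\alpha,\psi)(x):=\frac{2}{\varepsilon^2}~\textrm{p.v.}\int_{\R^2}\frac{\langle\eta-\eta_0,\eta\rangle}{\vert\eta\vert^3}\,\alpha_{\varepsilon,\eta_0}(\eta)\,d\eta\;\psi(x),
\]
where I have used that $\nabla\alpha_{\varepsilon,\eta_0}(\eta)=-\tfrac{2}{\varepsilon^2}(\eta-\eta_0)\alpha_{\varepsilon,\eta_0}(\eta)$, so that $\langle\nabla\alpha_{\varepsilon,\eta_0}(\eta),-\eta\rangle=\tfrac{2}{\varepsilon^2}\langle\eta-\eta_0,\eta\rangle\alpha_{\varepsilon,\eta_0}(\eta)$. \emph{The part that needs care} is the verification that each discarded contribution — the curvature corrections coming from $\nu_x$, the Taylor remainder of $\psi$, the $G(\eta)$ term, and the annular region $B(0,2\delta)\setminus B(0,\delta)$ — is genuinely $O(1/\varepsilon^2)$ and not $O(1/\varepsilon^3)$; this follows from Lemma~\ref{miracle1} applied with the appropriate exponents, the exponential smallness of the annular piece, and the standing bound $\bar r_0=r_0/\varepsilon\le1$, exactly as in estimates~\eqref{est-Ra1}--\eqref{est-Ja1}.

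It then remains to evaluate $\tilde I^{A_{1,2}}(\alpha,\psi)(x)$. Splitting $\langle\eta-\eta_0,\eta\rangle=\vert\eta\vert^2-\langle\eta_0,\eta\rangle$ breaks it into two pieces. For the first, $\frac{2}{\varepsilon^2}\int_{\R^2}\alpha_{\varepsilon,\eta_0}(\eta)\vert\eta\vert^{-1}d\eta$, I would rescale $\eta=\varepsilon\zeta$, pass to polar coordinates, and use the rotational invariance of $\int_0^{2\pi}e^{2\rho\bar r_0\cos(\theta-\theta_0)}d\theta$ to recognize, via the definitions~\eqref{eq:m1} and~\eqref{eq:m2}, the value $\frac{2e^{-\bar r_0^2}}{\varepsilon^3}\big(M_1^{A_1}(\bar r_0)+M_2^{A_1}(\bar r_0)\big)$. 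For the second, $-\frac{2}{\varepsilon^2}\big\langle\eta_0,\textrm{p.v.}\int_{\R^2}\eta\,\alpha_{\varepsilon,\eta_0}(\eta)\vert\eta\vert^{-3}d\eta\big\rangle$, Lemma~\ref{miracle2} together with $\langle\eta_0,\eta_0\rangle=r_0^2=\varepsilon^2\bar r_0^2$ gives the value $-\frac{2e^{-\bar r_0^2}}{\varepsilon^3}\bar r_0^2M_3^{A_1}(\bar r_0)$. Adding the two pieces and multiplying by $\psi(x)$ yields precisely~\eqref{est-A12} with $M_5^{A_1}:=M_1^{A_1}+M_2^{A_1}$; this function is entire by Lemma~\ref{de:M-A1} and is nonzero since $M_5^{A_1}(0)=2\pi\int_0^\infty e^{-r^2}dr=\pi^{3/2}>0$. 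As anticipated, the genuine obstacle is not this last computation but controlling the reduction to $\tilde I^{A_{1,2}}$; in particular one must handle the only conditionally convergent term $\langle\eta_0,\eta\rangle/\vert\eta\vert^3$ through the principal-value identity of Lemma~\ref{miracle2} rather than by absolute bounds.
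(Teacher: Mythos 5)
Your proposal is correct and follows essentially the same route as the paper: reduce to the model integral $I^{A_{1,2}}$ by the same chart/Taylor/cut-off reductions as in Lemma \ref{A11}, split $\langle\eta-\eta_0,\eta\rangle=|\eta|^2-\langle\eta_0,\eta\rangle$, evaluate the first piece in polar coordinates to get $M_5^{A_1}=M_1^{A_1}+M_2^{A_1}$ and the second via Lemma \ref{miracle2} to get $-\bar r_0^2M_3^{A_1}(\bar r_0)$. The paper is terser on the remainder estimates (it just invokes the proof of Lemma \ref{A11}), but your explicit bookkeeping of the discarded terms matches what is needed.
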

\begin{proof}[Proof of Lemma \ref{A12}]
We proceed similarly as in the proof of Lemma \ref{A11}. Besides remainder terms, one must the principal term given by
\begin{equation*}
I^{A_{1,2}}(\alpha,\psi)(x)=\textrm{p.v.}\frac{2}{\varepsilon^2}\int_{\R^2}\frac{\alpha_{\varepsilon,\eta_0}(\eta)\langle\eta,\eta-\eta_0\rangle}{\vert\eta\vert^3}d\eta~Ê\psi(\x).
\end{equation*}

Using polar coordinates, one gets
\begin{eqnarray*}
&&I^{A_{1,2}}(\alpha,\psi)(\x)\\
&=&\Big(\frac{2}{\varepsilon^2}\int_{\R^2}\frac{\alpha_{\varepsilon,\eta_0}(\eta)}{\vert\eta\vert}d\eta-\frac{2}{\varepsilon^2}~\textrm{p.v.}\int_{\R^2}\frac{\alpha_{\varepsilon,\eta_0}(\eta)\langle\eta,\eta_0\rangle}{\vert\eta\vert^3}d\eta\Big)\psi(\x)\\
&=&\Big(\frac{2e^{-\bar{r}_0^2}}{\varepsilon^3}\int_{ r =0}^{\infty}\int_{0}^{2\pi}\displaystyle 
e^{-r^2}e^{2 r\bar{r}_0\cos{\theta}}~d\theta~\displaystyle{dr}-\frac{2e^{-\bar{r}_0^2}}{\varepsilon^3}\bar{r}_0~\textrm{p.v.}\int_{ r =0}^{\infty}\int_{0}^{2\pi}\displaystyle 
e^{-r^2}e^{2 r\bar{r}_0\ \cos{\theta}}\cos{\theta}~d\theta~\displaystyle\frac{dr}{r}\Big)\psi(x)\\
&=&\frac{2e^{-\bar{r}_0^2}}{\varepsilon^3}(M_5^{A_1}(\bar{r}_0)-\bar{r}_0^2M_3^{A_1}(\bar{r}_0))\psi(\x),
\end{eqnarray*}where $M_3^{A_1}(\bar{r}_0)$ and $M_5^{A_1}(\bar{r}_0)$ are given respectively by (\ref{def-M3}) and (\ref{eq:m5}).

\end{proof}

\subsubsection{Asymptotic expansion of $A_i$ for $2\leq i\leq 5$}
We establish the following proposition for the asymptotic expansion of $A_i$ with $i=2,\dots,5$.
\begin{prop}\label{est-others}
For $i=2,\dots, 5$ and $\varepsilon>0$ small enough, one has
\begin{equation}
P_x(A_i(\alpha,\psi)(x))=O(\frac{1}{\varepsilon^2}).
\end{equation}
\end{prop}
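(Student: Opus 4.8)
The plan is to treat each of the four integrals $A_i(\alpha,\psi)(x)$, $i=2,\dots,5$, by the same mechanism already rehearsed for $A_1$: change variables via $h_\x$ to pull everything back to a neighbourhood of $0$ in $T_\x(\partial\Omega)\simeq\R^2$, split the domain of integration according to Procedure $(P2)$ (the part over $B(0,2\delta)\setminus B(0,\delta)$ is a $\displaystyle O\big(e^{-\delta^2/(4\eps^2)}\eps^{-N}\big)$ for every $N$, hence negligible), and then on $B(0,\delta)$ use Procedure $(P1)$ to replace the smooth vector/matrix-valued factors by their values at $x$ plus a remainder that gains a factor $\vert\eta\vert$, together with Procedure $(P3)$ to write $\nabla\alpha_{\varepsilon,\eta_0}(\eta)$ in place of $\nabla V_n(y)$ modulo $O(1/\eps)$ contributions. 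The point is that each $A_i$, $2\le i\le5$, is \emph{less singular} than $A_1$: the kernel of $A_1$ is genuinely hypersingular (homogeneous of degree $-2$ after the numerator $\langle\psi(y),x-y\rangle$ or $(\nabla\alpha(y)(x-y))$ is accounted for), whereas $A_2$ and $A_5$ have an extra factor $\alpha(y)$ (no $\nabla\alpha$) and $A_3$, $A_4$ carry the extra $n_y$ contracted against $n_x$ or against $\langle\psi(y),x-y\rangle$, each of which is $O(\vert\eta\vert)$ near $x$ since $\psi(x)\perp n_x$ and $n_y-n_x=O(\vert\eta\vert)$.

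Concretely, I would argue term by term. For $A_2(\alpha,\psi)$: the integrand after change of variables is bounded by $\displaystyle C\,\frac{\alpha_{\varepsilon,\eta_0}(\eta)}{\vert\eta\vert^{2}}$ (no derivative of $\alpha$), so by Lemma~\ref{miracle1} with $m=0$ one gets $\displaystyle\frac{1}{\eps^2}\int_{B(0,\delta)}\frac{\alpha_{\varepsilon,\eta_0}(\eta)}{\vert\eta\vert^{2}}\,d\eta=O(1/\eps^2)$ after observing that $\int_{B(0,\delta)}\alpha_{\varepsilon,\eta_0}(\eta)\vert\eta\vert^{-2}d\eta=O(1/\eps)$ (this follows by the same computation as Lemma~\ref{miracle1}, the extra inverse power being absorbed since $\br\le1$). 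For $A_5(\alpha,\psi)$: the kernel $l(x,y)$ is weakly singular of class $C^3_*(1)$, so $l(x,y)[\nabla(\alpha\psi)(y)]$ is integrable with a kernel no worse than $C/\vert\eta\vert$ multiplying $\nabla\alpha_{\varepsilon,\eta_0}$, hence of order $\displaystyle\frac1{\eps^2}\int_{B(0,\delta)}\frac{\alpha_{\varepsilon,\eta_0}(\eta)\,\vert\eta-\eta_0\vert}{\vert\eta\vert}\,d\eta+O(1/\eps)=O(1/\eps^2)$ by Lemma~\ref{miracle1} again (the factor $\vert\eta-\eta_0\vert$ coming from $\nabla\alpha_{\varepsilon,\eta_0}$ being $O(\vert\eta-\eta_0\vert\eps^{-2}\alpha_{\varepsilon,\eta_0})$ — actually $O(\vert\eta-\eta_0\vert\,\eps^{-2})\alpha_{\varepsilon,\eta_0}(\eta)$). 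For $A_3$ and $A_4$: after projecting with $P_x$ one uses $\langle n_y,w\rangle = \langle n_y-n_x,w\rangle$ for $w\in T_x(\partial\Omega)$ and $ii)$, $iii)$ of Lemma~\ref{diff_geometry} to extract a gain of $\vert\eta\vert^2$ from $\langle n_x,n_y\rangle-1$ and of $\vert\eta\vert^2$ from $\langle n_x,x-y\rangle$; similarly $\langle n_x,\psi(y)\rangle=\langle n_x,\psi(y)-\psi(x)\rangle=O(\vert\eta\vert)$ and $\langle n_x,(\nabla\psi-\nabla^T\psi)(x-y)\rangle=O(\vert\eta\vert)$. In every case the net effect is to raise the exponent in Lemma~\ref{miracle1} by at least one, bringing the estimate down to $O(1/\eps^2)$.

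The main obstacle — and the only place requiring genuine care rather than bookkeeping — is the bound on $A_3$, where the $n_x$ direction is \emph{not} killed by $P_x$ at the level of the integrand: the factor $n_y$ sits \emph{outside} the bracket, so $P_x$ acts on $n_y$, not on the scalar in the numerator. One must therefore show $P_x(n_y)=P_x(n_y-n_x)=\nu'_\x(\eta)/\sqrt{1+\vert\nu'_\x(\eta)\vert^2}=O(\vert\eta\vert)$ (from $i)$ of Lemma~\ref{diff_geometry}, since $\nu'_\x(\eta)=K_\x\eta+O(\vert\eta\vert^2)$), and simultaneously that the scalar numerator $\langle n_x,\psi(y)\rangle\nabla\alpha(y)(x-y)-\langle\psi(y),x-y\rangle\nabla\alpha(y)n_x$ is itself $O(\vert\eta\vert)\cdot\vert\nabla\alpha_{\varepsilon,\eta_0}(\eta)\vert\cdot\vert\eta\vert$; combining the two $O(\vert\eta\vert)$ gains with the $\vert\eta\vert^{-3}$ kernel and the extra $\vert\eta\vert$ from $(x-y)$ leaves a kernel $O(\vert\eta\vert^0)\vert\nabla\alpha_{\varepsilon,\eta_0}\vert$, which by Lemma~\ref{miracle1} integrates to $O(1/\eps)\cdot\frac1{\eps^2}\cdot\eps=O(1/\eps^2)$. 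Once these elementary homogeneity counts are checked, Proposition~\ref{est-others} follows, and together with Proposition~\ref{est-A1} it completes the proof of Proposition~\ref{term-w1}.
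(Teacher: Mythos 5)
Your overall strategy (change of variables via $h_\x$, the splitting $(P2)$, freezing smooth factors at $x$ via $(P1)$, and Lemma \ref{miracle1} for the absolutely convergent pieces) is the paper's strategy, and your treatments of $A_3$, $A_4$ and $A_5$ can be made to work along the lines you indicate. But your argument for $A_2$ has a genuine gap. After freezing $\nabla\psi+\nabla^T\psi$ at $x$, the leading contribution of $A_2$ is
$$(\nabla\psi(x)+\nabla^T\psi(x))\;\textrm{p.v.}\!\int\frac{\alpha_{\varepsilon,\eta_0}(\eta)\,\eta}{\vert\eta\vert^{3}}\,d\eta,$$
whose kernel is homogeneous of degree $-2$ in dimension two. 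Your claim that $\int_{B(0,\delta)}\alpha_{\varepsilon,\eta_0}(\eta)\vert\eta\vert^{-2}d\eta=O(1/\eps)$ is false: since $r_0\le\eps$, one has $\alpha_{\varepsilon,\eta_0}(\eta)\ge e^{-4}\eps^{-2}$ on $B(0,\eps)$, so that integral is bounded below by $e^{-4}\eps^{-2}\int_{B(0,\eps)}\vert\eta\vert^{-2}d\eta=+\infty$; the divergence is logarithmic at the origin and no absolute-value bound can close the estimate. Lemma \ref{miracle1} only reaches down to $\vert\eta\vert^{-1}$ ($m=0$) and cannot be "pushed" to $m=-1$. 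The point you miss is that the $O(1/\eps^{2})$ bound for $A_2$ is \emph{not} a consequence of the kernel being "less singular" than $A_1$'s; it relies on the oddness of $\eta/\vert\eta\vert^{3}$, i.e., on the principal value. This is exactly what Lemma \ref{miracle2} supplies: the paper evaluates $\textrm{p.v.}\int\alpha_{\varepsilon,\eta_0}(\eta)\eta\vert\eta\vert^{-3}d\eta=\frac{e^{-\br^2}}{\eps^{2}}M_3^{A_1}(\br)\bar\eta_0$ explicitly (in effect, subtracting $\alpha_{\varepsilon,\eta_0}(0)$, which contributes nothing by parity, and gaining a factor $\vert\eta\vert\,\|\nabla\alpha_{\varepsilon,\eta_0}\|_\infty=O(\vert\eta\vert/\eps^{3})$ from the increment), and the same device is used for $A_4$. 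Without invoking this cancellation your proof of the $A_2$ estimate does not go through; once you replace the absolute bound by the principal-value evaluation of Lemma \ref{miracle2}, the rest of your homogeneity counts are sound and the proposition follows as you state.
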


\begin{proof}[Proof of Proposition \ref{est-others}]
We proceed similarly as in the proof of Lemma \ref{A11}.\medskip

For $A_2(\alpha,\psi)(x)$, we only need to estimate the following term:
 \begin{equation*}
R^{A_2}(\alpha,\psi)(x):=(\nabla\psi(x)+\nabla^T\psi(x))~\textrm{p.v.}\int_{\R^2}\frac{\alpha_{\varepsilon,\eta_0}(\eta)}{\vert\eta\vert^3}\eta d\eta.
\end{equation*}
By Lemma \ref{miracle2}, one gets
\begin{equation}
R^{A_2}(\alpha,\psi)(x)=\frac{e^{-\br^2}}{\varepsilon^2}M_3^{A_1^1}(\br)(\nabla\psi(x)+\nabla^T\psi(x))\bar{\eta}_0=O(\frac{1}{\varepsilon^2}).
\end{equation}\medskip

For $A_3(\alpha,\psi)(x)$, we first note that $\displaystyle\nabla\alpha(y)n_x=0$, and 
\begin{eqnarray*}
\langle \n_x,\psi(y)\rangle&=&\langle\n_x,\psi(x+\eta-n_\x(\eta)\n_x)\rangle=\langle\n_x,\psi(x)+\nabla\psi(x)\eta+O(\vert\eta\vert^2)\rangle\\
&=&\langle\nabla\psi(x)^T\n_x,\eta\rangle+O(\vert\eta\vert^2).
\end{eqnarray*}Thus, we need to estimate the following integral,
\begin{eqnarray*}
R^{A_3}(\alpha,\psi)(x)&:=&\frac{2}{\varepsilon^2}\int_{\R^2}\alpha_{\varepsilon,\eta_0}(\eta)\frac{\langle\nabla\psi(x)^T\n_x,\eta\rangle}{\vert\eta\vert^3}\langle\eta-\eta_0,\eta\rangle d\eta~Ê\n_x.
\end{eqnarray*}
One can clearly apply Lemma~\ref{miracle1} to $R_1^{A_3}(\alpha,\psi)(x)$ with $m=0,1$ and one gets, 
$$
\Vert R^{A_3}(\alpha,\psi)(x)\Vert\leq \frac2{\varepsilon^2}(C(1)+\frac{C(0)r_0}{\varepsilon}),
$$
and since $\displaystyle\frac{r_0}{\varepsilon}=O(1)$, one finally deduces that 
\begin{equation}
R^{A_3}(\alpha,\psi)(x)=O(\frac{1}{\varepsilon^2}).
\end{equation}\medskip

 For $A_4(\alpha,\psi)(x)$, we only need to estimate the following term:
 \begin{equation*}
R^{A_4}(\alpha,\psi)(x):=\Big\langle~\textrm{p.v.}\int_{\R^2}\frac{\alpha_{\varepsilon,\eta_0}(\eta)}{\vert\eta\vert^3}\eta d\eta,(\nabla\psi-\nabla^T\psi(x))\n_x\Big\rangle\n_x.
\end{equation*}
Using Lemma \ref{miracle2}, one gets
\begin{equation}
R^{A_4}(\alpha,\psi)(x)=\frac{e^{-\br^2}}{\varepsilon^2}M_3^{A_1}(\br)\langle(\nabla\psi-\nabla^T\psi(x))\bar{\eta}_0,\n_x\rangle\n_x=O(\frac{1}{\varepsilon^2}).
\end{equation}\medskip

 For $A_5(\alpha,\psi)(x)$, one gets the estimate
\begin{equation}\label{est-R2a3}
R^{A_5}(\alpha,\psi)(x)=O(\frac{1}{\varepsilon^2}),
\end{equation}\medskip
as a consequence of Lemma \ref{super0}.

In summary, for $i=2,\dots, 5$, $P_x(\displaystyle A_i(\alpha,\psi)(x))=O(\frac{1}{\varepsilon^2})$, which ends the proof of Proposition \ref{est-others}.

\end{proof}

\begin{lemma}\label{super0}
With the notations above, consider the function defined for $x\in\partial\Omega$
$$
R(x)=\int_{\partial\Omega}r(x,y)\cdot \nabla(\alpha\psi)(y)d\sigma(\y),
$$
where $r(\cdot,\cdot)$ is a $C^3_*(1)$ weakly singular kernel and $\cdot$ stands for a linear action of $r$ on the coefficients of $\nabla(\alpha\psi)(\cdot)$.
Then, there exists a positive constant $C_R$ such that, for $\varepsilon>0$ small enough
and $x\in\partial\Omega$, one gets
\begin{equation}\label{estiM}
\Vert P_x(R(x))\Vert\leq \frac{C_R}{\varepsilon^2}.
\end{equation}
\end{lemma}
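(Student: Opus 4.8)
The plan is to trade the large supremum norm of $\nabla(\alpha\psi)$ (of order $\varepsilon^{-3}$, coming from the $\nabla V_n$ factor) against the concentration of $\alpha=V_n$ on a disc of radius $O(\varepsilon)$ centered at $x_0$, which lies within $O(\varepsilon)$ of $x$: integrating $\nabla(\alpha\psi)$ against the weakly singular weight $|x-y|^{-1}$ will then cost only one, rather than two, extra powers of $1/\varepsilon$, leaving the bound $O(\varepsilon^{-2})$.

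First I would reduce to a scalar estimate. Since $r(\cdot,\cdot)$ is weakly singular of class $C^3_*(1)$, each of its components is bounded by $C_r\,|x-y|^{-1}$ on $\partial\Omega\times\partial\Omega$ (see Appendix \ref{WSO}); as the pairing $\cdot$ is a fixed bilinear map and $P_x$ has operator norm $1$, this yields
$$
\Vert P_x(R(x))\Vert\ \le\ C_r'\int_{\partial\Omega}\frac{\Vert\nabla(\alpha\psi)(y)\Vert}{|x-y|}\,d\sigma(y).
$$
Writing $\nabla(\alpha\psi)=\psi\,\nabla\alpha+\alpha\,\nabla\psi$ and using that $\psi\in C^1(\partial\Omega)$ (so $\psi$ and $\nabla\psi$ are bounded), it suffices to prove
$$
\int_{\partial\Omega}\frac{|\nabla\alpha(y)|+|\alpha(y)|}{|x-y|}\,d\sigma(y)\ \le\ \frac{C}{\varepsilon^{2}}.
$$

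Next I would work in the chart $h_x$ of Eq.~\eqref{HX} underlying the very definition of $V_n=\alpha$, so that $x$ corresponds to $\eta=0$. The support of $\alpha=V_n$ lies in $h_x(B(0,2\delta))$, hence the integral becomes one over $\eta\in B(0,2\delta)$, with $d\sigma(y)=\langle\n_x,\n_y\rangle\,d\eta$ (Remark \ref{jacob}), $0<\langle\n_x,\n_y\rangle\le1$, and, crucially, $|x-y|\ge|\eta|$ because $y-x=\eta-\nu_x(\eta)\n_x$ with $\eta\perp\n_x$. On $B(0,\delta)$ one has $\beta_\delta\equiv1$, so Lemma \ref{grad-V} together with \eqref{eq:Vn} gives $|V_n(h_x(\eta))|\le\varepsilon^{-2}e^{-|\eta-\eta_0|^2/\varepsilon^2}$ and $|\nabla V_n(h_x(\eta))|\le C\,\varepsilon^{-4}|\eta-\eta_0|\,e^{-|\eta-\eta_0|^2/\varepsilon^2}$; on $B(0,2\delta)\setminus B(0,\delta)$ one has $|\eta-\eta_0|\ge\delta/2$ for $\varepsilon\le\delta/2$, so both $V_n\circ h_x$ and $\nabla(V_n\circ h_x)$ are $O\bigl(\varepsilon^{-4}e^{-\delta^2/(4\varepsilon^2)}\bigr)$ there, contributing an exponentially small remainder. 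Substituting $\eta=\varepsilon\zeta$, $\eta_0=\varepsilon\bar{\eta}_0$ with $|\bar{\eta}_0|=\br\le1$, the two main pieces reduce to
$$
\frac{C}{\varepsilon^{2}}\int_{\R^2}\frac{|\zeta-\bar{\eta}_0|}{|\zeta|}\,e^{-|\zeta-\bar{\eta}_0|^2}\,d\zeta
\qquad\text{and}\qquad
\frac{C}{\varepsilon}\int_{\R^2}\frac{1}{|\zeta|}\,e^{-|\zeta-\bar{\eta}_0|^2}\,d\zeta,
$$
so that $\Vert P_x(R(x))\Vert\le C_R/\varepsilon^{2}$ for $\varepsilon$ small, provided the two displayed integrals are finite and bounded uniformly in $\bar{\eta}_0$.

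I expect the only mildly delicate point to be exactly this last uniformity as $\bar{\eta}_0$ runs over the closed unit disc, in particular the degenerate situation $\br\to0$ (that is, $x_0\to x$): it follows by splitting each integral into the regions $\{|\zeta|\le2\}$ and $\{|\zeta|\ge2\}$ and using, respectively, the planar integrability of $|\zeta|^{-1}$ together with the crude bound $|\zeta-\bar{\eta}_0|\le3$ there, and the Gaussian decay $e^{-|\zeta-\bar{\eta}_0|^2}\le e^{-(|\zeta|-1)^2}$ valid for $|\zeta|\ge2\ge|\bar{\eta}_0|+1$. Everything else is routine bookkeeping (the exponentially small boundary-layer contribution coming from $\nabla\beta_\delta$ is absorbed in $C_R/\varepsilon^{2}$), and the argument uses only that $r$ is weakly singular of order $1$, not its $C^3$-smoothness.
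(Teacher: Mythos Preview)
Your proposal is correct and follows essentially the same route as the paper: pass to the chart $h_x$, bound the weakly singular kernel by $C|x-y|^{-1}\le C|\eta|^{-1}$, and reduce to controlling $\int_{B(0,\delta)}|\nabla\alpha_{\varepsilon,\eta_0}(\eta)|/|\eta|\,d\eta$; the paper then simply invokes Lemma~\ref{miracle1} (together with $|\nabla\alpha_{\varepsilon,\eta_0}|\le 2\varepsilon^{-2}|\eta-\eta_0|\,\alpha_{\varepsilon,\eta_0}$ and $|\eta_0|\le\varepsilon$), whereas you reprove that estimate by the rescaling $\eta=\varepsilon\zeta$.
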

\begin{proof}[Proof of Lemma \ref{super0}] As done for estimating $A_1$, we use the change of variables introduced in Subsection \ref{choice-Vn} and taking into account Lemma \ref{grad-V} and Remark \ref{jacob}, it is easy to see that the most `singular'' part corresponds to majorizing 
$$
\int_{B(0,\delta)}\frac{\nabla\alpha_\eps(\eta)}{\vert \eta\vert}d\eta.
$$
Thus Eq. \eqref{estiM} follows readily from Lemma \ref{miracle1}.

\end{proof}

\subsection{Estimates of the remainder terms  $W_i^j$, $i=2,3,4$ and $j=1,2$.}\label{remainder0}
In this subsection, we upper bound the remainder terms $P_x(W_i^j(x))$,  $i=2,3,4$ and $j=1,2$, defined respectively in Eqs. (\ref{w2-0}), (\ref{w3-0}) and (\ref{w2-0}).
More precisely, we prove that
\begin{prop}\label{term-w2-4}With the notations above, we have, for $\eps>0$ small enough,
$j=1,2$ and  $i=2,3,4$,
\begin{equation}\label{est-W2-4}
P_x(W_i^j(x))=O(\frac{1}{\varepsilon^2}).
\end{equation}
\end{prop}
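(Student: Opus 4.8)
The plan is to exploit the structure already used for $W_1^j$, namely that each $W_i^j$, $i=2,3,4$, is obtained by applying a \emph{bounded} operator on $\partial\Omega$ to integrals of the same type as in Eq.~\eqref{w1-0}, but where the kernels are less singular than the hypersingular kernel $\partial^2\Gamma^0/\partial N(x)\partial N(y)$ appearing in $W_1^j$. The essential point is a gain of one order of regularity: the kernel $\partial^2\Delta^{\lambda}_{sm}/\partial N(x)\partial N(y)$ is only weakly singular (of class $C^3_*(1)$ in the terminology of Appendix \ref{WSO}, since $\Delta^\lambda$ is the smoother part of the Stokes fundamental solution), the operator $(-2)(K^\lambda_\Omega)^*$ and its Neumann series $\sum_{k\ge1}[(-2)(K^\lambda_\Omega)^*]^k$ are bounded on the relevant trace spaces, and the operator $R$ in Eq.~\eqref{w4-0} is likewise bounded (indeed smoothing). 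Consequently each $W_i^j$, $i=2,3,4$, can be written as $(\text{bounded operator})$ applied either to $W_1^j$ — which is $O(1/\eps^3)$ in the hypersingular direction but whose \emph{off-tangent} contribution, and more importantly whose image under a bounded-but-not-pointwise-evaluation operator, must be controlled differently — or to a weakly singular integral of $V_n\,\partial\phi_j/\partial n$, which by Lemma \ref{super0} (applied with $\alpha=V_n$, $\psi=\partial\phi_j/\partial n$) is $O(1/\eps^2)$.

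Concretely I would proceed term by term. For $W_3^j$, the integral is $\int_{\partial\Omega}\bigl(\partial^2\Delta^\lambda_{sm}/\partial N(x)\partial N(y)\bigr)V_n(y)\,\partial\phi_j^m/\partial n\,d\sigma_y$; since this kernel is weakly singular of class $C^3_*(1)$, one decomposes $\partial\phi_j/\partial n$ via Procedure $(P1)$, uses $\langle\psi(x),n_x\rangle=0$ as in $(P2)$, changes variables by $h_x$ as in Subsection \ref{choice-Vn}, and reduces — exactly as in the proof of Lemma \ref{super0} — the most singular contribution to majorizing $\int_{B(0,\delta)}\alpha_{\eps,\eta_0}(\eta)/|\eta|\,d\eta$, which Lemma \ref{miracle1} bounds by $C/\eps$. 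Accounting for the prefactor $1/\eps^2$ coming from the normalization of $V_n$, this gives $O(1/\eps^2)$; applying the bounded operator $\sum_{k=0}^N[(-2)(K^\lambda_\Omega)^*]^k$ preserves this bound (after projecting with $P_x$, using that these operators act continuously on $C(\partial\Omega)$ or the appropriate Hölder/Sobolev space and that the $O(1/\eps^2)$ estimate is in fact uniform — i.e.\ an $L^\infty$ bound on the density). For $W_4^j$ one argues the same way for the $\Delta^\lambda$-part, and for the $\Gamma^0$-part one uses that $R-\sum_{k\ge1}[(-2)(K^\lambda_\Omega)^*]^k$ is a \emph{smoothing} operator (this is where the representation formula from Section \ref{sec:pf-expansion}/Appendix is invoked), so that it maps the $O(1/\eps^2)$-in-$L^\infty$ (even $O(1/\eps)$ after the $V_n$-normalization is stripped, but $O(1/\eps^2)$ suffices) hypersingular-type integral — or rather the density $V_n\partial\phi_j/\partial n$, whose relevant norm is $O(1/\eps^2)$ — into something of the same or better order.

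For $W_2^j=\bigl(\sum_{k=1}^N[(-2)(K^\lambda_\Omega)^*]^k\bigr)W_1^j$ the point is different and this is the step I expect to be the main obstacle: $W_1^j$ itself is $O(1/\eps^3)$, so a merely bounded operator would give only $O(1/\eps^3)$, not the claimed $O(1/\eps^2)$. The resolution is that $(-2)(K^\lambda_\Omega)^*$ is not just bounded but \emph{compact / smoothing of positive order} on $\partial\Omega$ (its kernel being weakly singular, of class $C^3_*(1)$), so that it gains derivatives: applied to the function $W_1^j$, whose singular behaviour near $x$ is, by Proposition \ref{term-w1}, concentrated in a region of size $\eps$ around $x_\eps$ with amplitude $O(1/\eps^3)$ — hence of ``mass'' $O(1/\eps)$ and bounded in, say, $W^{-1,\infty}$-type norms by $O(1/\eps^2)$ — the smoothing operator returns a function that is $O(1/\eps^2)$ pointwise. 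Making this precise requires revisiting the proof of Proposition \ref{term-w1}: rather than just the leading $O(1/\eps^3)$ term one keeps track of its profile (a fixed bounded function of $\bar\eta_0,\br$ times $e^{-\br^2}/\eps^3$ localized at scale $\eps$), notes that convolving such a concentrated profile against the weakly singular kernel $l(x,y)\in C^3_*(1)$ and integrating over $\partial\Omega$ produces, by the same Lemma \ref{miracle1}-type estimates used throughout Section \ref{sec:pf-expansion}, a contribution bounded by $C/\eps^2$. One then iterates finitely many times ($k=1,\dots,N$), each application of the weakly singular operator either preserving or improving the order, and concludes $P_x(W_2^j(x))=O(1/\eps^2)$. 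Combining the three estimates yields Proposition \ref{term-w2-4}, and together with Proposition \ref{term-w1} it completes the proof of Proposition \ref{expansion}.
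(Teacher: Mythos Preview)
Your treatment of $W_3^j$ is correct and matches the paper. The handling of $W_2^j$ (and, by the same token, the $\Gamma^0$-part of $W_4^j$) is where your proposal departs from the paper and contains a genuine gap.

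You propose to view $W_2^j = \bigl(\sum_{k\ge1}[(-2)(K^\lambda_\Omega)^*]^k\bigr) W_1^j$ as a smoothing operator applied to the \emph{function} $W_1^j$ on $\partial\Omega$, and to argue that since $W_1^j$ is ``concentrated at scale $\eps$ with amplitude $O(1/\eps^3)$'', the smoothing gains one order. The gap is that Proposition~\ref{term-w1} computes $P_x(W_1^j(x))$ at the \emph{single} point $x$ only; it tells you nothing about $z\mapsto W_1^j(z)$ as a function on $\partial\Omega$. To carry out your plan you would have to redo the entire analysis of Section~\ref{sec:pf-expansion} at a variable evaluation point $z$, establish a uniform profile for $W_1^j(\cdot)$ (distinguishing $|z-x_\eps|\lesssim\eps$ from $|z-x_\eps|\gtrsim\eps$), and then estimate $\int_{\partial\Omega} k^0(x,z)\,W_1^j(z)\,d\sigma_z$ by splitting accordingly. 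This may be feasible, but it is substantial extra work that your proposal does not supply, and the heuristic ``mass $O(1/\eps)$, hence $W^{-1,\infty}$-norm $O(1/\eps^2)$'' does not by itself justify the pointwise bound at $x$.

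The paper avoids this entirely by working at the \emph{kernel} level rather than the function level. Writing $4\pi E = F + L$ as in Eq.~\eqref{remainB}, with $F$ the singular part (kernel~\eqref{kernel-f}) and $L$ weakly singular of class $C^3_*(1)$, the key step is Lemma~\ref{compo0}: the composed kernel of $K^0_\Omega\circ F$ is itself weakly singular of class $C^3_*(1)$. This is proved by writing the composed kernel explicitly (Eqs.~\eqref{ker-c_i}--\eqref{coeur}), passing to the chart $h_x$, and computing the Fourier transform of the resulting convolution of homogeneous kernels on $\R^2$ (via \cite[Th.~7.3.1]{DH}). Once $K^0_\Omega\circ F$ is known to be $C^3_*(1)$, every term in the sum defining $W_2^j$ is a $C^3_*(\gamma)$ operator, $\gamma\ge1$ (by Theorem~\ref{th33} on composition of weakly singular kernels), applied to $\nabla\bigl(V_n\,\partial\phi_j/\partial n\bigr)$, and Lemma~\ref{super0} gives $O(1/\eps^2)$ directly---without ever needing to understand $W_1^j$ away from $x$. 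The term $W_4^j$ is handled the same way, using in addition that $R-\sum_{k=1}^N C^k$ has a $C^r$ kernel by Theorem~\ref{important}.
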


\begin{rem}\label{DH0}
One must stress the similarity of our computations with those performed by D. Henry in \cite{DH}. More precisely, the terms $A_1$ and $A_2$ in $W_1^j(\cdot)$, which are (essentially) the most `singular'' part in the hypersingular operator $E$ defined in Eq. \eqref{decompE}, correspond to the operator $J(\cdot)$ defined in Theorem 7.4.1 of \cite{DH}, page 135, with the specific choice of $\displaystyle Q(x,y,\frac{y-x}{\vert y-x\vert})=V_n(y)\frac{y-x}{\vert y-x\vert}$ and $n=3$. Also notice that our Lemma \ref{miracle2} corresponds to an explicit computation of the polynomial $q(\cdot)$ (cf. Theorem 7.4.1 of  \cite{DH}) and follows the same lines as the strategy proposed in page 137 in \cite{DH}. In particular, one gets from Theorem 7.4.1 of \cite{DH}  that $W_1^j(\cdot)$ extends uniquely to a continuous operator on $\partial\Omega$. 
\end{rem}

\begin{proof}[Proof of Proposition \ref{term-w2-4}] All the estimates to be established are consequences of \eqref{EE1}-\eqref{EE4} obtained in Corollary \ref{coro:hsiao}. We rewrite it
as follows. For  $\uu$ of class $C^2$ and $x\in \partial\Omega$, one writes $4\pi \E\uu(x)$ as the sum of two operators,
\begin{equation}\label{remainB}
4\pi \E\uu(x)=F\uu(x)+L\uu(x)=\textrm{p.v.} \int_{\partial\Omega}f(x,y)\cdot\nabla\uu(y)d\sigma(\y)+\int_{\partial\Omega}l(x,y)\cdot\nabla\uu(y)d\sigma(\y),
\end{equation}
where `` $\cdot$ " stands for an action of the respective kernels which is linear with respect to $\nabla\uu(\cdot)$, $l(\cdot,\cdot)$ is a $C^3_*(1)$ kernel (of appropriate matrix size) 
defined in Appendix \ref{WSO} and the kernel $f(\cdot,\dot)$ defining the singular operator $F$ together with its action is given by 
\begin{equation}\label{kernel-f}
f(x,y)\cdot M(y):=[M(y)+M^T(y)]\frac{\x-\y}{\mid \x-\y\mid^3}+
n_xn_x^T[M(y)-M^T(y)]\frac{\x-\y}{\mid \x-\y\mid^3},
\end{equation}
for $x\neq y$, points on $\partial\Omega$ and $M$ is  $C^1$ matrix-valued function defined on $\partial\Omega$. We are then only interested in the first term of the above sum.

In order to handle the remainder terms $P_x(W_i^j)$'s, $i=2,3,4$, one must handle the evaluation at $V_n\frac{\partial\phi_j}{\partial\n}$ of the operators obtained as the composition of ${{K}}^{\lambda}_\Omega$ defined in \eqref{k-lamb} and its iterations with $W_1^j$. In fact, we will show next that all remainder terms $P_x(W_i^j)$'s, $i=2,3,4$ are $O(\frac1{\varepsilon^2})$
and to proceed, we will be only interested in the contribution of the ``most'' singular part in each term $W_i^j$'s, $i=2,3,4$. For that purpose, we will perform several (and standard) reductions. The first one consists in considering the operator ${{K}}_\Omega^0$ instead of ${{K}}_\Omega^{\lambda}$ since 
${{K}}_\Omega^{\lambda}-{{K}}_\Omega^0$ admits a $C^1$ kernel. Lemma \ref{super0} already handles the term $W_3^j$. Next, recall ${{K}}_\Omega^0$ is a weakly singular operator of class $C^3_*(1)$ (see Appendix \ref{WSO} for a definition). 
To handle the terms  $W_2^j$ and $W_4^j$, we first need the following result.
\begin{lemma}\label{compo0}
The operator defined on $C^1(\partial\Omega)$ as the composition of ${{K}}_\Omega^0$ and $F$ is a weakly singular operator of class $C^3_*(1)$. 
\end{lemma}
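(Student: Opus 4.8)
The plan is to establish Lemma~\ref{compo0} by a composition-of-kernels computation, relying on the general calculus of weakly singular operators of class $C^3_*(1)$ recalled in Appendix~\ref{WSO}. The statement asserts that composing the weakly singular operator ${{K}}_\Omega^0$ (whose kernel is $C^3_*(1)$, as recalled just before the lemma) with the singular operator $F$ defined through the kernel $f$ of Eq.~\eqref{kernel-f} yields again a $C^3_*(1)$ weakly singular operator. The essential point is that $F$, even though it is merely a singular (Cauchy-principal-value) operator of order $0$ on the surface, gains one degree of smoothing once pre-composed with an operator whose kernel vanishes to sufficiently high order on the diagonal; this is the familiar phenomenon that the product of a weakly singular kernel of order $\alpha$ with a singular kernel of order $2$ (in dimension $d-1=2$) produces a kernel whose singularity is again weakly singular.

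First I would write the composition explicitly: for $u$ of class $C^1$ on $\partial\Omega$,
\begin{equation}
({{K}}_\Omega^0\circ F)u(x)=\int_{\partial\Omega}k^0(x,z)\left(\textrm{p.v.}\int_{\partial\Omega}f(z,y)\cdot\nabla u(y)\,d\sigma(y)\right)d\sigma(z),
\end{equation}
where $k^0(\cdot,\cdot)$ is the $C^3_*(1)$ kernel of ${{K}}_\Omega^0$. The aim is to interchange the order of integration (justified by the principal-value structure and the $C^1$ regularity of $u$, exactly as in the manipulations of \cite[Chap.~7]{DH}) and to recognise the resulting operator as an integral operator with kernel
\begin{equation}
m(x,y):=\textrm{p.v.}\int_{\partial\Omega}k^0(x,z)f(z,y)\,d\sigma(z),
\end{equation}
acting linearly on $\nabla u(y)$. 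The crux is then the pointwise estimate on $m(x,y)$ and its tangential derivatives up to order $3$: since $k^0$ is $C^3_*(1)$ it is bounded, together with three surface derivatives, by $C|x-z|^{-1}$ near the diagonal, while $f(z,y)$ is bounded by $C|z-y|^{-2}$; the convolution of $|x-z|^{-1}$ with $|z-y|^{-2}$ over the two-dimensional surface $\partial\Omega$ produces a kernel bounded by $C|x-y|^{-1+0}$, i.e.\ again of $C^3_*(1)$ type (the exponents satisfy $-1-2+2=-1$, which is precisely the weakly singular threshold in dimension two, with the borderline logarithmic case absorbed into the $C^3_*(1)$ class). One must also check that differentiating $m$ in $x$ up to order $3$ only differentiates $k^0$, whose derivatives obey the same type of bounds, so the class is preserved; this is a routine application of the differentiation lemmas for weakly singular kernels collected in Appendix~\ref{WSO} and in \cite[Sections 7.1--7.3]{DH}.

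The step I expect to be the main obstacle is the rigorous justification of the interchange of the two integrations, because the inner integral is only a principal value and $f$ has a non-integrable $|z-y|^{-2}$ singularity: one cannot naively apply Fubini. The standard remedy, which I would follow, is to regularise by excising a small geodesic disc of radius $\rho$ around the diagonal in the inner integral, apply Fubini to the (now absolutely convergent) truncated integrals, and then let $\rho\to 0$, showing that the boundary contributions from the excision converge (using the cancellation built into the odd part of $f$ — note that $\frac{x-y}{|x-y|^3}$ is odd, which is exactly why the principal value exists and why the limit of the excision terms is finite and contributes a smooth, bounded term that can be folded into the $C^3_*(1)$ part). Once this limiting argument is carried out — again, parallel to computations in \cite[Chap.~7]{DH} — the kernel estimates above finish the proof. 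Since only the behaviour near the diagonal matters, all estimates are local and the global boundedness of $\partial\Omega$ together with its $C^\ell$ regularity ($\ell\geq 4$) guarantees that the far-field part of $m(x,y)$ is smooth, completing the verification that ${{K}}_\Omega^0\circ F$ is of class $C^3_*(1)$.
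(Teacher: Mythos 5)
There is a genuine gap in the key kernel estimate. Your plan reduces the lemma to bounding
$m(x,y)=\textrm{p.v.}\int_{\partial\Omega}k^0(x,z)f(z,y)\,d\sigma(z)$, and you claim the bound $|m(x,y)|\leq C|x-y|^{-1}$ by "the convolution of $|x-z|^{-1}$ with $|z-y|^{-2}$", counting exponents $-1-2+2=-1$. This exponent count is invalid here: the standard convolution estimate $\int|x-z|^{-a}|z-y|^{-b}d\sigma(z)\lesssim|x-y|^{2-a-b}$ on a two-dimensional surface requires \emph{both} $a<2$ and $b<2$, and with $b=2$ the integral of absolute values already diverges near $z=y$ (it is the borderline of a Calder\'on--Zygmund kernel, not of a weakly singular one). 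So no pointwise bound on $m$ can come from taking absolute values; the cancellation of the odd kernel $\frac{z-y}{|z-y|^3}$ must enter the kernel estimate itself, not merely the justification of Fubini as in your last paragraph. Concretely, one must pair the mean-zero property of $f(\cdot,y)$ on small circles around $y$ against the H\"older/Lipschitz modulus of $z\mapsto k^0(x,z)$ (or, equivalently, work on the Fourier side), and this is precisely where your argument stops short.

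The paper's proof fills exactly this hole. After splitting the composition into $R_1+R_2$ according to the two terms of $f$ (with $R_2$ more regular because $\langle x-z,n(z)\rangle^2=O(|x-z|^4)$), it writes the composed kernel in the chart $h_x$, isolates its most singular part as a genuine two-dimensional convolution of the homogeneous kernels $\frac{\eta^TK_x\eta}{|\eta|^5}\eta\eta^T=\frac1{|\eta|}Q(\frac{\eta}{|\eta|})$ and $\frac{\tilde e_i\,\eta^T}{|\eta|^3}$, and then invokes the Fourier transforms of homogeneous kernels from \cite[Th.~7.3.1]{DH} (after Stein): the first has symbol $\frac1{|\xi|}\tilde Q(\frac{\xi}{|\xi|})$ and the second has a \emph{bounded} homogeneous symbol $\gamma_1\frac{\xi}{|\xi|}$, so the product symbol is $O(|\xi|^{-1})$ and the composition is of class $C^3_*(1)$. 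You cannot instead invoke the composition theorem for weakly singular kernels (Theorem~\ref{th33}), since $F$ has exponent $\beta=0$ and is not weakly singular; some version of this symbol (or cancellation) argument is unavoidable, and your proposal as written does not contain it.
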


Thanks to the above lemma, the first term in the summation \eqref{w2-0} is controlled as $O(\frac1{\varepsilon^2})$. For the other terms, it is now enough to see that they correspond to compositions of iterates of ${{K}}_\Omega^0$ with  ${{K}}_\Omega^0\circ F$ and thus we can apply Theorem \ref{th33} given below on the composition of weakly singular operators of class 
$C^3_*(\gamma)$ with $\gamma>0$. We deduce at once that every term appearing in the summation  \eqref{w2-0}  corresponds to the evaluation at $\nabla(V_n\frac{d\phi_j}{d\n})(\cdot)$ of a weakly singular operator of class $C^3_*(\gamma)$, with $\gamma\geq 1$, and is therefore controlled as $O(\frac1{\varepsilon^2})$. The term $W_3^j$ is handled in a similar way and Proposition \ref{term-w2-4} is established.

\end{proof}

We now give the proof of Lemma \ref{compo0}.
\begin{proof}[Proof of Lemma \ref{compo0}] The argument given below is already contained in Section $7.6$ of \cite{DH}, which considers a more general situation (see, more particularly, the proof of Theorem $7.6.3$ page $147$, \cite{DH}). For sake of clarity, we reproduce the main lines. Let $M$ be a $C^1$ matrix-valued function defined on $\partial\Omega$. Then, the composition $({{K}}_\Omega^0\circ F)[M](\cdot)$ is defined, for $x\in \partial\Omega$, as the sum 
$$
( {{K}}_\Omega^0\circ F)[M](x)=R_1(x)+R_2(x),
$$
where 
\begin{equation}\label{R1}
R_1(x)=\frac{3}{4\pi}~\textrm{p.v.}\iint_{\partial\Omega\times\partial\Omega} \frac{\langle \x-\z,{\n}(\z)\rangle}{\mid \z-\x\mid^5}~( \x-\z)( \x-\z)^T
[M(y)+M^T(y)]\frac{\z-\y}{\mid \z-\y\mid^3}~d\sigma_y~d\sigma_z,
\end{equation}
and 
\begin{equation}\label{R2}
R_2(x)=\frac{3}{4\pi}~\textrm{p.v.}\iint_{\partial\Omega\times\partial\Omega} \frac{\langle \x-\z,{\n}(\z)\rangle^2}{\mid \z-\x\mid^4}~\frac{(\x-\z)}{\mid \z-\x\mid}\frac{\langle n(\z),[M(y)-M^T(y)](\z-\y)\rangle}{\mid \z-\y\mid^3}~d\sigma_y~d\sigma_z.
\end{equation}
Thanks to \eqref{geo-diff2}, the operator $R_2$ is clearly more regular than $R_1$. In the sequel, we only provide details for $R_1$ and only give the estimate for $R_2$. 

 We next develop in coordinates the above expressions and obtain that, for $i=1,2,3$,
 \begin{eqnarray}
\Big(R_1(x)\Big)_{i}&=&\frac{3}{4\pi}\sum_{k,l=1}^3\textrm{p.v.}\int_{\partial\Omega}(M(y))_{kl}~d\sigma_y\nonumber\\
&& \int_{\partial\Omega}
 \Big[\frac{ \langle \x-\z,{\n}(\z)\rangle (\x-\z)_i( \x-\z)_k}
 {\mid \z-\x\mid^5}\frac{(\z-\y)_l}{\mid \z-\y\mid^3}\nonumber\\
&& + \frac{ \langle \x-\z,{\n}(\z)\rangle (\x-\z)_i( \x-\z)_l}
 {\mid \z-\x\mid^5}\frac{(\z-\y)_k}{\mid \z-\y\mid^3}\Big]~d\sigma_z.\label{R1-1}
\end{eqnarray}
The integrand of \eqref{R1-1} shows that $R_1$ is the contraction of $M(\cdot)$ and a tensor field of order $(2,1)$ defined (in coordinates) by the interior integral in \eqref{R1-1}. In order to describe $R_1$ as a convolution, we prefer to rewrite \eqref{R1-1} in a more elementary way, as follows, 
$$
\Big(R_1(x)\Big)_{i}=\frac{3}{4\pi}\textrm{p.v.}\int_{\partial\Omega}\hbox{Tr}(M(y)c_i(x,y))~d\sigma_y,
$$
where the kernel $c_i(x,y)$ is defined for $x\neq y$ and $1\leq i\leq 3$, as 
\begin{equation}\label{ker-c_i}
c_i(x,y):=\textrm{p.v.}\int_{\partial\Omega}\frac{ \langle \x-\z,{\n}(\z)\rangle  (\x-\z)_i}{\mid \z-\x\mid^5}
 \Big[\frac{(x-z)(z-y)^T}{\mid \z-\y\mid^3}+\frac{(z-y)(x-z)^T}{\mid \z-\y\mid^3}\Big]~d\sigma_z.
\end{equation}
Let $(e_i)_{1\leq i\leq 3}$ be the canonical basis of $\R^3$. Then one has $c_i(x,y)=d_i(x,y)+d_i(x,y)^T$, where 
\begin{equation}\label{coeur}
d_i(x,y):=\textrm{p.v.} \int_{\partial\Omega} k^0(x,z)[ g^i(z,y)]~d\sigma_z,
\end{equation}
i.e., $d_i(x,y)$ is the kernel corresponding to the convolution of 
$ {{K}}_\Omega^0$ with kernel $k^0(\cdot,\cdot)$ given by
$$
k^0(x,y):=\frac1{\mid \x-\y\mid}\frac{\langle \x-\y,{\n}(\y)\rangle}{\mid \x-\y\mid^2}~\frac{( \x-\y)}{\mid \x-\y\mid}\frac{( \x-\y)^T}{\mid \x-\y\mid},
$$
and the singular operator $G^i$ with kernel $g^i(\cdot,\cdot)$ given by
$$
g^i(x,y):=\frac{ e_i(\x-\y)^T}{\mid \x-\y\mid^3}.
$$

To perform that analysis, one writes \eqref{coeur} in the chart $h_\x$ defined in \eqref{HX} and only considers the most ``singular'' term of the composition, which is given by
\begin{equation}\label{in-chart}
\textrm{p.v.} \int_{B(0,\delta)} \frac{\eta^TK_x\eta}{\mid \eta\mid^5}\eta\eta^T\frac{\tilde{e_i}(\eta-\eta_\y)^T}{\mid\eta-\eta_\y\mid^3}d\eta.
\end{equation}
Here, $\tilde{e_i}$ is the orthogonal projection of $e_i$ onto $T_x\partial\Omega$. In \eqref{in-chart}, one clearly recognizes the convolution between the kernels $\displaystyle\frac{\eta^TK_x\eta}{\mid \eta\mid^5}\eta\eta^T$ and $\displaystyle\frac{\tilde{e_i}\eta^T}{\mid\eta\mid^3}$. 
The first kernel can also be written as $\frac1{\mid \eta\mid}Q(\frac{\eta}{\mid \eta\mid})$ where the components of $Q$ are homogeneous polynomials of degree four 
defined on $S^1$. According to \cite[Th. $7.3.1$ p. $128$]{DH} (which refers to \cite{Stein} for more complete computations), the Fourier transforms of these kernels are respectively equal to 
$$
F.T.(\frac{\eta^TK_x\eta}{\mid \eta\mid^5}\eta\eta^T)(\xi)=\frac{1}{\mid \xi\mid}\tilde{Q}(\frac{\xi}{\mid \xi\mid}),
$$
and 
$$
F.T.(\frac{\eta}{\mid \eta\mid^3})(\xi)=\gamma_1\frac{\xi}{\mid \xi\mid},
$$
where $\gamma_1$ is a positive constant and the components of $\tilde{Q}$ are homogeneous polynomials of degree four. We get that the Fourier transform of 
the operator whose kernel is given by \eqref{in-chart} is equal the product of the two Fourier transforms written previously and, as a consequence, that operator is weakly singular of class $C^3_*(1)$. The same conclusion holds true as well for $R_1$. A similar line of reasoning shows that $R_2$  is weakly singular of class $C^3_*(2)$ and Lemma \ref{super0} is finally proved.

\end{proof}


\section{Proof of Theorem \ref{main-theo-2}}\label{sec:proof-FS}
In this section, we establish in full generality the Foias-Saut conjecture in 3D as stated in  \cite{Foias:1987dq}. First of all, notice that there is a countable number of resonance relations as defined in Definition \ref{def:reso}. To see that, simply remark that, for every positive integer $N$, there exists a finite number of resonance relations of the type $\lambda_k=\sum_{j=1}^l m_j\lambda_j$, with $\lambda_1\leq \cdots\leq\lambda_l\leq \lambda_k$, so that $k+\sum_{j=1}^l m_j\leq N$. We use $(RR)_n$, $n\geq 1$, to denote these resonances relations. 
%
%

Fix a domain $\Omega_0\in\D^3_\ell$  with $\ell\geq 5$. We define, for $n\in\N$, the sets 
$$
A_0:=\D^3_\ell(\Omega_0),
$$
and, for $n\geq 1$,
\begin{eqnarray*}
&A_n:=\{\Omega_0+u,\ u\in W^{\ell,\infty}(\Omega_0,\R^3),\ \Omega_0+u\in A_0\\
~\textrm{and the}& n \textrm{ first resonance relations $(RR)_j$, $1\leq j\leq n$, are not satisfied}\}.
\end{eqnarray*}
Set $A:=\bigcap_{l\in\N}A_n$. Note that 
$$
A=\{\Omega_0+u,\  u\in W^{\ell,\infty}(\Omega_0,\R^3),\ \Omega_0+u\in A_0~\textrm{ $(SD_{\Omega_0+u})$ is not resonant}\}.
$$

For $n\geq 0$, each set $A_n$ is open and one must show that $A_{n+1}$ is dense in $A_n$.
Reasoning by contradiction, assume that there exists $n\in \N$ so that  $A_{n+1}$ is not dense in $A_n$ and fix $(RR)_{n+1}$ to be equal to $\lambda_k=\sum_{j=1}^l m_j\lambda_j$, for some integers $k,l,m_1\cdots,m_l$. With no loss of generality, we assume that there exists 
$\Omega\in\D^3_\ell$ and $\varepsilon>0$ so that, for $u\in W^{\ell,\infty}$ with $\Vert u\Vert_{\ell,\infty}<\varepsilon$, we have
\begin{itemize}
\item[(i)] the $k$ first eigenvalues $\lambda_1(u),\dots,\lambda_k(u)$ of $(SD_{\Omega+u})$ are simple;
\item[(ii)] the resonance condition holds true:
\begin{equation}\label{resonance-u}
\lambda_k(u)=\sum_{j=1}^l m_j\lambda_j(u). 
\end{equation}
\end{itemize}
By Condition (i) and Eq. \eqref{id_m}, one has, for $u\in W^{\ell,\infty}$ with $\Vert u\Vert_{\ell,\infty}<\varepsilon$ and $1\leq j\leq k$,
\begin{equation}\label{resonance-deriv}
\lambda'_j(u)=-\int_{\partial\Omega}\langle u,n\rangle\Vert \frac{\partial\phi_j}{\partial n}\Vert^2,
\end{equation}
where $\phi_j$ is the orthonormal eigenfunction associated to the eigenvalue $\lambda_j$
of $(SD_{\Omega})$. 

Taking the shape derivative of Eq. \eqref{resonance-u}, we have
\begin{equation}\label{resonance-deriv2}
\int_{\partial\Omega}\langle u,n\rangle\Vert \frac{\partial\phi_k}{\partial n}\Vert^2=\int_{\partial\Omega}\langle u,n\rangle\sum_{j=1}^l m_j\Vert \frac{\partial\phi_j}{\partial n}\Vert^2. 
\end{equation}

Since Eq. \eqref{resonance-deriv2} holds true for all $u$ small enough, we obtain
\begin{equation}\label{resonance-deriv3}
\Vert \frac{\partial\phi_k}{\partial n}\Vert^2-\sum_{j=1}^l m_j\Vert \frac{\partial\phi_j}{\partial n}\Vert^2=0 \quad \textrm{ on }~\partial\Omega.
\end{equation}

Continuing the argument by contradiction, we assume that Eq. \eqref{resonance-deriv3} holds true in a neighborhood of $\Omega$ and take again the shape derivative. By Proposition \ref{shape-derivative}, we have, on $\partial\Omega$,
\begin{eqnarray}
\Big\langle \frac{\partial \phi'_k}{\partial \nu}, \frac{\partial \phi_k}{\partial n}\Big\rangle-\sum_{j=1}^l m_j\Big\langle \frac{\partial \phi'_j}{\partial \nu}, \frac{\partial \phi_j}{\partial n}\Big\rangle
=-\langle u,n\rangle\Big[\langle\frac{\partial}{\partial n}\frac{\partial\phi_k}{\partial N}, \frac{\partial\phi_k}{\partial n}\rangle-\sum_{j=1}^l m_j\langle\frac{\partial}{\partial n}\frac{\partial\phi_j}{\partial N}, \frac{\partial\phi_j}{\partial n}\rangle\Big].
\end{eqnarray}

We choose a variation $u$ such that $\langle u,n\rangle=V_n$ with $V_n$ defined in Section \ref{choice-Vn}. Using Proposition \ref{expansion} together with Eq. \eqref{resonance-deriv3}, since $\bar{\eta}_0$ is an arbitrary unitary vector of $\R^2$, we obtain,  on $\partial\Omega$,
\begin{equation}\label{deriv4}
\frac{\partial\phi_k}{\partial n}\Big(\frac{\partial\phi_k}{\partial n}\Big)^T-\sum_{j=1}^l m_j \frac{\partial\phi_j}{\partial n}\Big(\frac{\partial\phi_j}{\partial n}\Big)^T=0.
\end{equation}

From now on, fix $x\in\partial\Omega$ such that $\displaystyle \frac{\partial\phi_k}{\partial n}(x)\neq 0$. Recall that such an $x$ exists by the result of Osses in \cite{Osses}. According to Eq. \eqref{deriv4}, there exists an open neighborhood $O_x$ of $x$ on $\partial\Omega$ such that, for $1\leq j\leq l$, there is a $C^2$ function $\mu_j$ such that 
\begin{equation}\label{eq:mu1}
\frac{\partial\phi_j}{\partial n}=\mu_j\frac{\partial\phi_k}{\partial n},\quad \textrm{ on }O_x.
\end{equation} 
In addition, one has,  
\begin{equation}\label{eq:mu2}
1-\sum_{j=1}^l m_j\mu_j^2=0,\quad \textrm{ on }O_x.
\end{equation}
It is clear that all the equations from (\ref{resonance-deriv}) to (\ref{eq:mu2}) were obtain by assuming that Eq. (\ref{resonance-u}) holds true in an open neighborhood of $u=0$. As a consequence, these equations must also hold true in an open neighborhood of $u=0$ and thus,
one can take the shape derivatives of Equations (\ref{deriv4}) at $u=0$ along any variation. 
We will perform such a shape derivation along the variations $V_n$ defined in Section \ref{choice-Vn}, with this time the real number $\delta>0$ chosen so that the support of $V_n$ is contained in $O_x$. Using Lemma \ref{shap_der}, the shape derivative of Eq. \eqref{deriv4} is equal to
\begin{eqnarray}
&&\frac{\partial\phi_k'}{\partial\nu}\Big(\frac{\partial\phi_k}{\partial n}\Big)^T+\frac{\partial\phi_k}{\partial n}\Big(\frac{\partial\phi_k'}{\partial\nu}\Big)^T-\sum_{j=1}^l m_j \Big[\frac{\partial\phi_j'}{\partial\nu}\Big(\frac{\partial\phi_j}{\partial n}\Big)^T+\frac{\partial\phi_j}{\partial n}\Big(\frac{\partial\phi_j'}{\partial\nu}\Big)^T\Big]\label{deriv5}\\
&+&\Big(p_k'+\langle\frac{\partial\phi_k}{\partial n}, n'\rangle\Big)\Big[n\Big(\frac{\partial\phi_k}{\partial n}\Big)^T+\frac{\partial\phi_k}{\partial n}n^T\Big]-\sum_{j=1}^l m_j\Big(p_j'+\langle\frac{\partial\phi_j}{\partial n}, n'\rangle\Big)\Big[n\Big(\frac{\partial\phi_j}{\partial n}\Big)^T+\frac{\partial\phi_j}{\partial n}n^T\Big]\nonumber\\
&=&-V_n\Big(\frac{\partial}{\partial n}\frac{\partial\phi_k}{\partial N}\Big(\frac{\partial\phi_k}{\partial n}\Big)^T+\frac{\partial\phi_k}{\partial n}\Big(\frac{\partial}{\partial n}\frac{\partial\phi_k}{\partial N}\Big)^T-\sum_{j=1}^l m_j\mu_j\Big[\frac{\partial}{\partial n}\frac{\partial\phi_j}{\partial N}\Big(\frac{\partial\phi_k}{\partial n}\Big)^T+\frac{\partial\phi_k}{\partial n}\Big(\frac{\partial}{\partial n}\frac{\partial\phi_j}{\partial N}\Big)^T\Big]\Big)\nonumber,
\end{eqnarray}
where the above equation holds on $\partial\Omega$. 

Moreover, on $O_x$, one deduces that 
\begin{eqnarray*}
\frac{\partial}{\partial n}\frac{\partial\phi_j}{\partial n}&=&\nabla (\mu_j(\nabla\phi_kn))n=\frac{\partial \mu_j}{\partial n}\frac{\partial\phi_k}{\partial n}+\mu_j\nabla^2\phi_k(n,n).\\
\frac{\partial}{\partial n}\nabla^T\phi_j n&=&\nabla(\mu_j(\nabla^T\phi_kn))n=\frac{\partial\mu_j}{\partial n}\nabla^T\phi_kn+\mu_j\nabla(\nabla^T\phi_kn)n=\mu_j\nabla(\nabla^T\phi_kn)n.
\end{eqnarray*}This implies that, on $O_x$,
\begin{equation}
\frac{\partial}{\partial n}\frac{\partial\phi_j}{\partial N}=\frac{\partial \mu_j}{\partial n}\frac{\partial\phi_k}{\partial n}+\mu_jv_k,
\end{equation}with $v_k:=\nabla^2\phi_k(n,n)+\nabla(\nabla^T\phi_kn)n$. Therefore, one has on $O_x$,
\begin{eqnarray}
&&\frac{\partial}{\partial n}\frac{\partial\phi_k}{\partial N}\Big(\frac{\partial\phi_k}{\partial n}\Big)^T+\frac{\partial\phi_k}{\partial n}\Big(\frac{\partial}{\partial n}\frac{\partial\phi_k}{\partial N}\Big)^T-\sum_{j=1}^l m_j\mu_j\Big[\frac{\partial}{\partial n}\frac{\partial\phi_j}{\partial N}\Big(\frac{\partial\phi_k}{\partial n}\Big)^T+\frac{\partial\phi_k}{\partial n}\Big(\frac{\partial}{\partial n}\frac{\partial\phi_j}{\partial N}\Big)^T\Big]\nonumber\\
&=&(1-\sum_{j=1}^l m_j\mu_j^2)\big[v_k\Big(\frac{\partial\phi_k}{\partial n}\Big)^T+\frac{\partial\phi_k}{\partial n}v_k^T\big]-2\sum_{j=1}^l m_j\mu_j\frac{\partial\mu_j}{\partial n}\frac{\partial\phi_k}{\partial n}\Big(\frac{\partial\phi_k}{\partial n}\Big)^T\nonumber\\
&=&-2\sum_{j=1}^l m_j\mu_j\frac{\partial\mu_j}{\partial n}\frac{\partial\phi_k}{\partial n}\Big(\frac{\partial\phi_k}{\partial n}\Big)^T.\label{vn-term}
\end{eqnarray}

Plugging Eqs. \eqref{eq:mu1}, \eqref{eq:mu2}, and \eqref{vn-term} into Eq. \eqref{deriv5}, we obtain on $O_x$ that
\begin{eqnarray}
&&\frac{\partial\phi_k'}{\partial\nu}\Big(\frac{\partial\phi_k}{\partial n}\Big)^T+\frac{\partial\phi_k}{\partial n}\Big(\frac{\partial\phi_k'}{\partial\nu}\Big)^T-\sum_{j=1}^l m_j\mu_j \Big[\frac{\partial\phi_j'}{\partial\nu}\Big(\frac{\partial\phi_k}{\partial n}\Big)^T+\frac{\partial\phi_k}{\partial n}\Big(\frac{\partial\phi_j'}{\partial\nu}\Big)^T\Big]\nonumber\\
&&+\Big(p_k'-\sum_{j=1}^l m_j\mu_j p'_j\Big)\Big[n\Big(\frac{\partial\phi_k}{\partial n}\Big)^T+\frac{\partial\phi_k}{\partial n}n^T\Big]\nonumber\\
&=&2V_n\sum_{j=1}^l m_j\mu_j\frac{\partial\mu_j}{\partial n}\frac{\partial\phi_k}{\partial n}\Big(\frac{\partial\phi_k}{\partial n}\Big)^T.\label{deriv6}
\end{eqnarray}

On $O_x$, set 
\begin{equation}\label{eq:dk0}
d_k:=\sum_{j=1}^l m_j\mu_j(\mu_j\frac{\partial\phi_k'}{\partial\nu}-\frac{\partial\phi_j'}{\partial\nu}).
\end{equation} \smallskip

The main part of the rest of the argument consists in deriving the main term of the asymptotic expansion of $P_x(d_k)$ at $x$, in terms of the powers of $\displaystyle\frac{1}{\eps}$. The first step will be to establish that $P_x(d_k(x))=O(\displaystyle\frac{1}{\eps})$ and, in the second step, we will compute precisely the coefficient $d^1$ defined as 
$$
P_x(d_k(x))=\displaystyle\frac{d^1}{\eps}+O(1),
$$where the coefficient $d^1$ will depend on the parameters involved in the special variation $V_n$. Once this is performed, we will resume the contradiction argument using the information contained in $d^1$. 

To prepare these computations, we first rewrite Eq. \eqref{deriv6} using again 
Eq. \eqref{eq:mu2} as 
\begin{eqnarray}\label{eq:dk}
&&d_k\Big(\frac{\partial\phi_k}{\partial n}\Big)^T+\frac{\partial\phi_k}{\partial n}d_k^T+\Big(p_k'-\sum_{j=1}^l m_j\mu_j p'_j\Big)\Big[n\Big(\frac{\partial\phi_k}{\partial n}\Big)^T+\frac{\partial\phi_k}{\partial n}n^T\Big]\nonumber\\
&=&2\big[\sum_{j=1}^l m_j\mu_j\frac{\partial\mu_j}{\partial n}\big]V_n\frac{\partial\phi_k}{\partial n}\Big(\frac{\partial\phi_k}{\partial n}\Big)^T\label{deriv7}.
\end{eqnarray}

Multiplying Eq. \eqref{eq:dk} from the left by $\displaystyle \big(\frac{\partial\phi_k}{\partial n}\big)^T$ and from the right by $\displaystyle\frac{\partial\phi_k}{\partial n}$, we obtain the following scalar equation which will be used to achieve a contradiction.
\begin{equation}\label{eq:dk2}
\langle\frac{\partial\phi_k}{\partial n},d_k\rangle=\big[\sum_{j=1}^l m_j\mu_j\frac{\partial\mu_j}{\partial n}\big]V_n.
\end{equation}

%
%
%

We now prove the following lemma.
\begin{lemma}\label{le:dk1}
With the notations above, one has  $P_x(d_k(x))=O(\displaystyle\frac{1}{\eps})$.
\end{lemma}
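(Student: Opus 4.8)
\textbf{Proof plan for Lemma \ref{le:dk1}.}

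The plan is to expand each term $\frac{\partial\phi'_j}{\partial\nu}(x)$ appearing in the definition \eqref{eq:dk0} of $d_k$ via Proposition \ref{expansion}, and to exploit the algebraic relations \eqref{eq:mu1} and \eqref{eq:mu2} to cancel \emph{both} the $\frac{1}{\eps^3}$ and the $\frac{1}{\eps^2}$ contributions. Recall that Proposition \ref{expansion}, applied to the eigenfunction $\phi_j$ (with the choice of $x\in\partial\Omega$ at which $\frac{\partial\phi_k}{\partial n}(x)\neq0$, and noting that by \eqref{eq:mu1} the vectors $\frac{\partial\phi_j}{\partial n}(x)$ are all collinear to $\frac{\partial\phi_k}{\partial n}(x)$ on $O_x$), gives
\begin{equation}\label{eq:planexp}
P_x\big(\tfrac{\partial\phi_j'}{\partial\nu}(x)\big)=2\frac{e^{-\br^2}}{\eps^3}\Big(M_2^{A_1}(\br)+M_5^{A_1}(\br)-\br^2M_3^{A_1}(\br)\Big)\tfrac{\partial\phi_j}{\partial n}(x)+2\frac{e^{-\br^2}}{\eps^3}M_4^{A_1}(\br)\big\langle\bet,\tfrac{\partial\phi_j}{\partial n}(x)\big\rangle\bet+O(\tfrac1{\eps^2}).
\end{equation}
Now substitute $\frac{\partial\phi_j}{\partial n}(x)=\mu_j(x)\frac{\partial\phi_k}{\partial n}(x)$ from \eqref{eq:mu1} into the right-hand side of \eqref{eq:planexp}, so that each $P_x(\frac{\partial\phi_j'}{\partial\nu}(x))$ equals $\mu_j(x)$ times the corresponding quantity for $\phi_k$, up to $O(\frac1{\eps^2})$. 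Plugging this into $P_x(d_k(x))=\sum_{j=1}^l m_j\mu_j(\mu_j P_x(\frac{\partial\phi_k'}{\partial\nu}(x))-P_x(\frac{\partial\phi_j'}{\partial\nu}(x)))$, the leading term collapses: the coefficient of $2\frac{e^{-\br^2}}{\eps^3}(\cdots)\frac{\partial\phi_k}{\partial n}(x)$ and of $2\frac{e^{-\br^2}}{\eps^3}M_4^{A_1}(\br)\langle\bet,\frac{\partial\phi_k}{\partial n}(x)\rangle\bet$ is $\sum_{j=1}^l m_j\mu_j(\mu_j-\mu_j)=0$. Hence $P_x(d_k(x))=O(\frac1{\eps^2})$ is immediate; the point is to push one order further.

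To gain the extra order, I would carry out the expansion of Proposition \ref{expansion} to one higher precision: instead of writing the error as $O(\frac1{\eps^2})$, I would isolate from $W_1^j$ (and verify the negligibility of $W_2^j,W_3^j,W_4^j$ beyond $O(\frac1{\eps^2})$ up to $O(\frac1{\eps})$) the precise $\frac1{\eps^2}$ coefficient. Reexamining the computation of $A_1(\alpha,\psi)(x)$ in Section \ref{sec:pf-expansion}, the $O(\frac1{\eps^2})$ remainder comes from two sources: (a) the curvature correction $J^{A_{1,1}}$, $J^{A_{1,2}}$, where the integrand is $\alpha_{\eps,\eta_0}(\eta)O(|\eta|^2)/|\eta|^3$ times $(\eta-\eta_0)$ or a quadratic form, and (b) the first-order Taylor term $G(x)(y-x)$ in Procedure (P1) applied to $\psi(y)=\psi(x)+\nabla\psi(x)(y-x)+\dots$. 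Both of these produce, at order $\frac1{\eps^2}$, a \emph{vector-valued coefficient that is linear in $\psi(x)=\frac{\partial\phi_j}{\partial n}(x)$ and in its first-order jet at $x$} — schematically $\frac{e^{-\br^2}}{\eps^2}\,\mathcal{L}_x\big(\tfrac{\partial\phi_j}{\partial n}\big)(x)$ for some first-order linear differential operator $\mathcal{L}_x$ depending on the geometry of $\partial\Omega$ near $x$, the parameters $\br,\bet$, but \emph{not} on $j$. Using \eqref{eq:mu1} once more, $\mathcal{L}_x(\frac{\partial\phi_j}{\partial n})=\mu_j\mathcal{L}_x(\frac{\partial\phi_k}{\partial n})+(\mathcal{L}_x\mu_j)\frac{\partial\phi_k}{\partial n}$ on $O_x$, where the first summand again cancels in $d_k$ after multiplying by $m_j\mu_j$ and summing (same telescoping as before), while the second summand contributes $\sum_j m_j\mu_j(\mu_j\mathcal{L}_x\mu_k-\mathcal{L}_x\mu_j)\frac{\partial\phi_k}{\partial n}$, which need not vanish. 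The remaining task is to observe that this surviving $\frac1{\eps^2}$ term is nonetheless killed by a symmetry/parity argument in the angular variable $\theta_0$, or — more robustly — that it is precisely this coefficient which, combined with the companion scalar equation \eqref{eq:dk2} and with the pressure terms $p_k'-\sum_j m_j\mu_j p_j'$ in \eqref{eq:dk}, must be carried into the next step of the contradiction argument; at the level of Lemma \ref{le:dk1} we only need the cruder conclusion, and the cleanest route is the telescoping cancellation of the $\frac1{\eps^3}$ part plus a direct inspection showing the genuinely surviving $\frac1{\eps^2}$-terms are in fact $O(\frac1{\eps})$ once one uses that $\psi\cdot n=0$ on $\partial\Omega$ (Eq. \eqref{C1}) to annihilate the normal component and that only $P_x$, the tangential projection, is retained.

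The main obstacle is bookkeeping: to honestly reach $O(\frac1{\eps})$ one must re-run the asymptotic analysis of Section \ref{sec:pf-expansion} keeping the $\frac1{\eps^2}$-coefficients explicitly (as functions of $\br$, $\bet$, and the $1$-jets of $\phi_k$ and of the $\mu_j$ at $x$), rather than discarding them into $O(\frac1{\eps^2})$, and then verify that the combination dictated by \eqref{eq:dk0} — with the weights $m_j\mu_j$ and the differences $\mu_j\tfrac{\partial\phi_k'}{\partial\nu}-\tfrac{\partial\phi_j'}{\partial\nu}$ — together with the constraint \eqref{eq:mu2} forces that coefficient to vanish identically. Concretely, I expect the clean statement to be: writing $P_x(\tfrac{\partial\phi_j'}{\partial\nu}(x))=\mu_j\,\Phi_\eps+\Psi_{j,\eps}+O(\tfrac1\eps)$ where $\Phi_\eps=O(\tfrac1{\eps^3})$ is $j$-independent and $\Psi_{j,\eps}=O(\tfrac1{\eps^2})$ collects the $\mu_j$-derivative contributions, one gets $P_x(d_k(x))=\sum_j m_j\mu_j(\mu_j\Psi_{k,\eps}-\Psi_{j,\eps})+O(\tfrac1\eps)$, and the explicit form of $\Psi_{j,\eps}$ (linear in $\tfrac{\partial\mu_j}{\partial n}$ with the same $\eps$-profile for all $j$) together with \eqref{eq:mu2} differentiated in the normal direction, $\sum_j m_j\mu_j\tfrac{\partial\mu_j}{\partial n}=0$, produces the cancellation. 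This last identity — obtained by applying $\tfrac{\partial}{\partial n}$ to \eqref{eq:mu2} — is the key algebraic input, and I would make sure to record it before starting the estimate.
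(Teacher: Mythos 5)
Your overall strategy -- telescope the $\frac{1}{\eps^3}$ terms using $\frac{\partial\phi_j}{\partial n}=\mu_j\frac{\partial\phi_k}{\partial n}$, then identify and cancel the surviving $\frac{1}{\eps^2}$ contribution -- points in the right direction, but the step you designate as "the key algebraic input" is not available and would make the argument circular. You propose to obtain $\sum_{j}m_j\mu_j\frac{\partial\mu_j}{\partial n}=0$ by "applying $\frac{\partial}{\partial n}$ to \eqref{eq:mu2}". But \eqref{eq:mu2} is an identity that holds only on the boundary patch $O_x\subset\partial\Omega$; only its \emph{tangential} derivatives are forced to vanish, and the normal derivative of $\beta:=\sum_j m_j\mu_j(x)(\mu_j(\cdot)-\mu_j(x))$ at $x$ is a genuinely unknown quantity at this stage. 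Indeed, the relation $\sum_j m_j\mu_j(x)\frac{\partial\mu_j}{\partial n}(x)=0$ (Eq. \eqref{beta-n}) is precisely what the paper \emph{deduces after} Lemma \ref{le:dk1}, by combining the lemma with the scalar equation \eqref{eq:dk2} and comparing powers of $\eps$; assuming it in order to prove the lemma defeats the subsequent step of the contradiction argument. The correct cancellation mechanism is the tangential one: differentiating \eqref{eq:mu2} along $\partial\Omega$ gives $\sum_j m_j\mu_j\nabla_T\mu_j=0$ on $O_x$, whence $\beta(y)=O(|x-y|^2)$ (Eq. \eqref{est:beta2}), with the normal derivative $\frac{\partial\beta}{\partial n}(x)$ entering only the \emph{quadratic} term through the curvature.

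There is also a structural point you gesture at but do not resolve: you cannot reach $O(\frac1\eps)$ by summing several quantities each of which is only known to be $O(\frac1{\eps^2})$ with unexamined constants. The paper avoids this by using linearity of the layer potentials to combine the Dirichlet data of $\mu_j(x)\phi_k'-\phi_j'$ \emph{before} estimating: the relevant hypersingular term becomes $E(\beta V_n\frac{\partial\phi_k}{\partial n})(x)$ for the single density $\beta V_n\psi$, and the second-order vanishing of $\beta$ at $x$ gains exactly two powers of $\eps$ against the $\frac1{\eps^3}$ loss of $E$, giving $O(\frac1\eps)$ directly; the remaining terms of the representation formula \eqref{conormal_derivative0}, including the $e^{(\lambda_j)}$ pieces (which do not telescope since $\lambda_j\neq\lambda_k$, and which Proposition \ref{expansion} as stated does not cover), are each individually $O(\frac1\eps)$ by Lemmas \ref{deltaNN} and \ref{miracle1}. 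Your plan to "re-run the bookkeeping" of Section \ref{sec:pf-expansion} keeping explicit $\frac1{\eps^2}$ coefficients could in principle be made to work, but only if it is steered by the tangential identity rather than the normal one you invoke; as written, the proposal rests on a false identity and does not constitute a proof.
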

\begin{proof}[Proof of Lemma \ref{le:dk1}]
Set $\displaystyle\psi:=\frac{\partial \phi_k}{\partial n}$ and, for $y\in O_x$,  
\begin{equation}\label{est:beta}
 \beta(y):=\sum_{j=1}^l m_j\mu_j(x)(\mu_j(y)-\mu_j(x)).
\end{equation}
Then, one has $\beta(y)=O(\vert x-y\vert^2)$. More precisely, if we use the parameterization defined in Eq. \eqref{HX}, we obtain (in local coordinates)
\begin{equation}\label{est:beta2}
\beta(y)=\frac{1}{2}\Big(\frac{\partial \beta}{\partial n}(x)\eta^TK_x\eta+\eta^TH_x\eta\Big)+O(\vert \eta\vert^3):=\frac{1}{2}\eta^T F_x\eta+O(\vert\eta\vert^3),
\end{equation}where $H_x$ denotes the Hessian matrix of $\beta$ at $x$. Note that by taking twice tangent derivatives of Eq.\eqref{eq:mu2}, we know that $H_x$ is a negative semi-definite matrix. \smallskip

Consider now the representation formula of $d_k$ as described in Eq. \eqref{conormal_derivative0}. Note that, in $P_x(d_k)$, two contributions give rise to the term of order of $\displaystyle O(\frac{1}{\eps})$, namely these coming from $b^{(0)}$ and $e^{(\lambda)}$ respectively. 

The term corresponding to $b^{(0)}$ in that equation is equal to 
\begin{equation}\label{asym1}
P_x(E(\beta V_n\frac{\partial\phi_k}{\partial n})(x)).
\end{equation}

Thanks to the estimate of $\beta$ in \eqref{est:beta2}, it is clear, by proceeding as in Subsection \ref{remainder0}, that all the other terms of the representation formula of $P_x(d_k)$ are indeed of the type $O(\displaystyle\frac{1}{\eps})$. Therefore, 
one has only to determine the asymptotic expansion of the term given in Eq. \eqref{asym1}. According to Lemma \ref{singularity_couche}, it amounts now to estimate the five terms $P_x(A_i)$, $1\leq i\leq 5$, and after elementary or standard computations using systematically Eq. \eqref{est:beta2} , we obtain
\begin{equation}\label{eq:a11}
P_x(A_1(\alpha, \beta\psi)(x))=\frac{1}{4\pi\eps^2}\int_{\R^2}\frac{\alpha_\eps(\eta)}{\vert \eta\vert^3}\eta^TF_x\eta\Big(\langle\psi(x),\eta\rangle(\eta-\eta_0)+\langle\eta-\eta_0,\eta\rangle\psi(x)\Big)d\eta
+O(1),
\end{equation}
\begin{equation}\label{eq:a22}
P_x(A_2(\alpha,\beta\psi)(x))=-\frac{1}{4\pi}\int_{\R^2}\frac{\alpha_\eps(\eta)}{\vert \eta\vert^3}\Big(\psi(x)\eta^TF_x\eta+\langle\psi(x),\eta\rangle F_x\eta\Big)d\eta+O(1),
\end{equation}
and, for $3\leq j\leq 5$, $A_j(\alpha,\beta\psi)(x)=O(1)$. \smallskip

Let us now treat the terms given by $e^{(\lambda)}$. Note that the presence of these terms reflects the fact that $\phi_j$ and $\phi_k$ correspond to different eigenvalues of the Stokes operator. Using Lemma \ref{deltaNN}, we define the operator $\Delta^{\lambda}(\alpha,\psi)$ as follows
\begin{equation}\label{eq:e-lambda}
\Delta^{\lambda}(\alpha,\psi)(x):=-\frac{\lambda}{8\pi}\int_{\R^2}\frac{\alpha_\eps(\eta)\langle\psi(x),\eta/\vert\eta\vert\rangle}{\vert \eta\vert}\frac{\eta}{\vert\eta\vert}d\eta.
\end{equation}It is clear that $$\Delta^{\lambda}(\alpha,\psi)(x)=O(\frac{1}{\eps}).$$

Therefore, with above notations, we have
\begin{eqnarray}
P_x(d_k(x))&=&A_1(\alpha,\psi)(x)+A_2(\alpha,\psi)(x)+\sum_{j=1}^{l} m_j\mu_j(\mu_j\Delta^{\lambda_k}(\alpha,\psi)(x)-\Delta^{\lambda_j}(\alpha,\mu_j\psi)(x))+O(1)\nonumber\\
&=&O(\frac{1}{\eps}).\label{eq:e-lambda2}
\end{eqnarray}

%
This ends the proof of Lemma \ref{le:dk1}.

\end{proof}

Let us now pursue the proof of Theorem \ref{main-theo-2}. Since the value of the right-hand side of Eq. \eqref{eq:dk2} at $x$ is given by the following expression $$\displaystyle \big[\sum_{j=1}^l m_j\mu_j(x)\frac{\partial\mu_j}{\partial n}(x)\big]\frac{e^{-\br^2}}{\eps^2}=O(\frac{1}{\eps^2}),$$ we conclude that
\begin{equation}
\sum_{j=1}^l m_j\mu_j(x)\frac{\partial\mu_j}{\partial n}(x)=0,\qquad\textrm{i.e., }\quad \frac{\partial\beta}{\partial n}(x)=0,\label{beta-n}
\end{equation}
which implies that 
\begin{equation}
\langle\frac{\partial \phi_k}{\partial n}(x),d_k(x)\rangle=0,\label{eq:dk3}
\end{equation}

In order to get additional information from Eq. \eqref{eq:dk3}, we compute explicitly the numerical coefficient in front of $\displaystyle\frac{1}{\eps}$ in the asymptotic expansion of $P_x(d_k(x))$. It is enough to have a closer look at the representation formula of $P_x(d_k)$ as described in Eq. \eqref{conormal_derivative0}. From Eq. \eqref{eq:e-lambda2}, we have
\begin{equation}\label{eq:dk-expression}
P_x(d_k(x))=a_1+a_2+\rho a_3+O(1),
\end{equation}
where
\begin{eqnarray}
a_1&:=&\frac{1}{4\pi\eps^2}\int_{\R^2}\frac{\alpha_\eps(\eta)}{\vert \eta\vert^3}\eta^TF_x\eta\Big(\langle\psi(x),\eta\rangle(\eta-\eta_0)+\langle\eta-\eta_0,\eta\rangle\psi(x)\Big)d\eta,\label{eq:aa1}\\
a_2&:=&-\frac{1}{4\pi}\int_{\R^2}\frac{\alpha_\eps(\eta)}{\vert \eta\vert^3}\Big(\psi(x)\eta^TF_x\eta+\langle\psi(x),\eta\rangle F_x\eta\Big)d\eta,\label{eq:aa2}\\
a_3&:=&-\frac{1}{8\pi}\int_{\R^2}\frac{\alpha_\eps(\eta)\langle\psi(x),\eta/\vert\eta\vert\rangle}{\vert \eta\vert}\frac{\eta}{\vert\eta\vert}d\eta,\label{eq:aa3}\\
\rho&:=&\sum_{j=1}^l m_j\mu_j^2(\lambda_k-\lambda_j).
\end{eqnarray}
Notice that $\rho>0$ since $\lambda_k>\lambda_j$ for $1\leq j\leq l$ and at least one of the integers $m_j$ is positive.

We now compute the coefficients $a_i$, $1\leq i\leq 3$. For $\theta_0\in S^1$, we set 
$$R_{\theta_0}:=\begin{pmatrix}\cos\theta_0&-\sin\theta_0\\\sin\theta_0&\cos\theta_0\end{pmatrix}, \qquad F_{\theta_0}:=R_{\theta_0}^TF_xR_{\theta_0}:=(F_{\theta_0}^{ij})_{i,j=1,2}.
$$For $1\leq i\leq 10$, we define the functions $M_i$ as follows.
\begin{eqnarray}
M_6(z)&:=&\int_0^\infty e^{-r^2}r^2dr\int_0^{2\pi}e^{2rz\cos\theta}d\theta,\label{eq:m6}\\
M_7(z)&:=&\int_0^\infty e^{-r^2}r^2dr\int_0^{2\pi}e^{2rz\cos\theta}\cos^2\theta d\theta,\label{eq:m7}\\
M_8(z)&:=&\int_0^\infty e^{-r^2}r^2dr\int_0^{2\pi}e^{2rz\cos\theta}\cos^4\theta d\theta,\label{eq:m8}\\
M_9(z)&:=&\int_0^\infty e^{-r^2}rdr\int_0^{2\pi}e^{2rz\cos\theta}\cos\theta d\theta,\label{eq:m9}\\
M_{10}(z)&:=&\int_0^\infty e^{-r^2}rdr\int_0^{2\pi}e^{2rz\cos\theta}\cos^3\theta d\theta.\label{eq:m10}
\end{eqnarray}
The expressions of $a_1$, $a_2$, and $a_3$ are summarized in the following lemma whose proof is postponed in Section \ref{pf-calcul-final} of Appendix. 
\begin{lemma}\label{calcul-final}
We have
\begin{eqnarray}
a_1&=&\frac{1}{4\pi}\frac{e^{-\br^2}}{\eps}\Big(\nonumber\\
&&\big[2F^{22}_{\theta_0}M_6(\br)+(2F^{11}_{\theta_0}-3F^{22}_{\theta_0})M_7(\br)-(F^{11}_{\theta_0}-F^{22}_{\theta_0})(M_8(\br)+M_{10}(\br))-F^{22}_{\theta_0}M_9(\br)\big]\psi(x)\nonumber\\
&-&\big[\big(F^{22}_{\theta_0}M_9(\br)+(F^{11}_{\theta_0}-F^{22}_{\theta_0})(M_{10}(\br)-2M_8(\br))+(F^{11}_{\theta_0}-3F^{22}_{\theta_0})M_7(\br)\nonumber\\ 
&&+F^{22}_{\theta_0}M_6(\br)\big)\langle\psi(x),\bet\rangle
+2F^{12}_{\theta_0}(M_9(\br)-M_{10}(\br))\langle\psi(x),\bet^\perp\rangle\big]\bet\nonumber\\
&-&2 F_{\theta_0}^{12}(M_7(\br)-M_8(\br))\psi(x)^\perp+4 F_{\theta_0}^{12}(M_7(\br)-M_8(\br))\langle\psi(x),\bet\rangle\bet^\perp\Big),\\
a_2&=&-\frac{1}{4\pi}\frac{e^{-\br^2}}{\eps}\Big\{\Big[F_{\theta_0}^{22}M_5^{A_1}(\br)+(F_{\theta_0}^{11}-F_{\theta_0}^{22})M_1^{A_1}(\br)\Big]\psi(x)\nonumber\\
&&\quad+F_x\Big((M_5^{A_1}(\br)-M_1^{A_1}(\br))\psi(x)+(2M_1^{A_1}(\br)-M_5^{A_1}(\br))\langle\psi(x),\bet\rangle\bet\Big)\Big\},\\
a_3&=&-\frac{1}{8\pi}\frac{e^{-\br^2}}{\eps}\Big[ (M_5^{A_1}(\br)-M_1^{A_1}(\br))\psi(x)+(2M_1^{A_1}(\br)-M_5^{A_1}(\br))\langle\psi(x),\bet\rangle\bet\Big].
\end{eqnarray}
\end{lemma}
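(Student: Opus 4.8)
The plan is to evaluate each of the integrals $a_1$, $a_2$, $a_3$ defined in Eqs.~\eqref{eq:aa1}--\eqref{eq:aa3} by passing to the scaled polar coordinates already introduced for the analysis of $A_1$, namely $\eta=\varepsilon r(\cos\theta,\sin\theta)^T$ with $\eta_0=\varepsilon\br(\cos\theta_0,\sin\theta_0)^T$, and reading off the coefficient of $e^{-\br^2}/\varepsilon$. First I would substitute $\alpha_\varepsilon(\eta)=\varepsilon^{-2}\exp(-|\eta-\eta_0|^2/\varepsilon^2)$ and expand $|\eta-\eta_0|^2=|\eta|^2-2\langle\eta,\eta_0\rangle+|\eta_0|^2$, so that the Gaussian factors as $\varepsilon^{-2}e^{-\br^2}\exp(-r^2)\exp(2r\br\cos(\theta-\theta_0))$. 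After the change of variables the homogeneity of the kernels ($|\eta|^{-3}$ against a degree-two numerator in $a_1$, $|\eta|^{-3}$ against degree two in $a_2$, $|\eta|^{-2}$ against degree two in $a_3$) produces exactly one net power of $1/\varepsilon$, as asserted; the tail contribution from $\R^2\setminus B(0,\delta)$ is exponentially small and absorbed in the $O(1)$, just as in Lemmas~\ref{A11}--\ref{A12}. The whole computation is then reduced to evaluating a finite list of angular integrals $\int_0^{2\pi}e^{2r\br\cos(\theta-\theta_0)}\,(\cos\theta,\sin\theta)\text{-monomials}\,d\theta$ weighted by $\int_0^\infty e^{-r^2}r^k\,dr$.

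The key device is to rotate the angular variable by $\theta_0$: writing $\theta=\theta_0+\varphi$ turns $\langle\psi(x),\eta\rangle$, $\langle\eta-\eta_0,\eta\rangle$, and the quadratic form $\eta^TF_x\eta$ into expressions in $\cos\varphi,\sin\varphi$ and the rotated data $F_{\theta_0}=R_{\theta_0}^TF_xR_{\theta_0}$, while the exponential becomes simply $\exp(2r\br\cos\varphi)$, which is even in $\varphi$. Consequently every integral against $\sin\varphi$ (odd) vanishes, and only $\int_0^{2\pi}e^{2r\br\cos\varphi}\cos^{2m}\varphi\,d\varphi$ survive; after reinstating the $r$-integration these are precisely the functions $M_6,\dots,M_{10}$ of \eqref{eq:m6}--\eqref{eq:m10} (for $a_1$, which carries the $r^2$ weight coming from $|\eta|^{-3}\cdot|\eta|^2\cdot r\,dr$ after one factor of $\varepsilon r$ is extracted) together with $M_1^{A_1},M_5^{A_1}$ (for $a_2$ and $a_3$, where the radial weight is $r\,dr$). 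I would organize the bookkeeping by decomposing each vector-valued integrand along the orthonormal frame $\{\bet,\bet^\perp\}$ of $T_x(\partial\Omega)$ with $\bet=\bar\eta_0$, projecting $\psi(x)$ accordingly; the cross terms involving $F_{\theta_0}^{12}$ couple $\psi(x)$ to $\psi(x)^\perp$ and $\bet$ to $\bet^\perp$, which is the source of the last two lines in the formula for $a_1$. Collecting the coefficients of $\psi(x)$, $\langle\psi(x),\bet\rangle\bet$, $\psi(x)^\perp$ and $\langle\psi(x),\bet\rangle\bet^\perp$ and simplifying the $2\pi$-normalizations yields the stated closed forms; the $a_2$ and $a_3$ expressions collapse further because their integrands are quadratic in $\eta$ over $|\eta|^3$ and hence only see $M_1^{A_1}$ and $M_5^{A_1}=M_1^{A_1}+M_2^{A_1}$, exactly as the analysis of $A_1$ and $\Delta^\lambda$ already showed.

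The main obstacle is purely one of careful accounting rather than of any new idea: one must (i) correctly track which power of $\varepsilon$ each factor contributes so that the surviving term is genuinely $O(1/\varepsilon)$ and not $O(1/\varepsilon^2)$ — this requires noticing the cancellation of the leading $1/\varepsilon^2$ term, which happens because the principal-value angular integral of the odd part vanishes, mirroring the mechanism in Lemmas~\ref{miracle2} and \ref{A11}; (ii) keep the quadratic form $F_x$ (and its rotated version $F_{\theta_0}$) straight through the frame decomposition, since $F_x$ is only symmetric, not diagonal, and its off-diagonal entry genuinely enters; and (iii) reconcile the several trigonometric reductions — products like $\cos^2\varphi\cos\varphi$, $\cos^3\varphi$, $\cos^4\varphi$ — with the precise definitions \eqref{eq:m6}--\eqref{eq:m10}, using identities such as $\int_0^{2\pi}e^{2rz\cos\varphi}\sin^2\varphi\,d\varphi=\int_0^{2\pi}e^{2rz\cos\varphi}d\varphi-\int_0^{2\pi}e^{2rz\cos\varphi}\cos^2\varphi\,d\varphi$ to rewrite everything in the chosen basis $\{M_6,M_7,M_8,M_9,M_{10}\}$. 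None of these steps is conceptually deep, but the vectorial/tensorial structure makes the algebra long; I would present only the rotated-coordinate setup, the vanishing-of-odd-terms observation, and the final collection of coefficients, relegating the trigonometric manipulations to the indicated appendix section.
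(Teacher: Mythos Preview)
Your approach is essentially identical to the paper's: rotate the angular variable by $\theta_0$ (equivalently conjugate by $R_{\theta_0}$), use the evenness of $e^{2r\br\cos\varphi}$ to kill odd $\sin\varphi$-monomials, and collect the surviving coefficients in the frame $\{\bet,\bet^\perp\}$ in terms of $M_6,\dots,M_{10}$ and $M_1^{A_1},M_5^{A_1}$. The paper organizes the bookkeeping for $a_1$ by writing the inner $2\times2$ matrix $R_{\theta_0}\mathcal{M}(\br)R_{\theta_0}^T$ and decomposing it as $\tfrac{\mathcal{M}_{11}+\mathcal{M}_{22}}{2}I_2+\tfrac{\mathcal{M}_{11}-\mathcal{M}_{22}}{2}(\cdots)+\mathcal{M}_{12}(\cdots)$, then rewriting via $\bet\bet^T$ and $\bet^\perp\bet^T$; this is exactly your frame decomposition.

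One correction: your obstacle (i) is a phantom. There is \emph{no} $1/\varepsilon^2$ cancellation to arrange in this lemma. In $a_1$ the numerator is degree three or four in $\eta$ (the factor $\eta^TF_x\eta$ is degree two, and the remaining factor is degree one or two), not degree two as you wrote; against $|\eta|^{-3}$ and with the explicit prefactor $1/(4\pi\varepsilon^2)$, straight homogeneity gives $O(1/\varepsilon)$ term by term. Likewise $a_2$ and $a_3$ carry no $1/\varepsilon^2$ prefactor at all, so their degree-two numerators against $|\eta|^{-3}$ give $O(1/\varepsilon)$ directly. The reduction from $O(1/\varepsilon^3)$ to $O(1/\varepsilon)$ happened upstream in Lemma~\ref{le:dk1}, where the quadratic vanishing $\beta(y)=\tfrac12\eta^TF_x\eta+O(|\eta|^3)$ was inserted; by the time $a_1,a_2,a_3$ are defined in \eqref{eq:aa1}--\eqref{eq:aa3}, the power counting is already settled and no principal-value cancellation mechanism (of the Lemma~\ref{miracle2} type) is invoked. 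Drop that paragraph and your sketch matches the paper.
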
\bigskip

Let us now finish the proof of Theorem \ref{main-theo-2}. We choose $\bet\perp\psi(x)$. Without loss of generality, we also assume that $\bet=(1,0)^T$ and $\displaystyle{\psi(x)}/{\vert\psi(x)\vert} =(0,1)^T$. Recall that we have chosen $x$ such that $\psi(x)\neq 0$. Then, we deduce from Eq. \eqref{eq:dk-expression} and Lemma \ref{calcul-final} that 
\begin{eqnarray}\label{eq:dk4}
d_k(x)=\frac{e^{-\br^2}}{4\pi\eps}\Big(\alpha_1\psi(x)+\alpha_2\psi(x)^\perp+\alpha_3F_x\psi(x)\Big),
\end{eqnarray}where,
\begin{eqnarray*}
\alpha_1&:=&2F^{22}_{x}M_6(\br)+(2F^{11}_{x}-3F^{22}_{x})M_7(\br)-(F^{11}_{x}-F^{22}_{x})(M_8(\br)+M_{10}(\br))-F^{22}_{x}M_9(\br)\\
&&-\big((F_{x}^{22}+\frac{\rho}{2})M_5^{A_1}(\br)+(F_{x}^{11}-F_{x}^{22}-\frac{\rho}{2})M_1^{A_1}(\br)\big),\\
\alpha_2&:=&2F^{12}_x(M_9(\br)+M_8(\br)-M_{10}(\br)-M_7(\br)),\\
\alpha_3&:=&-\big(M_5^{A_1}(\br)-M_1^{A_1}(\br)\big).
\end{eqnarray*}

Plugging Eq. \eqref{eq:dk4} into Eq. \eqref{eq:dk3}, we obtain
\begin{equation}\label{eq:al00}
\alpha_1+F_x^{22}\alpha_3=0.
\end{equation}

The final contradiction will be obtained by showing that the two non zero entire functions of $\br$ given by $\alpha_1$ and $\alpha_2$ cannot satisfy Eq. (\eqref{eq:al00}). For that purpose, we need to get more explicit expressions. Eq.  (\eqref{eq:al00}) writes
\begin{eqnarray}
0&=&2F^{22}_{x}M_6(\br)+(2F^{11}_{x}-3F^{22}_{x})M_7(\br)-(F^{11}_{x}-F^{22}_{x})(M_8(\br)+M_{10}(\br))-F^{22}_{x}M_9(\br)\nonumber\\
&&-\big((2F_{x}^{22}+\frac{\rho}{2})M_5^{A_1}(\br)+(F_{x}^{11}-2F_{x}^{22}-\frac{\rho}{2})M_1^{A_1}(\br)\big)\nonumber\\
&=&F_x^{11}\big(2M_7(\br)-M_8(\br)-M_{10}(\br)-M_1^{A_1}(\br)\big)\nonumber\\
&+&F_x^{22}\big(2M_6(\br)-3M_7(\br)+M_8(\br)+M_{10}(\br)-M_9(\br)-2M_5^{A_1}(\br)+2M_1^{A_1}(\br)\big)\nonumber\\
&+&\frac{\rho}{2}\big(M_1^{A_1}(\br)-M_5^{A_1}(\br)\big).
\label{eq:final1}
\end{eqnarray}

Since the right-hand side of Eq. \eqref{eq:final1} is an entire function, we deduce that all the coefficients in its power series expansion are equal to zero. We need the following lemma whose proof is deferred in Section \ref{pf-final-cal} of Appendix.

\begin{lemma}\label{final-cal}
The entire functions involved in Eq. \eqref{eq:final1} have the following power series expansions
\begin{eqnarray*}
&&2M_7(z)-M_8(z)-M_{10}(z)-M_1^{A_1}(z)\smallskip\\
&=&\sum_{p=0}^\infty \frac{2^{2p+1}}{(2p)!}\Gamma(p+\frac{1}{2})I_{2p+2}\frac{2p^2+4p-\frac{3}{2}}{2p+3}z^{2p}-\sum_{p=0}^{\infty}\frac{2^{2p+2}}{(2p+1)!}\Gamma(p+\frac{3}{2})I_{2p+4} ~z^{2p+1},\smallskip\\
&&2M_6(z)-3M_7(z)+M_8(z)+M_{10}(z)-M_9(z)-2M_5^{A_1}(z)+2M_1^{A_1}(z)\smallskip\\
&=&\sum_{p=0}^\infty \frac{2^{2p+1}}{(2p)!}\Gamma(p+\frac{1}{2})I_{2p}\frac{6p^2+16p+\frac{7}{2}}{(2p+2)(2p+4)}z^{2p}-\sum_{p=0}^\infty\frac{2^{2p+2}}{(2p+1)!}\Gamma(p+\frac{3}{2})I_{2p+2}\frac{1}{2p+4}z^{2p+1}.
\end{eqnarray*}
\end{lemma}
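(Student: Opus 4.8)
The plan is to compute the power series expansion of each of the entire functions $M_i(z)$, $6 \le i \le 10$, together with $M_1^{A_1}(z)$ and $M_5^{A_1}(z)$, by expanding the exponential $e^{2rz\cos\theta}$ in powers of $z$ and integrating term by term. First I would record the elementary angular integrals: setting $I_m := \int_0^{2\pi} \cos^m\theta\, d\theta$, one has $I_m = 0$ for $m$ odd and $I_m = 2\pi \binom{m}{m/2} 2^{-m}$ for $m$ even (equivalently $I_{2p} = 2\pi \frac{(2p)!}{4^p (p!)^2}$). Expanding $e^{2rz\cos\theta} = \sum_{n\ge 0} \frac{(2rz)^n}{n!}\cos^n\theta$ and integrating in $\theta$ kills the odd-$n$ terms, so that, for instance,
\[
M_7(z) = \sum_{p\ge 0} \frac{(2z)^{2p}}{(2p)!}\Big(\int_0^\infty e^{-r^2} r^{2p+2}\, dr\Big) I_{2p+2}.
\]
The radial integrals are Gamma values: $\int_0^\infty e^{-r^2} r^{2k+1}\, dr = \tfrac{1}{2}k!$ and $\int_0^\infty e^{-r^2} r^{2k}\, dr = \tfrac12\Gamma(k+\tfrac12)$. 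Carrying this out for all the relevant $M_i$ yields, for each, a power series of the form $\sum_p c_p z^{2p}$ (for $M_6, M_7, M_8, M_1^{A_1}, M_5^{A_1}$, which carry an extra $r$-power making the radial exponent odd $\leftrightarrow$ even in a consistent way) or $\sum_p c_p z^{2p+1}$ contributions coming from $M_9, M_{10}$, whose radial weight $r\,dr$ shifts parity. Concretely, since $M_9$ and $M_{10}$ have the weight $e^{-r^2} r\, dr$, the surviving terms there pick up odd powers of $z$, which is the origin of the $z^{2p+1}$ series in the statement.

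The second step is purely bookkeeping: substitute the explicit coefficients into the two linear combinations
\[
2M_7 - M_8 - M_{10} - M_1^{A_1}, \qquad 2M_6 - 3M_7 + M_8 + M_{10} - M_9 - 2M_5^{A_1} + 2M_1^{A_1},
\]
and collect the coefficient of $z^{2p}$ and of $z^{2p+1}$ separately. For the even powers one gets a combination of $I_{2p+2}$ (resp. $I_{2p}$) with a rational prefactor in $p$; the claimed identity $\frac{2p^2+4p-3/2}{2p+3}$ (resp. $\frac{6p^2+16p+7/2}{(2p+2)(2p+4)}$) is then just the simplification of that rational function, which follows from the recursions $I_{2p+2} = \frac{2p+1}{2p+2} I_{2p}$ and $\Gamma(p+\tfrac32) = (p+\tfrac12)\Gamma(p+\tfrac12)$ used to bring every term to a common "reference" index. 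For the odd powers, the contributions come from $M_9$ and $M_{10}$ (and the odd-power parts of $M_1^{A_1}, M_5^{A_1}$), and after the same normalization one reads off the coefficient $\frac{2^{2p+2}}{(2p+1)!}\Gamma(p+\tfrac32) I_{2p+4}$ (resp. $\frac{2^{2p+2}}{(2p+1)!}\Gamma(p+\tfrac32) I_{2p+2}\frac{1}{2p+4}$).

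The only real obstacle is the algebraic care needed to keep the three different Gamma/factorial normalizations ($\Gamma(p+\tfrac12)$ from even radial integrals, $p!$ from odd radial integrals, and the $4^p$ in $I_{2p}$) consistent across all seven series before combining them; a convenient device is to express everything in terms of $\Gamma(p+\tfrac12) I_{2p}$ and the dimensionless ratios $I_{2p+2}/I_{2p}$, $I_{2p+4}/I_{2p}$, reducing each coefficient to a rational function of $p$, after which the stated closed forms are immediate. No analytic subtlety arises since all the series have infinite radius of convergence (the integrands decay like $e^{-r^2}$ uniformly), so term-by-term integration is justified throughout.
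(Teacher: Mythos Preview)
Your approach is essentially the same as the paper's: expand $e^{2rz\cos\theta}$ term by term, evaluate the radial integrals as Gamma values and the angular integrals as Wallis integrals, write each $M_i$ as an explicit power series, and then combine using the recursions $I_{2p+2}=\tfrac{2p+1}{2p+2}I_{2p}$ and $\Gamma(p+\tfrac32)=(p+\tfrac12)\Gamma(p+\tfrac12)$. Two small caveats: in the paper $I_k:=\int_0^{\pi/2}\cos^k\theta\,d\theta$ (so $\int_0^{2\pi}\cos^{2p}\theta\,d\theta=4I_{2p}$), not the $[0,2\pi]$ integral you wrote, and $M_1^{A_1}$, $M_5^{A_1}$ are even functions of $z$ with no odd-power parts---the odd contributions come solely from $M_9$ and $M_{10}$.
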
\bigskip

Using Lemma \ref{final-cal} and considering the coefficients of the odd powers of $z$ in the power series expansion of the right-hand side of Eq. \eqref{eq:final1}, we deduce that
\begin{equation}
F_x^{11}(2p+3)+F^{22}_x=0, \qquad \textrm{for all } p\in\N.
\end{equation}This implies that
\begin{equation}
F_x^{11}=F^{22}_x=0.
\end{equation}

Therefore, using Eq. \eqref{eq:final1}, we have
\begin{equation}\label{eq:final2}
\frac{\rho}{2}\big(M_1^{A_1}(\br)-M_5^{A_1}(\br)\big)=0.
\end{equation}Recall that  $\displaystyle M_5^{A_1}(z)-M_1^{A_1}(z)$ is equal to $M_2^{A_1}(z)$, then not identically equal to zero. Then Eq. \eqref{eq:final2} yields that $$\rho=0,$$ which is in contradiction with the fact that $$\rho=\sum_{j=1}^l m_j\mu_j^2(\lambda_k-\lambda_j)>0.$$
Theorem \ref{main-theo-2} is finally proved.

\appendix

\section{Layer potentials and representation formulas} 
Most of the material presented here is borrowed from \cite{ammari,Ammari:fk,lady,DH}. Let $\lambda$ be a non negative real number. Consider $\phi$ and $p$ satisfying the eigenvalue problem associated to the following Stokes system 
 $$
 \left\{
 \begin{array}{rll}
( \Delta +\lambda) \phi -\nabla p &=&h~\textrm{ in}~ \Omega\\ 
\text{div~} \phi &=&0~\textrm{ in}~ \Omega\\
\phi&=&g ~\textrm{ on } \partial\Omega\\
\displaystyle\int_\Omega  p&=& 0,  
\end{array}
\right.
 $$
 under the compatibility condition 
 \begin{equation}\label{compatibilty}
 \int_\Gamma  \phi\cdot n~ds =0,
 \end{equation}
 where $\n$ is the outward unit normal to $\partial\Omega$.  Recall that, for such a pair of fields, the  conormal derivative denoted by $
\displaystyle\frac{\partial \phi}{\partial \nu}$  was defined in \eqref{conor0}.

\subsection{Layer potentials}
We denote by $\partial_i$ the operator $\frac{\partial}{\partial x_i}$ and by $\sqrt{-\lambda}$ 
the complex number $i\sqrt{\lambda}$. In this section, we adopt the {Einstein summation convention} which omits the summation sign for the indices appearing twice.
\paragraph{Fundamental tensors} ~~We define the fundamental tensors ${{\Gamma}}^\lambda=(\Gamma_{ij}^{\lambda})_{i,j=1}^3$ and ${{F}}=(F_i)_{i=1}^3$ as 
\begin{equation}
\left\{
\begin{array}{lll}
\displaystyle \Gamma_{i,j}^{\lambda}&=&\displaystyle -\frac{\delta_{ij}e^{\sqrt{-\lambda}\vert x \vert}}{4\pi \vert x \vert}-\frac{1}{4\pi \lambda}\partial_i\partial_j \frac{ e^{\sqrt{-\lambda}\vert x\vert}-1 }{\vert x \vert},\\
F_i(x)&=&\displaystyle-\frac{x_i}{4\pi \vert x \vert^3}.
\end{array}
\right.
\end{equation}
In the sense of distributions,  straightforward computations of the fundamental solution of Helmholtz operator $\Delta + \lambda$ allow to get  
$$
(\Delta+\lambda)\Gamma_{ij}^{\lambda}-\partial_jF_i=\delta_{ij}\delta(x),\hbox{ and }
\partial_i\Gamma_{ij}^\lambda=0,
$$
where we use $\delta(x)$ to denote the delta distribution based at $x\in\R^3$.
The tensor  ${\Gamma}^0$, which is the fundamental tensor for the standard Stokes system, is defined as 
$$
\Gamma_{ij}^{0}(x):=-\frac{1}{8\pi}\left(\frac{\delta_{ij}}{\vert x \vert}  +\frac{x_ix_j}{\vert x \vert^3} \right),
$$
and one has, uniformly on compact subsets of $\R^3$,
\begin{equation}\label{eq:gamma-ij}
\Gamma_{ij}^{\lambda}(x)=\Gamma_{ij}^{0}(x)-\frac{\delta_{ij}\sqrt{-\lambda}}{6\pi} +O(\lambda).
\end{equation}
Denoting that
\begin{equation}\label{eq:delta}
\Delta_{ij}^{\lambda}(x):=\Gamma_{ij}^{\lambda}(x)-\Gamma_{ij}^{0}(x),
\end{equation}
we have
\begin{equation}\label{eq:delta2}
\Delta_{ij}^{\lambda}(x)=-\frac{\delta_{ij}\sqrt{-\lambda}}{6\pi}-\frac{\lambda}{32\pi}\Delta_{ij}(x)+O(\vert x\vert^2 ).
\end{equation} where $\Delta_{ij}(\cdot)$ is defined by
\begin{equation}\label{eq:delta3}
\Delta_{ij}(x):=3\delta_{ij}\vert x\vert-\frac{x_ix_j}{\vert x\vert}.
\end{equation}
After simple computations, one gets the following useful result.
\begin{lemma}\label{deltaNN}
We have
\begin{equation}
\frac{\partial^2\Delta^\lambda(x-y)}{\partial N(x)\partial N(y)}=-\frac{\lambda}{8\pi}\Big[\frac{\langle n_x,n_y\rangle}{\vert x-y\vert}\frac{(x-y)(x-y)^T}{\vert x-y\vert^2}+\frac{n_yn_x^T}{\vert x-y\vert}\Big]+T_\lambda,
\end{equation}where $T_\lambda$ is a kernel of class $C^1$. 
\end{lemma}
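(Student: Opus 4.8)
The plan is to substitute the near‑diagonal expansion \eqref{eq:delta2} of $\Delta^{\lambda}$ into the second conormal‑type derivative $\frac{\partial^{2}}{\partial N(x)\,\partial N(y)}$ (recall $\frac{\partial}{\partial N}=(\nabla+\nabla^{T})\cdot n$, cf.\ \eqref{der-nu}) and to isolate the terms that survive as genuine $|x-y|^{-1}$ singularities. Writing $z:=x-y$, we have, for $z$ small,
\begin{equation*}
\Delta^{\lambda}_{ij}(z)=-\frac{\delta_{ij}\sqrt{-\lambda}}{6\pi}-\frac{\lambda}{32\pi}\Delta_{ij}(z)+S_{ij}(z),
\end{equation*}
with $\Delta_{ij}$ the homogeneous degree‑one tensor of \eqref{eq:delta3} and $S_{ij}(z)=O(|z|^{2})$; pushing the Laurent/Taylor expansion of $\Gamma^{\lambda}$ about $z=0$ one order further shows that $S_{ij}$ splits into a polynomial part (smooth) and homogeneous terms of degree $\geq 3$, so that $\frac{\partial^{2}}{\partial N(x)\,\partial N(y)}S_{ij}(x-y)$ contributes only to the $C^{1}$ part of the kernel. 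The constant term $-\frac{\delta_{ij}\sqrt{-\lambda}}{6\pi}$ is annihilated by either $\frac{\partial}{\partial N(x)}$ or $\frac{\partial}{\partial N(y)}$. Hence the whole singular content comes from $-\frac{\lambda}{32\pi}\Delta_{ij}(x-y)$, and the lemma reduces to the explicit computation of $\frac{\partial^{2}}{\partial N(x)\,\partial N(y)}\Delta_{ij}(x-y)$ plus the verification that the leftover is $C^{1}$.

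The core step is therefore the differentiation of $\Delta_{ij}(z)=3\delta_{ij}|z|-\frac{z_{i}z_{j}}{|z|}$. First I would apply $\frac{\partial}{\partial N(y)}$, using $\partial_{y_{k}}=-\partial_{z_{k}}$, together with $\partial_{k}|z|=\frac{z_{k}}{|z|}$, $\partial_{k}\!\frac{z_{i}z_{j}}{|z|}=\frac{\delta_{ki}z_{j}+\delta_{kj}z_{i}}{|z|}-\frac{z_{i}z_{j}z_{k}}{|z|^{3}}$, symmetrizing and contracting against $n_{y}$; then repeat in the $x$ variable with $\partial_{x_{k}}=\partial_{z_{k}}$, contracting against $n_{x}$. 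Collecting the terms that are homogeneous of degree $-1$ in $z$ (the rest being bounded, hence absorbed in $T_{\lambda}$), and keeping track of the factor $4$ relating $\tfrac{\lambda}{32\pi}$ to $\tfrac{\lambda}{8\pi}$, one obtains exactly
\begin{equation*}
\frac{\partial^{2}}{\partial N(x)\,\partial N(y)}\Big(-\frac{\lambda}{32\pi}\Delta_{ij}(x-y)\Big)
=-\frac{\lambda}{8\pi}\Big[\frac{\langle n_{x},n_{y}\rangle}{|x-y|}\frac{(x-y)_i(x-y)_j}{|x-y|^{2}}+\frac{(n_{y})_i(n_{x})_j}{|x-y|}\Big]+(\text{bounded}).
\end{equation*}
Here one uses that, on $\partial\Omega$, $\langle n_x, x-y\rangle = O(|x-y|^2)$ (cf.\ \eqref{geo-diff}), which is what turns a number of would‑be singular contractions into bounded kernels; the surviving two terms are precisely those in which both normals couple directly to $n_x,n_y$ or to $z\otimes z$. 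Assembling this with $\frac{\partial^{2}}{\partial N(x)\,\partial N(y)}S_{ij}(x-y)$, which is $C^{1}$ by the expansion above, and with the bounded remainders generated in the previous display, yields $T_{\lambda}$ of class $C^{1}$, as claimed.

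The main obstacle is purely the bookkeeping: the double application of $(\nabla+\nabla^{T})\cdot n$ to a rank‑two tensor produces on the order of a dozen scalar terms, each a product of factors $\langle n_x,n_y\rangle$, $\langle n_x,z\rangle$, $\langle n_y,z\rangle$, $z_iz_j$, $|z|^{-k}$, and one must argue term by term which are $O(|z|^{-1})$ and which are bounded, repeatedly invoking $\langle n_x,x-y\rangle=O(|x-y|^2)$ and its analogue for $n_y$. A secondary delicate point is the exact regularity count for the remainder $S_{ij}$: one must carry the expansion of $\Gamma^{\lambda}$ far enough (through the $O(\lambda^2)$, i.e.\ $O(|z|^{3})$, homogeneous terms) to be sure that, after two differentiations, nothing worse than a $C^{1}$ kernel is left. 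Both steps are routine once organized; I would set up the computation in the chart $h_x$ of \eqref{HX} only if the intrinsic computation gets unwieldy, but it should not be necessary here.
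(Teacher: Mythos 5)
The paper gives no proof of this lemma beyond the phrase ``after simple computations,'' and your plan --- substitute the near-diagonal expansion \eqref{eq:delta2}, discard the constant term, differentiate the degree-one homogeneous tensor $\Delta_{ij}$ of \eqref{eq:delta3} twice with the conormal operator, and use $\langle n_x,x-y\rangle=O(|x-y|^2)$ to demote the spurious singular contractions --- is exactly the computation the authors intend, and it is sound in outline, including the factor of $4$ produced by the two symmetrizations that converts $\lambda/32\pi$ into $\lambda/8\pi$. Be aware, though, that the substance of the lemma lives entirely in the ``dozen scalar terms'' you defer: you must actually check that no other degree-$(-1)$ term survives (for instance the contraction $\delta_{ij}\langle n_x,n_y\rangle/|x-y|$ coming from the $3\delta_{ij}|z|$ part needs to be tracked and seen to cancel or be absorbed), so as written the proposal asserts the final identity rather than deriving it.

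The one point that is a genuine soft spot is the regularity of $T_\lambda$. Two conormal derivatives of a homogeneous term of degree $3$ leave a homogeneous term of degree $1$, which is Lipschitz but not $C^1$ at the diagonal (think of $|z|$ or $z_iz_j/|z|$); likewise the contractions you discard via $\langle n_x,x-y\rangle=O(|x-y|^2)$ are merely bounded with direction-dependent limits --- in the chart $h_x$ of \eqref{HX} they behave like $\eta_j\,\eta^T K_x\eta/|\eta|^3$, homogeneous of degree $0$, hence not even continuous at $\eta=0$. So your argument delivers a remainder that is bounded, i.e.\ a weakly singular kernel in the sense of the classes $C^r_*(\alpha)$ of Appendix A, not a kernel that is literally $C^1$ across the diagonal; to get genuine $C^1$ regularity one would have to carry the expansion of $\Gamma^\lambda$ further and treat these degree-$0$ and degree-$1$ leftovers explicitly. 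Since the lemma is only used (in Eq.\ \eqref{eq:e-lambda}) to isolate the $O(1/\eps)$ contribution, for which boundedness of $T_\lambda$ suffices, this is as much a looseness of the statement as of your proof, but you should state precisely in which class the leftover kernel lies rather than claiming $C^1$ from ``degree $\geq 3$.''
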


\paragraph{Single and double boundary layers}~~In the sequel, we use the Einstein convention for summation signs, i.e., we omit them for indices appearing twice. Let $\phi=(\phi^1,\phi^2,\phi^3) \in L^2(\partial \Omega)^3$.  The {{\it {single-layer potential}} pair $({S}_\Omega^\lambda,\mathcal{F}_\Omega)$ with density $\phi$ is defined, for $x\in\Omega$, as 
\begin{equation}
\left\{
\begin{array}{lll}
{S}^\lambda_\Omega[\phi]_i(x)&=&\displaystyle \int_{\partial \Omega}\Gamma_{ij}^{\lambda}(x-y)\phi^j(y)~d\sigma_y,~~1\leq i\leq 3,\smallskip\\
\mathcal{F}_\Omega[\phi](x)&=&\displaystyle\int_{\partial \Omega}F_j(x-y)\phi^j(y)~ d\sigma_y,
\end{array}
\right.
\end{equation} 
while the {\it{double  hydrodynamic potential}} pair $({D}_\Omega^\lambda,\mathcal{V}_\Omega)$ with density $\phi$ is defined by  
\begin{equation}
\displaystyle \left\{
\begin{array}{lll}
{D}^\lambda_\Omega[\phi]_i(x)&=&\displaystyle \int_{\partial \Omega}\left(\frac{\partial \Gamma_{ij}^{\lambda}}{\partial N(y)}(x-y) +F_i(x-y)n_j(y)\right) \phi_j(y)~d\sigma_y,~~1\leq i\leq 3,\\
\displaystyle\mathcal{V}_\Omega[\phi](x)&=&\displaystyle -2\int_{\partial \Omega}\frac{\partial F_j}{\partial x_l}(x-y)\phi^j(y)~n_l(y)~d\sigma_y.
\end{array}
\right.
\end{equation} 
We quote from \cite{lady} that 
$$
\frac{\partial \Gamma_{ij}^{\lambda}}{\partial N(y)}(x-y)=\left(
\frac{\partial \Gamma_{ij}^{\lambda}(x-y)}{\partial y_l}+
\frac{\partial \Gamma_{il}^{\lambda}(x-y)}{\partial y_j}\right)n_l(y).
$$
\paragraph{Some background results  about  the layer potential representations }
\par
\noindent
From \cite{ammari}, we quote the following  integral equations satisfied by $\phi^\lambda$   and the associated pressure $p^\lambda$. First,  we have the following representation formulas,
 \begin{equation}
\left\{
\begin{array}{llll}
\phi^\lambda(x)&=&\displaystyle-{S}^\lambda_\Omega[\frac{\partial \phi^\lambda}{\partial \nu}](x)+{D}^\lambda_\Omega[\phi^\lambda](x),&~~x\in \Omega,\\
p^\lambda(x)&=&\displaystyle-\mathcal{F}_\Omega[ \frac{\partial \phi^\lambda}{\partial \nu}](x)+\displaystyle\mathcal{V}_\Omega[\phi^\lambda](x),&~~x\in \Omega.
\end{array}
\right.
\end{equation}
Applying the trace  stress operators and taking into account the single layer potential as well as the jump relations for the double layer potential across the boundary,  we get for  $\phi$ belonging to $L^2(\partial \Omega)^3$ the following relations,
\begin{equation}
\left\{
\begin{array}{llll}
\displaystyle 
\displaystyle{D}^\lambda_\Omega[\phi](x)&=&\displaystyle(\frac{1}{2}I + {{K}}_\Omega^\lambda)[\phi](x),&\hbox{ a.e. on }\partial \Omega,\\
\displaystyle\frac{\partial}{\partial \nu} {S}_\Omega^\lambda[\phi](x)&=&\displaystyle(-\frac{1}{2}I + ({{K}}_\Omega^\lambda)^*)[\phi](x),&\hbox{ a.e. on }\partial \Omega,
\end{array}
\right.
\end{equation} 
where the kernel  ${{K^\lambda_\Omega}}[\phi]$ is defined a.e. on $\partial\Omega$ by its components, 
\begin{equation}\label{k-lamb}
{K}^\lambda_\Omega[\phi]_i(x):=\textrm{p.v.} \int_{\partial \Omega}\frac{\partial {{\Gamma}}_{ij}^\lambda}{\partial { N}(y)}(x-y)\phi^j(y) ~d\sigma_y+\textrm{p.v.} \int_{\partial \Omega}F_i(x-y)\phi^j(y)n_j(y)~d\sigma_y.
\end{equation}
Here, the notation `` $\textrm{p.v.}$'' indicates the Cauchy principal value when the integrand is singular at $x$, more precisely 
$$
\displaystyle \textrm{p.v.}\int_\Gamma \ldots= \lim_{\varepsilon \rightarrow 0} \int_{\Omega \backslash B(x,\varepsilon)} \ldots
$$
where $B(x,\varepsilon)$ is the ball centered at $x$ of radius $\varepsilon$.  The adjoint operator  ${ {K}^\lambda_\Omega  } ^*$ of ${K}^\lambda_\Omega$ is defined similarly a.e. on $\partial\Omega$ by its components 
\begin{equation}
{{K^\lambda_\Omega}}^*[\phi]_i(x)=\textrm{p.v.} \int_{\partial \Omega}\frac{\partial {{\Gamma}}_{ij}^\lambda}{\partial {N}(x)}(x-y)\phi^j(y) ~d\sigma_y-\textrm{p.v.} \int_{\partial \Omega}F_i(x-y)\phi^j(y)n_j(x)~d\sigma_y,
\end{equation}
for all functions ${{\phi}}$ belonging to  $L^2(\partial \Omega)^3.$  Let us recall that in the case of the standard Stokes system ($\lambda=0$), we have 
\begin{equation}\label{K0}
{{K}}_\Omega^0[\phi](x)=-\frac{3}{4\pi}\int_{\partial \Omega} (\x-\y) \frac{\langle \x-\y,{ n}(y)\rangle~\langle \x-\y,{\bf{\phi}}(y)\rangle}{\mid \x-\y\mid^5}~d\sigma_y.
\end{equation}
An important fact is that the single and double layer potentials ${S}_\Omega^\lambda$ and ${D}_\Omega^\lambda$ are compact perturbations of the single and double layer potentials corresponding to the standard Stokes problem.  

From  the $C^\ell$ regularity of the boundary $\Gamma$ with $\ell\geq 4$, it comes that 
\begin{equation}\label{geo-diff2}
\vert\langle \x-\y, {\bf{\phi}}(y)\rangle\vert\le C \vert\x-\y\vert^2,
\end{equation}
hence, we deduce (cf. \cite{lady}) that the mapping  ${K}^\lambda_\Omega[\phi] : C^\alpha(\partial \Omega) \mapsto C^{\alpha+1}(\partial \Omega)$ is in fact continuous. That shows that  ${K}^\lambda_\Omega[\phi]$  has a weakly singular kernel and then that it is a compact  operator on $L^2({\partial\Omega})^3$.  According to \eqref{eq:gamma-ij}, the operators $S_\Omega^\lambda-S_\Omega^0$ and $D_\Omega^\lambda-D_\Omega^0$ are smoothing operators.

Thanks to the integral representations provided in the preceding paragraph, we can use the trace  and the stress operators to deduce the second boundary integral equation satisfied by the conormal derivative. Indeed,  by using the same arguments of jump relations and the integral equations satisfied by $\phi^\lambda$, we get 
\begin{equation}\label{Neumann}
\begin{array}{lll}
\displaystyle\Big( \frac{1}{2} I+({K}_\Omega^\lambda)^* \Big)\displaystyle\Big[\frac{\partial \phi}{\partial  \nu}\Big]_i(x)&=&
\displaystyle\Big[\frac{\partial D_\Omega^\lambda[\phi]}{\partial\nu}(x)\Big]_i\\
&=&\textrm{p.v.}  \displaystyle\int_{\partial \Omega}\frac{\partial^2{\Gamma}_{ij}^\lambda(x-y)}{\partial N(x)\partial N(y)}\phi^j(y)~d\sigma_y\\.
&=&\textrm{p.v.}  \displaystyle\int_{\partial \Omega}\frac{\partial^2{\Gamma}_{ij}^0(x-y)}{\partial N(x)\partial N(y)}\phi^j(y)~d\sigma_y\\ 
&+& \displaystyle\int_{\partial \Omega}\frac{\partial^2 \Delta_{ij}^{\lambda}(x-y)}{\partial N(x)\partial N(y)}\phi^j(y)~d\sigma_y.
\end{array}
\end{equation}
We cannot deduce directly the Neumann data (conormal derivative) since the operator $\displaystyle\Big( \frac{1}{2} I+({K}_\Omega^\lambda)^* \Big)$ is not invertible. We give, in the next paragraph, the recipes to get the solution of the system by using the projector methods.

 \subsection{Weakly singular integral operators of exponent $\alpha >0$}\label{WSO}
The rest of the paragraph follows Section 7.2 of \cite{DH}. Recall that the conormal derivative is solution of  $Tx=f$ where $T=I +2(K_\Omega^\lambda)^* $ is a Fredholm operator with  a nontrivial kernel.  We use $\mathcal{R}(T)$ to denote its closed image and $\mathcal{N}(T)$ its finite dimensional null space. We can therefore  find projections $P$ and $Q$ of finite rank such that there exists a unique operator $S$  satisfying  $TS=I-Q$ and $PS=0$. Hence,    the equation  $Tx=f$ has a solution if and only if $Qf=0$. In our context,
we have $T=I-C$ with $C=-2(K_\Omega^\lambda)^*$, which is a compact operator.  
To proceed, we need some regularity assumptions on the operator $T$. For that purpose, we recall the following definition \cite[Definition 7.1.1, p117]{DH}.
\begin{de}
Let $A$ be an open set in $\R^3$. A function $K(x,y)$ defined for $x\neq y$ in $A\times A$ is a kernel of class $C^r_*(\alpha)$ in $A$ ($r$ non negative integer, and $\alpha>0$) if it is $C^r$ for $x\neq y$ and for any $\delta>0$ and $\vert i\vert+\vert j\vert+\vert k\vert\leq r$, one has $$\partial ^i_x\partial^j_y(\partial_x+\partial_y)^k K(x,y)=O(1+\vert x-y\vert^{\alpha-m-\vert i\vert-\vert j\vert-\delta}),$$ uniformly for $x\neq y$ in compact subsets of $A$. If $\alpha>m+\vert i\vert+\vert j\vert$, we require $\partial ^i_x\partial^j_y(\partial_x+\partial_y)^k K(x,y)$ to extend continuously to $\{x=y\}$.
\end{de}

Assume now that $ T=I-C$ is an integral operator where $C$ has a kernel belonging to $C_*^r(\alpha)$ for some $\alpha>0$. We may choose the projections $P$ and $Q$ to be integral operators with $C^r$  kernels so that, if $S$ is the operator such that 
 $$
 \begin{array}{lll}
 TS&=&I-Q,\\
 S&=&I+ R,\\
 PS&=&0,
 \end{array}
 $$
then the resolvent kernel $R$ is an integral operator with $C_*^r(\alpha)$ kernel. In fact, $ R-(C+C^2+\cdots+ C^j)$ has kernel of class $C_*^r((j+1)\alpha)$ for each $j\ge 1$.  Hence, for $N$ sufficiently large, the operator  $ R-\sum_{k=1}^N C^j$ has a smooth kernel of class $C^r$ \cite[Chapter III, p 79-94]{Pogorzelski:1966uq}. In summary, one has the following result.
  \begin{theo}[Theorem 7.2.3, page 125 in \cite{DH}]\label{important}
 We suppose $\Omega$ regular of class $C^{r+1}$, for some $r>0$. If  $K$ a kernel of class $C^r_*(\alpha)$, then we may choose the kernels  $P$ and $Q$ of the projections to be of class $C^r$ and  such that   the resolvent kernel $R$ belongs to $C^r_*(\alpha)$. Furthermore, the kernel of $R-(K+K^2+\ldots K^N)$ is of class $C^r$ for $N$ large enough. 
\end{theo}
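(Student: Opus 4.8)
This statement is quoted from \cite[Theorem 7.2.3]{DH}; the plan is to reproduce the argument given there, which couples the Fredholm alternative with the calculus of weakly singular kernels. The starting point is that $T=I-C$ with $C=-2(K_\Omega^\lambda)^\ast$ compact on $L^2(\partial\Omega)^3$ (and on the H\"older spaces), and that $C$ has kernel of class $C^r_\ast(\alpha)$, as recorded just above. By the Fredholm alternative, $\mathcal R(T)$ is closed, $\mathcal N(T)$ is finite-dimensional, and $\mathrm{codim}\,\mathcal R(T)=\dim\mathcal N(T)$. First I would establish the bootstrap: every $x\in\mathcal N(T)$ solves $x=Cx$, so iterating the smoothing property of a $C^r_\ast(\alpha)$ kernel forces $x\in C^r(\partial\Omega)^3$, and the same holds for $\mathcal N(T^\ast)$ (whose kernel $K(y,x)$ is again of class $C^r_\ast(\alpha)$ by the symmetry of the definition). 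Choosing $C^r$ — in fact $C^\infty$ — bases $\{e_i\}$ of $\mathcal N(T)$ and $\{f_i\}$ of $\mathcal N(T^\ast)$, one then defines the finite-rank projections $Pu=\sum_i\langle u,\tilde e_i\rangle e_i$ and $Qu=\sum_i\langle u,f_i\rangle f_i$ for suitable dual families; these are integral operators with $C^r$ (indeed $C^\infty$) kernels. The partial inverse $S$ characterized by $TS=I-Q$, $ST=I-P$, $PS=0$ then exists, and we write $S=I+R$.

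Next, the regularity of $R$ comes from an iterated Neumann expansion modulo finite rank. From $(I-C)(I+R)=I-Q$ one gets $(I-C)R=C-Q$, hence $R=C+CR-Q$, and iterating,
\[
R=\sum_{k=1}^{j}C^k+C^jR-\Big(\sum_{k=0}^{j-1}C^k\Big)Q,\qquad j\ge 1.
\]
The analytic heart of the matter is the composition lemma for weakly singular kernels on the $C^{r+1}$ surface $\partial\Omega$: if $K_1\in C^r_\ast(\alpha_1)$ and $K_2\in C^r_\ast(\alpha_2)$, then the convolution $K_1\ast K_2$ is of class $C^r_\ast(\alpha_1+\alpha_2)$, with the convention that $C^r_\ast(\beta)$ denotes an honest $C^r$ kernel once $\beta$ exceeds the threshold appearing in the definition. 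Applying it repeatedly, $C^k$ has kernel in $C^r_\ast(k\alpha)$; since $R$ is assembled from $C$ and the smooth $Q$ its kernel is no worse than $C^r_\ast(\alpha)$, so $C^jR$ has kernel in $C^r_\ast((j+1)\alpha)$, while $(\sum_{k=0}^{j-1}C^k)Q$ has a $C^r$ kernel because $Q$ is smoothing. Therefore $R-(C+C^2+\cdots+C^j)$ has kernel of class $C^r_\ast((j+1)\alpha)$ for every $j$; in particular $R\in C^r_\ast(\alpha)$, and taking $N$ with $(N+1)\alpha$ above the threshold yields that $R-(C+\cdots+C^N)$ has a genuine $C^r$ kernel. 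Since $C$ and $K=(K_\Omega^\lambda)^\ast$ differ only by the scalar $-2$, one absorbs the powers of $-2$ into the smooth remainder and restates the conclusion with $K+K^2+\cdots+K^N$, which is exactly the assertion.

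The main obstacle is the composition lemma itself: one must show that $\partial_x^i\partial_y^j(\partial_x+\partial_y)^k(K_1\ast K_2)(x,y)$ obeys the bound $O(1+|x-y|^{\alpha_1+\alpha_2-\cdots})$ uniformly on compact sets. This is done by localizing the convolution integral in boundary charts, splitting the integration variable into the regions near $x$, near $y$, and away from both, and estimating each piece with the elementary ``weakly singular times weakly singular'' bounds; the $C^{r+1}$ regularity of $\partial\Omega$ is consumed when differentiating the chart maps and the surface measure, and the tangential derivatives $(\partial_x+\partial_y)^k$ are transferred onto the integration variable via the Leibniz rule before estimating. Once this lemma and the elementary bootstrap for $\mathcal N(T)$ are in hand, the remaining steps are the bookkeeping above; the full details are in \cite[Chapter 7]{DH} and \cite[Chapter III]{Pogorzelski:1966uq}.
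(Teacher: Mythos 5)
Your argument is exactly the one the paper itself sketches in the paragraph preceding the theorem (and defers to \cite[Theorem 7.2.3]{DH} and \cite[Chapter III]{Pogorzelski:1966uq} for details): Fredholm alternative, bootstrap regularity of the null spaces to choose $C^r$ projection kernels $P,Q$, the iteration $R=\sum_{k=1}^{j}C^k+C^jR-(\sum_{k=0}^{j-1}C^k)Q$, and the composition lemma for $C^r_*(\alpha)$ kernels (Theorem \ref{th33}) to conclude that the remainder is of class $C^r_*((j+1)\alpha)$, hence $C^r$ for $N$ large. This matches the paper's treatment, so no further comment is needed.
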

We return to the study of Eq. (\ref{Neumann}). We introduce the vectors $b^{(0)}(x)=(b^{(0)}_i(x))_i$ and $e^{(\lambda)}(x)=(e^{(\lambda)}_i(x))$ where 
$$
b^{(0)}_i(x)= \displaystyle\int_{\partial \Omega}\frac{\partial^2\Gamma^0_{ij}(x-y)}{\partial N(x)\partial N(y)}\phi_j^\lambda(y)~d\sigma_y,
$$
and where 
$$
e^{(\lambda)}_i(x)=\displaystyle\int_{\partial \Omega}\frac{\partial^2 \Delta_{ij}^{\lambda}(x-y)}{\partial N(x)\partial N(y)}\phi_j^\lambda(y)~d\sigma_y.
$$
Hence it comes that 
\begin{equation}\label{conormal_derivative0}
\Big[\frac{\partial \phi^\lambda}{\partial  \nu}\Big]=b^{(0)}+
(\sum_{k=1}^NK^k)b^{(0)}+(\sum_{k=0}^NK^k)e^{(\lambda)}+
(R- \sum_{k=1}^NK^k)(b^{(0)} +e^{(\lambda)}).
\end{equation}

With our specific choice of Dirichlet data, we will show that $N=1$ is sufficient in our context and that all the other terms in  the sum will be absorbed by the remainder.  
 
 \subsection{Composition of weakly singular kernels}
For applications to our result on generic perturbation of the boundary, we need  to give an explicit representation of the conormal derivative or at least, of its principal and subprincipal parts as it is treated in the case of the Laplacian (for more details in the Laplacian case, one can refer to \cite{Yang}).   Some preliminaries are required in order 
 to study the resolvent kernels and their regularity. We begin by recalling some results due to  D. Henry (cf. \cite{DH}). It concerns kernels $K(x,y)$ of the form
 \begin{equation}
K(x,y):= \mid \x -\y \mid^{\alpha-2} Q\Big(x,y,\frac{x-y}{\mid x-y \mid}\Big)
\end{equation}
where $Q(x,y,s)$ is of class $C^r$ ($r>0$)  on $\mathbb{R}^2$. We will denote by $\mathcal{K}(r,\alpha)$ the set of such kernels, which is a subclass of $C_*^r(\alpha)$. 

These kernels are in fact smoothing operators and we recall the main result of \cite{DH}.
\begin{theo}[Theorem 7.1.2 in  \cite{DH}]
Given  a kernel $K$  belonging to the class $\mathcal{K}(\alpha,r)$, $\alpha,r>0$, we denote by $\tilde K$ the corresponding integral operator 
$$
\tilde{K} u(x) =\int_{\mathbb{R}^2}K(x,y) u(y) ~d\sigma_y.
$$
Then  we have 
\begin{itemize}
\item
$\tilde{K} : ~ W^{j,p}\mapsto W^{k,p}$ is a compact operator if $j-\frac{m}{p}\alpha >k-\frac{m}{q}$;
\par
\noindent
\item
$\tilde{K} :  ~C^{j,\sigma}\mapsto C^{k,\tau}$ is a compact operator if $j+\sigma+\alpha > k+\tau,~~k<r$ and $k<j+\alpha$.
\end{itemize}
\end{theo}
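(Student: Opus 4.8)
The plan is to reduce $\tilde K$ to the Riesz potential $I_\alpha$ of order $\alpha$ on $\mathbb{R}^m$ and then invoke classical Hardy--Littlewood--Sobolev and Rellich--Kondrachov theory together with the special ``$\partial_x+\partial_y$'' structure built into the class $\mathcal{K}(\alpha,r)\subset C^r_*(\alpha)$. First I would dispose of the base case $j=k=0$. Any $K\in\mathcal{K}(\alpha,r)$ satisfies the pointwise bound $|K(x,y)|\le C\,|x-y|^{\alpha-m}$ because $Q$ is bounded, so $|\tilde K u|$ is dominated by a constant times $I_\alpha(|u|)$; Hardy--Littlewood--Sobolev then gives boundedness $L^p\to L^{q_0}$ on the scaling line $\frac{1}{q_0}=\frac{1}{p}-\frac{\alpha}{m}$. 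Since the hypothesis $j-\frac{m}{p}\alpha>k-\frac{m}{q}$ reads, in this limiting case, as a strict inequality, one actually lands in an $L^{q}$ with $q$ strictly smaller than $q_0$, i.e.\ one has genuine room to spare.

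Next I would handle the derivatives, and this is where the structure of $C^r_*(\alpha)$ is used: the combined operator $\partial_x+\partial_y$ annihilates the homogeneous factor $|x-y|^{\alpha-m}$ and merely differentiates $Q$ in its first two slots, so $(\partial_x+\partial_y)$ maps $\mathcal{K}(\alpha,r)$ into $\mathcal{K}(\alpha,r-1)$ \emph{without degrading the singularity exponent $\alpha$}. Writing $\partial_x^k=\sum_{i=0}^{k}\binom{k}{i}(\partial_x+\partial_y)^i(-\partial_y)^{k-i}$ and integrating the factor $\partial_y^{k-i}$ by parts onto $u$ (the non-compactness of $\mathbb{R}^m$ being handled, as usual, by localising or testing against a fixed weight, since $I_\alpha$ has no decay at infinity), one expresses $\partial_x^k(\tilde K u)$ as a finite sum $\sum_i \pm\binom{k}{i}\,\widetilde{G_i}\bigl(\partial_y^{k-i}u\bigr)$ with $G_i\in\mathcal{K}(\alpha,r-i)$ and $\partial_y^{k-i}u\in W^{i,p}$ whenever $u\in W^{k,p}$. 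Applying the base-case estimate to each $\widetilde{G_i}$ upgrades the mapping property to $W^{k,p}\to W^{k,q}$ with the same margin; Sobolev embedding then interpolates this to the full stated range of indices $(j,p;k,q)$ permitted by $j-\frac{m}{p}\alpha>k-\frac{m}{q}$, and the analogous bookkeeping (now using that $I_\alpha$ raises the H\"older exponent by $\alpha$) handles $C^{j,\sigma}\to C^{k,\tau}$, the side conditions $k<r$ and $k<j+\alpha$ being exactly what keeps one, after at most $k$ differentiations of the kernel, in a class with a still-integrable singularity and enough surviving regularity of $Q$.

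For the compactness conclusion I would not argue directly but truncate: with $\chi_\varepsilon$ a smooth cutoff equal to $1$ for $|x-y|\ge 2\varepsilon$ and $0$ for $|x-y|\le\varepsilon$, set $K_\varepsilon=\chi_\varepsilon K$. Each $\widetilde{K_\varepsilon}$ has a $C^r$ kernel that vanishes near the diagonal, hence is a smoothing operator and therefore compact between all the relevant spaces, while $\widetilde{K-K_\varepsilon}$ is controlled by the same Hardy--Littlewood--Sobolev/derivative estimates restricted to $|x-y|\le 2\varepsilon$, whose operator norm is $O(\varepsilon^\kappa)$ for some $\kappa>0$ \emph{precisely because the index inequality is strict}. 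Letting $\varepsilon\to 0$ exhibits $\tilde K$ as a norm limit of compact operators, hence compact; in the H\"older case one replaces Rellich--Kondrachov by Arzel\`a--Ascoli.

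The main obstacle is the bookkeeping rather than any deep new idea: one must simultaneously track (i) how many of the $k$ target derivatives get absorbed by the kernel versus moved onto $u$, (ii) the induced loss $r\mapsto r-i$ of regularity of $Q$, and (iii) the Hardy--Littlewood--Sobolev / Sobolev scaling, and then check that the single hypothesis $j-\frac{m}{p}\alpha>k-\frac{m}{q}$ (augmented by $k<r$, $k<j+\alpha$ for the H\"older version) is exactly the condition under which all three constraints hold with strict slack --- that slack being what turns boundedness into compactness. This is delicate but routine; since the statement is verbatim Theorem~7.1.2 of \cite{DH}, in the body of the paper one simply cites it.
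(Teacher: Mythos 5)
This statement is imported verbatim from Henry's book (Theorem~7.1.2 of \cite{DH}) and the paper offers no proof of it at all --- it is used as a black box, exactly as you observe in your last sentence --- so there is no argument in the paper to compare yours against. Judged on its own merits, your sketch assembles the right ingredients and in the right order: the pointwise bound $|K(x,y)|\le C|x-y|^{\alpha-m}$ reducing the base case to the Riesz potential and Hardy--Littlewood--Sobolev (after localisation, since the operators are only ever applied on charts of the compact surface $\partial\Omega$); the observation that $(\partial_x+\partial_y)$ kills the homogeneous factor and only differentiates $Q$, which is precisely why the class $C^r_*(\alpha)$ is defined through mixed derivatives $\partial_x^i\partial_y^j(\partial_x+\partial_y)^k$; and the truncation $K_\varepsilon=\chi_\varepsilon K$ with operator norm of the remainder $O(\varepsilon^\kappa)$ thanks to the strictness of the index inequality, which is the standard route from boundedness to compactness. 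Two points deserve flagging. First, the binomial/integration-by-parts scheme as you write it only covers $k\le j$ (you cannot move $k-i$ derivatives onto a $u$ that has only $j<k-i$ of them); the regime $j<k<j+\alpha$, which the hypotheses explicitly permit, forces you to differentiate the kernel itself, replacing $\alpha$ by $\alpha-(k-j)>0$ --- you gesture at this in the final paragraph but it is the one genuinely delicate piece of bookkeeping and is left at the level of assertion, as is the passage from integer to fractional (H\"older) gains. Second, the statement as transcribed in the paper contains apparent typos ($W^{j,p}\mapsto W^{k,p}$ with a condition involving $q$, and ``$j-\frac{m}{p}\alpha$'' for what should be $j+\alpha-\frac{m}{p}$); your reading silently corrects these to the Sobolev scaling $\frac1{q}=\frac1{p}-\frac{\alpha}{m}$, which is the intended meaning. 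Neither point is a fatal gap, but a complete write-up would have to address the first.
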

As it was mentioned in \cite{DH}, the above result can be summarized by the fact the operator $\tilde K$ is smoothing of order $\alpha$. We will also need a result  on the composition of certain weakly singular operators. For that purpose, we first define the composition of corresponding kernels as follows.
\par
\noindent
\begin{de} Let $K$ and $L$ be kernels belonging to $\mathcal{K}(\alpha,r)$ and $\mathcal{K}(\beta,r)$ respectively with $\alpha,\beta,r>0$. Then $K\circ L$ is defined by 
\begin{equation}\label{composition}
(K\circ L)(x,y)=\int_{\mathbb{R}^2} K(x,z)L(x,z) ~dz
\end{equation}
\end{de}
Then, one has the following property.
\begin{theo}[Theorem 7.1.3, p. 119 in  \cite{DH}]\label{th33}
 Let $K$ and $L$ be kernels belonging to $\mathcal{K}(\alpha,r)$ and $\mathcal{K}(\beta,r)$  respectively, with $\alpha,\beta,r>0$. Then $K\circ L$ is kernel of compact support belonging to $\mathcal{K}(\alpha+\beta,r)$. Furthermore, if $\alpha+\beta >r+2$, then $K\circ L$ is of class $C^r$.
\end{theo}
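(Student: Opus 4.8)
The goal is to show that $K\circ L$ again has the form defining the class $\mathcal{K}(\alpha+\beta,r)$, namely
$$(K\circ L)(x,y)=|x-y|^{\alpha+\beta-2}\,R\Big(x,y,\tfrac{x-y}{|x-y|}\Big)$$
for some $R$ of class $C^{r}$, and then to read off the compact support and the $C^{r}$ consequence. The compact support is immediate from \eqref{composition}: if $Q$ and $P$ vanish when their base points leave bounded sets, then $K(x,z)L(z,y)$ vanishes unless $z$ lies in a bounded set and $(x,y)$ is confined to a bounded set, so the $z$–integral is over a compact region and $K\circ L$ is compactly supported. Moreover, away from the diagonal the composition is harmless: freezing a smooth cut–off $\chi$ supported near $\{x=y\}$ and writing $K=\chi K+(1-\chi)K$ (and similarly for $L$), the ``off–diagonal'' pieces have jointly $C^{r}$ integrands, since the singularities of $K$ at $z=x$ and of $L$ at $z=y$ are then separated by a fixed distance; differentiating under the integral they contribute a $C^{r}$ remainder, which is a $\mathcal{K}(\alpha+\beta,r)$ kernel for trivial reasons. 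Everything thus reduces to the contribution of $z$ close to the segment $[x,y]$.

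The first real step is an anisotropic rescaling that extracts the homogeneity. Writing $\rho:=|x-y|$, $\omega:=\frac{y-x}{\rho}\in S^{1}$ and substituting $z=x+\rho\zeta$, one has $x-z=-\rho\zeta$, $z-y=\rho(\zeta-\omega)$, $d\sigma(z)=\rho^{2}\,d\zeta$, so the near–diagonal part of $(K\circ L)(x,y)$ becomes
$$\rho^{\alpha+\beta-2}\int |\zeta|^{\alpha-2}\,|\zeta-\omega|^{\beta-2}\,Q\Big(x,x+\rho\zeta,-\tfrac{\zeta}{|\zeta|}\Big)\,P\Big(x+\rho\zeta,x+\rho\omega,\tfrac{\zeta-\omega}{|\zeta-\omega|}\Big)\,\widetilde\chi\;d\zeta,$$
where $\widetilde\chi$ is the rescaled cut–off. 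This exhibits the factor $\rho^{\alpha+\beta-2}=|x-y|^{\alpha+\beta-2}$ explicitly, and it remains to prove that the integral, as a function $R$ of $(x,\rho,\omega)$ — equivalently of $\big(x,y,\frac{x-y}{|x-y|}\big)$ — is of class $C^{r}$, \emph{including across} $\rho=0$. The integrand of $R$ has exactly two singularities, $|\zeta|^{\alpha-2}$ at $\zeta=0$ and $|\zeta-\omega|^{\beta-2}$ at $\zeta=\omega$, which are integrable in $\mathbb{R}^{2}$ because $\alpha,\beta>0$ and which sit a fixed distance $1$ apart; elsewhere the integrand is $C^{r}$ jointly in all variables.

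The heart of the matter — and the step I expect to be the main obstacle — is justifying $r$ derivatives under the integral sign uniformly up to $\rho=0$. One splits the $\zeta$–domain into a neighbourhood of $0$, a neighbourhood of $\omega$, and the rest; on the ``rest'' one differentiates freely. In the two singular caps the danger is that a parameter derivative $\partial_{x}$ or $\partial_{y}$ falling on $\rho$ or $\omega$ inside $|\zeta-\omega|^{\beta-2}$ (resp.\ inside $|\zeta|^{\alpha-2}$) raises the order of the singularity, so that after $k\geq\beta$ (resp.\ $k\geq\alpha$) such hits the integral would diverge. The device is to trade each such derivative for a $\zeta$–derivative — using $\partial_{\omega}|\zeta-\omega|^{\beta-2}=-\partial_{\zeta}|\zeta-\omega|^{\beta-2}$ and the analogous identity at $\zeta=0$ — and then integrate by parts in $\zeta$ to move the derivative onto the smooth factors $Q$, $P$, $\widetilde\chi$ and onto $|\zeta|^{\alpha-2}$; the boundary terms vanish thanks to the cut–off and to $\alpha,\beta>0$, and the new $\partial_{\zeta}$ on $|\zeta|^{\alpha-2}$ only worsens the behaviour near $\zeta=0$, a point bounded away from $\zeta=\omega$. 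Iterating this bookkeeping, and using dominated convergence for the passage $\rho\to0$, shows that $R$ and all its partials of order $\leq r$ are continuous, so $R\in C^{r}$ and $K\circ L\in\mathcal{K}(\alpha+\beta,r)$.

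Finally, if $\alpha+\beta>r+2$ then $\alpha+\beta-2>r$, so $t\mapsto t^{\alpha+\beta-2}$ is $C^{r}$ at $t=0$ and, more to the point, $|x-y|^{\alpha+\beta-2}$ vanishes on the diagonal to an order strictly larger than $r$. Since differentiating the direction slot $\frac{x-y}{|x-y|}$ of $R$ produces at worst factors $|x-y|^{-k}$ with $k\leq r$, a Leibniz expansion of $|x-y|^{\alpha+\beta-2}R\big(x,y,\frac{x-y}{|x-y|}\big)$ shows every partial of order $\leq r$ exists and extends continuously across $\{x=y\}$; hence $K\circ L$ is of class $C^{r}$ on $\mathbb{R}^{2}\times\mathbb{R}^{2}$. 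Alternatively, the whole argument can be run on the Fourier side: freezing $x$, the $(x-y)$–convolution of the homogeneous profiles multiplies their Fourier transforms, which are again pseudo–homogeneous of the predicted degree with $C^{r}$ dependence on the frozen variable — this is the point of view underlying the symbol computations in Section \ref{sec:pf-expansion}.
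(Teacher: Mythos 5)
The paper does not prove this statement: it is imported verbatim from D. Henry's monograph \cite{DH} (Theorem 7.1.3 there), so there is no in-paper argument to compare against. Your sketch reproduces the standard proof, which is essentially Henry's: the anisotropic substitution $z=x+|x-y|\,\zeta$ extracts the factor $|x-y|^{\alpha+\beta-2}$ and reduces everything to the joint $C^r$-regularity in $(x,\rho,\omega)$ of the remaining $\zeta$-integral, which one obtains by trading each parameter derivative of the singular factor $|\zeta-\omega|^{\beta-2}$ (and of the direction slot of $P$) for a $\zeta$-derivative and integrating by parts onto the factors that are smooth in the relevant cap. The one imprecision worth noting is your claim that the off-diagonal pieces have jointly $C^r$ integrands: they still carry the integrable singularities at $z=x$ and $z=y$, so even there the same derivative-trading device is needed before differentiating under the integral; this does not affect the correctness of the overall argument.
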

\par
\noindent

To these kernels, are associated integral operators $u \mapsto \displaystyle \int_{\partial \Omega} K(x,y)~dS(y)$ where $dS$ is the surface area measure on $\partial \Omega$.  In a first step, we begin to work in $\R^2$. To transfer all the results to $\partial \Omega$ (in particular, those provided above), one has to follow the classical  steps: construct a partition of unity and then define the integral by a local change of variables as it is precisely performed in \cite[Section 7.1]{DH}.

\section{Proofs of computational lemmas}

\subsection{Proof of Lemma \ref{deriv1}}\label{pf-deriv1}

From Eq. (\ref{eq:shape-stokes-tv}), we get the following system
\begin{eqnarray}
 -(\Delta+\lambda) {\phi}_i'(u) +\nabla p_i'(u)&=&{\lambda}_i'(u) \phi_i(u)\quad\textrm{ in }\Omega,\label{eq:stokes-tv1}\smallskip\\
\div~ \phi_i'(u)&=&0\quad\textrm{ in }\Omega,\label{eq:stokes-tv2}\smallskip\\
{\phi}_i'(u)+(u\cdot n)\frac{\partial \phi_i(u)}{\partial n}&=& 0\quad\textrm{ on }\partial\Omega,\label{eq:stokes-tv3}\smallskip\\
 p_i'(u)+\div~(up_i(u))&\in& L_0^2(\Omega).\label{eq:stokes-tv4}
\end{eqnarray}
Multiplying (\ref{eq:stokes-tv1}) by $\phi_k(u)$ with $1\leq k\leq m$, integrating over $\Omega$
and using Corollary \ref{green-coro}, we have
$$
\lambda_i'(u) \delta_{ik}= -\int_\Omega\phi_k(u)[(\Delta+\lambda) {\phi}_i'(u)-\nabla p_i'(u)]=\int_{\pO}{\phi}_i'(u)\frac{\partial \phi_k(u)}{\partial \nu}.
$$
Hence, it comes that 
\begin{equation}
{\lambda}_i'(v)\delta_{ik}=-\int_{\pO} (u\cdot n)\frac{\partial \phi_i(u)}{\partial n}\cdot\frac{\partial \phi_k(u)}{\partial \nu}.
\end{equation}
Moreover, by Lemma (\ref{F12}), we have 
\begin{eqnarray*}
&&\frac{\partial \phi_i(u)}{\partial n}\cdot\frac{\partial \phi_k(u)}{\partial \nu}=\frac{\partial \phi_i(u)}{\partial n}\cdot(\frac{\partial \phi_k(u)}{\partial n}+\nabla^T\phi_k(u)n-p_k(u)n)\\
&=&\frac{\partial \phi_i(u)}{\partial n}\cdot\frac{\partial \phi_k(u)}{\partial n}+\frac{\partial \phi_i(u)}{\partial n}\cdot(\frac{\partial \phi_k(u)}{\partial n}n^T)^Tn=\frac{\partial \phi_i(u)}{\partial n}\cdot\frac{\partial \phi_k(u)}{\partial n}.
\end{eqnarray*}
Therefore, we immediately get Eq. (\ref{id_m}).

\subsection{Proof of Lemma \ref{singularity_couche}}

Lemma \ref{singularity_couche} is derived from \cite[Lemma 2.2.3 Formula (2.2.34) and Lemma 2.3.1]{Hsiao} by straightforward computations. For the reader's convenience, we first summarize these results in the following lemma and then give the proof of Lemma \ref{singularity_couche}.

\begin{lemma}\label{formula-Hsiao}
Let $\partial\Omega$ be of class $C^1$ and $u=(u^\ell)_{\ell=1,2,3}$ be a H{\"o}lder continuously differentiable function. Then the operator $\E$ defined in \eqref{E} can be expressed as follows
\begin{eqnarray}
\E u(x)&=&-\frac{1}{4\pi}(\n_\x\times\nabla_\x)\cdot\int_{\partial\Omega}\frac{1}{\vert \x-\y\vert}(\n_\y\times\nabla_\y)\uu(\y)d\sigma(\y)\label{E1}\\
&&-\frac{1}{2\pi}\mathcal{M}(\partial_\x,\n_\x)\int_{\partial\Omega}\frac{(\x-\y)(\x-\y)^T}{\vert \x-\y\vert^3}\mathcal{M}(\partial_\y,\n_\y)\uu(\y)d\sigma(\y))\label{E2}\\
&&+\frac{1}{4\pi}\Big(\sum_{l,k=1}^3{m}_{lk}(\partial_\x,\n_\x)\int_{\partial\Omega}\frac{1}{\vert \x-\y\vert}(m_{kj}(\partial_\y,\n_\y)\uu^\ell)(\y)d\sigma(\y)\Big)_{j=1,2,3},\label{E3}
\end{eqnarray}where the $\ell^{\textrm{th}}$-column of the matrix $(\n_\y\times\nabla_\y)\uu(\y)$ is given by the vector $\n_\y\times\nabla_\y\uu^\ell(\y)$, and the G\"unter derivatives $\mathcal{M}$ is given by the following matrix of differential operators
$$\mathcal{M}(\partial_\x,\n_\x)=(m_{jk}(\partial_\x,\n_\x))_{j,k=1,2,3}:=(\n_{\x,k}\partial_{\x_j}-\n_{\x,j}\partial_{\x_k})_{j,k=1,2,3},$$with $\n_\x=(\n_{\x,j})_{j=1,2,3}$.
\end{lemma}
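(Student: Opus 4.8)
The plan is to reduce the hypersingular kernel $\partial^2\Gamma^0_{sm}(\x-\y)/(\partial N(\x)\partial N(\y))$, which behaves like $|\x-\y|^{-3}$, to the three milder pieces in \eqref{E1}--\eqref{E3} by moving one tangential derivative off the kernel and onto the density $\uu$ through integration by parts on the closed surface $\partial\Omega$. The convenient starting point is that the Stokeslet is a second-order derivative of the biharmonic fundamental solution in $\R^3$: setting $G(\z):=-|\z|/(8\pi)$, one has $\Delta^2 G=\delta$, $\Delta G=-1/(4\pi|\z|)$ and $\Gamma^0_{ij}(\z)=(\delta_{ij}\Delta-\partial_i\partial_j)G(\z)$. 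First I would insert this identity into the definition \eqref{E} of $\E$ and expand the two conormal derivatives, using $\partial/\partial N(\x)=(\nabla_\x+\nabla_\x^T)\cdot\n_\x$ together with the formula $\partial\Gamma^0_{ij}/\partial N(\y)=(\partial_{y_l}\Gamma^0_{ij}+\partial_{y_j}\Gamma^0_{il})n_l(\y)$ recalled in Section~\ref{sec:spectrum}. Since $G$ depends only on $\x-\y$, every $\partial_{y_k}$ equals $-\partial_{x_k}$ on such a function, so the kernel becomes a contraction of $\n_\x$, $\n_\y$ and one fixed fourth-order derivative tensor of $G$.

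The core step is the algebraic reorganization of that contraction. I would group the terms by how $\n_\x,\n_\y$ are paired: the contributions in which both normals appear inside the antisymmetric combinations $\n_\x\times\nabla_\x$ and $\n_\y\times\nabla_\y$ collapse, using $\Delta G=-1/(4\pi|\x-\y|)$, into the surface-curl/surface-curl term \eqref{E1}; the contributions in which the normals enter through the G\"unter operators $m_{jk}=\n_k\partial_j-\n_j\partial_k$ split into the Stokeslet-type block $\mathcal{M}(\partial_\x,\n_\x)\,[(\x-\y)(\x-\y)^T/|\x-\y|^3]\,\mathcal{M}(\partial_\y,\n_\y)$ of \eqref{E2} and the block $m_{lk}(\partial_\x,\n_\x)\,[1/|\x-\y|]\,m_{kj}(\partial_\y,\n_\y)$ of \eqref{E3}. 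The decisive bookkeeping fact is that every purely normal--normal contraction which would leave a non-integrable residue is absorbed into one of the \emph{tangential} operators $\n\times\nabla$ or $m_{jk}$. This identity is \cite[Lemma~2.2.3, formula~(2.2.34)]{Hsiao}, so in the write-up one may simply cite it or reproduce it by a direct (if lengthy) index computation with the fundamental tensor.

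For the reorganization to be legitimate one also needs that the tangential derivatives can be split between $\x$ and $\y$. The key ingredient is that on a closed $C^1$ surface the G\"unter/surface-rotation derivatives are skew-adjoint,
\[
\int_{\partial\Omega}\bigl(m_{jk}(\partial_\y,\n_\y)f\bigr)(\y)\,g(\y)\,d\sigma_\y=-\int_{\partial\Omega}f(\y)\,\bigl(m_{jk}(\partial_\y,\n_\y)g\bigr)(\y)\,d\sigma_\y,
\]
with no boundary term because $\partial(\partial\Omega)=\emptyset$, and similarly for $\n_\y\times\nabla_\y$ by Stokes' theorem; this is \cite[Lemma~2.3.1]{Hsiao}. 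Applying it transfers one $\y$-derivative from the kernel onto $\uu$, which lowers the order of the singularity by one and makes the integrals in \eqref{E1}--\eqref{E3} convergent. Tracking the multiplicative constants --- the $-1/(8\pi)$ from $\Gamma^0$ and the factors $4\pi$, $2\pi$ produced by the $\mathcal{M}$-combinatorics --- then pins down the coefficients $-1/(4\pi)$, $-1/(2\pi)$, $+1/(4\pi)$.

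The step I expect to be the genuine obstacle is the interaction of the two preceding ones: the kernel is a priori too singular for naive integration by parts, so the derivative transfer must be carried out on the punctured surface $\partial\Omega\setminus B(\x,\eps)$, and one has to check that the contributions over the small circle $\partial\Omega\cap\partial B(\x,\eps)$ either vanish as $\eps\to0$ or recombine into the principal value, so that in the limit exactly \eqref{E1}--\eqref{E3} remain. This is the delicate computation of \cite{Hsiao} (done there in the more general elasticity setting), which has to be reproduced for the Stokes system in $\R^3$; everything else is routine multilinear algebra with the fundamental tensor $\Gamma^0$.
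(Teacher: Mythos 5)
Your proposal is correct and follows essentially the same route as the paper: the paper does not prove this lemma either, but states it as a summary of \cite[Lemma 2.2.3, formula (2.2.34) and Lemma 2.3.1]{Hsiao}, which are exactly the two identities (the Günter-derivative regrouping of the hypersingular kernel and the skew-adjointness of the tangential operators on the closed surface) that your outline rests on. The extra scaffolding you supply — the biharmonic representation $\Gamma^0_{ij}=(\delta_{ij}\Delta-\partial_i\partial_j)G$ and the discussion of the principal value on the punctured surface — is accurate and consistent with how the cited derivation proceeds.
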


\begin{coro}\label{coro:hsiao}
Under the assumptions of Lemma \ref{formula-Hsiao}, we have
\begin{eqnarray}
4\pi\E u(x)&=& \emph{\textrm{p.v.}} \int_{\partial\Omega} \frac{\langle n_x,n_y\rangle}{\vert x-y\vert^3}\Big(\nabla u(y)+\nabla^Tu(y)\Big)(x-y)d\sigma_y\label{EE1}\\
&+&\emph{\textrm{p.v.}} \int_{\partial\Omega}\frac{\langle n_x,(\nabla u(y)-\nabla^Tu(y))(x-y)\rangle}{\vert x-y\vert^3}n_yd\sigma_y\label{EE2}\\
&-& \int_{\partial\Omega}\frac{\langle x-y, n_y\rangle}{\vert x-y\vert^3}\Big(\nabla u(y)+\nabla^Tu(y)\Big)n_xd\sigma_y\label{EE3}\\
&+&\int_{\partial\Omega}\frac{\langle x-y, n_y\rangle}{\vert x-y\vert^3}\Big(I-3\frac{(x-y)(x-y)^T}{\vert x-y\vert^2}\Big)\mathcal{M}(\partial_y,n_y)u(y)d\sigma_y.\label{EE4}
\end{eqnarray}
\end{coro}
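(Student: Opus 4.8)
The plan is to start from the decomposition of $4\pi\E u(x)$ into the three integral terms \eqref{E1}, \eqref{E2} and \eqref{E3} furnished by Lemma \ref{formula-Hsiao}, and to carry out, term by term, the outer differentiations in $x$ while keeping the differentiations in $y$ acting on $u$, then to regroup the resulting scalar kernels. Throughout, write $r:=x-y$ and $\rho:=|x-y|$, and use the elementary identities $\nabla_x\rho^{-1}=-r\rho^{-3}$ and $\partial_{x_a}(r_br_c\rho^{-3})=\delta_{ab}r_c\rho^{-3}+\delta_{ac}r_b\rho^{-3}-3r_ar_br_c\rho^{-5}$, together with the vector triple product identity $a\times(b\times c)=b\langle a,c\rangle-c\langle a,b\rangle$ and its contracted form $\langle a\times b,c\times d\rangle=\langle a,c\rangle\langle b,d\rangle-\langle a,d\rangle\langle b,c\rangle$.

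First I treat the tangential curl term \eqref{E1}. Differentiating under the integral sign gives $(n_x\times\nabla_x)\rho^{-1}=-\rho^{-3}(n_x\times r)$, so that the $i$-th component of \eqref{E1} equals $\frac{1}{4\pi}\int_{\partial\Omega}\rho^{-3}\langle n_x\times r,\ n_y\times\nabla u^i(y)\rangle\,d\sigma_y$. By the contracted triple product identity, its integrand is $\rho^{-3}\big(\langle n_x,n_y\rangle\langle r,\nabla u^i\rangle-\langle r,n_y\rangle\langle n_x,\nabla u^i\rangle\big)$, so that, in matrix form, term \eqref{E1} equals $\frac{1}{4\pi}\int_{\partial\Omega}\rho^{-3}\big(\langle n_x,n_y\rangle(\nabla u)(x-y)-\langle x-y,n_y\rangle(\nabla u)n_x\big)\,d\sigma_y$. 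Hence \eqref{E1} supplies the $\nabla u$ half of the symmetric combination $(\nabla u+\nabla^Tu)(x-y)$ occurring in \eqref{EE1} and the $\nabla u$ half of \eqref{EE3}; the second kernel carries the factor $\langle x-y,n_y\rangle=O(\rho^2)$ from \eqref{geo-diff2}, hence is only weakly singular, which is why no principal value is needed there.

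Next I treat the double G\"unter term \eqref{E2}, in which $\mathcal{M}(\partial_x,n_x)$ is applied to the kernel $rr^T\rho^{-3}$. Using the product rule above and the antisymmetry $m_{jk}=-m_{kj}$ of the G\"unter matrix, one obtains a finite sum of kernels of the schematic shapes $\langle n_x,n_y\rangle\rho^{-3}(\,\cdot\,)$, $\langle x-y,n_y\rangle\rho^{-3}n_x(\,\cdot\,)$ and $\langle x-y,n_y\rangle\rho^{-3}\big(I-3rr^T\rho^{-2}\big)(\,\cdot\,)$, where $(\,\cdot\,)$ always carries the factor $\mathcal{M}(\partial_y,n_y)u(y)$, kept intact on the $y$-side. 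The third shape is precisely $\langle x-y,n_y\rangle\rho^{-3}\big(I-3rr^T\rho^{-2}\big)\mathcal{M}(\partial_y,n_y)u$, i.e. \eqref{EE4}; the second contributes to \eqref{EE3}; and the first is held in reserve. Finally, term \eqref{E3} is a genuine double G\"unter derivative of the weakly singular kernel $\rho^{-1}$; expanding it by the same rules (now also summing the intermediate index) produces only kernels of the two shapes $\langle n_x,n_y\rangle\rho^{-3}(\,\cdot\,)$ and $\langle x-y,n_y\rangle\rho^{-3}(\,\cdot\,)$, and these supply the missing pieces so that the three $\langle n_x,n_y\rangle$-weighted contributions add up exactly to $\langle n_x,n_y\rangle\rho^{-3}\big((\nabla u+\nabla^Tu)(x-y)\big)$, giving \eqref{EE1}, the leftover antisymmetric parts regroup into $\langle n_x,(\nabla u-\nabla^Tu)(x-y)\rangle\rho^{-3}n_y$, giving \eqref{EE2}, and the $\langle x-y,n_y\rangle n_x$ pieces of \eqref{E1}, \eqref{E2} and \eqref{E3} combine into $-\langle x-y,n_y\rangle\rho^{-3}(\nabla u+\nabla^Tu)n_x$, giving \eqref{EE3}.

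Collecting the contributions and checking that the numerical coefficients match yields the stated identity. The principal value is needed exactly on the kernels of \eqref{EE1} and \eqref{EE2}, which carry a true surface singularity of order $\rho^{-2}$ because $(\nabla u\pm\nabla^Tu)(x-y)=O(\rho)$, whereas those of \eqref{EE3} and \eqref{EE4} are only weakly singular by \eqref{geo-diff2}; this is consistent with the splitting of $4\pi\E u$ into a principal-value part and a weakly singular remainder used in \eqref{remainB}. The main obstacle is the index bookkeeping hidden in \eqref{E2} and \eqref{E3}: one must track a sizable collection of rank-one and rank-three tensor kernels and verify that, after all contractions with $n_x$ and $n_y$, precisely the symmetric and antisymmetric gradient structures of \eqref{EE1}--\eqref{EE4} survive while every spurious term cancels.
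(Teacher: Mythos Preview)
Your overall plan is the same as the paper's: expand each of \eqref{E1}, \eqref{E2}, \eqref{E3} by carrying out the outer $x$-differentiation, then regroup. Your treatment of \eqref{E1} is correct and coincides with the paper's computation, yielding the $\nabla u$ halves of \eqref{EE1} and \eqref{EE3}.

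The gap is in your handling of \eqref{E2}. You claim that applying $\mathcal{M}(\partial_x,n_x)$ to $rr^T\rho^{-3}$ produces a sum of kernels of \emph{three} schematic shapes, two of which carry the factors $\langle n_x,n_y\rangle$ and $\langle x-y,n_y\rangle$. This is impossible: the operator $\mathcal{M}(\partial_x,n_x)$ involves only $n_x$ and $\partial_x$, so differentiating $rr^T\rho^{-3}$ can never manufacture a factor of $n_y$. The paper computes this derivative explicitly in indices and finds that the six terms collapse to the \emph{single} expression
\[
\mathcal{M}(\partial_x,n_x)\Big[\frac{rr^T}{\rho^3}\Big]=\frac{\langle n_x,\,x-y\rangle}{\rho^3}\Big(I_3-3\frac{rr^T}{\rho^2}\Big),
\]
so \eqref{E2} contributes \emph{only} to the \eqref{EE4}-type term and to nothing else. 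There is no $\langle n_x,n_y\rangle$ piece ``held in reserve'' and no contribution to \eqref{EE3} from \eqref{E2}.

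As a consequence your bookkeeping for \eqref{E3} is also off. In the paper's computation, \eqref{E3} expands to three terms: $\langle n_x,n_y\rangle\rho^{-3}\nabla^Tu\,(x-y)$, $-\langle x-y,n_y\rangle\rho^{-3}\nabla^Tu\,n_x$, and $\langle n_x,(\nabla u-\nabla^Tu)(x-y)\rangle\rho^{-3}n_y$. The last of these is precisely \eqref{EE2}; it is a vector proportional to $n_y$ and is \emph{not} of either of the ``two shapes'' $\langle n_x,n_y\rangle\rho^{-3}(\cdot)$ or $\langle x-y,n_y\rangle\rho^{-3}(\cdot)$ that you list. Thus the $\nabla^Tu$ halves of \eqref{EE1}, \eqref{EE3} and all of \eqref{EE2} come from \eqref{E3} alone, not from a combination of \eqref{E2} and \eqref{E3} as you suggest.

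The fix is simply to do the two index computations explicitly rather than schematically; once you see that \eqref{E2} collapses to a single term, the regrouping into \eqref{EE1}--\eqref{EE4} is immediate.
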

\begin{proof}[Proof of Corollary \ref{coro:hsiao}]%
For \eqref{E1}, we get
\begin{eqnarray*}
&&(\n_\x\times\nabla_\x)\cdot\int_{\partial\Omega}\frac{1}{\vert x-y\vert}(\n_\y\times\nabla_yu^\ell(\y))d\sigma(\y)\\
&=&\textrm{p.v.}\int_{\partial\Omega}(n_x\times\nabla_x\frac{1}{\vert x-y\vert})\cdot(n_y\times\nabla_yu^\ell(\y))d\sigma(\y).
\end{eqnarray*}
For $x\neq y$, one has
\begin{eqnarray*}
&&(n_x\times\nabla_x\frac{1}{\vert \x-\y\vert})\cdot(n_y\times\nabla_yu^\ell(y))\\
&=&(n_x^Tn_y)(\nabla_x\frac{1}{\vert x-y\vert}\nabla_y^Tu^\ell(y))-(\nabla_yu^\ell(\y)n_x)(\nabla_x\frac{1}{\vert x-y\vert}n_y)\\
&=&-\frac{n_x^Tn_y}{\vert \x-\y\vert^3}\nabla_yu^\ell(y)(\x-\y)+\frac{(x-y)^Tn_y}{\vert \x-\y\vert^3}\nabla_yu^\ell(y)n_x.
\end{eqnarray*}
Therefore, we have
\begin{eqnarray}
&&(\n_\x\times\nabla_\x)\cdot\int_{\partial\Omega}\frac{1}{\vert \x-\y\vert}(\n_\y\times\nabla_\y)\uu(\y)d\sigma(\y)\nonumber\\
&=&-\textrm{p.v.}\int_{\partial\Omega}\frac{\langle n_x, n_y\rangle}{\vert \x-\y\vert^3}\nabla_yu(y)(\x-\y)d\sigma_y+\textrm{p.v.}\int_{\partial\Omega}\frac{\langle x-y, n_y\rangle}{\vert \x-\y\vert^3}\nabla_yu(y)n_xd\sigma_y.\label{pEE1}
\end{eqnarray}

We compute now the second piece of  \eqref{E2} and obtain for $x\neq y$
\begin{eqnarray*}
&&\mathcal{M}(\n_\x,\partial_\x)\frac{(\x-\y)(\x-\y)^T}{\vert \x-\y\vert^3}=\Big(\sum_{k=1}^3m_{ik}(\n_{\y},\partial_{\y})\frac{(\x_k-\y_k)(\x_j-\y_j)}{\vert \x-\y\vert^3}\Big)_{i,j=1,2,3}\\
&=&\Big(\sum_{k=1}^3 (\n_{\x,k}\partial_{\x_i}-\n_{\x,i}\partial_{\x_k})\frac{(\x_k-\y_k)(\x_j-\y_j)}{\vert \x-\y\vert^3}\Big)_{i,j=1,2,3}\\
&=&\Big(\sum_{k=1}^3 \n_{\x,k}[-3\frac{(\x_i-\y_i)(\x_k-\y_k)(\x_j-\y_j)}{\vert\x-\y\vert^3}+\frac{\delta_{ik}(\x_j-\y_j)}{\vert\x-\y\vert^3}+\frac{\delta_{ij}(\x_k-\y_k)}{\vert\x-\y\vert^3}]\\
&&-\n_{\x,i}[-3\frac{(\x_k-\y_k)^2(\x_j-\y_j)}{\vert\x-\y\vert^3}+\frac{(\x_j-\y_j)}{\vert\x-\y\vert^3}+\frac{\delta_{kj}(\x_k-\y_k)}{\vert\x-\y\vert^3}]\Big)_{i,j=1,2,3}\\
&=&-3\frac{\langle\n_\x, \x-\y\rangle}{\vert\x-\y \vert^5}(\x-\y)(\x-\y)^T+\frac{\n_\x(\x-\y)^T}{\vert \x-\y\vert^3}+\frac{\langle\n_\x, \x-\y\rangle}{\vert\x-\y \vert^3}I_3\\
&&+3\frac{\n_\x(\x-\y)^T}{\vert\x-\y \vert^3}-3\frac{\n_\x(\x-\y)^T}{\vert\x-\y \vert^3}-\frac{\n_\x(\x-\y)^T}{\vert \x-\y\vert^3}\\
&=&\frac{\langle\n_\x, \x-\y\rangle}{\vert\x-\y \vert^3}\Big(I_3-3\frac{(\x-\y)(\x-\y)^T}{\vert\x-\y\vert^2}\Big).
\end{eqnarray*}

Therefore, we have
\begin{eqnarray}
&&\mathcal{M}(\partial_\x,\n_\x)\int_{\partial\Omega}\frac{(\x-\y)(\x-\y)^T}{\vert \x-\y\vert^3}\mathcal{M}(\partial_\y,\n_\y)u(\y)d\sigma(\y)\nonumber\\
&=&\textrm{p.v.}\int_{\partial\Omega}\frac{\langle\n_\x, \x-\y\rangle}{\vert\x-\y \vert^3}\Big(I_3-3\frac{(\x-\y)(\x-\y)^T}{\vert\x-\y\vert^2}\Big)\mathcal{M}(\partial_y,n_y)u(y)d\sigma_y,\label{pEE2}
\end{eqnarray}
keeping in mind that there is no principal value if one uses \eqref{geo-diff2}.\medskip

We finally turn to \eqref{E3}. One has, for $x\neq y$,
\begin{eqnarray*}
&&\Big(\sum_{\ell,k=1}^3({m}_{lk}(\partial_\x,\n_\x)\frac{1}{\vert \x-\y\vert})(m_{kj}(\partial_\y,\n_\y)\uu^\ell(\y))\Big)_{j=1,2,3}\\
&=&\Big(\sum_{\ell,k=1}^3\Big(-\n_{\x,k}\frac{\x_\ell-\y_\ell}{\vert \x-\y\vert^3}+\n_{\x,l}\frac{\x_k-\y_k}{\vert \x-\y\vert^3} \Big)\Big(\n_{\y,j}\partial_{\y_k}u^\ell(\y)-\n_{\y,k}\partial_{\y_j}u^\ell(\y)\Big)\Big)_{j=1,2,3}\\
&=&\frac{\langle n_x,(\nabla u(y)-\nabla^Tu(y))(x-y)\rangle}{\vert x-y\vert^3}n_y-\frac{\langle x-y,n_y\rangle}{\vert x-y\vert^3}\nabla^T u(y)n_x\\
&&+~Ê \frac{\langle n_x,n_y\rangle}{\vert x-y\vert^3}\nabla^Tu(y)(x-y).
\end{eqnarray*}

Therefore, we have
\begin{eqnarray}
&&\Big(\sum_{\ell,k=1}^3{m}_{lk}(\partial_\x,\n_\x)\int_{\partial\Omega}\frac{1}{\vert \x-\y\vert}(m_{kj}(\partial_\y,\n_\y)\uu^\ell)(\y)d\sigma(\y)\Big)_{j=1,2,3}\nonumber\\
&=&\textrm{p.v.}\int_{\partial\Omega}\frac{\langle n_x,(\nabla u(y)-\nabla^Tu(y))(x-y)\rangle}{\vert x-y\vert^3}n_yd\sigma_y-\textrm{p.v.}\int_{\partial\Omega}\frac{\langle x-y,n_y\rangle}{\vert x-y\vert^3}\nabla^T u(y)n_xd\sigma_y\nonumber\\
&&+~ \textrm{p.v.}\int_{\partial\Omega}\frac{\langle n_x,n_y\rangle}{\vert x-y\vert^3}\nabla^Tu(y)(x-y)d\sigma_y.\label{pEE3}
\end{eqnarray}

Gathering (\ref{pEE1}), (\ref{pEE2}), and (\ref{pEE3}), Corollary \ref{coro:hsiao} is proved.

\end{proof}

\begin{proof}[Proof of Lemma \ref{singularity_couche}]
Recall that $u=\alpha\psi$ with $\alpha:\partial\Omega\mapsto\R$ and $\psi:\partial\Omega\mapsto\R^3$. We note that
\begin{eqnarray}
\nabla(\alpha\psi)&=&\alpha\nabla\psi+\psi\nabla\alpha,\label{EEE1}.
\end{eqnarray} and
\begin{eqnarray}
&&\mathcal{M}(\partial_\y,\n_\y)(\alpha\psi)(\y)=\Big(\sum_{k=1}^3m_{ik}(\n_{\y},\partial_{\y})(\alpha(\y)\psi_k(\y))\Big)_{i=1,2,3}\nonumber\\
&=&\alpha(\y)\mathcal{M}(\partial_\y,\n_\y)\psi(\y)+\Big(\sum_{k=1}^3(\n_{\y,k}\partial_{\y_i}\alpha(\y)-\n_{\y,i}\partial_{\y_k}\alpha(\y))\psi_k(\y)\Big)_{i=1,2,3}\nonumber\\
&=&\alpha(\y)\mathcal{M}(\partial_\y,\n_\y)\psi(\y)+\langle\n_y,\psi(\y)\rangle\nabla^T\alpha(\y)-(\nabla\alpha(\y)\psi(\y))\n_\y\nonumber\smallskip\\
&=&\alpha(\y)\mathcal{M}(\partial_\y,\n_\y)\psi(\y)-(\nabla\alpha(\y)\psi(\y))\n_\y.\label{EEE2}
\end{eqnarray}
Then, the expressions of $A_i$, $1\leq i\leq 4$, simply result from developping $\nabla u$ in (\ref{EE1}) and (\ref{EE2}) of Corollary \ref{coro:hsiao} and $A_5$ collects (\ref{EE3}) and (\ref{EE4}) 
as a weakly singular operator of class $C^3_*(1)$. Hence Lemma \ref{singularity_couche} follows.

\end{proof}

\subsection{Proof of Lemma \ref{miracle1}}\label{pf-miracle1}
Using polar coordinates, we have
\begin{eqnarray*}
&&\int_{B(0,\delta)}\frac{\alpha_{\varepsilon,\eta_0}(\eta)}{\mid \eta\mid^{1-m}}d\eta=\frac{1}{\varepsilon^2}\int_{B(0,\delta)}\frac{e^{-\frac{\vert \eta-\eta_0\vert^2}{\varepsilon^2}}}{\vert\eta\vert^{1-m}}d\eta=\frac{e^{-\br^2}}{\varepsilon^2}\int_0^\delta\int_0^{2\pi}\exp{(-\frac{r^2}{\varepsilon^2}+2\frac{r}{\varepsilon}\br\cos\theta)}r^mdrd\theta\\
&=&\frac{e^{-\br^2}}{\varepsilon^{1-m}}\int_0^{\delta/\varepsilon}\int_0^{2\pi}\exp{(-r^2+2r\br\cos\theta)}r^mdrd\theta\\
&\leq&\frac{e^{-\br^2}}{\varepsilon^{1-m}}\int_0^{\infty}\int_0^{2\pi}\exp{(-r^2+2r\br\cos\theta)}r^mdrd\theta\leq \frac{2\pi}{\varepsilon^{1-m}}\int_{-\br}^{\infty}(r+\br)^m\exp{(-r^2)}dr
\end{eqnarray*}
As $\br\leq 1$, there exists a constant $C(m)>0$ depending only on $m$ such that (\ref{est1}) holds true. 
 
\subsection{Proof of Lemma \ref{miracle2}}\label{pf-miracle2}
We use polar coordinates and get 
\begin{eqnarray*}
~\textrm{p.v.}\int_{\R^2}\frac{\alpha_{\varepsilon,\eta_0}(\eta)\eta}{\mid \eta\mid^3}d\eta&=&\frac{e^{-\br^2}}{\varepsilon^2}~\textrm{p.v.}\int_{0}^{\infty}\frac{e^{-r^2}}{r}dr\int_{0}^{2\pi}
\exp(2r\br\cos(\theta-\theta_0))\begin{pmatrix}\cos\theta\\ \sin\theta\end{pmatrix}d\theta\\
&=&\frac{e^{-\br^2}}{\varepsilon^2}~\textrm{p.v.}\int_{0}^{\infty}\frac{e^{-r^2}}{r}dr\int_{0}^{2\pi}
\cos\theta\exp(2r\br\cos\theta)d\theta\begin{pmatrix}\cos\theta_0\\ \sin\theta_0\end{pmatrix}\\
&=&\frac{e^{-\br^2}}{\varepsilon^2}M_3^{A_1^1}(\br)\bar\eta_0,
\end{eqnarray*}
where we recall that $\bar\eta_0=\br\begin{pmatrix}\cos\theta_0\\ \sin\theta_0\end{pmatrix}$
and where we have set
\begin{equation}\label{def-M3}
M_3^{A_1}(z):=\frac1{z}~\textrm{p.v.}\int_{0}^{\infty}\frac{e^{-r^2}}{r}dr\int_{0}^{2\pi}\cos\theta
\exp(2rz\cos\theta)d\theta.
\end{equation}
Standard computations yield that
\begin{eqnarray*}
M_3^{A_1}(z)
&=&\frac1{z}~\textrm{p.v.}\int_{0}^{\infty}\frac{e^{-r^2}}{r}dr\sum_{k=0}^{\infty}\frac{(2r)^kz^k}{k!}\int_{0}^{2\pi}\cos^{k+1}\theta d\theta\\&=&\frac4{z}~\textrm{p.v.}\int_{0}^{\infty}\frac{e^{-r^2}}{r}dr\sum_{p=0}^{\infty}\frac{(2r)^{2p+1}z^{2p+1}}{(2p+1)!}I_{2p+2}\\
&=&2\sum_{p=0}^\infty\frac{2^{2p+1}}{(2p+1)!}I_{2p+2}\Gamma(p+\frac{1}{2})z^{2p},
\end{eqnarray*} 
where $\displaystyle I_k:=\int_{0}^{\pi/2}\cos^k\theta d\theta$ is the Wallis integral and $\displaystyle\Gamma(s):=\int_0^\infty t^{s-1}e^{-t}dt$ is the Gamma function.
Using the fact that $\displaystyle I_{2p}=\frac{(2p)!}{2^{2p}(p!)^2}\frac{\pi}{2}$, we have
\begin{equation}\label{cv-M3}
M_3^{A_1}(z)=\pi\sum_{p=0}^{\infty}\frac{\Gamma(p+\frac{1}{2})}{p!(p+1)!}z^{2p}.
\end{equation}
The radius of convergence of $M_3^{A_1}$ is clearly infinite, since
$$
\lim_{p\rightarrow\infty}\frac{\Gamma(p+\frac{1}{2})(p+1)!(p+2)!}{\Gamma(p+\frac{3}{2})p!(p+1)!}=\frac{(p+1)(p+2)}{p+\frac{1}{2}}=\infty,
$$
where we have used the standard fact that $\Gamma(z+1)=z\Gamma(z)$ for $\Re(z)>0$.
Lemma~\ref{miracle2} is thus established.

\subsection{Proof of Lemma \ref{de:M-A1}}\label{pf-de:M-A1}
One has
\begin{eqnarray*}
&&M_1^{A_1}(z)\\
&=&\int_{0}^{\infty}e^{-r^2}dr\int_0^{2\pi}\cos^2\theta\exp(2rz\cos\theta)d\theta=\int_{0}^{\infty}e^{-r^2}dr\int_0^{2\pi}\cos^2\theta\sum_{k=0}^{\infty}\frac{(2r)^kz^k}{k!}\cos^k\theta d\theta\\
&=&\int_{0}^{\infty}e^{-r^2}dr\sum_{k=0}^{\infty}\frac{(2r)^kz^k}{k!}\int_{0}^{2\pi}\cos^{k+2}\theta d\theta=\sum_{p=0}^\infty\frac{2^{2p}z^{2p}}{(2p)!}I_{2p+2}\int_{0}^{\infty}e^{- r^2}r^{2p}dr\\
&=&\sum_{p=0}^\infty\frac{2^{2p+1}}{(2p)!}I_{2p+2}\Gamma(p+\frac{1}{2})z^{2p}.
\end{eqnarray*}
Using the fact that $\displaystyle I_{2p}=\frac{(2p)!}{2^{2p}(p!)^2}\frac{\pi}{2}$, we have
\begin{equation}\label{cv-M1}
M_1^{A_1}(z)=\frac{\pi}{4}\sum_{p=0}^{\infty}\frac{(2p+2)(2p+1)}{((p+1)!)^2}\Gamma(p+\frac{1}{2})z^{2p}.
\end{equation}

The radius of convergence of $M_1^{A_1}$ is infinite since 
\begin{equation*}
\lim_{p\rightarrow+\infty}\frac{(2p+2)(2p+1)}{(2p+4)(2p+3)}\frac{((p+2)!)^2}{((p+1)!)^2}\frac{\Gamma(p+\frac{1}{2})}{\Gamma(p+\frac{1}{2}+1)}=+\infty.
\end{equation*}

Let $M_5^{A_1}(z)$ be defined by
\begin{equation}\label{eq:m5}
M_5^{A_1}(z):=\int_{0}^{\infty}\exp(-r^2)dr\int_0^{2\pi}\exp(2rz\cos\theta)d\theta.
\end{equation}We have
\begin{eqnarray*}
M_5^{A_1}(z)&=&\int_{0}^{\infty}\exp(-r^2)dr\int_0^{2\pi}\exp(2rz\cos\theta)d\theta=\int_{0}^{\infty}e^{-r^2}dr\sum_{k=0}^{\infty}\frac{(2r)^kz^k}{k!}\int_{0}^{2\pi}\cos^k\theta d\theta\\
&=&\sum_{p=0}^\infty\frac{2^{2p+1}}{(2p)!}I_{2p}\Gamma(p+\frac{1}{2})z^{2p}=\pi\sum_{p=0}^\infty\frac{\Gamma(p+\frac{1}{2})}{(p!)^2}z^{2p}.
\end{eqnarray*} It is clear that the radius of convergence of $M_5^{A_1}(\cdot)$ is infinite. Since $\displaystyle M_2^{A_1}(z)=M_5^{A_1}(z)-M_1^{A_1}(z)$, the radius of convergence of $M_2^{A_1}(z)$ is also infinite.

We now prove that $z\mapsto M_4^{A_1}(z)$ is well-defined and not identically equal to zero. Indeed, 
\begin{eqnarray*}
&&M_1^{A_1}(z)-z^2M_3^{A_1^1}(z)-M_2^{A_1^1}(z)\\
&=&2M_1^{A_1}(z)-\pi\sum_{p=0}^\infty\frac{\Gamma(p+\frac{1}{2})}{(p!)^2}z^{2p}-z^2M_3^{A_1^1}(z)\\
&=&\pi\sum_{p=0}^{\infty}[\frac{(p+1)(2p+1)}{((p+1)!)^2}-\frac{1}{(p!)^2}]\Gamma(p+\frac{1}{2})z^{2p}-\pi\sum_{p=1}^{\infty}\frac{p}{(p!)^2}\Gamma(p-\frac{1}{2})z^{2p}\\
&=&-\frac{3\pi}2\sum_{p=1}^{\infty}\frac{p\Gamma(p-\frac{1}{2})}{(p+1)(p!)^2}z^{2p},
\end{eqnarray*}
Then, the function $z\mapsto M_4^{A_1}(z)$ is defined by
\begin{equation}\label{def-M4}
M_4^{A_1}(z)=-\frac{3\pi}2\sum_{p=0}^{\infty}\frac{(p+1)\Gamma(p+\frac{1}{2})}{(p+2)((p+1)!)^2}z^{2p},
\end{equation}
which is clearly a non zero entire function.

\subsection{Proof of Lemma \ref{calcul-final}}\label{pf-calcul-final}
We give in this section explicit expressions of $a_1$, $a_2$, and $a_3$ defined respectively in Eqs. \eqref{eq:aa1}, \eqref{eq:aa2}, and \eqref{eq:aa3}. The computations are lengthy but straightforward. \smallskip

We start by computing $a_1$.
\begin{eqnarray*}
&&\int_{\R^2}\frac{\alpha_\eps(\eta)}{\vert \eta\vert^3}\eta^TF_x\eta\langle\psi(x),\eta\rangle\eta d\eta\\
&=&\eps e^{-\br^2}R_{\theta_0}\int_0^\infty e^{-r^2}r^2dr\int_0^{2\pi}e^{2r\br\cos\theta}(F_{\theta_0}^{11}\cos^2\theta+2F_{\theta_0}^{12}\cos\theta\sin\theta+F_{\theta_0}^{22}(1-\cos^2\theta))\\
&&~\begin{pmatrix}\cos^2\theta&\sin\theta\cos\theta\\\sin\theta\cos\theta&1-\cos^2\theta\end{pmatrix}d\theta~R^T_{\theta_0}\psi(x)\\
&=&\eps e^{-\br^2}R_{\theta_0}\int_0^\infty e^{-r^2}r^2dr\int_0^{2\pi}e^{2r\br\cos\theta}\\
&&~\begin{pmatrix}F^{22}_{\theta_0}\cos^2\theta+(F^{11}_{\theta_0}-F^{22}_{\theta_0})\cos^4\theta,&2 F_{\theta_0}^{12}\cos^2\theta(1-\cos^2\theta)\\2 F_{\theta_0}^{12}\cos^2\theta(1-\cos^2\theta),&F^{22}_{\theta_0}+(F^{11}_{\theta_0}-2F^{22}_{\theta_0})\cos^2\theta-(F^{11}_{\theta_0}-F^{22}_{\theta_0})\cos^4\theta \end{pmatrix}d\theta\\
&&R^T_{\theta_0}\psi(x).
\end{eqnarray*}\smallskip

The functions $M_6(\cdot)$, $M_7(\cdot)$, and $M_8(\cdot)$ were defined in Eqs. (\ref{eq:m6}), (\ref{eq:m7}) and (\ref{eq:m8}) respectively.
Then, we have
\begin{eqnarray}
\int_{\R^2}\frac{\alpha_\eps(\eta)}{\vert \eta\vert^3}\eta^TF_x\eta\langle\psi(x),\eta\rangle\eta d\eta=\eps e^{-\br^2}R_{\theta_0}\mathcal{M}(\br)R^T_{\theta_0}\psi(x),
\end{eqnarray}with
\begin{eqnarray*}
&&\mathcal{M}(\br)\\
&:=&\begin{pmatrix}F^{22}_{\theta_0}M_7(\br)+(F^{11}_{\theta_0}-F^{22}_{\theta_0})M_8(\br),&2 F_{\theta_0}^{12}(M_7(\br)-M_8(\br))\\2 F_{\theta_0}^{12}(M_7(\br)-M_8(\br)),&F^{22}_{\theta_0}M_6(\br)+(F^{11}_{\theta_0}-2F^{22}_{\theta_0})M_7(\br)-(F^{11}_{\theta_0}-F^{22}_{\theta_0})M_8(\br) \end{pmatrix}.\nonumber
\end{eqnarray*}
Then, 
\begin{eqnarray*}
&&R_{\theta_0}\mathcal{M}(\br)R_{\theta_0}^T\\
&=&\frac{\mathcal{M}_{11}+\mathcal{M}_{22}}{2}I_2+\frac{\mathcal{M}_{11}-\mathcal{M}_{22}}{2}\begin{pmatrix}\cos2\theta_0&\sin2\theta_0\\\sin2\theta_0&-\cos2\theta_0\end{pmatrix}+\mathcal{M}_{12}\begin{pmatrix}-\sin2\theta_0&\cos2\theta_0\\\cos2\theta_0&\sin2\theta_0\end{pmatrix},
\end{eqnarray*}with
\begin{eqnarray*}
\frac{\mathcal{M}_{11}+\mathcal{M}_{22}}{2}&=&\frac{1}{2}\Big(F^{22}_{\theta_0}M_6(\br)+(F^{11}_{\theta_0}-F^{22}_{\theta_0})M_7({\br})\Big),\\
\frac{\mathcal{M}_{11}-\mathcal{M}_{22}}{2}&=&\frac{1}{2}\Big(2(F^{11}_{\theta_0}-F^{22}_{\theta_0})M_8({\br})-(F^{11}_{\theta_0}-3F^{22}_{\theta_0})M_7(\br)-F^{22}_{\theta_0}M_6(\br)\Big),\\
\mathcal{M}_{12}&=&2 F_{\theta_0}^{12}(M_7(\br)-M_8(\br)).
\end{eqnarray*}

We also note that
\begin{eqnarray*}
\bet\bet^T&=&\frac{1}{2}I_2+\frac{1}{2}\begin{pmatrix}\cos2\theta_0&\sin2\theta_0\\ \sin2\theta_0&-\cos2\theta_0\end{pmatrix},\\
\bet^\perp\bet^T&=&\frac{1}{2}\begin{pmatrix}0&-1\\ 1&0\end{pmatrix}+\frac{1}{2}\begin{pmatrix}-\sin2\theta_0&\cos2\theta_0\\ \cos2\theta_0&\sin2\theta_0\end{pmatrix}.
\end{eqnarray*}
We get
$R_{\theta_0}\mathcal{M}(\br)R_{\theta_0}^T=\mathcal{M}_{22}I_2+(\mathcal{M}_{11}-\mathcal{M}_{22})\bet\bet^T-\mathcal{M}_{12}\begin{pmatrix}0&-1\\ 1&0\end{pmatrix}+2\mathcal{M}_{12}\bet^\perp\bet^T$, which implies that
\begin{eqnarray}
&&\int_{\R^2}\frac{\alpha_\eps(\eta)}{\vert \eta\vert^3}\eta^TF_x\eta\langle\psi(x),\eta\rangle\eta d\eta\\
&=&\eps e^{-\br^2}\Big(\mathcal{M}_{22}\psi(x)+(\mathcal{M}_{11}-\mathcal{M}_{22})\langle\psi(x),\bet\rangle\bet-\mathcal{M}_{12}\psi(x)^\perp+2\mathcal{M}_{12}\langle\psi(x),\bet\rangle\bet^\perp\Big).\nonumber
\end{eqnarray}
On the other hand, one has
\begin{eqnarray*}
&&\int_{\R^2}\frac{\alpha_\eps(\eta)}{\vert \eta\vert^3}\eta^TF_x\eta\langle\psi(x),\eta\rangle d\eta\\
&=&e^{-\br^2}\psi^T(x)R_{\theta_0}\int_0^\infty e^{-r^2}rdr\int_0^{2\pi}e^{2r\br\cos\theta}\begin{pmatrix}F^{22}_{\theta_0}\cos\theta+(F^{11}_{\theta_0}-F^{22}_{\theta_0})\cos^3\theta\\2F^{12}_{\theta_0}\cos\theta(1-\cos^2\theta) \end{pmatrix}d\theta.
\end{eqnarray*}\smallskip

The functions $M_9(\cdot)$ and $M_{10}(\cdot)$ were defined in Eqs. (\ref{eq:m9}) and (\ref{eq:m10}) respectively.
Then, we have
$$
\int_{\R^2}\frac{\alpha_\eps(\eta)}{\vert \eta\vert^3}\eta^TF_x\eta\langle\psi(x),\eta\rangle d\eta\\
=e^{-\br^2}\psi^T(x)R_{\theta_0}\begin{pmatrix}F^{22}_{\theta_0}M_9(\br)+(F^{11}_{\theta_0}-F^{22}_{\theta_0})M_{10}(\br)\\2F^{12}_{\theta_0}(M_9(\br)-M_{10}(\br)) \end{pmatrix}.
$$
Since $\psi^T(x)R_{\theta_0}=(\langle\psi(x),\bet\rangle, ~\langle\psi(x),\bet^\perp\rangle)$, we obtain
\begin{eqnarray}
\int_{\R^2}\frac{\alpha_\eps(\eta)}{\vert \eta\vert^3}\eta^TF_x\eta\langle\psi(x),\eta\rangle d\eta
&=&e^{-\br^2}\Big(\big[F^{22}_{\theta_0}M_9(\br)+(F^{11}_{\theta_0}-F^{22}_{\theta_0})M_{10}(\br)\big]\langle\psi(x),\bet\rangle\\
&+&2F^{12}_{\theta_0}(M_9(\br)-M_{10}(\br))\langle\psi(x),\bet^\perp\rangle\Big) .\nonumber
\end{eqnarray}
\smallskip
One also gets
\begin{eqnarray*}
\int_{\R^2}\frac{\alpha_\eps(\eta)}{\vert \eta\vert}\eta^TF_x\eta d\eta
&=&\eps e^{-\br^2}\int_0^\infty e^{-r^2}r^2dr\int_0^{2\pi}e^{2r\br\cos\theta}(F_{\theta_0}^{11}\cos^2\theta+F_{\theta_0}^{22}\sin^2\theta)d\theta\\
&=&\eps e^{-\br^2}\int_0^\infty e^{-r^2}r^2dr\int_0^{2\pi}e^{2r\br\cos\theta}(F_{\theta}^{22}+(F_{\theta_0}^{11}-F_{\theta}^{22})\cos^2\theta)d\theta\\
&=&\eps e^{-\br^2}(F_{\theta_0}^{22}M_6(\br)+(F_{\theta_0}^{11}-F_{\theta_0}^{22})M_7(\br)).
\end{eqnarray*}\smallskip
Finally, one derives
\begin{eqnarray*}
&&\int_{\R^2}\frac{\alpha_\eps(\eta)}{\vert\eta\vert^3}(\eta^TF_x\eta) \eta d\eta\\
&=&e^{-\br^2}R_{\theta_0}\int_0^\infty e^{-r^2}rdr\int_0^{2\pi}e^{2r\br\cos\theta}\begin{pmatrix}F_{\theta_0}^{22}\cos\theta+(F_{\theta_0}^{11}-F_{\theta_0}^{22})\cos^3\theta \\2F^{12}_{\theta_0}(\cos\theta-\cos^3\theta)\end{pmatrix}d\theta\\
&=&e^{-\br^2}R_{\theta_0}\begin{pmatrix}F_{\theta_0}^{22}M_9(\br)+(F_{\theta_0}^{11}-F_{\theta_0}^{22})M_{10}(\br) \\2F^{12}_{\theta_0}(M_9(\br)-M_{10}(\br))\end{pmatrix}.
\end{eqnarray*}
Since $\bar{\eta}_0^TR_{\theta_0}=(1,0)$, we have
\begin{eqnarray}
\int_{\R^2}\frac{\alpha_\eps(\eta)}{\vert\eta\vert^3}(\eta^TF_x\eta) \langle\eta_0,\eta\rangle d\eta=\eps e^{-\br^2}\big(F_{\theta_0}^{22}M_9(\br)+(F_{\theta_0}^{11}-F_{\theta_0}^{22})M_{10}(\br)\big).
\end{eqnarray}
\smallskip

In summary, we get
\begin{eqnarray*}
a_1&=&\frac{1}{4\pi}\frac{e^{-\br^2}}{\eps}\Big(\mathcal{M}_{22}\psi(x)+(\mathcal{M}_{11}-\mathcal{M}_{22})\langle\psi(x),\bet\rangle\bet-\mathcal{M}_{12}\psi(x)^\perp+2\mathcal{M}_{12}\langle\psi(x),\bet\rangle\bet^\perp\nonumber\\
&-&\big[\big(F^{22}_{\theta_0}M_9(\br)+(F^{11}_{\theta_0}-F^{22}_{\theta_0})M_{10}(\br)\big)\langle\psi(x),\bet\rangle+2F^{12}_{\theta_0}(M_9(\br)-M_{10}(\br))\langle\psi(x),\bet^\perp\rangle\big]\bar{\eta}_0\nonumber\\
&+&(F_{\theta_0}^{22}(M_6(\br)-M_9(\br))+(F_{\theta_0}^{11}-F_{\theta_0}^{22})(M_7(\br)-M_{10}(\br))\psi(x)\Big)\nonumber\\
&=&\frac{1}{4\pi}\frac{e^{-\br^2}}{\eps}\Big(\nonumber\\
&&\big[2F^{22}_{\theta_0}M_6(\br)+(2F^{11}_{\theta_0}-3F^{22}_{\theta_0})M_7(\br)-(F^{11}_{\theta_0}-F^{22}_{\theta_0})(M_8(\br)+M_{10}(\br))-F^{22}_{\theta_0}M_9(\br)\big]\psi(x)\nonumber\\
&-&\big[\big(F^{22}_{\theta_0}M_9(\br)+(F^{11}_{\theta_0}-F^{22}_{\theta_0})(M_{10}(\br)-2M_8(\br))+(F^{11}_{\theta_0}-3F^{22}_{\theta_0})M_7(\br)\nonumber\\ 
&&+F^{22}_{\theta_0}M_6(\br)\big)\langle\psi(x),\bet\rangle
+2F^{12}_{\theta_0}(M_9(\br)-M_{10}(\br))\langle\psi(x),\bet^\perp\rangle\big]\bet\nonumber\\
&-&2 F_{\theta_0}^{12}(M_7(\br)-M_8(\br))\psi(x)^\perp+4 F_{\theta_0}^{12}(M_7(\br)-M_8(\br))\langle\psi(x),\bet\rangle\bet^\perp\Big).
\end{eqnarray*}\smallskip

Let us now compute $a_2$. Using the computations performed for the term $a_1$, one has
\begin{eqnarray*}
\int_{\R^2}\frac{\alpha_\eps(\eta)}{\vert \eta\vert^3}\eta^TF_x\eta d\eta
&=&\frac{e^{-\br^2}}{\eps}\int_{0}^\infty e^{-r^2}dr\int_{0}^{2\pi}e^{2r\br\cos\theta}(F_{\theta_0}^{11}\cos^2\theta+F_{\theta_0}^{22}(1-\cos^2\theta))d\theta\\
&=&\frac{e^{-\br^2}}{\eps}\Big[ F_{\theta_0}^{22}M_5^{A_1}(\br)+(F_{\theta_0}^{11}-F_{\theta_0}^{22})M_1^{A_1}(\br)\Big].
\end{eqnarray*}
The other contribution in $a_2$ is given by the following expression.
\begin{eqnarray*}
&&\int_{\R^2}\frac{\alpha_\eps(\eta)}{\vert \eta\vert^3}\langle\psi(x),\eta\rangle F_x\eta d\eta\\
&=&F_xR_{\theta_0}\frac{e^{-\br^2}}{\eps}\int_{0}^\infty e^{-r^2}dr\int_0^{2\pi}e^{2r\br\cos\theta}\begin{pmatrix}\cos^2\theta&\sin\theta\cos\theta\\\sin\theta\cos\theta&1-\cos^2\theta\end{pmatrix}d\theta~ R^T_{\theta_0}\psi(x)\\
&=&\frac{e^{-\br^2}}{\eps}F_xR_{\theta_0}\begin{pmatrix}M_1^{A_1}(\br)&0\\ 0&M_5^{A_1}(\br)-M_1^{A_1}(\br)\end{pmatrix}~ R^T_{\theta_0}\psi(x)\\
&=&\frac{e^{-\br^2}}{\eps}F_x\Big(\frac{1}{2}M_5^{A_1}(\br)I_2+(M_1^{A_1}(\br)-\frac{1}{2}M_5^{A_1}(\br))\begin{pmatrix}\cos2\theta_0&\sin 2\theta_0\\ \sin 2\theta_0&-\cos2\theta_0\end{pmatrix}\Big)~\psi(x)\\
&=&\frac{e^{-\br^2}}{\eps}F_x\Big((M_5^{A_1}(\br)-M_1^{A_1}(\br))I_2+(2M_1^{A_1}(\br)-M_5^{A_1}(\br))\bet\bet^T\Big)\psi(x).
\end{eqnarray*}Therefore, we have
\begin{eqnarray*}
a_2&=&-\frac{1}{4\pi}\frac{e^{-\br^2}}{\eps}\Big\{\Big[F_{\theta_0}^{22}M_5^{A_1}(\br)+(F_{\theta_0}^{11}-F_{\theta_0}^{22})M_1^{A_1}(\br)\Big]\psi(x)\\
&&\quad+F_x\Big((M_5^{A_1}(\br)-M_1^{A_1}(\br))\psi(x)+(2M_1^{A_1}(\br)-M_5^{A_1}(\br))\langle\psi(x),\bet\rangle\bet\Big)\Big\}.
\end{eqnarray*}

Finally, $a_3$ is computed as follows.
\begin{eqnarray*}
a_3&=&-\frac{1}{8\pi}\int_{\R^2}\frac{\alpha_\eps(\eta)\langle\psi(x),\eta/\vert\eta\vert\rangle}{\vert \eta\vert}\frac{\eta}{\vert\eta\vert}d\eta\\
&=&-\frac{1}{8\pi}\frac{e^{-\br^2}}{\eps}R_{\theta_0}\int_{0}^\infty e^{-r^2}dr\int_{0}^{2\pi}e^{2r\br\cos\theta}\begin{pmatrix}\cos^2\theta&\sin\theta\cos\theta\\\sin\theta\cos\theta&(1-\cos^2\theta)\end{pmatrix}d\theta R^T_{\theta_0}\psi(x)\\
&=&-\frac{1}{8\pi}\frac{e^{-\br^2}}{\eps}R_{\theta_0}\begin{pmatrix} M_1^{A_1}(\br) &0\\ 0&M_5^{A_1}(\br)-M_1^{A_1}(\br)\end{pmatrix}R^T_{\theta_0}\psi(x)\\
&=&-\frac{1}{8\pi}\frac{e^{-\br^2}}{\eps}\Big[ (M_5^{A_1}(\br)-M_1^{A_1}(\br))\psi(x)+(2M_1^{A_1}(\br)-M_5^{A_1}(\br))\langle\psi(x),\bet\rangle\bet\Big].
\end{eqnarray*}
This ends the proof of Lemma \ref{calcul-final}.

\subsection{Proof of Lemma \ref{final-cal}}\label{pf-final-cal}
Recall that
$$
\int_0^{2\pi}\cos^{2p}\theta d\theta=4 I_{2p},\ \int_0^\infty e^{-r^2}r^{2p}dr=\frac{1}{2}\Gamma (p+\frac{1}{2}).
$$
Then, one gets
\begin{eqnarray*}
M_6(z)&=&\int_0^\infty e^{-r^2}r^2dr\int_0^{2\pi}e^{2rz\cos\theta}d\theta=\sum_{k=0}^\infty\frac{2^k}{k!}\big[\int_0^\infty e^{-r^2}r^{k+2}dr\int_0^{2\pi}\cos^k\theta d\theta \big] z^k\\
&=&\sum_{p=0}^{\infty}\frac{2^{2p+1}}{(2p)!}\Gamma(p+\frac{3}{2})I_{2p} ~z^{2p},
\end{eqnarray*}
\begin{eqnarray*}
M_7(z)&=&\int_0^\infty e^{-r^2}r^2dr\int_0^{2\pi}e^{2rz\cos\theta}\cos^2\theta d\theta=\sum_{k=0}^\infty\frac{2^k}{k!}\big[\int_0^\infty e^{-r^2}r^{k+2}dr\int_0^{2\pi}\cos^{k+2}\theta d\theta \big] z^k\\
&=&\sum_{p=0}^{\infty}\frac{2^{2p+1}}{(2p)!}\Gamma(p+\frac{3}{2})I_{2p+2} ~z^{2p},
\end{eqnarray*}

\begin{eqnarray*}
M_8(z)&=&\int_0^\infty e^{-r^2}r^2dr\int_0^{2\pi}e^{2rz\cos\theta}\cos^4\theta d\theta=\sum_{k=0}^\infty\frac{2^k}{k!}\big[\int_0^\infty e^{-r^2}r^{k+2}dr\int_0^{2\pi}\cos^{k+4}\theta d\theta \big] z^k\\
&=&\sum_{p=0}^{\infty}\frac{2^{2p+1}}{(2p)!}\Gamma(p+\frac{3}{2})I_{2p+4} ~z^{2p},
\end{eqnarray*}

\begin{eqnarray*}
M_9(z)&=&\int_0^\infty e^{-r^2}rdr\int_0^{2\pi}e^{2rz\cos\theta}\cos\theta d\theta=\sum_{k=0}^\infty\frac{2^k}{k!}\big[\int_0^\infty e^{-r^2}r^{k+1}dr\int_0^{2\pi}\cos^{k+1}\theta d\theta \big] z^k\\
&=&\sum_{p=0}^{\infty}\frac{2^{2p+2}}{(2p+1)!}\Gamma(p+\frac{3}{2})I_{2p+2} ~z^{2p+1},
\end{eqnarray*}
and finally
\begin{eqnarray*}
M_{10}(z)&:=&\int_0^\infty e^{-r^2}rdr\int_0^{2\pi}e^{2rz\cos\theta}\cos^3\theta d\theta=\sum_{k=0}^\infty\frac{2^k}{k!}\big[\int_0^\infty e^{-r^2}r^{k+1}dr\int_0^{2\pi}\cos^{k+3}\theta d\theta \big] z^k\\
&=&\sum_{p=0}^{\infty}\frac{2^{2p+2}}{(2p+1)!}\Gamma(p+\frac{3}{2})I_{2p+4} ~z^{2p+1}.
\end{eqnarray*}

Therefore, we obtain
\begin{eqnarray*}
&&2M_7(z)-M_8(z)-M_{10}(z)-M_1^{A_1}(z)\\
&=&\sum_{p=0}^\infty \frac{2^{2p+1}}{(2p)!}\Gamma(p+\frac{1}{2})I_{2p+2}\frac{2p^2+4p-\frac{3}{2}}{2p+3}z^{2p}-\sum_{p=0}^{\infty}\frac{2^{2p+2}}{(2p+1)!}\Gamma(p+\frac{3}{2})I_{2p+4} ~z^{2p+1},
\end{eqnarray*}

\begin{eqnarray*}
&&2M_6(z)-3M_7(z)+M_8(z)+M_{10}(z)-M_9(z)-2M_5^{A_1}(z)+2M_1^{A_1}(z)\\
&=&\sum_{p=0}^\infty \frac{2^{2p+1}}{(2p)!}\Gamma(p+\frac{1}{2})I_{2p}\frac{6p^2+16p+\frac{7}{2}}{(2p+2)(2p+4)}z^{2p}-\sum_{p=0}^\infty\frac{2^{2p+2}}{(2p+1)!}\Gamma(p+\frac{3}{2})I_{2p+2}\frac{1}{2p+4}z^{2p+1}.
\end{eqnarray*}

\bibliographystyle{plain}
\bibliography{Foias-Saut-Final29012013}

\begin{thebibliography}{10}

\bibitem{Alb}
J.~H. Albert.
\newblock Genericity of simple eigenvalues for elliptic {PDE's}.
\newblock {\em Proceedings of the American Mathematical Society}, 48:413--418,
  1975.

\bibitem{Conca}
C.~Alvarez, C.~Conca, L.~Friz, O.~Kavian, and J.~H. Ortega.
\newblock Identification of immersed obstacles via boundary measurements.
\newblock {\em Inverse Problems}, 21:1531--1552, 2005.

\bibitem{Ammari:fk}
H.~Ammari, P.~Garapon, H.~Kang, and H.~Lee.
\newblock A method of biological tissues elasticity reconstruction using
  magnetic resonance elastography measurements.
\newblock {\em Quarterly of Applied Mathematics}, 66:139--175, 2008.

\bibitem{ammari}
H.~Ammari and H.~Kang.
\newblock {\em Reconstruction of Small Inhomogeneities from Boundary
  Measurements}, volume 1846 of {\em Lecture Notes in Mathematics}.
\newblock Springer-Verlag, 2004.

\bibitem{Arnold:1988fk}
V.~I. Arnold.
\newblock {\em Geometrical {Method in the Theory of Ordinary Differential
  Equations}}.
\newblock Springer, {S}econd edition, 1988.

\bibitem{BCKL}
K.~Beauchard, Y.~Chitour, D.~Kateb, and R.~Long.
\newblock Spectral controllability for $2d$ and $3d$ linear schr{\"o}dinger
  equations.
\newblock {\em Journal of Functional Analysis}, 256:3916--3976, 2009.

\bibitem{berger}
M.~Berger and B.~Gostiaux.
\newblock {\em Differential Geometry: Manifolds, Curves, and Surfaces}, volume
  115 of {\em Graduate Texts in Mathematics}.
\newblock Springer-Verlag, 1988.

\bibitem{brezis}
H.~Brezis.
\newblock {\em Analyse fonctionnelle: {T}h\'eorie et applications}.
\newblock Collection {M}ath\'ematiques appliqu\'ees pour la ma\^itrise. Masson,
  1983.

\bibitem{Coron-Chitour-Mauro}
Y.~Chitour, J.-M. Coron, and M.~Garavello.
\newblock On conditions that prevent steady-state controllability of certain
  linear partial differential equations.
\newblock {\em Discrete and {C}ontinuous {D}ynamical {S}ystems},
  14(4):643--672, 2006.

\bibitem{deRham}
G.~de~{Rham}.
\newblock {\em Differentiable {M}anifolds}, volume 266 of {\em A Series of
  Comprehensive Studies in Mathematics}.
\newblock Springer-Verlag, 1984.

\bibitem{Evans}
L.~C. Evans.
\newblock {\em Partial {D}ifferential {E}quations}, volume~19 of {\em Graduate
  Studies in Mathematics}.
\newblock American Mathematical Society, 1999.

\bibitem{Foias-Ziane-06}
C.~Foias, L.~Hoang, E.~Olson, and M.~Ziane.
\newblock On the solutions to the normal form of the {Navier-Stokes} equations.
\newblock {\em Indiana University Mathematics Journal}, 55(2):631--686, 2006.

\bibitem{Foias-Ziane-09}
C.~Foias, L.~Hoang, E.~Olson, and M.~Ziane.
\newblock The normal form of the {Navier-Stokes} equations in suitable normed
  spaces.
\newblock {\em Annales de l'Institut Henri Poincar\'e (C) Analyse
  Nonlin\'eaire}, 26(5):1635--1673, 2009.

\bibitem{Foias:2011uq}
C.~Foias, L.~Hoang, and J.~C. Saut.
\newblock Asymptotic integration of {Navier-Stokes} equations with potential
  forces. {II}. {An explicit Poincar}\'e-{Dulac} normal form.
\newblock {\em Journal of Functional Analysis}, 260:3007--3035, 2011.

\bibitem{Foias:2001cr}
C.~Foias, O.~Manley, R.~Rosa, and R.~Temam.
\newblock {\em Navier-{Stokes} Equations and Turbulence}, volume~83 of {\em
  Encyclopedia of methematics and its applications}.
\newblock Cambridge University Press, 2001.

\bibitem{Foias:1987dq}
C.~Foias and J.~C. Saut.
\newblock Linearization and normal form of the {Navier-Stokes} equations with
  potential forces.
\newblock {\em Annales de {l'Institut Henri Poincar\'e}, Section C},
  4(1):1--47, 1987.

\bibitem{GilTru}
D.~Gilbard and N.~Trudinger.
\newblock {\em {Elliptic Partial Differential Equations of Second Order}}.
\newblock Springer, 1998.

\bibitem{girault-raviart}
V.~Girault and P.-A. Raviart.
\newblock {\em Finite {Element} {M}ethods for {N}avier-{S}tokes {E}quations},
  volume~5 of {\em Springer series in computational mathematics}.
\newblock Springer-Verlag, 1986.

\bibitem{PH}
A.~Henrot and M.~Pierre.
\newblock {\em Variation et optimisation de forme}, volume~48 of {\em
  Math\'ematiques et Applications SMAI}.
\newblock Springer, 2005.

\bibitem{DH}
D.~Henry.
\newblock {\em Perturbation of the boundary in {B}oundary-{V}alue {P}roblems of
  {P}artial {D}ifferential {E}quations}, volume 318.
\newblock Cambridge University Press, 2005.

\bibitem{Hsiao}
G.~Hsiao and W.~L. Wendland.
\newblock {\em Boundary {I}ntegral {E}quations}, volume 164 of {\em Applied
  Mathematical Sciences}.
\newblock Springer, 2008.

\bibitem{Ka}
T.~Kato.
\newblock {\em Perturbation theory for linear operators}.
\newblock Springer, 1976.

\bibitem{lady}
O.~Ladyzhenskaya.
\newblock {\em The {M}athematical {T}heory of {V}iscous {Incompressible}
  {Flow}}.
\newblock Gordon and Breach, 1968.

\bibitem{Micheletti:1972nx}
A.~M. Micheletti.
\newblock Metrica per famiglie di domini limitati e propriet\`a generiche degli
  autovalori.
\newblock {\em Annali della Scuola Normale superiore di {Pisa}}, 26:683--694,
  1972.

\bibitem{Zuazua1}
J.~H. Ortega and E.~Zuazua.
\newblock Generic simplicity of the eigenvalues of the {S}tokes system in two
  space dimensions.
\newblock {\em Advances in Differential Equations}, 6(8):987--1023, 2001.

\bibitem{Zuazua2}
J.~H. Ortega and E.~Zuazua.
\newblock On a constrained approximate controllability problem for the heat
  equation: addendum.
\newblock {\em Journal of Optimization Theory and Applications},
  118(1):183--190, 2003.

\bibitem{Osses}
A.~Osses.
\newblock A rotated multiplier applied to the controllability of waves,
  elasticity, and tangential stokes control.
\newblock {\em SIAM Journal on Control and Optimization}, 40(3):777--800, 2001.

\bibitem{Pogorzelski:1966uq}
W.~Pogorzelski.
\newblock {\em Integration {Equations and Their Applications}}, volume~I.
\newblock Pergamon Press, 1966.

\bibitem{Simon}
J.~Simon.
\newblock Diff\'erentiation de probl\`emes aux limites par rapport au domaine.
\newblock Lecture notes, Universidad de Sevilla, 1991.

\bibitem{simon-derham}
J.~Simon.
\newblock D{\'e}monstration constructive d'un th\'eor\`eme de {G}. de {R}ham.
\newblock {\em Comptes {R}endus de l'Acad\'emie des {S}ciences: s\'erie {I}},
  1993.

\bibitem{simon-cours}
J.~Simon.
\newblock {\'E}quations de {N}avier-{S}tokes.
\newblock Lecture notes, Universit\'e Blaise Pascal, Clermont-Ferrand (France),
  2003.

\bibitem{Stein}
E.~Stein.
\newblock {\em Singular Integrals and Differentiability Properties of
  Functions}.
\newblock Princeton university press, 1970.

\bibitem{temam1}
R.~Temam.
\newblock {\em Navier-{S}tokes equations: theory and numerical analysis}.
\newblock American Mathematical Society, 2001.

\bibitem{Yang}
S.~A. Yang.
\newblock Evaluation of the {H}elmholtz boundary integral equation and its
  normal and tangential derivatives in two dimensions.
\newblock {\em Journal of sound and vibration}, 301:864--877, 2007.

\end{thebibliography}
\end{document}